\DeclareMathAlphabet{\pazocal}{OMS}{zplm}{m}{n}
\newcommand{\unif}{\pazocal{U}}
\newtheorem{theorem}{Theorem}[section]
\newtheorem{lemma}[theorem]{Lemma}
\newtheorem{corollary}[theorem]{Corollary}
\newtheorem{proposition}[theorem]{Proposition}
\theoremstyle{definition}
\newtheorem{question}[theorem]{Question}
\newtheorem{conjecture}[theorem]{Conjecture}
\theoremstyle{remark}
\newtheorem{remark}[theorem]{Remark}
\newtheorem{example}{Example}
\renewcommand{\ge}{\geqslant}
\renewcommand{\le}{\leqslant}
\newcommand{\N}{\mathbb{N}}
\newcommand{\R}{\mathbb{R}}
\newcommand{\C}{\mathbb{C}}
\DeclarePairedDelimiterX\PH[1](){
    
   #1
}
\newcommand{\E}[1]{\mathbb{E}\!\left[#1\right]}
\newcommand{\cE}[2]{\mathbb{E}\!\left[#1\,\middle\vert\,#2\right]}
\newcommand{\Es}[2]{\mathbb{E}_{#1}\!\left[#2\right]}
\newcommand{\Var}[1]{\mathrm{Var}\!\left(#1\right)}
\newcommand{\Vars}[2]{\mathrm{Var}_{#1}\!\left(#2\right)}
\newcommand{\Exp}[1]{\exp\!\left(#1\right)}
\renewcommand{\P}[1]{\mathbb{P}\!\left(#1\right)}
\newcommand{\Ps}[2]{\mathbb{P}_{#1}\!\left(#2\right)}
\newcommand{\cP}[2]{\mathbb{P}\!\left(#1\,\middle\vert\,#2\right)}
\newcommand{\ang}[1]{\langle#1\rangle}
\newcommand{\e}{\mathrm{e}}
\newcommand{\norm}[1]{\left\lVert#1\right\rVert}
\newcommand{\normTV}[1]{\left\lVert#1\right\rVert_{\textup{TV}}}
\newcommand{\normo}[1]{\left\lVert#1\right\rVert_1}
\newcommand{\normt}[1]{\left\lVert#1\right\rVert_2}
\newcommand{\normi}[1]{\left\lVert#1\right\rVert_\infty}
\newcommand{\normtt}[1]{\left\lVert#1\right\rVert_{2 \rightarrow 2}}
\newcommand{\normtn}[1]{\left\lVert#1\right\rVert_{2, \nu}}
\newcommand{\normtp}[1]{\left\lVert#1\right\rVert_{2, \pi}}
\newcommand{\abs}[1]{\left\lvert#1\right\rvert} 
\newcommand{\set}[1]{\left\{#1\right\}}
\newcommand{\floor}[1]{\left\lfloor#1\right\rfloor}
\newcommand{\ceil}[1]{\left\lceil#1\right\rceil}
\newcommand{\diff}{\mathop{}\!\mathrm{d}}
\newcommand{\brac}[1]{\left(#1\right)} 
\newcommand{\sqbrac}[1]{\left[#1\right]} 
\newcommand{\angbrac}[1]{\left\langle#1\right\rangle} 
\def\cC{\mathrm{Centre}(C)}
\def\Mac{\mathrm{Macro}(\delta)}
\newcommand{\CC}[1]{{\mathrm{Centre}(#1)}}
\newcommand{\MC}[1]{{\mathrm{Macro}(#1)}}
\newcommand{\Meso}[1]{\mathrm{Meso}(#1)}
\def\sPt{\tilde{\Lambda}}
\def\sPit{\tilde{\Lambda}^{\textup{ind},+}_{n}}
\def\sPr{\Lambda^{\textup{res}}_{n}}
\def\sPa{\Lambda^{\textup{aux}}_{n}}
\def\PY{P^{\textup{mf}}}
\def\PYr{P^{\textup{res}}}
\def\PYa{P^{\textup{aux}}}
\def\PXm{P^{\textup{mod}}}
\def\PYm{P^{\textup{mod,mf}}}
\def\PXi{P^{\textup{ind}}}
\def\PXiplus{P^{\textup{ind},+}}
\def\Ya{Y^{\textup{aux}}}
\def\Xm{X^{\textup{mod}}}
\def\Xl{X^{\textup{lab}}}
\def\Xb{X^{\textup{bal}}}
\def\Xs{X^{\textup{stat}}}
\def\Xi{X^{\textup{ind}}}
\def\Yr{Y^{\textup{res}}}
\def\Xsh{X^{\textup{shuf}}}
\def\Ymb{\mathbf{Y}^{\textup{mod}}}
\def\Yab{\mathbf{Y}^{\textup{aux}}}
\def\Xmb{\mathbf{X}^{\textup{mod}}}
\def\Xib{\mathbf{X}^{\textup{ind}}}
\def\Yrb{\mathbf{Y}^{\textup{res}}}
\def\Xsb{\mathbf{X}^{\textup{stat}}}
\def\Xmplusb{\mathbf{X}^{\textup{mod},+}}
\def\Xiplusb{\mathbf{X}^{\textup{ind},+}}
\def\Xshb{\mathbf{X}^{\textup{shuf}}}
\def\Xrshb{\mathbf{X}^{\textup{res-shuf}}}
\def\Xbob{\mathbf{X}^{\textup{ord,bal}}}
\def\Xlob{\mathbf{X}^{\textup{ord,lab}}}
\def\Xlopb{\mathbf{X}^{\textup{ord,lab}}_{\textup{prod}}}
\def\pic{\pi_{\mathrm{Centre}(C)}}
\def\pim{\pi_{\textup{Macro}(\delta)}}
\def\pil{\pi_{\textup{lab}}}
\def\pia{\pi_{\textup{aux}}}
\def\pib{\pi_{\textup{bal}}}
\def\piM{\pi_{\textup{mod}}}
\def\piI{\pi_{\textup{ind}}}
\def\piR{\pi_{\textup{res}}}
\def\piMF{\pi_{\textup{mf}}}
\def\piMMF{\pi_{\textup{mod,mf}}}
\def\Oml{\Omega_{\textup{lab}}}
\def\Omb{\Omega_{\textup{bal}}}
\def\Xnb{\mathbf{X}^{(n)}}
\def\Xn{X^{(n)}}
\def\Ch{\Phi_{\ast}}
\def\Chn{\Phi_{\ast}^{(n)}}
\def\Emf{E_{\textup{mf}}}
\def\Eplus{E_+}
\def\Er{E_{\textup{res}}}
\def\Ea{E_{\textup{aux}}}
\def\tr{t_{\textup{rel}}}
\def\tm{t_{\textup{mix}}}
\def\tmsb{t_{\textup{mix,\,single}}^{(n)}(1/\sqrt{n})}
\def\thi{t_{\textup{hit}}}
\def\tM{t_{\textup{MLS}}}
\def\bv{\mathbf{v}}
\def\x{\mathbf{x}}
\def\y{\mathbf{y}}
\def\z{\mathbf{z}}
\def\M{\mathbf{M}}
\def\U{\mathbf{U}}
\def\V{\mathbf{V}}
\def\X{\mathbf{X}}
\def\Y{\mathbf{Y}}
\def\Z{\mathbf{Z}}
\def\mix{\textup{mix}}
\renewcommand{\complement}{\mathsf{c}}
\newcommand\cycle[2][\;]{\readlist\thecycle{#2}(\foreachitem\i\in\thecycle{\ifnum\icnt=1\else#1\fi\i})}
\newcommand{\custMR}[1]{\href{http://www.ams.org/mathscinet-getitem?mr=#1}{MR#1}}
\newcommand{\arxiv}[1]{\href{http://arxiv.org/abs/#1}{arXiv:#1}}
\title{Cutoff for generalised Bernoulli--Laplace urn models}
\author{Ritesh Goenka}
\address{Mathematical Institute \\ University of Oxford \\ Oxford \\ United Kingdom}
\email{goenka@maths.ox.ac.uk}
\author{Jonathan Hermon}
\address{Department of Mathematics \\ University of British Columbia \\ Vancouver \\ BC \\ Canada}
\email{jhermon@math.ubc.ca}
\author{Dominik Schmid}
\address{Institut für Mathematik  \\ Augsburg University \\ Augsburg \\ Germany}
\email{d.schmid@uni-a.de}
\date{\today}
\subjclass[2020]{Primary: 60J10; Secondary: 60C05}
\keywords{Bernoulli--Laplace urn model, mixing time, cutoff phenomenon, spectral profile, hit-mix.}
\begin{document}

\begin{abstract}
    We introduce a multi-colour multi-urn generalisation of the Bernoulli--Laplace urn model, consisting of $d$ urns, $m$ colours, and $dmn$ balls, with $dn$ balls of each colour and $mn$ balls in each urn. At each step, one ball is drawn uniformly at random from each urn, and the chosen balls are redistributed among the urns based on a permutation drawn from a distribution $\mu$ on the symmetric group $S_d$. We study the mixing time of this Markov chain for fixed $m$, $d$, and $\mu$, as $n \rightarrow \infty$. We show that there is cutoff whenever the chain on $[d]$ corresponding to the evolution of a single ball is irreducible, and that the same holds for a labeled version of the model. As an application, we also obtain partial results on cutoff for a card shuffling version of the model in which the cards are labeled and their ordering within each stack matters.
\end{abstract}

\maketitle

\setcounter{tocdepth}{1}
\tableofcontents

\section{Introduction}
\label{sec:introduction}

The \emph{Bernoulli--Laplace model} is a discrete model for the diffusion of two incompressible liquids between two containers. The system consists of $2n$ particles: $n$ black and $n$ white. Each container contains $n$ particles, and in each step, one particle is chosen uniformly at random from each container and sent to the other container. Thinking of the containers as urns and particles as balls, the above model is easily rephrased as an \emph{urn model}. We refer to Feller's classical book~\cite{Fel} for historical remarks about the above model, and Johnson and Kotz~\cite{JK} for a broader overview of urn models. Clearly, the above discrete random process is a Markov chain --- in fact, it was one of the first chains analysed by Markov; see \cite{DS91}. Diaconis and Shahshahani~\cite{DS} showed in a seminal paper that it takes $\frac{1}{4} n \log n + cn$ ball exchanges, where $c$ is a constant, to mix up the urns in this model, and established \emph{the cutoff phenomenon}~\cite{Dia}. In this paper, we consider a natural generalisation of the above model, which recovers several previous generalisations.

\subsection{The generalised Bernoulli--Laplace model}

Let $d, m, n \in \N$ with $d, m \ge 2$, and let $\mu$ be a distribution on the symmetric group $S_d$. We define the \emph{generalised Bernoulli--Laplace urn model} with parameters $(d, m, n, \mu)$ as the following continuous-time Markov chain $\mathbf{X} \coloneqq (X_t)_{t \ge 0}$. The system consists of $d$ urns, $m$ colours, and a total of $mnd$ balls. Each ball is coloured in one of the $m$ colours and there are exactly $dn$ balls of each colour, assumed to be indistinguishable. We identify the set of urns and colours with $[d] \coloneqq \{1, \dots, d\}$ and $[m] \coloneqq \{1, \dots, m\}$, respectively. Initially, each urn contains $mn$ balls. At rate $1$, we choose one ball from each urn uniformly at random and sample a permutation $\sigma \sim \mu$. Then we move the ball chosen from urn $i$ to urn $\sigma(i)$ for each $i \in [d]$. See Figure~\ref{fig:GBL} for an instance of the above model, which depicts the movement of balls in a single step. Note that the classical Bernoulli--Laplace urn model corresponds to the $(2, 2, n, \mu_0)$ generalised model, where $\mu_0$ is the Dirac measure on the transposition $\cycle{1,2}$.

\begin{remark}
    The generalised Bernoulli--Laplace chain is in general nonreversible. However, if the distribution $\mu$ is \emph{symmetric}, namely, if $\mu(\sigma) = \mu(\sigma^{-1})$ for all $\sigma \in S_d$, then the chain is reversible. Furthermore, when $m = d$, the chain is reversible if and only if $\mu$ is symmetric; see Lemma~\ref{lem:revGeneral}.
\end{remark}

\begin{figure}
    \centering
    \includegraphics[width=\linewidth]{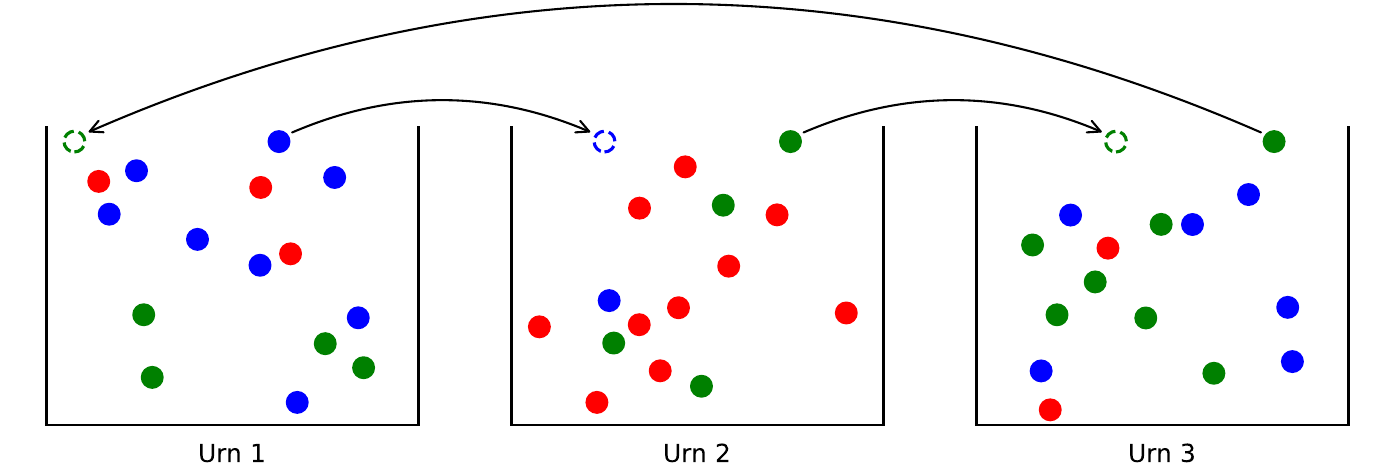}
    \caption{A step in the $(3, 3, 5, \mu_0)$ generalised Bernoulli--Laplace chain, where $\mu_0$ is the Dirac measure on the $3$-cycle $\cycle{1,2,3}$ inside the symmetric group $S_3$.}
    \label{fig:GBL}
\end{figure}

Our main objective is to study the \emph{total variation mixing time} $\tm(\varepsilon)$ of the generalised Bernoulli--Laplace chain $\mathbf{X} \coloneqq (X_t)_{t \ge 0}$, which is defined by
\begin{equation*}
    \tm(\varepsilon) \coloneqq \inf \left\{t \ge 0 \colon d(t) \le \varepsilon\right\}, 
\end{equation*}
for $\varepsilon \in (0,1)$, where 
\begin{equation*}
    d(t) \coloneqq \max_{\eta \in \Omega} \normTV{\cP{X_t \in \cdot}{X_0 = \eta} - \pi},
\end{equation*}
is the worst-case total variation distance from the stationary measure $\pi$ at time $t$. Here, $\Omega$ is the state space of the chain, and $\normTV{\nu - \pi}$ denotes the total variation distance between probability measure $\nu$ and $\pi$ on $\Omega$.

Consider a sequence $\{\X^{(n)}\}_{n \in \N}$ of generalised Bernoulli--Laplace chains with state spaces $\Omega_n$ and stationary distributions $\pi_n$, respectively. Let $\tm^{(n)}(\varepsilon)$ and $d_n(t)$ denote the $\varepsilon$-total variation mixing time and the worst case total variation distance from the stationary measure at time $t$ for the $n^{\mathrm{th}}$ chain, respectively. The sequence is said to exhibit \emph{cutoff} if
\begin{equation*}
    \lim_{n \to \infty} \frac{t^{(n)}_{\mix}(\varepsilon)}{t^{(n)}_{\mix}(1-\varepsilon)} = 1,
\end{equation*}
for any fixed $\varepsilon \in (0,1)$. Moreover, the sequence $\{\mathbf{X}^{(n)}\}_{n \in \N}$ is said to exhibit cutoff with a \emph{cutoff window} of size $O(w_n)$ if $w_n = o(t^{(n)}_{\mix})$ as $n \to \infty$, where $t^{(n)}_{\mix} \coloneqq t^{(n)}_{\mix}(\frac{1}{4})$, and
\begin{align*}
    \lim_{\alpha \to \infty} \liminf_{n \to \infty} d_n(t^{(n)}_{\mix} - \alpha w_n) &= 1,\\
    \lim_{\alpha \to \infty} \limsup_{n \to \infty} d_n(t^{(n)}_{\mix} + \alpha w_n) &= 0;
\end{align*}
see Section~\ref{sec:Notation} for the asymptotic notation used above as well as throughout the rest of the paper. For each $n \in \N$, assign a distribution $\mu_n$ on $S_d$, and let $U_n$ be the $d \times d$ doubly stochastic single ball transition matrix given by
\begin{equation*}
    U_n(i,j) = \Ps{\sigma \sim \mu_n}{\sigma(i) = j}, \text{ for all } i, j\in[d].
\end{equation*}
Throughout this article, we assume that $U_n$ is irreducible. It is easy to check that its stationary distribution is the uniform distribution $\nu_0$ on the state space $[d]$. Let $\Chn$ and $\gamma_n$ be the Cheeger constant and the spectral gap of $U_n$, respectively. Here the Cheeger constant of $U_n$ is defined by
\begin{equation*}
    \Chn \coloneqq \Ch(U_n) = \min_{A \subseteq [d],\, 0 < \nu_0(A) \le 1/2} \frac{\sum_{x \in A, y \in A^{\complement}} \nu_0(x) U_n(x,y)}{\sum_{x \in A} \nu_0(x)},
\end{equation*}
and the \textit{spectral gap}\footnote{There are several different notions of spectral gap for nonreversible chains; see for example \cite{Cha}. Our definition is in general rarely useful, but it turns out to be useful in our setup.} of $U_n$ is defined as
\begin{equation*}
    \gamma(U_n) \coloneqq \min \set{1-\Re(\lambda): U_n \bv = \lambda \bv \text{ for some non-constant } \bv \in \C^d},
\end{equation*}
where $\Re(\lambda)$ denotes the real part of $\lambda \in \mathbb{C}$. To state our main result, we first record the following lemma, in which we provide an asymptotic estimate for the mixing time of the rate $1$ single ball chain in terms of the spectral gap of the transition matrix $U_n$. We defer its proof to Section~\ref{sec:sbmix}.

\begin{lemma}
\label{lem:sbmix}
    Let $d \ge 2$. Consider a sequence of continuous time rate $1$ single ball chains where the $n$th chain has parameters $(d, n, \mu_n)$. Suppose that the Cheeger constant of these chains satisfies $\Chn = \Omega(1)$. Then, we have
    \begin{equation*}
        \frac{\log n}{2 \gamma_n} - O(1) \le \tmsb \le \frac{\log n}{2 \gamma_n} + O(\log \log n).
    \end{equation*}
    Further, if $\mu_n$ is symmetric for each $n \in \N$, then the condition $\Chn = \Omega(1)$ is no longer required. In fact, in this case, we have the more precise estimates
    \begin{equation*}
        \frac{\log n}{2 \gamma_n} - O\brac{\frac{1}{\gamma_n}} \le \tmsb \le \frac{\log n}{2 \gamma_n} + O\brac{\frac{1}{\gamma_n}}.
    \end{equation*}
\end{lemma}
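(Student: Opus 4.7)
The plan is to run a spectral analysis of $U_n$ on $\ell^2(\nu_0)$: I will estimate $\chi^2(P_t(x,\cdot), \nu_0)$ by spectral expansion and pass to total variation via $\|\cdot\|_{\mathrm{TV}} \le \tfrac12\sqrt{\chi^2}$ for the upper bounds, and use an eigenvector with $\Re \lambda = 1-\gamma_n$ as a mean-zero test function against a half-space event for the lower bounds.

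\textbf{Symmetric case.} Here $U_n$ is self-adjoint on $\ell^2(\nu_0)$, with an orthonormal real eigenbasis $\{f_j\}_{j=1}^d$, $f_1 \equiv 1$, and real eigenvalues $1 = \lambda_1 > \lambda_2 \ge \dots \ge \lambda_d$. Spectral expansion yields
\begin{equation*}
\chi^2(P_t(x,\cdot), \nu_0) \;=\; \sum_{j=2}^d f_j(x)^2\, e^{-2(1-\lambda_j)t} \;\le\; d(d-1)\, e^{-2\gamma_n t},
\end{equation*}
using $|f_j(x)|^2 \le \|f_j\|_{\ell^2(\nu_0)}^2/\nu_0(x) = d$, and this will give the upper bound $\tmsb \le \tfrac{\log n}{2\gamma_n} + O(1/\gamma_n)$. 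For the matching lower bound I would apply Chebyshev to $f = f_2$: since $\cE{f(X_t)}{X_0=x} = e^{-\gamma_n t} f(x)$, $\nu_0(f) = 0$, and $\mathrm{Var}_{\nu_0}(f) = 1$, the maximiser $x$ of $|f|$ (for which $1 \le |f(x)| \le \sqrt d$) forces $d_n(t) \gtrsim e^{-\gamma_n t}/\sqrt d$, yielding $\tmsb \ge \tfrac{\log n}{2\gamma_n} - O(1/\gamma_n)$.

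\textbf{General case, lower bound.} The same idea runs with a possibly complex eigenvalue $\lambda$, $\Re\lambda = 1-\gamma_n$, and corresponding eigenvector $\bv$: we always have $\nu_0(\bv) = 0$, since $\langle\nu_0, \bv\rangle = \langle U_n^\top\nu_0, \bv\rangle = \lambda\langle\nu_0, \bv\rangle$ forces $(1-\lambda)\langle\nu_0,\bv\rangle = 0$. Splitting $\bv = g + ih$ into real and imaginary parts supplies two real mean-zero test functions whose squared conditional means sum to $e^{-2\gamma_n t}|\bv(x)|^2$, and the same Chebyshev argument applies. The Cheeger assumption gives $\gamma(S_n) \ge \tfrac12\Chn^2 = \Omega(1)$ for the reversible additive reversibilisation $S_n := (U_n + U_n^*)/2$, and then the Rayleigh quotient identity $\Re\lambda = \langle S_n \bv,\bv\rangle_{\nu_0}/\|\bv\|_{\nu_0}^2$ for mean-zero right eigenvectors gives $\gamma_n \ge \gamma(S_n) = \Omega(1)$; this upgrades the $O(1/\gamma_n)$ loss to $O(1)$.

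\textbf{General case, upper bound.} This is the hard part: non-normality of $U_n$ means the $\chi^2$-contraction rate is only $\gamma(S_n)$, which may be strictly less than $\gamma_n$. I plan a two-phase argument. \emph{Phase one (Cheeger burn-in):} the Dirichlet-form contraction $\tfrac{d}{dt}\chi^2(P_t(x,\cdot),\nu_0) \le -2\gamma(S_n)\chi^2(P_t(x,\cdot),\nu_0)$ drives $\chi^2$ down to $(\log n)^{-K}$ in time $t_0 = O(\log\log n)$, for any prescribed $K$. \emph{Phase two:} decompose $L := U_n - I$ into Jordan blocks on its $(d-1)$-dimensional mean-zero invariant subspace; since every eigenvalue has real part $\le -\gamma_n$ and $d$ is fixed, one obtains $\|e^{sL}\|_{\mathrm{op}} \le C\, s^{d-1} e^{-\gamma_n s}$ on this subspace. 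Taking $s = \tfrac{\log n}{2\gamma_n} + O(\log\log n)$ and absorbing the $s^{d-1}$ polynomial into the burn-in gain (choose $K \ge 2(d-1)$) delivers TV $\le 1/\sqrt n$. The principal obstacle is making the Jordan prefactor $C$ uniform in $n$; this calls for either a compactness argument on the space of doubly stochastic $d\times d$ matrices with $\Chn$ bounded below, or a direct estimate through the characteristic polynomial of $L$ (whose coefficients are $O_d(1)$ since $\|L\|\le 2$). The polynomial Jordan factor is precisely why the general-case correction degrades to $O(\log\log n)$, in contrast to the sharper $O(1/\gamma_n)$ of the symmetric case.
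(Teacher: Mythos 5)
Your proposal is correct and essentially follows the paper's route: both pass through $\ell^2(\nu_0)$/spectral estimates and convert to total variation. The paper's own proof is a two-line deduction from Proposition~\ref{pro:CheegerPoincareBounds}: item~(c) gives the symmetric-case $\e^{-\gamma^+t}$ $\ell^2$-contraction, and item~(e) supplies both the lower bound $\mathrm{d}_{\mathrm{TV}}(t)\ge\tfrac12\e^{-\gamma t}$ and the non-reversible upper bound $\mathrm{d}_{\mathrm{TV}}(t)\lesssim(1+t^D)\e^{-\gamma(t-D)}$. The ``principal obstacle'' you flag --- uniformity in $n$ of the Jordan-type prefactor --- is exactly what the proof of item~(e) in Appendix~\ref{app:Markov} resolves, by quoting Young's theorem on norms of matrix powers (\cite{You}), which gives a prefactor depending only on $D$ and the spectral radius; so it is a quotable black box rather than a conceptual hole, and your proposed compactness or characteristic-polynomial arguments would also close it. Your Phase~1 burn-in is in fact redundant: once you have $\gamma_n\ge\gamma_n^+\asymp\Chn=\Omega(1)$ (as you correctly observe), taking $t=\frac{\log n}{2\gamma_n}+A\log\log n$ in the polynomially-corrected decay already gives $(1+t^D)\e^{-\gamma_n(t-D)}\lesssim(\log n)^D(\log n)^{-\gamma_n A}n^{-1/2}\lesssim n^{-1/2}$ for a suitable constant $A$, with no preliminary Cheeger-driven $\chi^2$-reduction. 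Finally, the lower bound is cleanest phrased as the $L^\infty$ eigenfunction test-function argument of \cite[Theorem~12.5]{LP} rather than Chebyshev: with $g=f_2/\|f_2\|_\infty$ and $x$ a maximiser of $|f_2|$ one gets $|\mathbb{E}_x[g(X_t)]-\nu_0(g)|=\e^{-\gamma_n t}$ and hence $\mathrm{d}_{\mathrm{TV}}(t)\ge\tfrac12\e^{-\gamma_n t}$ directly, no second-moment control needed.
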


As mentioned previously, Diaconis and Shahshahani~\cite{DS} proved that the Bernoulli--Laplace chain has cutoff around the mixing time $\frac{1}{4} n \log n$ with a cutoff window of order $n$. We extend their result to the generalised Bernoulli--Laplace urn model.

\begin{theorem}
\label{thm:Main}
    Let $d, m \in \N$ be fixed with $d, m \ge 2$. Consider a sequence $\{\Xnb\}_{n \in \N}$ of generalised Bernoulli--Laplace chains where the $n^{\mathrm{th}}$ chain has parameters $(d, m, n, \mu_n)$. Suppose that the Cheeger constant of the single ball transition matrix satisfies $\Chn = \Omega(1)$. Then, for any $\varepsilon \in (0,1)$, we have
    \begin{equation}
    \label{eqn:mix}
        \lim_{n \rightarrow \infty} \frac{t^{(n)}_{\mix}(\varepsilon)}{mn \tmsb} = 1,
    \end{equation}
    where $\tmsb$ denotes the $(1/\sqrt{n})$-mixing time of the rate $1$ single ball chain. In particular, the sequence $\{\Xnb\}_{n \in \N}$ exhibits cutoff. Moreover, the cutoff window is of size $O(n)$. Further, if $\mu_n$ is symmetric for each $n \in \N$, then the equality in \eqref{eqn:mix} continues to hold under the assumption that $\Chn = \Omega(n^{\alpha-1})$ for some $\alpha > 0$, with a cutoff window of size $O(n/\gamma_n)$.
\end{theorem}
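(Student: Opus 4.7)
My plan is to prove matching upper and lower bounds of the form $\tmn(\varepsilon) = mn \cdot \tmsb \pm O(w_n)$, where $w_n$ is the claimed window. The natural object to analyse is the \emph{labeled} chain in which the $dmn$ balls are treated as distinguishable; the original $\Xnb$ is its projection onto colour multisets per urn, so TV upper bounds transfer and TV lower bounds for $\Xnb$ are a fortiori lower bounds for the labeled chain. In the labeled version a fixed ball is resampled only when it happens to be the one drawn from its urn, i.e.\ at rate $1/(mn)$, so after rescaling time by the factor $mn$ each ball performs approximately a rate~$1$ single-ball chain with transition matrix $U_n$, the only coupling between balls being the capacity constraint that each urn always contains exactly $mn$ balls.

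\textbf{Upper bound.} By Lemma~\ref{lem:sbmix}, at time $t^{\ast} \coloneqq mn \cdot \tmsb$ the marginal of any single ball is within $n^{-1/2}$ of the uniform distribution $\nu_0$ on $[d]$ in total variation. Under the Cheeger hypothesis this is promoted to an $L^2$ bound of the same order via a spectral profile argument for the possibly nonreversible matrix $U_n$; the assumption $\Chn = \Omega(1)$ (respectively $\Chn = \Omega(n^{\alpha-1})$ in the symmetric case) is exactly what is needed to control the singular values of $U_n^{t}$ in terms of the real-part gap $\gamma_n$. I would then tensorise to a joint bound on the labeled chain: if ball motions were independent, the product $\chi^2$ distance at time $t^{\ast}$ would be at most of order $dmn \cdot n^{-1} = O(1)$, and an extra $O(1/\gamma_n)$ units of time would decrease it below $4\varepsilon^2$. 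To make the independence heuristic rigorous I would use a hit-mix reduction: conditioning on the sequence of (urn, time) draw events, the evolution of any single labeled ball becomes a genuine single-ball chain and the conditional joint law approximately factorises, so that the capacity constraint enters only as an exchangeability correction absorbed into the window.

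\textbf{Lower bound.} For this I would apply Wilson's method. Let $v \in \C^d$ be a right eigenvector of $U_n$ realising $\gamma_n$, fix a colour $c$, and define $F_t \coloneqq \sum_b v(\mathrm{urn}_b(t))$, with the sum over the $dn$ balls of colour $c$. Choosing an initial configuration in which colour $c$ is concentrated on urns where $|v|$ is large, one obtains $|\E{F_t}| \asymp dn \cdot e^{-\gamma_n t/(mn)}$, while $\Var{F_t} = O(n)$ uniformly in $t$, since pairwise correlations between two colour-$c$ balls are $O(1/n)$ on the relevant timescale. Chebyshev's inequality then separates $\cP{X_t \in \cdot}{X_0 = \eta_0}$ from $\pi$ for all $t \le mn \cdot \tfrac{\log n}{2 \gamma_n} - C n$ (respectively $- C n/\gamma_n$ in the symmetric case), yielding the matching lower bound with the stated window.

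\textbf{Main obstacle.} The hardest step is the upper bound, and specifically the conversion of the real-part gap $\gamma_n$ into a genuine $\chi^2$-contraction of $U_n^{t}$ in the nonreversible setting without picking up polynomial-in-$n$ factors from Jordan-block behaviour. This is precisely where the Cheeger hypothesis enters, and obtaining sufficiently tight constants calls for a careful spectral profile analysis of the single-ball chain (the content underlying Lemma~\ref{lem:sbmix}). A secondary, technically delicate point is to handle the capacity-constraint dependencies between labeled balls tightly enough that the product comparison fits inside the claimed window $O(n)$ (resp.\ $O(n/\gamma_n)$); I expect this is where the hit-mix framework highlighted in the keywords will do most of the work. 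The lower bound, by contrast, should be essentially routine once the variance estimate for $F_t$ is in place.
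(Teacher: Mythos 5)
Your Wilson-style argument with an eigenvector $v$ of $U_n$ is in the same spirit as the paper's reversible lower bound (Lemma~\ref{lem:HittingTimesGeneralCheeger}, which indeed uses the second eigenvector of $U_n$ to build an adversarial start), and for the nonreversible case the paper uses a related but different route via the exact value of the single-ball TV distance at time $\tmsb$ (Lemma~\ref{lem:nonrevup}). However, the step you mark as ``routine'' --- establishing $\Var{F_t}=O(n)$ \emph{uniformly in $t$} --- is not routine. A naive pairwise-correlation bound (as in the paper's Lemma~\ref{lem:var}) gives $O(N) + O\big(\tfrac{N^2}{n^2}t\big)$, which at $t\asymp n\log n$ and $N=n$ is $O(n\log n)$, not $O(n)$. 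The paper gets $O(n)$ in the $\Chn=\Omega(1)$ case only via a martingale/quadratic-variation argument (Lemma~\ref{lem:var2}); and in the reversible regime with $\Chn$ decaying, even this fails to give a usable one-shot variance bound, forcing a multi-stage bootstrap (Lemma~\ref{lem:revprelim} iterated $k=\lfloor 1/\alpha\rfloor$ times). So your lower bound, while pointed at the right quantity, underestimates the work.

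\textbf{Upper bound.} Here there is a genuine gap. You reduce to the labeled chain, obtain a $\chi^2$ bound of $O(n^{-1})$ per ball at time $t^\ast=mn\,\tmsb$, and then write ``if ball motions were independent, the product $\chi^2$ distance at time $t^\ast$ would be $O(dmn\cdot n^{-1})=O(1)$.'' That step is not a bound on the actual chain: the $dmn$ balls are \emph{strongly} dependent through the exact occupancy constraint (each urn has exactly $mn$ balls at all times), and bounding the true mixing time by the product-chain mixing time is precisely the difficulty --- it is Oliveira's conjecture for interchange-type processes, discussed in Section~\ref{sec:relatedM} of the paper. Declaring that dependencies ``enter only as an exchangeability correction absorbed into the window'' is the entire content of the theorem, not a remark. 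Moreover, your invocation of ``hit-mix'' is not what the hit-mix characterisation of \cite{BHP} says: it relates the $\varepsilon$-mixing time to hitting times of sets of large stationary mass, and does \emph{not} provide a conditioning that decouples labeled-ball trajectories given the draw events (in fact, given the draw schedule, ball trajectories are deterministic functions of the initial labelling and still jointly constrained by occupancy). The paper's route is different in structure: (i) show that the hitting time of the centre $\CC{C}$ is concentrated around $mn\,\tmsb$ (Lemmas~\ref{lem:revprelim}, \ref{lem:HittingTimesGeneralCheeger}, \ref{lem:nonrevup}); (ii) show mixing in time $O(n/\gamma_n)$ once inside the centre, by a spectral-profile bound for the mean-field chain restricted to $\CC{C}$ (Proposition~\ref{pro:SpectralProfileMeanField}, exploiting the $\sqrt n$-diameter and polynomial volume growth of $\CC{C}$), transferred to the general chain by canonical-path comparison (Lemma~\ref{lem:CompareMeanFieldToRestricted}); and (iii) a relaxation-time bound for a modified chain on the macroscopic centre (Proposition~\ref{pro:MacroRelax}, Corollary~\ref{cor:RelaxationTimeModified}) to turn the restricted-chain mixing into full-chain mixing. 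It is only inside step (ii)--(iii), in the regime of a decaying Cheeger constant, that \cite{BHP}'s hit-mix relation is used --- applied to the \emph{induced} chain on the centre to convert its $\ell^\infty$-mixing bound into a hitting-time statement, then back to full-chain mixing via the modified chain (Lemma~\ref{lem:GeneralCheegerMixing}). None of this is supplied or replaced by a tensorisation heuristic, so the upper bound as proposed would not close.
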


\begin{remark}
    Using a recent result of Hermon and Malmquist~\cite{HM}, one can show that under the aforementioned symmetry condition, Theorem~\ref{thm:Main} continues to hold for the non-lazy discrete time version of the generalised Bernoulli--Laplace chain. See Appendix~\ref{app:discrete} for further details.
\end{remark}

When $\mu_n = \mu$ for all $n \in \N$, where $\mu$ is a fixed distribution on the symmetric group $S_d$, the Cheeger constant satisfies $\Chn = \Phi_0$ for all $n \in \N$, where $\Phi_0 > 0$ is a positive constant. This implies $\Chn = \Omega(1)$, and hence the result applies without the assumption of reversibility. We will discuss some examples arising from specific choices of $\mu$ in Section~\ref{sec:examples}.

Next, consider the continuous-time chain on the state space $[d]$ that makes transitions according to the single ball transition matrix $U_n$ at a rate $1/(mn)$. This chain precisely captures the dynamics of a fixed ball in the generalised chain $\Xnb$. Theorem~\ref{thm:Main} can then be interpreted as stating that the chain mixes in roughly the same time as it would if each ball moved independently of the other balls. More precisely, it states that the chain exhibits cutoff around the same time as the product chain (defined on $[d]^{dmn}$) of the single ball chains over the $dmn$ balls. Oliveira~\cite{Oli} asked for examples of interacting particle systems whose mixing parameters can be bounded solely in terms of the constituent parts. The aforementioned interpretation of Theorem~\ref{thm:Main} implies that the generalised Bernoulli--Laplace chain provides such an example. We will discuss Oliveira's work in more detail in Section~\ref{sec:relatedM}.

As we shall later discuss in more detail in Section~\ref{sec:proofstrat}, our overall proof strategy for Theorem~\ref{thm:Main} is to argue that the hitting time of a set with large stationary mass is concentrated, and then to show that it does not take long to mix starting from a state within this respective large set. This is reminiscent of the hit-mix characterisation of cutoff in \cite{BHP} for reversible Markov chains. Finally, let us remark that the reason for the slow decay condition on the Cheeger constant of $U_n$ is that if it decays as $O(1/n)$, the chain takes a long time to hit the centre if started at an adversarial state. Consequently, interactions between the balls start to become prominent. Although the result might still hold in this setting, our current approach for estimating the hitting time fails to handle such a rapid decay in the Cheeger constant.

\subsection{The balanced model}

Consider a modification of the generalised Bernoulli--Laplace urn model that consists of $d$ urns, $d$ colours, and $dn$ balls, where each urn contains exactly $n$ balls, and there are exactly $n$ balls of each colour. This model is parametrised by the parameters $(d, n, \mu)$, where $\mu$ is a distribution on the symmetric group $S_d$. Moreover, we define it as having the same dynamics as the generalised Bernoulli--Laplace chain. We call this the \emph{balanced generalised Bernoulli--Laplace model}.

Indeed, this model is the same as the generalised Bernoulli--Laplace model with $d = m$, but with $n$, instead of $dn$, balls of each colour and in each urn. Setting $d = m$ in Theorem~\ref{thm:Main}, we can obtain a similar result for this balanced model when $n$ is divisible by $d$. It will be easy to see that the divisibility of $n$ by $d$ does not affect the proof of Theorem~\ref{thm:Main}. Therefore, we obtain the following result analogous to Theorem~\ref{thm:Main}.

\begin{theorem}
\label{thm:balanced}
    Let $d \ge 2$ be a fixed integer. Consider a sequence $\{\mathbf{X}^{(n)}\}_{n \in \N}$ of balanced generalised Bernoulli--Laplace chains where the $n^{\mathrm{th}}$ chain has parameters $(d, n, \mu_n)$. Suppose that the Cheeger constant of the single ball transition matrix satisfies $\Chn = \Omega(1)$. Then, for any $\varepsilon \in (0,1)$, we have
    \begin{equation}
    \label{eqn:mix2}
        \lim_{n \rightarrow \infty} \frac{t^{(n)}_{\mix}(\varepsilon)}{n \tmsb} = 1,
    \end{equation}
    where $\tmsb$ denotes the $(1/\sqrt{n})$-mixing time of the rate $1$ single ball chain. In particular, the sequence $\{\Xnb\}_{n \in \N}$ exhibits cutoff. Moreover, the cutoff window is of size $O(n)$. Further, if $\mu_n$ is symmetric for each $n \in \N$, then the equality in \eqref{eqn:mix2} continues to hold under the assumption that $\Chn = \Omega(n^{\alpha-1})$ for some $\alpha > 0$, with a cutoff window of size $O(n/\gamma_n)$.
\end{theorem}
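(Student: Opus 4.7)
The plan is to reduce Theorem~\ref{thm:balanced} to Theorem~\ref{thm:Main} via a reparameterisation, supplemented by a brief inspection showing that divisibility plays no essential role in the proof of Theorem~\ref{thm:Main}. First I would handle the divisible case. Suppose $d \mid n$ and set $n' \coloneqq n/d$. Then the balanced chain with parameters $(d, n, \mu_n)$ has $d$ urns, $d$ colours, $dn$ balls total, with $n$ balls per urn and $n$ balls per colour, and its state space and transitions coincide \emph{exactly} with those of the generalised chain with parameters $(d, d, n', \mu_n)$: there, $m = d$, the number of balls per colour is $d n' = n$, and the number of balls per urn is $m n' = n$. Applying Theorem~\ref{thm:Main} with $m = d$ and $n$ replaced by $n'$ yields
\begin{equation*}
    \lim_{n' \to \infty} \frac{t^{(n')}_{\mix}(\varepsilon)}{d n' \cdot t^{(n')}_{\textup{mix,\,single}}(1/\sqrt{n'})} = 1,
\end{equation*}
with cutoff window $O(n')$ in the general case and $O(n'/\gamma_n)$ in the symmetric case.

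It remains to translate the expression back to $n$. Since $d n' = n$ and by Lemma~\ref{lem:sbmix} both $t^{(n')}_{\textup{mix,\,single}}(1/\sqrt{n'})$ and $t^{(n)}_{\textup{mix,\,single}}(1/\sqrt{n})$ equal $\log n/(2 \gamma_n)$ up to an additive error of $O(\log \log n)$ (respectively $O(1/\gamma_n)$ in the symmetric case, noting that the $\log d$ discrepancy is absorbed in these errors), the difference between the two scales is negligible compared with $\log n/(2 \gamma_n)$ under the hypotheses $\gamma_n = \Omega(1)$ or $\gamma_n = \Omega(n^{\alpha-1})$. Hence the ratio in \eqref{eqn:mix2} tends to $1$, and $O(n') = O(n)$, $O(n'/\gamma_n) = O(n/\gamma_n)$ preserve the cutoff window statements.

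For general $n$, the balanced chain cannot be literally identified with any generalised chain, so one re-runs the proof of Theorem~\ref{thm:Main} with $m$ hard-coded to $d$ and the count of balls per colour set to $n$ rather than $dn$. The key ingredients of that proof---the hit-mix reduction, the concentration of the hitting time of a macroscopic ball configuration, and the spectral/spectral-profile bounds on the chain restricted to such configurations---depend only on all ball counts being of order $\Theta(n)$, not on any integer relation between them. Multinomial coefficients, stationary probabilities, and restricted spectral gap estimates rescale by explicit factors of $d$ that are absorbed into the $O(\cdot)$ notation.

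The only non-routine point, which I expect to be the main obstacle, is the line-by-line verification in the last step that no intermediate construction in the proof of Theorem~\ref{thm:Main} (for example, modified, restricted, auxiliary, or labeled chains used along the way) implicitly assumes that the number of balls per colour is divisible by $m$. Given the modular and parameter-agnostic nature of that proof, this check should confirm that everything carries through without the need to introduce any new ideas, thereby establishing Theorem~\ref{thm:balanced}.
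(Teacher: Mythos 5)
Your proposal matches the paper's own approach essentially exactly: the paper obtains Theorem~\ref{thm:balanced} by setting $m = d$ in Theorem~\ref{thm:Main} (which gives the result when $d \mid n$, with the denominator $mn' \cdot t^{(n')}_{\textup{mix,\,single}}(1/\sqrt{n'}) = n \cdot t^{(n')}_{\textup{mix,\,single}}(1/\sqrt{n'})$ translating to $n \tmsb$ up to a negligible $\log d$ discrepancy absorbed by Lemma~\ref{lem:sbmix}'s error terms) and then remarking that the divisibility of $n$ by $d$ plays no role in the proof. Your write-up spells out the bookkeeping of the reduction a bit more explicitly than the paper, but the argument is the same, including the closing observation that the proof of Theorem~\ref{thm:Main} only requires all ball counts to be $\Theta(n)$.
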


\subsection{The labeled model}
\label{sec:lab}

We define this model exactly like the balanced model, with the key difference being that balls of the same colours are now distinguishable. This is equivalent to having no colours at all and instead labeling the balls with labels from $1$ up to $dn$. This model is parametrised by the parameters $(d, n, \mu)$, and we call this the \emph{labeled generalised Bernoulli--Laplace model}.

As noted by Nestoridi and White in \cite[Section~1.2]{NW}, understanding mixing in the labeled model is equivalent to understanding mixing for the balanced model. We make this precise in the following lemma, whose proof we defer to Section~\ref{sec:labeled}.

\begin{lemma}
\label{lem:equiv}
    For $t \ge 0$, let $d_{\mathrm{bal}}(t)$ and $d_{\mathrm{lab}}(t)$ denote the worst case total variation distance from stationarity at time $t$ for the $(d, n, \mu)$ balanced and labeled chains, respectively. Then, we have
    \begin{equation*}
        d_{\mathrm{bal}}(t) = d_{\mathrm{lab}}(t), \text{ for all } t \ge 0.
    \end{equation*}
\end{lemma}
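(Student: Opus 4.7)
The plan is to construct an explicit Markov projection from the labeled chain to the balanced chain, and then to use a symmetry argument to show that no information is lost when comparing worst-case distances to stationarity.

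First, I would set up notation. A state of the labeled chain is an ordered partition $\eta = (A_1,\dots,A_d)$ of $[dn]$ with $\abs{A_i}=n$, while a state of the balanced chain can be recorded as a $d\times d$ matrix $M$ with nonnegative integer entries whose rows and columns sum to $n$ (here $M_{ij}$ is the number of colour-$j$ balls in urn $i$). Fix any surjection $c\colon[dn]\to[d]$ with $\abs{c^{-1}(j)}=n$ for each $j$, and define $\Pi\colon\Omega_{\mathrm{lab}}\to\Omega_{\mathrm{bal}}$ by $\Pi(\eta)_{ij}=\abs{A_i\cap c^{-1}(j)}$. The first step is to check that $\Pi$ is a Markov projection: the dynamics of the labeled chain only look at how many balls lie in each urn, not at their identities, so $\Pi(\X^{\mathrm{lab}})$ evolves exactly as $\X^{\mathrm{bal}}$. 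Consequently, the stationary measure $\pi_{\mathrm{bal}}$ is the pushforward $\Pi_{\ast}\pi_{\mathrm{lab}}$, and the data processing inequality for total variation immediately gives $d_{\mathrm{bal}}(t)\le d_{\mathrm{lab}}(t)$.

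For the reverse inequality, I would exploit two group symmetries of the labeled chain. The full symmetric group $S_{dn}$ acts on $\Omega_{\mathrm{lab}}$ by relabeling balls, and the transition kernel is invariant under this action because the rule \emph{choose a uniform ball from each urn and reassign urns according to $\sigma$} does not see labels. Since $S_{dn}$ acts transitively on $\Omega_{\mathrm{lab}}$, the map $\eta\mapsto \normTV{P_t^{\mathrm{lab}}(\eta,\cdot)-\pi_{\mathrm{lab}}}$ is constant, so $d_{\mathrm{lab}}(t)$ equals this common value. Now pick a specific $\eta_0=(A_1^{(0)},\dots,A_d^{(0)})$ aligned with the colouring, meaning $A_i^{(0)}=c^{-1}(i)$, so that $M_0\coloneqq\Pi(\eta_0)=n\,I_d$. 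The stabiliser of $\eta_0$ inside $S_{dn}$ is the product $H\coloneqq S_n\times\cdots\times S_n$ of symmetric groups on the fibres of $c$, and by the symmetry of the dynamics, the distribution $P_t^{\mathrm{lab}}(\eta_0,\cdot)$ is $H$-invariant.

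The key observation is then that the $H$-orbits in $\Omega_{\mathrm{lab}}$ coincide exactly with the fibres of $\Pi$: two labeled states are $H$-equivalent if and only if they distribute the balls of each colour among the urns in the same numbers. Hence $P_t^{\mathrm{lab}}(\eta_0,\cdot)$ is uniform on each fibre $\Pi^{-1}(M)$, and similarly $\pi_{\mathrm{lab}}$ is uniform on each fibre. It follows that
\begin{equation*}
    \sum_{\eta\in\Pi^{-1}(M)}\abs{P_t^{\mathrm{lab}}(\eta_0,\eta)-\pi_{\mathrm{lab}}(\eta)}=\abs{P_t^{\mathrm{bal}}(M_0,M)-\pi_{\mathrm{bal}}(M)}
\end{equation*}
for every $M\in\Omega_{\mathrm{bal}}$. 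Summing over $M$ yields $\normTV{P_t^{\mathrm{lab}}(\eta_0,\cdot)-\pi_{\mathrm{lab}}}=\normTV{P_t^{\mathrm{bal}}(M_0,\cdot)-\pi_{\mathrm{bal}}}$, and combining with the first step gives $d_{\mathrm{lab}}(t)\le d_{\mathrm{bal}}(t)$ and hence equality.

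The only genuinely delicate point is the uniformity of $P_t^{\mathrm{lab}}(\eta_0,\cdot)$ on fibres; everything else is bookkeeping. The main obstacle will therefore be spelling out carefully that the dynamics commute with the $H$-action (and hence with the $S_{dn}$-action), which reduces to the remark that the transition probabilities depend on $\eta$ only through the sizes $\abs{A_i}$ and not through the specific labels drawn.
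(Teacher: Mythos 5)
Your proof is correct, and its overall architecture matches the paper's: the easy direction follows from the projection map, and the hard direction follows from showing that the labeled chain started from a fixed state is uniform on each $\Pi$-fibre, after which the TV computation is identical. The genuine difference is in how that uniformity is established. You use the $S_{dn}$-equivariance of the transition kernel together with the observation that the stabiliser $H=S_n\times\cdots\times S_n$ of the aligned state $\eta_0$ has orbits on $\Omega_{\mathrm{lab}}$ that coincide with the fibres of $\Pi$; the paper instead builds an explicit coupling of two labeled chains started from the same state $\y$ but with different slot-orderings, the permutations $\sigma_i$ in that coupling being precisely the elements of $H$ that carry $\y_1$ to $\y_2$. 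Your group-theoretic phrasing is more conceptual and simultaneously delivers, via transitivity of $S_{dn}$, the fact that $d_{\mathrm{lab}}(t)$ is constant over starting states (which the paper gets as a byproduct of proving \eqref{eqn:baleqlab} for every $\y$). The paper's coupling is more concrete and makes explicit exactly which randomness to share, which may be useful elsewhere. Both arguments reduce to the single observation you isolate at the end: the transition rule depends only on slot positions within urns, never on ball identities; and, as in the paper, neither direction requires symmetry of $\mu$.
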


An immediate consequence of Lemma~\ref{lem:equiv} is that Theorem~\ref{thm:balanced} also holds identically for sequences of labeled chains. Note that this result, by a standard projection argument, implies the upper bound on the mixing time in Theorem~\ref{thm:Main}. However, it does not imply the lower bound.

\subsection{Examples}
\label{sec:examples}

Specialising the distribution $\mu$, one may obtain several examples of generalised Bernoulli--Laplace chains, some of which have been studied in previous works. We list below two examples, each of which is itself a generalisation of the original Bernoulli--Laplace model. We discuss the examples only for the balanced model so that it is easy to see how our results relate to the existing results in the literature. Consider a sequence of balanced Bernoulli--Laplace chains, where the $n^{\textup{th}}$ chain has parameters $(d, n, \mu_n)$.

\begin{example}[Bernoulli--Laplace with a cyclic shift]
\label{ex:cyclic}
    The first example is when $\mu_n = \mu$ for each $n \in \N$, where $\mu$ is the Dirac measure on a fixed $d$-cycle, say $\cycle{1,2,\dots,d}$. For $d \ge 3$, this chain is nonreversible. In this case, the single ball transition matrix $U_n$ is the adjacency matrix of a directed $d$-cycle, which is clearly irreducible. Since $U_n$ is circulant, it is easy to check that its eigenvalues are precisely the $d$th roots of unity. Therefore, the spectral gap of $U_n$ is given by
    \begin{equation*}
        \gamma_n = 1 - \Re(\e^{2\pi i/d}) = 1 - \cos\left(\frac{2\pi}{d}\right) = 2 \sin^2\left(\frac{\pi}{d}\right).
    \end{equation*}
    Theorem~\ref{thm:balanced} and Lemma~\ref{lem:sbmix} then imply that the above sequence of chains exhibits cutoff around
    \begin{equation*}
        \tmsb = \frac{n \log n}{4 \sin^2\left(\frac{\pi}{d}\right)} + O(n \log \log n),
    \end{equation*}
    with a cutoff window of size $O(n)$. This example was studied previously by Nestoridi and White~\cite{NW}, who gave an $\Omega(n)$ lower bound and an $O(n\log n)$ upper bound on the mixing time. Their upper bound is of the correct order but differs from the bound above by a multiplicative constant. Eskenazis and Nestoridi~\cite[Question~3]{EN} posed the following question: what is the mixing time of the above model but with $k$ balls being moved from one urn to the next one (in cyclic order)? The above discussion answers this question for $k = 1$. It is likely that the same techniques work for any constant $k$ as well as for $k$ growing ``very slowly'' with $n$, with the mixing time being scaled by a factor of $k^{-1}$. In Section~\ref{sec:open}, we state an analogue of \cite[Question~3]{EN} for the balanced generalised Bernoulli--Laplace model.
\end{example}

\begin{example}[Mean-field Bernoulli--Laplace]
\label{ex:mean-field}
    The second example is when $\mu_n = \mu$ for each $n \in \N$, where $\mu$ is the distribution that assigns equal measure to each transposition in $S_d$, namely,
    \begin{equation*}
        \mu(\cycle{i,j}) = \binom{d}{2}^{-1}, \text{ for } \{i,j\} \subseteq [d].
    \end{equation*}
    In this case, the single ball transition matrix is given by
    \begin{equation*}
        U_n = \brac{1-\frac{2}{d}} I + \frac{2}{d(d-1)} (J-I),
    \end{equation*}
    where $J$ is the matrix of all ones and $I$ is the identity matrix. Clearly, $U_n$ is irreducible. Furthermore, it is easy to check that the eigenvalues of $U_n$ are $1$, with multiplicity $1$, and $1-2/(d-1)$, with multiplicity $d-1$. Therefore, the spectral gap $\gamma_n$ is given by
    \begin{equation*}
        \gamma_n = 1 - \left(1 - \frac{2}{d-1}\right) = \frac{2}{d-1}.
    \end{equation*}
    Theorem~\ref{thm:balanced} and Lemma~\ref{lem:sbmix} then imply that the above sequence of chains exhibits cutoff around
    \begin{equation*}
        \tmsb = \frac{(d-1)n \log n}{4} + O(n),
    \end{equation*}
    with a cutoff window of size $O(n)$. This example was previously studied by Scarabotti in~\cite{Sca}, where the study of the general case in which $\mu$ is supported on the set of transpositions in $S_d$, but the probability measure is not necessarily uniform, is mentioned as an open question. Theorem~\ref{thm:Main} answers this question for all reasonable values of the measure $\mu$; see Lemma~\ref{lem:irredGeneral} for why assuming $U_n$ to be irreducible is essential.
\end{example}

\subsection{Related work on the Bernoulli--Laplace urn model}
\label{sec:related}

The Bernoulli--Laplace urn model is a classical and well-studied Markov chain. Diaconis and Shahshahani~\cite{DS} investigate its mixing time and prove that it exhibits a total variation cutoff around $\frac{1}{4}n\log n$ with a cutoff window of order $n$. Their argument involves lifting the urn model to a random walk on the symmetric group $S_n$, and then analysing it using Fourier analysis on the symmetric group $S_n$ and the homogeneous space $S_{2n}/(S_n \times S_n)$. In fact, Diaconis and Shahshahani~\cite{DS} proved cutoff for a slightly more general version of the model, where the urns may have unequal sizes, starting with $r$ black particles in the first urn and $n-r$ white particles in the second urn. Following their work, Donnelly, Lloyd, and Sudbury~\cite{DLS} established cutoff for mixing in the \emph{separation distance} as well. Recently, Olesker-Taylor and the third author~\cite{OS} determined the \emph{limit profile}, i.e., the decay profile of the worst-case total variation distance at the cutoff window, for the classical Bernoulli--Laplace urn model by viewing it as a \emph{birth-death chain} and using an approximation via diffusions; see also \cite{LLB}.

Several generalisations of the Bernoulli--Laplace urn model have been proposed and studied since initial works on its mixing time. Scarabotti~\cite{Sca} proved cutoff for the many-urn mean-field Bernoulli--Laplace chain using similar techniques as \cite{DS}. Schoolfield~\cite{Sch} showed cutoff for a signed generalisation of the Bernoulli--Laplace Markov chain. Ta\"ibi~\cite{Tai} defined and proved several properties of the $(n,k)$ Bernoulli--Laplace model---a generalisation of the classical model in which $k$ randomly chosen balls are exchanged between the two urns in each step. Nestoridi and White~\cite{NW} proved some lower and upper bounds on the mixing time of this chain for different regimes of $k$. In addition, they considered a many-urn model with a cyclic shift and gave some lower and upper bounds on its mixing time. Eskenazis and Nestoridi~\cite{EN} showed that if $k = o(n)$, then the $(n,k)$ Bernoulli--Laplace model exhibits cutoff around the time $\frac{n}{4k} \log n$ with a window of order $\frac{n}{k} \log \log n$. They leave determining the mixing time of the cyclic shift model as an open question while remarking that adapting their strategy to this model may require multiple technical and conceptual modifications. Recently, Alameda et al.~\cite{ABBHKS} showed that if $\alpha \le k/n \le \beta$ for some $0 < \alpha < \beta < 1/2$, then the $(n,k)$ Bernoulli--Laplace model exhibits cutoff around the time $\log n/(2 \abs{\log(1-2k/n)})$ with a constant window. More recently, Griffin et al.~\cite{GHHHSWW} extended their results to a more general model with unequal colours and urn sizes, but still consisting of two urns and two colours.

Karlin and McGregor~\cite{KM} studied a different multi-urn generalisation of the Bernoulli--Laplace model in which $d$ urns contain a total of $n$ balls. At each step, a ball is chosen uniformly at random from the $n$ balls and sent to an urn according to a fixed probability vector $\mathbf{p} \coloneqq (p_1, \dots, p_d)$ on $[d]$. As for the classical Bernoulli--Laplace chain, a natural extension allows for $k$ balls to be chosen in each step, with each redistributed independently according to $\mathbf{p}$. Explicit diagonalisations have been obtained for these chains, with eigenvectors expressed in terms of Krawtchouk polynomials~\cite{DG,KZ,ZL}. A crucial distinction from our generalised Bernoulli--Laplace model is that here, each ball can be assumed to move independently of the others, whereas in our setup, the movement of balls is coupled.

Since the cutoff phenomenon was first observed in Markov chains, a significant research goal has been to understand the abstract conditions under which cutoff occurs. It is well-known that the so-called \emph{product condition}, namely, that the mixing time $\tm$ grows asymptotically faster than the \emph{relaxation time} $\tr$, is necessary for cutoff in reversible chains. However, Aldous and Pak constructed examples to show that it is not a sufficient condition \cite[Chapter~18]{LP}. Basu et al.~\cite{BHP} established a necessary and sufficient condition for cutoff in reversible Markov chains, although finding easier-to-verify sufficient conditions continues to be an active area of research.

Pedrotti and Salez~\cite{PS} made a recent breakthrough in this direction by giving a widely applicable sufficient condition for cutoff in sparse \emph{non-negatively curved} chains. Their work utilises techniques from information theory such as entropy, varentropy, and differential inequalities. More precisely, they establish cutoff if the following conditions are satisfied: (1) the chain has a non-negative Bakry-\'Emery curvature, (2) the positivity of transition probabilities is a symmetric binary relation on the state space, and (3) a growth condition on the ratio of the mixing time to the \emph{inverse modified log-Sobolev constant} $\tM$ holds, namely, $\tm \gg \tM \log \log \Delta$, where $\Delta$ is the inverse of the minimum positive transition probability.

It follows from a result of Hermon et al.~\cite[Theorem~2]{HHPS} that if the measure $\mu$ is \emph{conjugacy invariant}, namely, if $\mu(\sigma) = \mu(\tau \sigma \tau^{-1})$ for all $\sigma, \tau \in S_d$, then the generalised Bernoulli--Laplace chain has a non-negative Bakry-\'Emery curvature. Moreover, the second condition is automatically satisfied since the measure $\mu$ being conjugacy invariant implies that it is symmetric, which further implies that the chain is reversible. Thus, proving cutoff in this case reduces to showing $\tm \gg \tM \log \log n$. It is easy to show that $\tm = \Omega(n \log n)$. However, proving $\tM = o(n \log n / \log \log n)$ appears to be a non-trivial task except in very specific cases where it follows from known results, for instance, \cite[Lemma~1]{Sal} implies $\tM = \Theta(n)$ for the mean-field chain from Example~\ref{ex:mean-field}. Moreover, in the case of decaying Cheeger constant, one would have to prove even stronger estimates on the inverse modified log-Sobolev constant. For choices of $\mu$ that are not conjugacy invariant, computational analysis on small examples confirms that the chain is generally not non-negatively curved. An interesting example for which the chain appears to be non-negatively curved based on computations is the cyclic shift chain from Example~\ref{ex:cyclic}. However, it does not satisfy the second condition, so the result of Pedrotti and Salez~\cite[Theorem~1]{PS} does not apply.

\subsection{Related models and their inter-relationships}
\label{sec:relatedM}

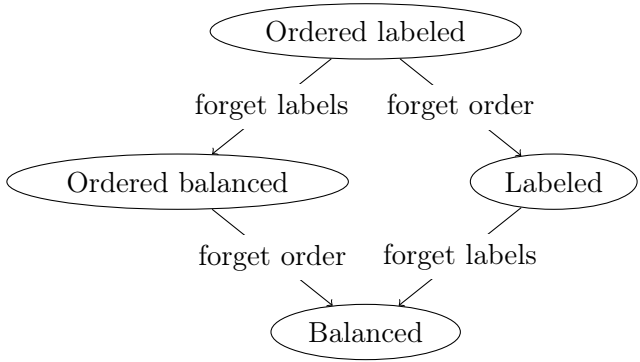
\begin{figure}[b]
\centering
\begin{tikzpicture}[scale=0.45]

\node (ol) [draw=black, ellipse, align=center] {Ordered labeled};
\node (ob) [draw=black, ellipse, align=center, below of=ol, xshift=-2.5cm, yshift=-1cm] {Ordered balanced};
\node (l) [draw=black, ellipse, align=center, below of=ol, xshift=2.5cm, yshift=-1cm] {Labeled};
\node (b) [draw=black, ellipse, align=center, below of=ob, xshift=2.5cm, yshift=-1cm] {Balanced};

\draw[->] (ol) edge node[outer sep=3pt, fill=white] {forget labels} (ob);
\draw[->] (ol) edge node[outer sep=3pt, fill=white] {forget order} (l);
\draw[->] (ob) edge node[outer sep=3pt, fill=white] {forget order} (b);
\draw[->] (l) edge node[outer sep=3pt, fill=white] {forget labels} (b);

\end{tikzpicture}
\caption{The inter-relationships between the four chains we define. Arrows denote projection with the text corresponding to the operation. Forgetting order and labels correspond to treating the slots in each urn and balls with the same colour as indistinguishable, respectively.}
\label{fig:chain-rels}
\end{figure}

We defined the balanced and labeled models earlier. We shall now define two related ordered models, namely, models in which we have indexed slots for the balls inside each urn. We shall call these the \emph{ordered balanced model} and the \emph{ordered labeled model}, respectively. In each step of these chains, we still pick a ball uniformly at random from each urn, but we send each ball to the same slot as the chosen ball in the target urn. Let us note that the ordered labeled chain is irreducible on the symmetric group $S_{dn}$ or the alternating group $A_{dn}$ depending on whether the support of $\mu$ (assumed to be such that the single ball chain is irreducible) contains an odd permutation or not. Moreover, its stationary distribution is uniform since it is vertex transitive. The proof of irreducibility is similar to the proof of Lemma~\ref{lem:irred}. Similarly, the ordered balanced chain is irreducible if and only if the single ball chain is irreducible. Its stationary distribution can be computed from that of the ordered labeled chain via the projection relation, as done in Section~\ref{sec:properties} for the generalised Bernoulli--Laplace chain.

The inter-relationships between these new chains and the old chains are shown in Figure~\ref{fig:chain-rels}. Note that for the classical Bernoulli--Laplace chain, the ordered balanced and labeled chains correspond to the exclusion process (with $n$ black and $n$ white particles) and the interchange process on the complete bipartite graph $K_{n, n}$, respectively. More generally, if $\mu$ is supported on the set of transpositions in $S_d$, these chains correspond to $d$-species exclusion processes and interchange processes on weighted $d$-partite graphs (specifically, $n$-blowups of weighted connected graphs on $[d]$). We remark here that the simple exclusion process on the complete graph is also equivalent to a lazy version of the classical Bernoulli--Laplace chain via a different projection map \cite{LLB}.

Let us mention that questions about the mixing time of exclusion and interchange processes are a topic of recent interest. Oliveira conjectured in \cite{Oli} that the mixing time of the interchange process on a graph of size $N$ is of the same order as the product chain consisting of $N$ independent simple random walks on the same graph. This was recently confirmed by Alon and Kozma when the mixing and relaxation time are of the same order \cite{AK}, and by Hermon and Pymar~\cite{HP,HP2} for the interchange process with $k$ particles such that $\log k \asymp \log n$ and $k = n - \Omega(n)$. Hermon and Salez~\cite{HS} also established the conjecture for the interchange process on certain high-dimensional product graphs, including the hypercube. However, the question whether cutoff occurs remains open apart from some special cases such as the line segment or the complete graph \cite{DS2,Lac}; see also the discussion on mixing times of the balanced and labeled models in the previous sections.

\subsection{Application to card shuffling}
\label{sec:shuffle}

In this section, we present an application of our main result (Theorem~\ref{thm:balanced}) to a card shuffling model. We consider a generalised version of the well studied random-to-random shuffle, which was first introduced by Diaconis and Saloff-Coste~\cite{DSC}.

Let $d, n \in \N$ with $d \ge 2$. Consider a deck of $dn$ cards split into $d$ stacks (indexed $1$ through $d$) of equal size. Let $\mu$ be a distribution on the symmetric group $S_d$. Consider the following shuffling model: in each step, sample a permutation $\sigma \sim \mu$ and for each $i \in [d]$, send a card drawn uniformly at random from stack $i$ to a uniform random position in stack $\sigma(i)$. It remains to describe how the positions inside each stack change for the $n-1$ cards not picked in this step: their relative order stays the same. We refer to this model as the \emph{multi-stack random-to-random shuffle}. We also define another related model, which is the same as the above model except that cards are only drawn from stacks $i \in [d]$ such that $\sigma(i) \ne i$. In particular, the other stacks remain unaffected. We refer to this model as the \emph{restricted multi-stack random-to-random shuffle}. Both the models introduced above are lifts of the labeled generalised Bernoulli--Laplace model introduced in Section~\ref{sec:lab}.

As an application of our main result, Theorem~\ref{thm:balanced}, for labeled chains, we prove the following theorem, which provides lower and upper bounds on the mixing times of the chains defined above.

\begin{theorem}
\label{thm:shuffle}
    Let $d \ge 2$ be an integer and $\mu$ be a distribution on $S_d$. Let $\gamma$ denote the spectral gap of the single ball transition matrix $U$ and $q \coloneqq \min \set{1-U(i,i): i \in [d]}$. Then the following hold.

    \begin{enumerate}[label=(\alph*)]
        \item Consider a sequence $\{\X^{(n)}\}_{n \in \N}$ of multi-stack random-to-random shuffle chains where the $n^{\mathrm{th}}$ chain has parameters $(d, n, \mu)$. Then, for any fixed $\varepsilon \in (0, 1)$, we have
        \begin{equation*}
            \frac{n \log n}{2 \gamma} - O(n) \le t^{(n)}_{\mix}(\varepsilon) \le \max \set{\frac{1}{2 \gamma}, 1} \cdot n \log n + O(n \log \log n).
        \end{equation*}

        \item Consider a sequence $\{\X^{(n)}\}_{n \in \N}$ of restricted multi-stack random-to-random shuffle chains where the $n^{\mathrm{th}}$ chain has parameters $(d, n, \mu)$. Then, for any fixed $\varepsilon \in (0, 1)$, we have
        \begin{equation*}
            \frac{n \log n}{2 \gamma} - O(n) \le t^{(n)}_{\mix}(\varepsilon) \le \max \set{\frac{1}{2 \gamma}, \frac{1}{q}} \cdot n \log n + O(n \log \log n).
        \end{equation*}
    \end{enumerate}
\end{theorem}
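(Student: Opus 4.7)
Both shuffle chains project onto the labeled $(d, n, \mu)$ balanced generalised Bernoulli--Laplace chain of Section~\ref{sec:lab}, via the map that forgets within-stack positions and retains only which stack each card lies in. Since total variation distance is nonincreasing under projection, the mixing time of either shuffle chain is at least that of its labeled projection. By Theorem~\ref{thm:balanced} applied via Lemma~\ref{lem:equiv}, the labeled chain mixes around time $n \tmsb$ with a window of size $O(n)$, and Lemma~\ref{lem:sbmix} gives $\tmsb \ge \frac{\log n}{2\gamma} - O(1)$. Combining these yields the claimed lower bound $\frac{n \log n}{2\gamma} - O(n)$ for both chains.

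\textbf{Upper bound.} The plan is to decompose the state as a pair (colour configuration, within-stack orders). The stationary distribution $\pi$ factors accordingly: the colour marginal equals the labeled stationary measure and, conditional on it, the within-stack orders are independent and uniform (since the stationary distribution of the ordered labeled chain is uniform on $S_{dn}$ or $A_{dn}$; see Section~\ref{sec:relatedM}). Writing $X_t^{\mathrm{lab}}$ for the colour component and $X_t^{\mathrm{ord}}$ for the within-stack orders, the standard product decomposition of total variation gives
\begin{equation*}
\normTV{\mathcal{L}(X_t) - \pi} \;\le\; \normTV{\mathcal{L}(X_t^{\mathrm{lab}}) - \pi^{\mathrm{lab}}} \;+\; \Es{\mathcal{L}(X_t^{\mathrm{lab}})}{\normTV{\mathcal{L}(X_t^{\mathrm{ord}} \mid X_t^{\mathrm{lab}}) - \pi^{\mathrm{ord} \mid \mathrm{lab}}}}.
\end{equation*}
The first term is $o(1)$ at $t = \frac{n \log n}{2\gamma} + O(n \log\log n)$ by Theorem~\ref{thm:balanced} and Lemma~\ref{lem:sbmix}. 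For the second term, each movement of a card is by a uniformly random insertion into the target stack, and a coupon-collector argument will show that once every card has been picked $\Omega(\log n)$ times, the conditional within-stack order is close to uniform, reducing the analysis to the classical $O(n \log n)$ upper bound on the mixing time of the random-to-random shuffle. In the unrestricted model each stack has pick rate $1$, so this happens by time $n \log n + O(n \log \log n)$; in the restricted model stack $i$ has pick rate $1 - U(i,i) \ge q$, giving $\frac{n \log n}{q} + O(n \log \log n)$. Taking the maximum of the two time scales yields the stated upper bounds.

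\textbf{Main obstacle.} The delicate step is justifying the within-stack mixing bound, since the multiset of cards in each stack is itself evolving over time. The cleanest route I foresee is to condition on the entire trajectory of the labeled chain, after which the within-stack orders in distinct stacks evolve through independent random-to-random-type dynamics driven by the uniform insertion positions; an exchangeability argument then allows importing the classical random-to-random bounds once every card has accumulated enough insertion events. Care is needed to argue that the labeled-chain conditioning does not introduce spurious correlations between within-stack positions and the colour process, which is ultimately guaranteed by the fact that the insertion positions are drawn independently of everything else.
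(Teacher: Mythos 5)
Your lower bound is exactly the paper's and needs no comment. For the upper bound, you take a genuinely different route. The paper telescopes over urns: it interpolates between the starting state $x$ and stationarity through distributions $\nu_0, \dots, \nu_{d+1}$, where $\nu_i$ randomizes the within-stack orders in stacks $1, \dots, i$, and bounds each consecutive pair either by a coupling (for $i < d$) or by the labeled-chain mixing bound (for $i = d$). Each intermediate coupling compares two copies of the shuffle chain that have the \emph{same} colour trajectory and differ only in the initial order inside one stack; once every card initially in that stack has been picked from it, the coupling coalesces, and a coupon-collector bound gives the $n\log n$ time scale. You instead apply the chain rule for total variation to split the state into (colour configuration, within-stack orders), handle the colour marginal by Theorem~\ref{thm:balanced}, and then bound the expected conditional TV of the orders given the colour. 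The two strategies are complementary: the paper's telescope localizes the argument to one urn at a time, whereas yours front-loads the colour mixing and defers the order analysis to a single conditional term.

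The place where your proposal is genuinely incomplete is the conditional-order step, and the gap is more than the "care is needed" you acknowledge. First, a minor inaccuracy: you do not need each card to be picked $\Omega(\log n)$ times; one pick suffices, because once a card is re-inserted it acquires a fresh uniform position in its current stack. The order of magnitude of the coupon-collector time is unchanged, but the stronger condition obscures the actual mechanism. Second, and more importantly, the claim that the conditional within-stack order given the colour trajectory becomes uniform after every present card has been inserted once is \emph{not} a straightforward consequence of the classical random-to-random analysis: in an open system, the position a card acquires at insertion depends on where the as-yet-untouched "extra" cards (cards present at that moment but later removed) sit, and averaging over those extras' positions is exactly where a naive "all cards moved once implies uniform" claim breaks down. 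The clean way around this — both for your argument and to make the paper's coupling fully rigorous — is to replace position-index insertion by an exchangeable rank variable: assign each card a fresh i.i.d.~uniform rank in $[0,1]$ upon insertion and declare the within-stack order to be the rank order; one then verifies that, conditional on the order, the ranks are distributed as order statistics, so the marginal order-process agrees with the positional model, and once every present card carries a fresh rank the order is uniform. With that device, your coupon-collector argument and your decomposition do close, but as written the proposal stops short of it, whereas the paper's coupling phrasing makes the need for such a device less visible because it never has to identify a stationary conditional law.
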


We remark here that in the cyclic shift case (Example~\ref{ex:cyclic}), the two shuffle chains above coincide, and Theorem~\ref{thm:shuffle} implies cutoff for $d \ge 6$. However, in the mean-field case (Example~\ref{ex:mean-field}), the two chains are different. In this case, Theorem~\ref{thm:shuffle} implies cutoff for the unrestricted chain when $d \ge 5$, but we do not get cutoff for the restricted chain. We defer the proof of Theorem~\ref{thm:shuffle} to Appendix~\ref{app:shuffling}. We also make some conjectures about the mixing times of these chains in Section~\ref{sec:open}.

\subsection{Proof strategy}
\label{sec:proofstrat}

Using basic properties about the stationary distribution of the generalised Bernoulli--Laplace chain, we observe that most of the stationary mass is supported on configurations with $n \pm c\sqrt{n}$ balls of each colour in each urn. We call this set of configurations the \emph{centre}. The first step in our proof is to argue that by virtue of the Poincar\'e inequality and certain deviation estimates, the hitting time of the centre is concentrated around the time $mn \log n/(2 \gamma_n)$. This idea has been previously explored for urn models, for instance, in \cite{EN}. However, we remark that in our setup, the arguments to establish $O(n/\gamma_n)$ fluctuations for the hitting time in the reversible case with $\gamma_n \rightarrow 0$ as $n \rightarrow \infty$ are quite involved.

Our main contribution is to convert this hitting time estimate into mixing time bounds under minimal assumptions on the underlying parameters. To this end, we establish estimates for the spectral profile of the mean-field chain restricted to the centre. We choose the mean-field chain since it is much easier to work with when compared to the general chain. We then use a comparison argument to transfer these estimates to the general chain, and finally use them to get an upper bound on the mixing time of the chain restricted to the centre. 

To conclude our argument, we require estimates on the amount of time that the chain spends in the centre and a bound on the relaxation time of the chain. We again have to use a comparison argument to bound the relaxation time since it is hard to work with the general chain. However, the comparison argument fails if we consider the full chain. Instead, we have to consider a modified version of the chain when restricted to a larger ``macroscopic'' centre. The construction of this modified chain may be of independent interest.

Let us emphasise that our arguments do not require reversibility of the Markov chain, and the overall strategy is prototypical to be used for other models in which (1) there is a centre with a significantly smaller underlying graph diameter than the entire state space, (2) the hitting time of this centre is concentrated, and (3) we have access to the spectral profile of the chain on the centre.

Finally, let us mention that in the case where the Cheeger constant goes to $0$, we crucially rely on the hit-mix characterisation of cutoff by Basu, Hermon, Peres for reversible Markov chain \cite{BHP}. This allows us to convert the mixing time estimate for the chain restricted to the centre to hitting time estimates for the same chain, which in turn yields hitting time estimates for the full chain, which can finally be converted back to a mixing time estimate for the full chain.

\subsection{Coupling approach}

A significant part of our arguments is devoted to analysis of the spectral profile of the generalised Bernoulli--Laplace chain, restricted to the centre. This is a key ingredient in deducing an effective bound of order $n$ on the mixing time of the restricted chain. In the following, we discuss the challenges that arise when trying to obtain a similar result using coupling arguments. Note that it is easy to define a coupling between two generalised Bernoulli--Laplace chains, which is non-increasing in the number of disagreements, i.e., the $\ell^1$-distance between the vectors counting the number of balls of each colour in each urn. In fact, we provide an explicit such coupling in the proof of Lemma~\ref{lem:TimeInMacro}. However, it is difficult to control the rate at which the number of disagreements changes for general transition measures $\mu_n$ due to the non-trivial correlations in the selection of balls from different urns. In particular, in contrast to the standard Bernoulli--Laplace model, note that once we have coupled the number of balls of a given colour in a given urn, this relation may, in general, not be preserved when attempting to match a different colour in the same urn, as we have to obey the marginal transition rates of both chains.

At first glance, another possible route is to follow the ideas by Eskenazis and Nestoridi in~\cite{EN}, where a path coupling approach is used to prove cutoff for the $(n,k)$ Bernoulli--Laplace model. However, while the choice of the underlying path metric in \cite{EN}, using the knowledge of eigenfunctions in the model, is particularly suited for systems of two urns and two colours, a similar explicit metric seems out of reach for general transition measures $\mu_n$. Moreover, a naive choice of the path metric, e.g., counting the number of disagreements between two configurations, would result in a contraction rate of $1-\Theta(n^{-1})$. To see this, note that when $\mu$ corresponds to the mean-field chain, the number of disagreements can only decrease when we pick two balls from the order $O(\sqrt{n})$ many positions where the chains differ. This happens with probability of order $O(n^{-1})$. Since the diameter of the centre is of order $\sqrt{n}$, this would yield an upper bound on the mixing time of the restricted chain of order $n \log n$, instead of the desired bound of order $o(n \log n)$ required to verify that the generalised Bernoulli--Laplace chain exhibits cutoff.

\subsection{Open problems}
\label{sec:open}

We state and discuss some open problems and conjectures arising from our results. For convenience, we shall only ask these questions for the balanced model and the related models from Section~\ref{sec:relatedM}. Moreover, we will assume that $\mu_n = \mu$ for all $n \in \N$ for some fixed distribution $\mu$ on $S_d$ in this part. The first question concerns the precise order of the cutoff window.

\begin{question}
    Is the cutoff window of the balanced chain of order $n$?
\end{question}

Our results imply that the cutoff window is of size $O(n)$, so it only remains to establish a lower bound. A possible approach to proving such a lower bound would be to show the weak convergence of a sequence of suitably scaled versions of balanced generalised Bernoulli--Laplace chains to a (multi-dimensional) Ornstein--Uhlenbeck diffusion, as done in \cite{OS} for the classical Bernoulli--Laplace chain. In the second question, we ask for an even more refined analysis, namely, the limit profile.

\begin{question}
    Determine the limit profile of the balanced chain.
\end{question}

This question appears to be much harder than the previous one since it would require proving a ``local'' convergence to the diffusion at the relevant time. This was effectively attained by Olesker-Taylor and Schmid~\cite{OS} for the classical Bernoulli--Laplace model via a coupling argument.

The third question pertains to the more general model where in each step, we pick $k$ balls from each urn instead of just $1$ ball per urn. More precisely, at each step, a permutation $\sigma$ is sampled from the distribution $\mu$, $k$ balls are chosen uniformly at random from each urn, and the $k$ balls chosen from urn $i$ are sent to the urn $\sigma(i)$ for each $i \in [d]$.

\begin{conjecture}
\label{conj:kmixing}
    For $k = o(n)$, the $(d, n, \mu)$ balanced chain with $k$ balls being moved from each urn in each step exhibits cutoff around the time
    \begin{equation}\label{eq:OrderkSteps}
        \frac{t_{\mix}^{(n),1}}{k} = \frac{n \log n}{2k \gamma}(1 + o(1)),
    \end{equation}
    where $t_{\mix}^{(n),1}$ denotes the cutoff time for the corresponding $k = 1$ chain. Furthermore, for $k$ satisfying $\Omega(n) = k = n - \Omega(n)$, the chain exhibits cutoff around its mixing time $t_{\mix}^{(n),k}$, which is of the same order as the expressions in \eqref{eq:OrderkSteps}.
\end{conjecture}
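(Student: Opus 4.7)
The plan is to mimic, with appropriate modifications, the three-stage strategy outlined in Section~\ref{sec:proofstrat} for the $k=1$ case. I would first observe that moving $k$ balls per urn in each step is equivalent, from the perspective of a fixed ball, to being selected with probability $k/n$ and, conditionally on being selected, moved according to a row of $U$. Hence the effective single-ball transition matrix is
\begin{equation*}
    V_k = \left(1 - \frac{k}{n}\right) I + \frac{k}{n} U,
\end{equation*}
whose rate-$1$ spectral gap equals $(k/n)\gamma$. Applying Lemma~\ref{lem:sbmix} to $V_k$ gives a single-ball $(1/\sqrt{n})$-mixing time of $\frac{n \log n}{2 k \gamma}(1 + o(1))$, which is exactly the claimed cutoff time and confirms that the ``product of single-ball chains'' heuristic correctly predicts \eqref{eq:OrderkSteps}.

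For the regime $k = o(n)$, I would re-run the three ingredients of the $k=1$ argument. Define the centre $C$ as the set of configurations whose colour--urn counts differ from $n/d$ by $O(\sqrt{n})$; the stationary distribution is unaffected by $k$ and still concentrates on $C$. Next, I would establish hitting-time concentration of $C$ around $\frac{n\log n}{2k\gamma}$ using Poincaré-based deviation estimates, after noting that the Dirichlet form of a single $k$-ball step factorises as $k/n$ times that of a $k=1$ step, up to lower-order correlation terms. Finally, the spectral profile of the chain restricted to $C$ would be controlled by comparison with a mean-field $k$-ball variant and then lifted to the original chain, exactly as for $k=1$. The hit-mix characterisation of Basu--Hermon--Peres in the symmetric case, or the direct nonreversible arguments otherwise, then combine these pieces to yield cutoff with window of order $n/(k\gamma)$.

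The regime $\Omega(n) = k = n - \Omega(n)$ is qualitatively different, because a single step already moves a constant fraction of the balls and the chain mixes in $\Theta(\log n)$ time units. My plan here is purely spectral: study the action of one $k$-ball step on symmetric functions of the colour--urn count vector via a multi-colour Krawtchouk-type expansion, analogous to the two-colour analyses of Alameda et al.~\cite{ABBHKS} and Griffin et al.~\cite{GHHHSWW}. One expects an explicit $L^2$ contraction rate of the form $1 - (k/n)\gamma + O((k/n)^2)$ per step, which yields cutoff around $\frac{\log n}{2\lvert\log(1-(k/n)\gamma)\rvert}$; this is of the same order as \eqref{eq:OrderkSteps} whenever $k/n$ stays bounded away from $1/\gamma$. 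Matching lower bounds should follow from a second-moment/distinguishing-statistic argument on a leading-order eigenfunction in the expansion.

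The main obstacle, shared by both regimes, will be the comparison inequality for the spectral profile. For $k > 1$ the balls chosen inside one urn are drawn without replacement, hence negatively correlated, which breaks the clean factorisation between the general and mean-field chains that underlies the $k=1$ proof. A viable fix for $k = o(\sqrt{n})$ is to couple with draws with replacement at total-variation cost $O(k^2/n)$ per step; for larger $k$ one can try to split each step into substeps and iterate, or else develop intrinsic comparison inequalities that absorb the hypergeometric correlations. Incorporating these correlations into effective constants in the Poincaré and modified log-Sobolev inequalities appears to be the most technically delicate part of the programme, and any essentially new ideas required are most likely to appear here.
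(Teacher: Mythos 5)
The statement you are addressing is a \emph{conjecture} in the paper, not a theorem: the authors pose Conjecture~\ref{conj:kmixing} as an open problem and note that it is currently established only for $d = m = 2$ (Eskenazis--Nestoridi~\cite{EN} for $k = o(n)$, Alameda et al.~\cite{ABBHKS} for $k = \Theta(n)$). There is no proof in the paper to compare against; what can be assessed is whether your programme is sound and whether your estimate of the difficulties is realistic.

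Your heuristic is correct, and the three-stage plan for $k = o(n)$ is the natural adaptation of Section~\ref{sec:proofstrat}. However, two of your remedies for the central difficulty — hypergeometric correlations — would fail as stated, and the $k = \Theta(n)$ plan invokes machinery that does not exist for general $d, m$ and $\mu$. First, splitting a $k$-ball step into substeps does not reduce it to $k$ steps of the $k=1$ chain: a \emph{single} $\sigma \sim \mu$ governs all $k$ chosen balls within a step, so the substeps would have to share $\sigma$, which is a different process from $k$ independent $k=1$ steps (and drawing a fresh $\sigma$ per substep changes the law). Second, a with-replacement coupling at total-variation cost $O(k^2/n)$ per step accumulates to $\Theta(k \log n / \gamma)$ over the $\Theta(n\log n/(k\gamma))$ steps required for the hitting-time analysis (Section~\ref{sec:concentration}), which diverges for every $k \geq 1$; the coupling might suffice for the step-local Dirichlet-form comparison of Sections~\ref{sec:spectral}--\ref{sec:comparison}, but for the concentration estimate you would instead have to redo Lemma~\ref{lem:var} directly, absorbing the factor $k^2/n^2$ pairwise-interaction probability into the Doob quadratic-variation bound. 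Third, the Krawtchouk-type expansion you propose for $\Omega(n) = k = n - \Omega(n)$ is specific to the two-colour, two-urn case, where the colour-count process is one-dimensional; for $d, m \geq 3$ the count vector lives in a $(d-1)(m-1)$-dimensional lattice, no explicit eigenbasis of the non-mean-field chain is known, and for nonreversible $\mu$ the spectrum may be complex, so the $L^2$ contraction picture does not apply verbatim. Finally, you do not discuss the lower bound for $k = o(n)$: the paper's Lemma~\ref{lem:HittingTimesGeneralCheeger} constructs an adversarial initial state aligned with a real eigenvector of $U$ and relies on reversibility, while the nonreversible route goes through the single-ball total-variation decay; either construction would have to be adapted to account for $k$ correlated balls per urn.
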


Again, as mentioned in Section~\ref{sec:related}, Conjecture~\ref{conj:kmixing} has already been proven for the classical Bernoulli--Laplace model \cite{ABBHKS,EN}.

Recall that Theorem~\ref{thm:balanced} implies cutoff for the balanced chain, and together with Lemma~\ref{lem:equiv}, also for the labeled chain. It is natural to ask whether the ordered and ordered labeled chains also exhibit cutoff. Let us first consider the labeled ordered model. Let $\Xlob$ be a labeled ordered chain with parameters $(n, d, \mu)$. Suppose the balls are labeled $1, 2, \dots, dn$. Let $V$ be the collection of all slots across all urns. Let $\Xlopb$ denote the product chain on $V^{dn}$ in which each ball moves independently at rate $1/n$, i.e., like a single ball in $\Xlob$. 

\begin{conjecture}
\label{conj:prodequiv}
    The chains $\Xlob$ and $\Xlopb$ exhibit cutoff around the same time.
\end{conjecture}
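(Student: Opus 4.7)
The strategy is to establish cutoff for $\Xlob$ and $\Xlopb$ separately and then verify their cutoff times match. The common building block is the ``single ball'' chain on $V = [d] \times [n]$ which, at rate $1/n$, samples $\sigma \sim \mu$ and moves from $(i, s)$ to $(\sigma(i), s')$ for a uniformly random $s' \in [n]$. Its urn-coordinate evolves by $U_n$ while its slot-coordinate is refreshed to uniform at each transition; consequently its $L^2$-distance to uniform on $V$ decays at rate $\gamma_n/n$, and by Lemma~\ref{lem:sbmix} its $(1/\sqrt{n})$-mixing time in this time scale equals $n \tmsb$.

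For $\Xlopb$, I would apply standard $L^2$ product-chain machinery: since the $dn$ coordinates are independent, the chi-squared distance from the uniform distribution is bounded by $(1 + \chi^2_{\textup{sb}}(t))^{dn} - 1$, where $\chi^2_{\textup{sb}}$ denotes the analogous quantity for the single ball chain. Combining this with the single ball $L^2$-bound yields $t_{\mix}(\Xlopb) \le \max\{n \tmsb,\, n \log(dn)\} + O(n/\gamma_n)$. The matching lower bound follows from two obstructions: projection onto a single coordinate requires time $n \tmsb (1 - o(1))$ by Lemma~\ref{lem:sbmix}, and a coupon-collector argument shows that until a given ball has moved, its slot coordinate remains at its initial value. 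The same two lower bounds apply to $\Xlob$ after projecting onto a single distinguished ball, whose evolution is exactly the rate-$1/n$ single ball chain.

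The main obstacle is the matching upper bound $t_{\mix}(\Xlob) \le (1 + o(1)) \cdot t_{\mix}(\Xlopb)$. My approach is a two-stage reduction: first, invoke Theorem~\ref{thm:balanced} together with Lemma~\ref{lem:equiv} to conclude that the ``forget order'' projection of $\Xlob$ (namely, the labeled chain) is $o(1)$-close in total variation to uniform by time $n \tmsb$; then show that conditional on this label-to-urn assignment the within-urn orderings are close to uniform, using that by this time each ball has typically refreshed its slot $\Omega(\log n)$ times.

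The hard part is stage two. The synchronous step structure of $\Xlob$---at each event exactly one ball enters a given urn while another leaves---induces correlations between the slot trajectories of distinct balls in a common urn, so a naive independence argument fails. I would attempt a Chameleon-type auxiliary process in the spirit of Morris and Oliveira, interpolating between $\Xlob$ and $\Xlopb$, or alternatively a multi-commodity coupling that pairs each ball's trajectory in $\Xlob$ with an independent single-ball trajectory and controls the aggregate discrepancy within the cutoff window $o(n \tmsb)$. This is precisely the source of difficulty in Oliveira's conjecture for the interchange process, and one expects that a complete resolution here will require techniques of comparable depth.
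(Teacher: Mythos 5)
This statement is a conjecture in the paper, not a proven result; the paper offers only supporting heuristics (the analogy to the interchange process and Oliveira's conjecture, together with Theorem~\ref{thm:balanced}), so there is no proof of record to compare against. Your proposal does not close the gap either, and to your credit you say so explicitly: stage two --- de-correlating the within-urn slot trajectories of distinct balls --- is the genuine obstruction, and you correctly identify it as being of the same nature and difficulty as Oliveira's conjecture for the interchange process. A Chameleon-style interpolation or a multi-commodity coupling are the natural candidate tools, but the paper itself treats this as open, and nothing in the paper's machinery (spectral profile, hit-mix, comparison of Dirichlet forms) is directly set up to control the correlations inside a fixed urn.

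One correction to your setup that matters for the target cutoff time. Your single ball chain on $V = [d] \times [n]$ refreshes the slot coordinate to uniform \emph{every} time its rate-$1/n$ clock rings, regardless of whether the urn changes. That is not the marginal of a single ball in $\Xlob$: when ball $b$ in urn $i$ at slot $s$ is selected and $\sigma(i) = i$, it is the chosen ball in its own target urn and therefore stays at slot $s$; and when $b$ is \emph{not} selected, its slot is untouched since the incoming ball simply fills the vacated slot. Thus the slot coordinate refreshes at rate $(1 - U(i,i))/n$ while in urn $i$, which is why Conjecture~\ref{conj:time} carries the factor $1/q$ with $q = \min_i\set{1 - U(i,i)}$, rather than the factor $1$ you would obtain from an unconditional refresh. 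Your coupon-collector term $n\log(dn)$ should accordingly be $n\log(dn)/q$, and your claimed cutoff time $\max\set{n\tmsb, n\log(dn)}$ should read $n\log n \cdot \max\set{1/(2\gamma), 1/q}$, in line with the paper's Conjecture~\ref{conj:time}. With this fix your stage one ($\Xlopb$ upper and lower bounds, plus the $\Xlob$ lower bound by projection) is a reasonable outline; stage two remains the open heart of the problem.
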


We recall from the discussion after Theorem~\ref{thm:Main} that Theorem~\ref{thm:balanced} and Lemma~\ref{lem:equiv} imply a similar result for the labeled chain. Moreover, this conjecture is supported by the analogy drawn to the interchange process and independent simple random walks in Section~\ref{sec:relatedM}. The following conjecture refines Conjecture~\ref{conj:prodequiv} by suggesting a candidate time for the cutoff time of the product chain.

\begin{conjecture}
\label{conj:time}
    The chains $\Xlob$ and $\Xlopb$ both exhibit cutoff around time
    \begin{equation}
    \label{eqn:T}
        T \coloneqq n \log n \cdot \max\{1/q, 1/(2\gamma)\},
    \end{equation}
    where $\gamma$ is the spectral gap of the transition matrix $U$ of the single ball chain on $[d]$, and
    \begin{equation*}
        q \coloneqq \min \{1 - U(i,i) \colon i \in [d]\}.
    \end{equation*}
\end{conjecture}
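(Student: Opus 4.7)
The plan is to prove cutoff at time $T$ for both $\Xlob$ and $\Xlopb$, treating $\Xlopb$ first via explicit spectral analysis and then $\Xlob$ via a hit-mix argument in the spirit of Theorem~\ref{thm:Main}. I would begin by diagonalising the single-ball chain on $V \coloneqq [d] \times [n]$ at rate $1/n$. Its transition kernel $P_V$ admits two families of eigenfunctions: (i) urn-only functions $(i, s) \mapsto u_k(i)$, carrying the eigenvalues $\lambda_k$ of $U$; and (ii) for each urn $i \in [d]$, mean-zero functions $(j, s) \mapsto \mathbf{1}[j = i]\, g(s)$ supported on urn $i$, carrying the eigenvalue $U(i, i)$ with multiplicity $n-1$. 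Assuming $U$ is reversible (the general case being similar), this yields the $\chi^2$-bound
\begin{equation*}
    \chi^2\!\left(P_t \delta_{(i_0, s_0)}, \pi_V\right) \leq (d-1)\, e^{-2t\gamma/n} + d(n-1)\, e^{-2t(1-U(i_0, i_0))/n}.
\end{equation*}
Combining this with the standard product estimate $\chi^2_{\textup{prod}} \leq N \chi^2_{\max}$ for $N = dn$ (valid while $N\chi^2_{\max}$ is small) gives the upper bound $T + o(T)$ on the mixing time of $\Xlopb$.

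For the lower bound, applicable to both chains, I would use two distinguishing statistics corresponding to the two terms in $T$. The first, Wilson's method applied to $F(\mathbf{X}) = \sum_b u_2(i_b)$ for $u_2$ a top nontrivial eigenfunction of $U$, yields $n \log n /(2\gamma)$. The second, a coupon-collector count $N_t = \sum_b \mathbf{1}[\text{ball } b \text{ still at its starting position}]$ with balls initialised in urns $i$ achieving $1 - U(i,i) = q$, gives $n \log n/q$: since the marginal law of each ball under $\Xlob$ coincides with that under $\Xlopb$, one has $\mathbb{E} N_t \geq \Theta(n)\, e^{-tq/n}$ while $\mathbb{E}_{\pi} N_t = O(1)$, and Chebyshev's inequality (with variance bounded via independence for $\Xlopb$ and negative association for $\Xlob$) yields distinguishability as long as $n\, e^{-tq/n} \gg 1$. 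Note that naive Wilson with a Type~(ii) eigenfunction yields only $n \log n/(2q)$; the nonlinear count statistic is essential to match the sharp bound, reflecting that the Type~(ii) eigenvalue has degeneracy $n-1$ which feeds into the $\chi^2$ upper bound via a $\log n$ factor rather than through a single eigenfunction.

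The main obstacle is the upper bound for $\Xlob$. I would adapt the hit-mix framework of Theorem~\ref{thm:Main}: define the \emph{extended centre} of $\Xlob$ as the set of configurations whose projection onto the balanced chain lies in the balanced centre used in Theorem~\ref{thm:Main}, and in which every ball has moved to a different urn at least once. The plan is then to (a) show the hitting time of the extended centre concentrates around $T$, by combining the hitting time analysis of the balanced centre (of order $n \log n/(2\gamma)$) with a coupon-collector time of order $n \log n/q$ for ball departures; and (b) bound the mixing time from within the extended centre by $o(T)$, via spectral profile techniques restricted to the centre. Step (b) is the main technical challenge: as discussed in the ``Coupling approach'' subsection, slot dynamics in $\Xlob$ are highly coupled across balls, and direct couplings yield contraction rates that are too slow. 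A promising route is to decompose the restricted dynamics into a label component, controlled by Theorem~\ref{thm:balanced}, and a conditional within-urn slot shuffle, comparable to a random-to-random shuffle on $n$ cards; however, making this decomposition quantitative without losing a $\log n$ factor in the comparison constants remains the principal open difficulty.
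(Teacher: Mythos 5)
This statement is one of the paper's open \emph{conjectures} (it appears in the ``Open problems'' section), and the paper does \emph{not} prove it; there is no proof to compare against. Your proposal is therefore a genuine research attempt, and it makes real partial progress, but it is not a complete proof -- as you yourself flag at the end. Let me assess the three pieces.

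The $\chi^2$-analysis of $\Xlopb$ is correct. Your decomposition of the single-ball chain on $V = [d]\times[n]$ into urn-only eigenfunctions (eigenvalues $\lambda_k$ of $U$) and slot-mean-zero eigenfunctions supported on one urn (eigenvalue $U(i,i)$, multiplicity $n-1$) is exactly right, as is the weight $d(n-1)$ in the second term of your $\chi^2$ bound (since $\sum_\ell |f_\ell(i_0,s_0)|^2 = d(n-1)$ by completeness of the slot basis). Tensorising gives $\sum_b \chi^2_b \lesssim d^2 n\, e^{-2\gamma t/n} + d^2 n^2 e^{-2q t/n}$, so the product-chain upper bound is $T + O(n)$. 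The identification that the $1/q$ coefficient (not $1/(2q)$) comes from the $n^2$ factor in the slot term rather than from a single eigenfunction is the right insight, and it correctly matches the coupon-collector lower bound. Two minor cautions: (i) the reversible case you treat is special, and in the non-reversible case one should use a Poincar\'e inequality on the additive symmetrisation rather than spectral decomposition, costing at most a constant in the exponent (which is absorbed in the $O(n)$ window); (ii) the variance bound for $N_t$ under $\Xlob$ via ``negative association'' is not obviously a citable fact for this process -- a direct interaction/covariance estimate in the spirit of Lemma~\ref{lem:var} (bounding $\mathrm{Cov}(\mathbf{1}_a,\mathbf{1}_b) \lesssim p_a p_b/n$ by showing two balls interact with probability $O(t/n^2)$) would be cleaner and is enough for Chebyshev.

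The upper bound for $\Xlob$ is the genuine gap, and your ``extended centre'' route as described would not close it. The spectral-profile machinery of Sections~\ref{sec:spectral}--\ref{sec:relMixInduced} crucially exploits that the centre $\cC$ of the \emph{balanced} chain can be identified with a ball of radius $O(\sqrt n)$ in $\Z^{(d-1)(m-1)}$, so that the restricted chain mixes in time $O(n)$, which is $o(n\log n)$. But the slot component of $\Xlob$ lives on (roughly) $(S_n)^d$, where distances between states with identical balanced projections are $\Theta(n)$ transpositions, not $O(\sqrt n)$. Any analogue of $\cC$ inside $S_{dn}$ with $\pi$-mass $\Omega(1)$ has diameter $\Theta(n)$ in the transition graph, so a spectral-profile bound on the restricted chain can at best give $O(n\log n)$, not $o(n\log n)$. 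A more promising route would be to apply Lemma~\ref{lem:HitMixReversible} directly to $\Xlob$: since the relaxation time of the full chain should be $\Theta(n)$ (by comparison to the $d$-partite interchange process), the error term $\frac{3}{2\gamma}|\log\delta|$ in the hit-mix inequality is already $O(n) = o(T)$, and cutoff would follow from concentration of hitting times of large sets, \emph{without} needing a separate ``mix from the centre'' step. The obstacle then shifts to proving that concentration, which is exactly what is unknown. Your proposal is an honest and well-informed sketch of the state of the problem, but it does not constitute a proof of the conjecture.
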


Let us mention that $T$ as given by \eqref{eqn:T} is an asymptotic lower bound for the cutoff time by looking at the number of fixed points in any given urn, and Theorem~\ref{thm:balanced}. We now consider the ordered balanced model. Again, we believe that it exhibits cutoff around the same time as the product chain, but with balls of the same colour assumed to be indistinguishable. Concretely, we make the following quantitative conjecture about its mixing time. Again, it is supported by Theorem~\ref{thm:balanced}.

\begin{conjecture}
    The chain $\Xbob$ exhibits cutoff around time
    \begin{equation*}
        T \coloneqq \frac{n \log n}{2} \cdot \max\{1/q, 1/\gamma\},
    \end{equation*}
    where $\gamma$ and $q$ are defined as in Conjecture~\ref{conj:time}.
\end{conjecture}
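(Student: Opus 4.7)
The plan is to establish matching lower and upper bounds on the worst-case total variation distance for the $\Xbob$ chain, together with a cutoff window of $o(T)$, where $T = \tfrac{n\log n}{2}\max\{1/q,\,1/\gamma\}$.

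For the lower bound, I would combine two Wilson-type second-moment arguments, one for each term in the maximum. For the $\tfrac{n\log n}{2\gamma}$ part, let $\bv \in \C^d$ be a left eigenvector of $U$ with eigenvalue $\lambda$ satisfying $\gamma = 1 - \Re(\lambda)$. For each colour $c \in [d]$, consider the (complex-valued) statistic $S_c(t) \coloneqq \sum_{i \in [d]} v_i N_{i,c}(t)$, where $N_{i,c}(t)$ is the number of colour-$c$ balls in urn $i$. A direct computation using the coupled ODE $\tfrac{d}{dt}\E{N_{i,c}} = \tfrac{1}{n}\sum_j (U(j,i) - \mathbf{1}_{j=i})\E{N_{j,c}}$ yields $|\E{S_c(t)}| = e^{-\gamma t/n}|S_c(0)|$, while $\Vars{\pi}{S_c} = O(n)$ by a standard moment calculation. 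Starting from the configuration where urn $i$ contains only colour-$i$ balls, this produces a TV lower bound of $\tfrac{n\log n}{2\gamma} - O(n)$. For the $\tfrac{n\log n}{2q}$ part, fix $i^* \in [d]$ achieving $1 - U(i^*,i^*) = q$ and track $F_{i^*}(t)$, the number of slots in urn $i^*$ still holding their initial colour. Since a slot's colour can change only when it is picked as $s_{i^*}$ and simultaneously $\sigma(i^*) \ne i^*$, we have $\E{F_{i^*}(t)} - n/d \gtrsim n\,e^{-qt/n}$ on the relevant time scale; comparing with the $O(\sqrt{n})$ stationary fluctuation of $F_{i^*}$ yields the desired lower bound.

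For the upper bound, I would adapt the hit-and-mix framework from the proof of Theorem~\ref{thm:balanced} sketched in Section~\ref{sec:proofstrat}. I would define a centre $C$ in the ordered-balanced state space consisting of configurations whose per-urn colour counts lie within $O(\sqrt{n})$ of their stationary means, and argue via a Poincar\'e-type deviation estimate that the hitting time of $C$ concentrates around $\tfrac{n\log n}{2\gamma}$. In parallel, the same selection dynamics randomise positional information, so that by time $\tfrac{n\log n}{2q}$ only $O(\sqrt{n})$ slots per urn remain unselected with high probability. Once in $C$, Theorem~\ref{thm:balanced} applied to the colour-forgetting projection provides rapid equilibration of the colour marginal, and the residual TV contribution from the unselected slots can be controlled by a variance calculation analogous to the $S_c$ computation above. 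Since both phases share the same underlying clock, both are complete by time $\max\{\tfrac{n\log n}{2\gamma},\,\tfrac{n\log n}{2q}\}(1+o(1)) = T(1+o(1))$.

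The main obstacle is achieving the sharp constant $\tfrac{1}{2q}$ (as opposed to $\tfrac{1}{q}$) in the position-mixing phase. A direct coupon-collector argument yields only $\tfrac{n\log n}{q}$; the improvement by a factor of two requires exploiting the indistinguishability of same-colour balls so that $O(\sqrt{n})$ frozen slots per urn are absorbed into the natural stationary fluctuations, rather than each slot having to be individually disturbed. Implementing this rigorously will likely require a refined spectral-profile or $L^2$ argument for the chain restricted to $C$, in the spirit of the comparison and restriction techniques used for Theorem~\ref{thm:Main}, enriched with slot-wise positional information. An alternative route, contingent on first establishing Conjecture~\ref{conj:time}, is to lift to the $\Xlob$ chain, obtain an intermediate upper bound of $n\log n\cdot\max\{1/q,\,1/(2\gamma)\}$ via projection, and then sharpen the constant in the $1/q$ regime by directly comparing the projected labeled measure with the uniform balanced measure on the ordered state space.
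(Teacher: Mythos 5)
This statement is a \emph{conjecture} in the paper (one of several posed in Section~\ref{sec:open}), not a proved result; the paper offers no proof of it, only the heuristic justification that it is ``supported by Theorem~\ref{thm:balanced}.'' There is therefore no proof in the paper to compare your attempt against, and your submission should not be read as a proof of a theorem the authors established.

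Taken on its own terms, what you have written is a plausible but acknowledged-incomplete programme rather than a proof. The lower-bound sketch is sound in spirit: the eigenvector statistic $S_c(t)$ is a standard Wilson-type test function and does yield the $\tfrac{n\log n}{2\gamma}$ term, and the fixed-slot count $F_{i^*}(t)$ with decay rate $q/n$ gives the $\tfrac{n\log n}{2q}$ term by comparison with its $O(\sqrt{n})$ stationary fluctuation. These are exactly the heuristics that motivate the conjectured constant, and they would likely go through with routine (if tedious) second-moment calculations. The real issue is the upper bound. Your own discussion correctly identifies the gap: a coupon-collector bound on slot refreshment gives only $\tfrac{n\log n}{q}$, and sharpening this to $\tfrac{n\log n}{2q}$ requires an $L^2$ or spectral-profile argument on the \emph{ordered} state space enriched with positional information, which is strictly harder than anything proved in the paper. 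The machinery of Sections~\ref{sec:spectral}--\ref{sec:relMixInduced} controls the spectral profile of the colour-count projection (a $(d-1)(m-1)$-dimensional lattice object); the ordered balanced chain lives on a quotient of $S_{dn}$ of size $\binom{dn}{n,\dots,n}$ and the existing growth-of-balls and canonical-path arguments (Lemmas~\ref{lem:Identify}, \ref{lem:MetricComparison}, \ref{lem:SpectralComparison}) do not transfer. Your ``alternative route'' through Conjecture~\ref{conj:time} is circular for present purposes since that conjecture is also open. So the proposal correctly locates where the difficulty lies but does not close it, which is consistent with the statement remaining a conjecture.
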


Finally, recall that we gave lower and upper bounds on the mixing times of certain generalisations of the random-to-random card shuffle in Section~\ref{sec:shuffle}. We make the following conjecture about the mixing times of these chains. It is supported by the proof in Appendix~\ref{app:shuffling} and the fact that the random-to-random shuffle on $n$ cards exhibits cutoff around the time $\frac{3}{4} n \log n$~\cite{BN,Sub}.

\begin{conjecture}
    Let $d, \mu, \gamma$, and $q$ be as in Theorem~\ref{thm:shuffle}. Then the following hold.
    \begin{enumerate}[label=(\alph*)]
        \item The multi-stack random-to-random shuffle chain $\Xshb$ exhibits cutoff around time
        \begin{equation*}
            T \coloneqq n \log n \cdot \max\set{\frac{1}{2\gamma}, \frac{3}{4}}.
        \end{equation*}

        \item The restricted random-to-random shuffle chain $\Xrshb$ exhibits cutoff around time
        \begin{equation*}
            T \coloneqq n \log n \cdot \max\set{\frac{1}{2\gamma}, \frac{3}{4q}}.
        \end{equation*}
    \end{enumerate}
\end{conjecture}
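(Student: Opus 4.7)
The plan is to combine the mixing-time estimates for the labeled generalised Bernoulli--Laplace chain (Theorem~\ref{thm:balanced} together with Lemma~\ref{lem:equiv}) with the known $\frac{3}{4} n \log n$ cutoff for the classical random-to-random shuffle~\cite{BN,Sub}. The $\frac{n \log n}{2\gamma}$ terms in both parts already follow from Theorem~\ref{thm:shuffle} (they are inherited from the labeled Bernoulli--Laplace chain via the projection that forgets positions within each stack), so the new content is the $\frac{3}{4} n \log n$ and $\frac{3}{4q} n \log n$ contributions, together with matching lower bounds.

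For the lower bound, I would project the chain onto a single stack $i$ and consider the induced dynamics on the ordering of the labels currently residing in that stack. In the unrestricted model, each step effectively performs a removal followed by uniform reinsertion within stack $i$, which is exactly a single move of a classical random-to-random shuffle; in the restricted model, this happens only when $\sigma(i) \ne i$, i.e., at rate at least $q$. The plan is to lift the distinguishing eigenfunction of Bernstein--Nestoridi~\cite{BN} (equivalently, the test statistic of Subag~\cite{Sub}) to an eigenfunction of the multi-stack chain and then apply Wilson's method, exploiting the fact that labels entering from outside the stack contribute independent random positions and therefore only inflate the variance of the test statistic.

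For the upper bound, I would design a two-phase coupling. In the first phase, invoke the coupling underlying Theorem~\ref{thm:shuffle} to ensure that after time $\frac{n \log n}{2\gamma} + O(n \log \log n)$, the multiset of labels in each stack agrees across both copies. Once this is achieved, it suffices to couple the orderings within each stack. In the unrestricted case, each stack evolves at rate $1$ according to a single random-to-random step (with additional randomness injected by inter-stack movements, which can only help), so by the Bernstein--Nestoridi upper bound this takes at most $\frac{3}{4} n \log n + O(n)$ additional time. In the restricted case, stack $i$ evolves at rate $1 - U(i,i) \ge q$, so the corresponding time scales by $1/q$. Running the two phases concurrently yields the desired maximum upper bound.

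The main obstacle is handling the second phase rigorously, because the induced chain on the ordering within a given stack is not literally a classical random-to-random shuffle but rather one perturbed by external label replacements. For the upper bound, one needs a monotone coupling showing that these replacements only accelerate mixing. For the lower bound, one needs the converse estimate --- that the replacements cannot reduce the mixing time by more than $o(n \log n)$ --- which is the subtler point and requires a delicate variance computation for the lifted eigenfunction to control how much information about the internal ordering of stack $i$ leaks through inter-stack transitions relative to its decay via the random-to-random dynamics inside the stack.
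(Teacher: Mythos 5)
This statement is one of the paper's open \emph{conjectures} (stated in Section~\ref{sec:open}), not a theorem the paper proves. The paper's Theorem~\ref{thm:shuffle}, proved in Appendix~\ref{app:shuffling} by a telescoping plus coupon-collector argument, only establishes the weaker upper bounds $\max\{\frac{1}{2\gamma},1\}\,n\log n$ and $\max\{\frac{1}{2\gamma},\frac{1}{q}\}\,n\log n$, so the constants $\frac34$ and $\frac{3}{4q}$ and the matching lower bounds are genuinely open. Your proposal should therefore be judged as a candidate proof of an unresolved conjecture, and it contains two gaps that are not merely ``subtleties to be handled'' but precisely the content that would be needed to settle it.

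First, for the upper bound you rely on the claim that the inter-stack perturbation ``can only help'' random-to-random mixing. But the per-step dynamics on a fixed stack $i$ is not a random-to-random step: with probability $1-U(i,i)$ it removes a uniformly chosen label and inserts a \emph{different} label (arriving from $\sigma^{-1}(i)$) at a uniformly chosen position, so the chain on a single stack lives on injections $[n]\hookrightarrow[dn]$, not on $S_n$, and the labels present change over time. There is no monotone coupling known showing that such a perturbed chain mixes at least as fast as random-to-random; the Bernstein--Nestoridi~\cite{BN} proof is a delicate representation-theoretic eigenvalue computation and Subag's~\cite{Sub} is a carefully tuned second-moment argument, neither of which is stable under the kind of ``extra randomness'' you invoke. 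Getting from the paper's coupon-collector bound (constant $1$) down to $\frac34$ requires exactly this comparison, and it is the reason the paper stops at the weaker bound.

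Second, for the lower bound, lifting the Bernstein--Nestoridi distinguishing statistic and applying Wilson's method requires the lifted function to be an eigenfunction (or a sufficiently good approximate eigenfunction) of the multi-stack generator with a controllable eigenvalue. Since the single-stack marginal is not a random-to-random walk on $S_n$, the eigenfunction structure does not transfer verbatim; one would need to redo the spectral computation for the ``remove-and-insert-different-label'' step, account for the correlations between stacks (the departing and arriving labels are coupled across stacks through $\sigma$), and show that the leakage you describe decays slowly enough. You correctly flag this as ``the subtler point,'' but it is in fact the entire difficulty, and there is no indication in the literature that it has an elementary resolution. In short, your outline correctly identifies where the hard work lies, but both of its key premises are currently unproven and are why this remains a conjecture rather than a corollary of Theorem~\ref{thm:shuffle}.
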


\subsection{Outline of the paper}

We state some definitions and background results from Markov chain theory in Section~\ref{sec:preliminaries}. We then set up the notation and prove basic properties of the generalised Bernoulli--Laplace chain in Section~\ref{sec:properties}. In Section~\ref{sec:concentration}, we prove the concentration of the hitting time of the centre. In Section~\ref{sec:spectral}, we establish estimates for the spectral profile of the restricted mean-field chain. In Section~\ref{sec:comparison}, we use a comparison argument to transfer the results from the mean-field chain to the general chain. In Section~\ref{sec:relaxation}, we bound the relaxation time of a modified mean-field chain, using a comparison argument to transfer the result to a modified general chain. Similarly, in Section \ref{sec:relMixInduced}, we provide bounds on the relaxation and infinity mixing time of the chain induced on the centre. In Section~\ref{sec:timeinCOM}, we provide estimates for the time the generalised Bernoulli--Laplace chain spends within the centre. Finally, in Section~\ref{sec:conclusion}, we combine all the results established so far to provide a concluding argument for our main results.

\subsection{Notation}
\label{sec:Notation}

Throughout this paper, we use standard asymptotic notation. For functions $f, g \colon \N \rightarrow \R_{> 0}$, we will write $f = o(g)$ if $f(N) g(N)^{-1} \rightarrow 0$ and $f \sim g$ if $f(N) g(N)^{-1} \rightarrow 1$ for $N \rightarrow \infty$. Similarly, we write $f = O(g)$ if $f(N) g(N)^{-1} \le C_0$ as well as $f = \Theta(g)$ if $c_0 \le f(N) g(N)^{-1} \le C_0$ for constants $c_0, C_0 > 0$ and all $N$ large enough. We will sometimes write $\asymp$ instead of $\Theta$, and $\lesssim$ instead of $O$, as well as $\ll$ instead of $o$. Moreover, we write $f = \Omega(g)$ whenever $g = O(f)$. Unless stated differently, the constants $c, C, c_0, C_0, c_1, C_1, \dots$ do not depend on $n$, and vary from line to line. For a sequence of events $(A_n)_{n \in \N}$, we say that $A_n$ occurs with high probability if $\lim_{n \to \infty} \P{A_n} = 1$.

\section{Preliminaries on Markov chains}
\label{sec:preliminaries}

In the following, we collect basic definitions, properties and notations of general Markov chains. 
Let $R$ be an irreducible transition matrix on a finite state space $S$ with stationary distribution $\pi$. Let $\M \coloneqq (M_t)_{t \ge 0}$ be the continuous-time Markov chain that makes transitions according to $R$ at rate $1$. Then we say that $\M$ has transition matrix $R$. The transition probabilities are given by the heat kernel
\begin{equation*}
    H_t(x,y) \coloneqq \Ps{x}{M_t = y} \text{ for all } x, y \in S.
\end{equation*}
It is easy to check that $H_t = \e^{t(R-I)}$ in matrix form, and it is well known that for each $x \in S$, $H_t(x,\cdot)$ converges to the stationary distribution $\pi$ in total variation distance. Note that this convergence does not require $R$ to be aperiodic, unlike in the discrete-time case; see \cite[Chapter~20]{LP}.

For a given transition matrix $R$ on $S \times S$, and stationary distribution $\pi$, we denote the \emph{edge measure} $Q$ by
\begin{equation}
\label{def:EdgeMeasure}
    Q(x, y) = \pi(x) R(x, y)
\end{equation}
for all $x, y \in S$. Let $A, B \subseteq S$. The probability of moving from $A$ to $B$ in one step, when starting from the stationary distribution, is given by
\begin{equation*}
    Q(A,B) = \sum_{x \in A, y \in B} Q(x,y).
\end{equation*}
The \emph{Cheeger constant} of $R$ is defined by
\begin{equation}
\label{eqn:Cheeger}
    \Ch(R) = \min_{A \colon A \subseteq S,\, 0 < \pi(A) \le 1/2} \frac{Q(A,A^{\complement})}{\pi(A)}.
\end{equation}
The \emph{Dirichlet form} associated with $R$ is defined for functions $f, g \colon S \to \R$ by
\begin{equation*}
    \mathcal{E}(f, g) = \ang{(I-R)f, g}_{\pi},
\end{equation*}
where $\ang{\cdot, \cdot}_{\pi}$ is the standard inner product on $L^2(S, \pi)$; see \cite{GMT}. The \emph{spectral profile} of $R$ is given by
\begin{equation*}
\label{eqn:specProf}
    \Lambda(\delta) \coloneqq \min \set{ \frac{\ang{(I-R)f, f}_{\pi}}{\ang{f, f}_{\pi}} \, \colon \, f \colon \Omega \to \R_{\ge 0} , \, \pi(\mathrm{Supp}(f))\le \delta}, \text{ for } \delta > 0,
\end{equation*}
where $\mathrm{Supp}(f)$ denotes the support of $f$. Further, we define the \emph{modified spectral profile} of $R$ by
\begin{equation}
\label{eqn:modSpecProf}
    \widetilde{\Lambda}(\delta) \coloneqq \min \set{ \frac{\big\langle (I-R) f,f \big\rangle_{\pi}}{\Vars{\pi}{f}} \, \colon \, f \colon \Omega \to \R_{\ge 0} , \, \pi(\mathrm{Supp}(f))\le \delta}, \text{ for } \delta > 0,
\end{equation}
where $\Vars{\pi}{f} \coloneqq \Es{\pi}{f^2} - \Es{\pi}{f}^2$. Note that for all $\delta \in (0, 1)$, we get that
\begin{equation}
\label{eq:ModifiedSP}
    \Lambda(\delta) \le \sPt(\delta) \le \frac{\Lambda(\delta)}{1-\delta},
\end{equation}
since we have $\Es{\pi}{f}^2 \le \Es{\pi}{f^2} \pi(\mathrm{Supp}(f))$ by the Cauchy Schwartz inequality, which further implies $\Vars{\pi}{f} \ge (1-\delta) \Es{\pi}{f^2}$.

We define the \emph{additive reversibilisation} $\mathbf{M^+}$ of $\M$ as the reversible continuous-time Markov chain with the same state space $S$ and with transition matrix
\begin{equation*}
    R^+ = \frac{R + \widetilde{R}}{2},
\end{equation*}
where $\widetilde{R}$ denotes the \emph{time reversal} of $R$, i.e., 
\begin{equation}
\label{eqn:reversal}
    \widetilde{R}(x,y) \coloneqq \frac{\pi(y) R(y,x)}{\pi(x)} 
\end{equation}
for all $x,y \in S$. 
It follows by checking the detailed balance equations that $\M^+$ is reversible with respect to $\pi$; see also \cite[Chapter~9]{AF}.

For $A \subseteq S$, we define the \emph{restriction} $\M_A$ of $\M$ to $A$ as the continuous-time Markov chain with state space $A$ and transition matrix $R_A$, which has the same transition probabilities as $\M$, except that it stays put if $\M$ tries to exit $A$; see also \cite[Chapter~2]{AF}. In other words, we have
\begin{equation*}
    R_A(x, y) = R(x, y), \quad \forall x, y \in A, \; x \ne y,
\end{equation*}
and
\begin{equation*}
    R_A(x, x) = R(x, x) + \sum_{y \in A^{\complement}} R(x, y), \quad \forall x \in A.
\end{equation*}
Suppose that $\M$ is reversible. Then it follows by checking the detailed balance equations that $\M_A$ is also reversible, and its stationary distribution $\pi_A$ is given by
\begin{equation*}
    \pi_A(x) = \pi(x)/\pi(A) \text{ for all } x \in A.
\end{equation*}

For $A \subseteq S$, we define the \emph{induced chain} $\M^{\textup{ind}, A}$, where we view $\M$ at the times it is inside $A$. More precisely, let $\tau_\ast$ be the first time the chain $\M$ performs a jump (which happens at rate $1$). Then, whenever the induced chain is at a state $x$, we let the chain move to position $y$ chosen according to $\Ps{x}{M_{\tau^{+}_A} = y}$, where
\begin{equation}\label{def:ReturnTime}
    \tau^{+}_A \coloneqq \inf\big\{ t >0  \text{ such that } M_t \in A \text{ and } t \ge \tau_{\ast}\big\} 
\end{equation}
denotes the first return time to the set $A$. Note that when $(M_t)_{t \ge 0}$ is reversible with respect to some measure $\pi$, so is the induced chain.

For $A \subseteq S$, we define the \emph{collapsed chain} $\M^A$ as the continuous-time chain with the same dynamics as $\M$ at stationarity, but with the set $A$ collapsed into a single state $a$; see also \cite[Chapter~2]{AF}. It has state space $(S \setminus A) \cup \{a\}$ and its transition matrix $R^A$ is given by
\begin{align*}
    R^A(x,y) &\coloneqq R(x,y) \text{ for all } (x,y) \in (S \setminus A)^2,\\
    R^A(x,a) &\coloneqq R(x,A) \text{ for all } x \in S \setminus A,\\
    R^A(a,x) &\coloneqq \sum_{w \in A} \pi_A(w) R(w, x) \text{ for all } x \in S \setminus A,\\
    R^A(a,a) &\coloneqq \sum_{w \in A} \pi_A(w) R(w, A).
\end{align*}
It is easy to check that the stationary distribution $\pi^A$ of $\M^A$ is given by
\begin{equation*}
    \pi^A(a) = \pi(A), \text{ and } \pi^A(x) = \pi(x) \text{ for all } x \in S \setminus A.
\end{equation*}
Further, it follows again by checking the detailed balance equations that $\M^A$ is reversible whenever $\M$ is reversible.

Suppose that the transition matrix $R$ is reversible. By reversibility, all eigenvalues of $R$ are real. We define its \emph{spectral gap} by
\begin{equation}
\label{eq:PoincareConstant}
    \gamma \coloneqq \min \set{1 - \lambda \colon A \bv = \lambda \bv \text{ for some non-constant } \bv \in \R^d},
\end{equation}
and its \emph{relaxation time} by $\tr \coloneqq \gamma^{-1}$. It is standard that the spectral gap of $R^A$ is at least as large as that of $R$; see \cite[Corollary~3.27]{AF}.

Let $U$ be a doubly stochastic irreducible $D \times D$ transition matrix for some $D\in \N$. Then its stationary distribution $\pi$ is the uniform distribution on the state space $[D]$. Using this in \eqref{eqn:reversal} implies that its time reversal $\widetilde{U}$ is equal to $U^T$. Therefore, its additive reversibilisation, also called the \emph{additive symmetrisation}, $U^+$ is given by $U^+ = \frac{1}{2}(U + U^T)$. We define the spectral gap $\gamma$ of $U$ as
\begin{equation}
\label{def:SpectralGapU}
    \gamma \coloneqq \min \set{1-\Re(\lambda) \colon A \bv = \lambda \bv \text{ for some non-constant } \bv \in \mathbb{C}^d}.
\end{equation}
The following proposition shall be useful later. We assume $D$ to be constant while stating the results, meaning that the implicit constants depend only on $D$, and not the matrix $U$.

\begin{proposition}
\label{pro:CheegerPoincareBounds}
    Let $D \ge 2$ be an integer and $U$ be a doubly stochastic irreducible $D \times D$ transition matrix. Let $U^+$ be the additive symmetrisation of $U$. Then
    \begin{enumerate}[label=(\alph*)]
        \item The Cheeger constant $\Ch(U)$ of $U$ is equal to that of $U^T$ as well as to that of $U^+$.

        \item The spectral gap $\gamma^+$ of $U^+$, also known as the \emph{Poincar\'e constant} of $U$, satisfies $\gamma^+ \asymp \Ch(U)$.

        \item The following \emph{Poincar\'e inequality} holds: For any distribution $\nu$ on $[D]$, writing $\pi$ for the uniform distribution on $[D]$, we have that for all $t \ge 0$,
        \begin{equation*}
            \normtp{\nu \e^{t(U-I)} - \pi} \le \e^{-\gamma^+ t} \normtp{\nu - \pi} = \e^{-\gamma^+ t} \Big( \sum_{x \in [D]} \Big( \frac{\nu(x)}{\pi(x)}-1\Big)^2 \pi(x)\Big)^{\frac{1}{2}}.
        \end{equation*}

        \item In the notation of (c), uniformly in $t \ge D$, we have that
        \begin{equation*}
            \normtp{\nu \e^{t(U-I)} - \pi} \lesssim (1+t^D)  \e^{-\gamma (t-D)} \normtp{\nu - \pi}.
        \end{equation*}

        \item The worst-case total variation distance from $\pi$ at time $t$, defined as
        \begin{equation*}
            \mathrm{d}_{\mathrm{TV}}(t) \coloneqq \max_{x \in [D]} \normTV{\delta_x \e^{t(U-I)} - \pi},
        \end{equation*}
        where $\delta_x$ is the Dirac measure on $x$, satisfies uniformly in $t \ge D$ that
        \begin{equation*}
            \frac 12 \e^{-\gamma t} \le \mathrm{d}_{\mathrm{TV}}(t) \lesssim (1+t^D)  \e^{-\gamma (t-D)}.
        \end{equation*}
    \end{enumerate}
\end{proposition}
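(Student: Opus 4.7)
The plan is to dispatch parts (a) and (c) by direct computation, tackle (b) via Cheeger inequalities in both directions, prove (d) via a Schur decomposition (the key technical step), and finally derive (e) from (d) together with a left-eigenvector lower bound.

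For part (a), since $\pi$ is uniform, $Q_U(A, A^\complement) = \tfrac{1}{D} \sum_{x \in A,\, y \in A^\complement} U(x,y)$, and double stochasticity (row and column sums both equal to $1$) forces $Q_U(A, A^\complement) = Q_U(A^\complement, A)$ for every $A \subseteq [D]$. Hence $Q_{U^T}(A, A^\complement) = Q_U(A^\complement, A) = Q_U(A, A^\complement)$ and $Q_{U^+}(A, A^\complement) = \tfrac12 (Q_U + Q_{U^T})(A, A^\complement) = Q_U(A, A^\complement)$; taking minima over $A$ gives (a). For part (c), set $h_t \coloneqq (\nu e^{t(U-I)})/\pi - 1$, a mean-zero function under $\pi$, and compute
\begin{equation*}
    \frac{d}{dt} \normtp{h_t}^2 = 2 \langle (U^T - I) h_t, h_t \rangle_\pi = -2 \langle (I - U^+) h_t, h_t \rangle_\pi \le -2 \gamma^+ \normtp{h_t}^2,
\end{equation*}
using that the antisymmetric part of $U$ contributes nothing to the quadratic form and then the Poincar\'e inequality for the reversible matrix $U^+$. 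Gr\"onwall integration then yields (c).

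For part (b), the easy direction $\gamma^+ \le 2 \Ch(U)$ follows by plugging the test function $f = \mathbb{1}_A - \pi(A)$ (with $\pi(A) \le 1/2$) into the Rayleigh quotient of $U^+$. For the reverse inequality $\gamma^+ \gtrsim \Ch(U)$ I would invoke the Sinclair--Jerrum canonical paths method after first establishing the following connectivity claim: the subgraph $G_c$ on $[D]$ consisting of edges $(u,v)$ with $Q_U(u,v) \ge c \Ch(U)$ is connected, provided $c = c(D) < 1/D^3$. Indeed, if $A$ were a proper connected component of $G_c$, all cut edges would carry weight less than $c\Ch(U)$, so $Q_U(A, A^\complement) \le D^2 c \Ch(U)$; this contradicts the Cheeger lower bound $Q_U(A, A^\complement) \ge \Ch(U) \min(\pi(A), \pi(A^\complement)) \ge \Ch(U)/D$. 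Routing each pair $(x,y)$ through a shortest path of length $\le D-1$ in $G_c$ then yields the standard canonical-paths bound $\gamma^+ \ge c \Ch(U)/D$.

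The main difficulty is part (d). I would work on the $(D-1)$-dimensional $U$-invariant subspace $V = \{h \in \mathbb{C}^D \colon \sum_x h(x) = 0\}$ of mean-zero vectors and take the Schur decomposition $U|_V = Q T Q^*$ relative to the $L^2(\pi)$-inner product, with $Q$ unitary and $T$ upper triangular. Two crucial facts are that $Q$ preserves $L^2(\pi)$-norms, and that since $U$ is an $L^2(\pi)$-contraction (doubly stochastic), the entries of $T$ are bounded uniformly in $U$ by a constant depending only on $D$. This uniformity is the whole reason for using Schur rather than Jordan form, since the Jordan conjugating matrix can have arbitrarily large condition number near matrices with degenerate spectra. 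Writing $T - I = \Delta + N$ with $\Delta$ diagonal (entries $\lambda_j - 1$ of real part $\le -\gamma$) and $N$ strictly upper triangular (hence $N^{D-1} = 0$), the entrywise divided-difference formula for $e^{t(T-I)}$ shows that each entry is at most $C_D (1 + t^{D-1}) e^{-\gamma t}$. Therefore $\|e^{t(U-I)}\|_{V \to V} \lesssim (1 + t^{D-1}) e^{-\gamma t}$, which for $t \ge D$ is at most $(1+t^D) e^{-\gamma(t-D)}$ after absorbing the constant $e^{\gamma D} \le e^D$ into the implicit constant.

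Finally, for part (e): the upper bound follows from Cauchy--Schwarz, $\normTV{\mu - \pi} \le \tfrac12 \normtp{\mu/\pi - 1}$, combined with (d) and the identity $\normtp{\delta_x/\pi - 1}^2 = D - 1$. For the lower bound, let $\lambda \ne 1$ be an eigenvalue of $U$ with $\Re \lambda = 1 - \gamma$ and $v$ a corresponding left eigenvector; then $v \cdot \mathbf{1} = 0$ and $v e^{t(U-I)} = e^{t(\lambda-1)} v$. In the real case, decomposing $v = v_+ - v_-$ and normalising gives probability measures $\mu, \nu$ with $\normTV{\mu e^{t(U-I)} - \nu e^{t(U-I)}} = e^{-\gamma t}$, and the triangle inequality forces $\max\bigl(\normTV{\mu e^{t(U-I)} - \pi}, \normTV{\nu e^{t(U-I)} - \pi}\bigr) \ge \tfrac12 e^{-\gamma t}$. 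The complex case is analogous, applied to the real and imaginary parts of $v$.
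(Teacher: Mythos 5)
Parts (a) and (c) follow essentially the same route as the paper (for (a) both arguments come down to the flow-balance identity $Q_U(A,A^\complement)=Q_U(A^\complement,A)$ together with uniformity of $\pi$, and (c) is the same Gr\"onwall computation). For (b), (d), and the lower bound in (e), you take genuinely different routes. In (b), the paper passes through the identity $1/\gamma^+ \asymp \thi$ (maximal stationary hitting time, via~\cite[Theorem~1.1]{Her}) and then bounds $\thi$ by $2(D-1)/c$ with $c \ge \Ch(U^+)/D$ using~\cite[Proposition~4.2]{AF}; your canonical-paths argument through the high-conductance subgraph $G_c$ is more self-contained and equally effective, though you should define $G_c$ in terms of $Q_{U^+}$ rather than $Q_U$ so that it is an undirected graph whose edge weights lower-bound the Dirichlet form of $U^+$. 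In (d), the paper reduces to the discrete-time estimate $\normtt{H^{\lfloor t\rfloor}}$ with $H = \e^{U-I}-\Pi$ and cites Young's bound~\cite{You} on norms of matrix powers; your Schur decomposition with divided-difference estimates is in effect a self-contained proof of the same inequality, arguably more transparent, and gives the marginally sharper exponent $t^{D-1}$. In (e), the paper's lower bound follows~\cite[Theorem~12.5]{LP} using a \emph{right} eigenfunction $f$ of $\e^{t(U-I)}$: evaluating $|(\e^{t(U-I)}-\Pi)f|$ at a maximiser $x^*$ of $|f|$ yields $|\e^{t(\lambda-1)}|\,\normi{f} \le 2\,\normi{f}\,\mathrm{d}_{\mathrm{TV}}(t)$ directly, with no case distinction. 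Your left-eigenvector argument is clean when $v$ is real, but the complex case is not ``analogous'' as stated: $\Re v$ and $\Im v$ do not individually satisfy $w\,\e^{t(U-I)} = \e^{t(\lambda-1)}w$, and a naive triangle-inequality split loses a factor of $2$, falling short of the constant $\tfrac12$ in the statement. It can be repaired by setting $w_\theta \coloneqq \Re(\e^{\mathrm{i}\theta}v)$ with $\theta$ chosen to minimise $\normo{w_\theta}$ and using the identity $w_\theta\,\e^{t(U-I)} = \e^{-\gamma t}\,w_{\theta+t\Im\lambda}$, but the right-eigenfunction route sidesteps this complication entirely.
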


Since the proof of Proposition~\ref{pro:CheegerPoincareBounds} follows by standard estimates for Markov chains, we defer it to  Appendix~\ref{app:Markov}.

\section{Properties of the generalised Bernoulli--Laplace chain}
\label{sec:properties}

Consider the $(d, m, n, \mu)$ generalised Bernoulli--Laplace Markov chain, which we denote by $\mathbf{X}^{(n)}=\X \coloneqq (X_t)_{t \ge 0}$. Throughout our study, we assume that $m$ and $d$ are fixed and $\mu$ is allowed to vary with $n$. A state of this chain can be described by a vector $\x \in \{0, 1, \dots\}^{d \times m}$, where $x(i,j)$ is the number of balls of colour $j$ in urn $i$ in $\x$. Thus the state space $\Omega_n$ of $\X$ is given by
\begin{equation*}
    \Omega_n = \set{\x \in \{0, 1, \dots\}^{d \times m} \colon \sum_{j \in [m]} x(i,j) = mn, \forall i \in [d],\; \sum_{i \in [d]} x(i,j) = dn, \forall j \in [m]}.
\end{equation*}

A labeled variant of the above model arises when balls of the same colour are assumed to be distinguishable. This labeled model is transitive, hence its stationary distribution is the uniform distribution. The original unlabeled model is a projection of the labeled model, reminiscent of the correspondence between the interchange and exclusion processes. Hence, the stationary distribution $\pi_n$ for the unlabeled model is given by
\begin{equation}
\label{eq:StationaryDistribution}
    \pi_n(\x) = \frac{1}{\binom{mdn}{mn, \; mn, \; \dots, \; mn}} \prod_{j \in [m]} \binom{dn}{x(1,j), \; x(2,j), \; \dots, \; x(d,j)}, \quad \forall \x \in \Omega_n.
\end{equation}

An instance of the generalised Bernoulli--Laplace chain central to our analysis is the chain from Example~\ref{ex:mean-field}, for which $\mu$ is the uniform distribution on the set of transpositions in $S_d$. At each step, one ball each is picked uniformly from two uniformly chosen urns, and swapped. We refer to this chain as the $(d, m, n)$ \emph{mean-field Bernoulli--Laplace chain}, and denote it by $\mathbf{Y}^{(n)}=\Y \coloneqq (Y_t)_{t \ge 0}$. 
 
We now define a continuous-time Markov chain, which represents the dynamics of a single ball in the generalised Bernoulli--Laplace chain. Let $U$ be the transition matrix given by
\begin{equation}
\label{def:SingleBall}
    U(i,j) = \Ps{\sigma \sim \mu}{\sigma(i) = j}, \quad \forall i, j \in [d].
\end{equation}
We define the $(d, m, n, \mu)$ single ball chain as the Markov chain with state space $[d]$, which makes transitions at a rate $\frac{1}{mn}$ according to the transition matrix $U$. Throughout this article, we assume that $\mu=\mu_n$ is such that $U$ is irreducible. Note that $U$ is doubly stochastic. Consequently, its stationary distribution is the uniform distribution on $[d]$. We denote the spectral gap, also referred to as \textit{Poincar\'e constant}, and the \textit{Cheeger constant} of $U$ by $\gamma_n$ and $\Chn$, respectively.

 We denote the transition matrices of the generalised and mean-field Bernoulli--Laplace chains by $P$ and $\PY$, respectively. We denote the stationary distribution of $U$ by $\nu$. Further, we denote the spectral gap of $U^+$ by $\gamma^+$. For $\x \in \Omega$, we define $\Es{\x}{Z} \coloneqq \E{Z \, | \, X_0 = \x}$ for a random variable $Z$ and $\Ps{\x}{A} \coloneqq \P{A \, | \,  X_0 = \x}$ for an event $A$.

We define the following family of subsets of the state space $\Omega_n$ of the generalised Bernoulli--Laplace chain that shall be useful in our analysis. For any positive real number $L$, we define the mesoscopic centre
\begin{equation*}
    \Meso{L} = \set{\x \in \Omega \colon x(i,j) \ge n - L \text{ for all } i \in [d], \;j \in [m]}.
\end{equation*}
We will usually take $L$ to be a growing function of $n$. The regimes of growth of $L$ with respect to $n$ that will be useful in our arguments are $\sqrt{n} \lesssim L \le (1-o(1))n$. We introduce separate notation for the two extremes since we will use them repeatedly. Namely, for $C > 0$, we define the centre
\begin{equation}
\label{def:CoM}
    \CC{C} \coloneqq \Meso{C \sqrt{n}},
\end{equation}
and for $0< \delta \le 1$, we define the $\delta$-macroscopic centre
\begin{equation}
\label{def:Macro}
    \Mac \coloneqq \Meso{\delta n}.
\end{equation}
It follows from Chernoff bounds that $\pi(\Mac) = 1 - o(1)$ while $\pi(\cC)$ is of constant order. We will make these bounds more precise in Lemma~\ref{lem:CentreMacroOrder}.

In our arguments, we work with several chains related to the generalised Bernoulli--Laplace chain. We collect basic information about these chains in Table~\ref{tab:chains}. In the remainder of this section, we collect further preliminaries on the generalised Bernoulli--Laplace urn model.

\begin{table}[t]
\centering
\begin{tabular}{c c c c c} 
 \hline
 Markov chain & Symbol & State space & Transition matrix & Stationary distribution\\ 
 \hline\hline
 Generalised BL & $\X$ & $\Omega_n$ & $P$ & $\pi$ \\ 
 \hline
 Mean-field BL & $\Y$ & $\Omega_n$ & $\PY$ & $\piMF = \pi$ \\
 \hline
 Induced generalised BL & $\Xib$ & $\cC$ & $\PXi$ & $\piI = \pi_{\cC}$ \\
 \hline
 Restricted mean-field BL & $\Yrb$ & $\cC$ & $\PYr$ & $\piR = \pi_{\cC}$ \\
 \hline
 Modified generalised BL & $\Xmb$ & $\Mac$ & $\PXm$ & $\piM = \pim$ \\ 
 \hline
 Modified mean-field BL & $\Ymb$ & $\Mac$ & $\PYm$ & $\piMMF = \pim$ \\ 
 \hline
 Auxiliary & $\Yab$ & $\cC$ & $\PYa$ & $\pia = \unif(\cC)$ \\ 
 \hline
 Single ball & $\U$ & $[d]$ & $U$ & $\nu = \unif([d])$ \\ 
 \hline
\end{tabular}
\vspace{10pt}
\caption{\label{tab:chains}Basic information about some of the Markov chains defined and used in this paper.}
\vspace{-10pt}
\end{table}

\subsection{Irreducibility and reversibility of the generalised Bernoulli--Laplace urn}

Note that the mean-field chain, where we pick two balls from two urns chosen uniformly at random, and swap, is reversible and irreducible on the state space $\Omega_n$. In order to show irreducibility of the generalised Bernoulli--Laplace chain, we first consider the labeled version, where all balls are distinguishable.

\begin{lemma}
\label{lem:irred}
    For $N \ge 2$, the $(d,N,\mu)$ labeled Bernoulli--Laplace chain is irreducible if and only if the single ball chain from \eqref{def:SingleBall} is irreducible.
\end{lemma}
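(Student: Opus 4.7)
I address the two directions of the equivalence separately.

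For the forward direction, suppose the labeled chain is irreducible. Given $i, j \in [d]$, tag a specific ball $a$, and start from some configuration with $a$ in urn $i$. By irreducibility, there is a finite sequence of chain transitions ending at some configuration with $a$ in urn $j$. Restricting to those transitions in which $a$ is picked yields a sequence of urn moves $i = i_0 \to i_1 \to \cdots \to i_k = j$ for $a$; each such move occurs because the sampled $\sigma_t \in \mathrm{supp}(\mu)$ satisfies $\sigma_t(i_{t-1}) = i_t$, whence $U(i_{t-1}, i_t) \ge \mu(\sigma_t) > 0$. This exhibits a positive-probability path from $i$ to $j$ in the single ball chain.

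For the reverse direction, suppose the single ball chain is irreducible. The plan is to lift to the \emph{ordered labeled chain}, whose states are bijections from balls to slots $V \coloneqq [d] \times [N]$, identified with $\mathrm{Sym}(V) \cong S_{dN}$. Each ordered-chain transition acts by left-multiplication by some $\rho \in \mathrm{Sym}(V)$ whose cycle structure matches that of the sampled $\sigma \in \mathrm{supp}(\mu)$ (with all non-participating slots fixed), the slot chosen per urn per cycle of $\sigma$ being arbitrary. Let $G \le \mathrm{Sym}(V)$ be the subgroup generated by all such $\rho$ (this is indeed a subgroup, since finite submonoids of groups are groups), and let $H \coloneqq \prod_{i \in [d]} \mathrm{Sym}(\{i\} \times [N]) \cong S_N^d$ be the stabiliser of the unordered partition structure. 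Labeled configurations correspond to left cosets of $H$ in $\mathrm{Sym}(V)$, and the labeled chain is irreducible if and only if $HG = \mathrm{Sym}(V)$. Since $N \ge 2$, $H$ contains odd permutations (e.g.\ a single within-urn transposition), so it suffices to show that $G \supseteq A_{dN}$.

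To prove $G \supseteq A_{dN}$, since 3-cycles generate $A_{dN}$, it is enough to show that $G$ contains every 3-cycle on slots. The irreducibility of the single ball chain provides, for any ordered pair of urns, a sequence of permutations in $\mathrm{supp}(\mu)$ whose action shuttles a tagged ball between them; applying the corresponding ordered-chain transitions, using the freedom in slot choice in auxiliary urns afforded by $N \ge 2$, and combining with a suitable reverse excursion, one can arrange that the net label-permutation is a single 3-cycle on any prescribed triple of slots. The main obstacle is the control of the ``side effects'': because $\mu$ is not assumed symmetric, the incidental cycle-permutations occurring on intermediate steps cannot be cancelled by simply reversing the sequence of transitions. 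The argument therefore requires a commutator-type construction and a case analysis distinguishing whether $\mathrm{supp}(\mu)$ contains a transposition, a $3$-cycle, or only longer cycles, where in each case the $N \ge 2$ flexibility is used to park the unwanted displacements on slots whose subsequent rearrangement is itself realisable as an element of $G$.
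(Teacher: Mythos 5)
Your high-level strategy for the reverse direction is essentially the same as the paper's: lift to the symmetric group on slots, note that the monoid generated by the transition permutations is automatically a group, reduce irreducibility to a generation statement, and exploit the freedom of $N\geq 2$ slot choices. The forward direction (track a tagged ball) is fine and matches the paper's contrapositive observation. But there is a genuine gap in the reverse direction: the key construction is never produced, and your plan is also set up in a way that may be strictly harder than necessary.

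Two specific concerns. First, the correct reduction is not quite ``$HG = \mathrm{Sym}(V)$'' but rather $\langle G, H\rangle = \mathrm{Sym}(V)$ (the labeled-chain orbit of a coset is the orbit under the full group generated by the transition permutations together with within-urn permutations, since a within-urn rearrangement is a free no-op at every intermediate step). Your reduction to $G \supseteq A_{dN}$ does imply both, so it is a valid sufficient condition, but it aims at a strictly stronger target than what is needed: you are forbidding yourself the use of elements of $H$ in the construction. The paper's proof uses exactly such an element. It shows $(1^1,2^1) \in \langle G, H\rangle$ by multiplying by $(2^1,2^2) \in S^2 \subseteq H$, so that construction does not directly give $(1^1,2^1) \in G$ or even $G \supseteq A_{dN}$.

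Second and more importantly, the crucial combinatorial step is missing. You correctly identify the obstruction (non-symmetric $\mu$ prevents simply reversing a sequence of moves), and you correctly note that the resolution must exploit the freedom of slot choice for $N \geq 2$, but you then only assert that a ``commutator-type construction and a case analysis'' would finish the job. The paper's actual resolution is short and avoids any case analysis on $\mathrm{supp}(\mu)$: take $\sigma \in \mathrm{Supp}(\mu^r)$ with $\sigma(1)=2$, let $C=(1,2,i_3,\dots,i_\ell)$ be its cycle containing $1,2$, and let $L$ be the order of $\sigma$. Since $\sigma^{L}=\mathrm{id}$, the product $\sigma_{\mathbf j}^{\ell-1}\,\sigma_{\mathbf j'}^{L-\ell+1}$ is the identity on all slots \emph{except} where the two slot choices $\mathbf j, \mathbf j'$ disagree (they disagree only at urn $2$, using slots $2$ and $1$ respectively); the residual permutation, after composing with $(2^1,2^2)$, collapses to the single transposition $(1^1,2^1)$. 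This exploits the group relation $\sigma^{L}=\mathrm{id}$ to cancel side effects without needing to invert $\mu$. Without a construction of comparable precision your proof is incomplete, and the route via $G \supseteq A_{dN}$ additionally needs a separate argument that this stronger containment actually holds (it is not a priori implied by $\langle G, H\rangle = \mathrm{Sym}(V)$).
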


\begin{proof}
    First, suppose that the single ball chain is reducible. Then it is easy to see by a similar argument as in the proof of Lemma~\ref{lem:irredGeneral} below that the labeled chain is also reducible. Hence, we have to show that the labeled chain is irreducible if the single ball chain is irreducible.

    For each urn $i \in [d]$, suppose that urn $i$ has $N$ distinct slots labeled by $i^1, i^2, \dots, i^N$. Further, for each $i \in [d]$, let $S^i$ be the symmetric group on $\{i^1, i^2, \dots, i^N\}$. Finally, let $S$ be the symmetric group on $I \coloneqq \{i^j \colon i \in [d], j \in [N]\}$. For $\sigma \in S_d$ and $\mathbf{j} = (j_1, j_2, \dots, j_d) \in [N]^d$, we define
    \begin{equation*}
        \sigma_{\mathbf{j}} = \prod_{i \in [d]} \cycle{i,i^{j_i}} \cdot \sigma \cdot  \prod_{i \in [d]} \cycle{i,i^{j_i}},
    \end{equation*}
    namely, $\sigma_{\mathbf{j}} \in S$ is the permutation $\sigma$ with $i$ renamed to $i^{j_i}$ for each $i \in [d]$. It is easy to see that showing that the labeled Bernoulli--Laplace chain is irreducible is equivalent to showing
    \begin{equation*}
        H \coloneqq \angbrac{\set{\sigma_{\mathbf{j}} \colon \sigma \in \mathrm{Supp}(\mu), \mathbf{j} \in [N]^d}, S^1, S^2, \dots, S^d} = S.
    \end{equation*}
    Let us emphasise that we do not allow taking inverses in the definition of a generating set since $\mu$ is not assumed to be symmetric. Namely, for $A \subseteq S$, we define
    \begin{equation*}
        \ang{A} \coloneqq \{\mathrm{id}\} \cup \bigcup_{\ell=1}^\infty \set{\prod_{i=1}^\ell a_i: a_i \in A \text{ for all } i \in [\ell]}.
    \end{equation*}
    Since transpositions generate the symmetric group, it suffices to show that all transpositions on the set $I$ are in $H$. Let $\tau = \cycle{i_1^{j_1}, i_2^{j_2}}$ be a transposition on $I$, for some $i_1, i_2 \in [d]$ and $j_1, j_2 \in [N]$. We may assume $i_1 \ne i_2$ since $S^i \subseteq H$ for each $i \in [d]$. We may assume without loss of generality that $i_1 = 1$ and $i_2 = 2$. We may further assume that $j_1 = j_2 = 1$ since we can rename $1^{j_1}$ to $1^1$ and $2^{j_2}$ to $2^1$. Since the associated single ball chain is irreducible, it follows that there exists $r \in \N$ and $\sigma \in S_d$ such that $\sigma \in \mathrm{Supp}(\mu^r) \subseteq H$ and $\sigma(1) = 2$. Let $C \coloneqq \cycle{1,2,i_3,\dots,i_\ell}$ be the cycle in the cycle decomposition of $\sigma$ that contains $1$ and $2$. Let $L$ be the order of $\sigma$, which is equal to the least common multiple of the length of all cycles in the cycle decomposition of $\sigma$. Therefore, $\ell$ divides $L$. Now consider the permutation
    \begin{align*}
        \kappa \coloneqq \sigma_{(1,2,1,\dots,1)}^{\ell-1} \sigma_{(1,1,1,\dots,1)}^{L-\ell+1} \cycle{2^1,2^2} &= \cycle{i_\ell^1,\dots,i_3^1,2^2,1^1} \cycle{1^1,2^1,i_3^1,\dots,i_\ell^1} \cycle{2^1,2^2},\\
        &= \cycle{1^1,2^1,2^2} \cycle{2^1,2^2},\\
        &= \cycle{1^1,2^1}.
    \end{align*}
    It is clear from the definition of $\kappa$ that it is in $H$. Therefore, $\tau \in H$, allowing us to conclude.
\end{proof}

We remark that Lemma~\ref{lem:irred} does not hold for $N = 1$. For example, when $\mu$ is the Dirac measure on $\cycle{1,2,\dots,d}$, the single ball chain is irreducible but the labeled chain with $N = 1$ is reducible. In the following lemma, we provide a sufficient condition for irreducibility of the generalised Bernoulli--Laplace chain, which is also necessary if $m \ge d$.

\begin{lemma}
\label{lem:irredGeneral}
    The $(d, m, n, \mu)$ generalised Bernoulli--Laplace chain is irreducible if the associated single ball chain is irreducible. The converse holds if $m \ge d$.
\end{lemma}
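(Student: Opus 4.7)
The plan is to handle the two directions separately, using Lemma~\ref{lem:irred} for the forward direction and an invariant argument for the converse.

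For the forward direction, suppose the single ball chain on $[d]$ is irreducible. I would first invoke Lemma~\ref{lem:irred} applied to the $(d, mn, \mu)$ labeled chain (viewing each of the $dmn$ balls as distinguishable, with $mn$ balls per urn) to conclude that the labeled version is irreducible. The $(d, m, n, \mu)$ generalised Bernoulli--Laplace chain is then obtained as a projection of the labeled chain under the equivalence relation identifying two labeled configurations that differ only by a relabeling of balls within each colour class. Any single step of the labeled chain projects to a single step of the unlabeled chain, so projecting a path between two labeled states yields a path between the corresponding unlabeled states, and irreducibility is inherited.

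For the converse, assume $m \ge d$ and that the single ball chain is reducible. The first step is to note that a reducible doubly stochastic matrix decomposes as a direct sum: there exists a non-trivial partition $[d] = A \sqcup B$ with $U(i,j) = 0$ whenever $i$ and $j$ lie in different parts. Indeed, if $A$ is a closed communicating class then $\sum_{j \in A} U(i,j) = 1$ for all $i \in A$; summing over $i \in A$ together with column-stochasticity of $U$ forces $U(i,j) = 0$ also when $i \in B$ and $j \in A$. Consequently every $\sigma \in \mathrm{Supp}(\mu)$ satisfies $\sigma(A) = A$, so in each step of the generalised chain balls in urns indexed by $A$ are only redistributed among urns in $A$. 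This makes the quantity
\[
    N_c(\x) \coloneqq \sum_{i \in A} x(i,c), \qquad c \in [m],
\]
conserved by the dynamics.

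To conclude reducibility, I would exhibit two configurations in $\Omega_n$ with different values of $N_1$. The uniform configuration $\x_0$ with $x_0(i,c) = n$ for every $i, c$ has $N_1(\x_0) = n|A|$. On the other hand, fixing any $i^\ast \in A$, the hypothesis $m \ge d$ gives $dn \le mn$, so there is room in urn $i^\ast$ to accommodate all $dn$ balls of colour $1$; once colour $1$ is placed there, the remaining $(m-d)n$ slots in urn $i^\ast$ together with the $mn$ slots in each other urn can be filled with colours $2, \ldots, m$ to produce a valid configuration $\x_1 \in \Omega_n$ with $N_1(\x_1) = dn > n|A|$, using $|A| \le d-1$. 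The main obstacle I anticipate is the structural argument on reducible doubly stochastic matrices needed to pass from reducibility of $U$ to $\sigma$-invariance of the partition $\{A,B\}$; the hypothesis $m \ge d$ enters only at the very end, ensuring that a single urn has enough capacity to hold all balls of one colour so that the invariant $N_1$ is non-constant on $\Omega_n$.
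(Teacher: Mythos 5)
Your forward direction is identical to the paper's: both invoke Lemma~\ref{lem:irred} for the $(d,mn,\mu)$ labeled chain and then pass to the projection. For the converse, you and the paper agree on the crucial construction — use $m \ge d$ to cram all $dn$ balls of colour $1$ into one urn — but arrive at reducibility by slightly different mechanisms. The paper picks $i,j$ with $U^k(i,j)=0$ for all $k$ and argues directly that, starting with all colour-$1$ balls in urn $i$, the colour-$1$ count in urn $j$ is permanently zero; no structural decomposition of $U$ is needed. You instead prove the direct-sum decomposition of the reducible doubly stochastic matrix (using column-stochasticity to upgrade a closed class $A$ to a $\sigma$-invariant block for every $\sigma$ in the support of $\mu$), extract the conserved quantity $N_c(\x) = \sum_{i \in A} x(i,c)$, and exhibit two configurations with distinct values of $N_1$. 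Your version is a bit longer but is also more informative: it identifies a full conservation law and makes the invariant partition explicit, whereas the paper's argument is purely a reachability obstruction. Both are correct, and the step you flagged as the main obstacle — the direct-sum decomposition — goes through cleanly by exactly the double-counting argument you give.
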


\begin{proof}
    Since the $(d, m, n, \mu)$ generalised Bernoulli--Laplace chain is a projection of the $(d, N, \mu)$ labeled Bernoulli--Laplace chain with $N = mn \ge 2$, it follows from Lemma~\ref{lem:irred} that the latter chain, and hence also the former chain, is irreducible. For the converse, suppose $m \ge d$ and the single ball chain is reducible. Then we can find $i, j \in [d]$ such that $U^k(i, j) = 0$ for all $k \in \N$. Note that the number of balls of a given colour in the system is $nd$, which is at most $nm$, which equals the number of balls in a given urn. Starting from a state in which all balls of colour $1$ are in urn $i$, one can never reach a state with nonzero number of balls of colour $1$ in urn $j$. Hence, the generalised chain is reducible.
\end{proof}

We now investigate the reversibility of the labeled model.

\begin{lemma}
\label{lem:rev}
    For $N \ge 2$, the $(d,N,\mu)$ labeled Bernoulli--Laplace chain is reversible if and only if $\mu$ is symmetric.
\end{lemma}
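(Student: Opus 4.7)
My plan is to use that the labeled chain is vertex-transitive under the natural $S_{dN}$-action of relabeling balls, so its stationary distribution is uniform and reversibility reduces to $P(f,f')=P(f',f)$ for all states $f,f'$, where a state is a map $f\colon[dN]\to[d]$ with $|f^{-1}(i)|=N$. I decompose $P(f,f')=\sum_{\sigma\in S_d}\mu(\sigma)\alpha_\sigma(f,f')$, where $\alpha_\sigma(f,f')$ is the probability that the chosen balls send $f$ to $f'$ conditional on sampling $\sigma$. The key combinatorial identity I establish is $\alpha_\sigma(f,f')=\alpha_{\sigma^{-1}}(f',f)$, obtained from the bijection $(b_1,\dots,b_d)\mapsto(b_{\sigma^{-1}(1)},\dots,b_{\sigma^{-1}(d)})$ between tuples of chosen balls realising $f\to f'$ via $\sigma$ and $f'\to f$ via $\sigma^{-1}$. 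Given this identity, the forward direction follows immediately by reindexing $\sigma\leftrightarrow\sigma^{-1}$ in the sum and invoking $\mu(\sigma)=\mu(\sigma^{-1})$.

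For the converse I argue contrapositively. Suppose $\mu(\sigma_0)\ne\mu(\sigma_0^{-1})$ for some $\sigma_0\ne\mathrm{id}$. I construct a witness pair $(f,f')$ such that $\sigma_0$ is the \emph{unique} permutation $\sigma$ with $\alpha_\sigma(f,f')>0$: pick any $f$, select one ball $b_i\in f^{-1}(i)$ for each $i$ in the support $S$ of $\sigma_0$, and let $f'$ coincide with $f$ except that each $b_i$ is moved to urn $\sigma_0(i)$. For any $\sigma$ realising $f\to f'$, the urn contents force $\sigma(i)=i$ for $i\notin S$ (otherwise some untouched urn would disagree with $f$) and $\sigma(i)\in\{i,\sigma_0(i)\}$ for $i\in S$. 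A cycle-by-cycle walk around $\sigma_0$ then shows that any choice $\sigma(i_0)=i_0$ inside a $\sigma_0$-cycle forces $i_0$ to have no preimage under $\sigma$, violating bijectivity; hence $\sigma=\sigma_0$. Combined with the identity above, $P(f,f')=\mu(\sigma_0)\alpha_{\sigma_0}(f,f')$ and $P(f',f)=\mu(\sigma_0^{-1})\alpha_{\sigma_0}(f,f')$ are unequal positive multiples of the same quantity, contradicting reversibility.

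The main technical obstacle is the uniqueness argument in the converse; the forward direction is essentially immediate once the combinatorial bijection of ball tuples is identified. The uniqueness reduces to a cycle-by-cycle analysis of $\sigma_0$ but must carefully handle both enlargements of the support (ruled out by the untouched urns) and refinements of $\sigma_0$'s cycle structure into smaller cycles (ruled out by the walk-around argument).
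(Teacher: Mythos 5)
Your plan follows the same route the paper gestures at: uniform stationarity from transitivity (already noted in Section~\ref{sec:properties}), so reversibility reduces to $P(f,f')=P(f',f)$ for all states, which you unwind via the decomposition $P(f,f')=\sum_{\sigma}\mu(\sigma)\alpha_\sigma(f,f')$ and the bijection identity $\alpha_\sigma(f,f')=\alpha_{\sigma^{-1}}(f',f)$. The paper's own proof is a one-line remark (``writing down the detailed balance equations\ldots\ observing that they are collectively equivalent to the symmetry of $\mu$''), and your argument is a correct and natural way to fill it in: the combinatorial identity is right, the forward direction is immediate from it, and the witness construction for the converse is valid.

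The one step that does not hold up is your justification for uniqueness of $\sigma_0$ in the converse. You claim that ``any choice $\sigma(i_0)=i_0$ inside a $\sigma_0$-cycle forces $i_0$ to have no preimage under $\sigma$, violating bijectivity,'' but if $\sigma(i_0)=i_0$ then $i_0$ has $i_0$ itself as its preimage, and the walk around the cycle only shows that $\sigma(i_0)=i_0$ forces $\sigma$ to act as the identity on the entire $\sigma_0$-cycle through $i_0$ --- which is a perfectly valid permutation. Bijectivity alone cannot rule this out; you must return to the ball positions, and there the contradiction is that no ball in those urns would move while $f'$ requires the $b_i$ to have moved. In fact the cleanest argument is more direct and needs no cycle walk and no case split $\sigma(i)\in\{i,\sigma_0(i)\}$: for each $i$ in the support of $\sigma_0$, the ball $b_i$ lies in different urns in $f$ and $f'$, and the only way a ball leaves urn $i$ is by being the selected ball from urn $i$; hence $b_i$ must be selected and sent to urn $\sigma(i)$, while $f'$ places it in urn $\sigma_0(i)$, so $\sigma(i)=\sigma_0(i)$. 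Combined with your correctly derived $\sigma(i)=i$ off the support, this forces $\sigma=\sigma_0$ outright.
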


\begin{proof}
    The result follows by writing down the detailed balanced equations for any state along with each of its neighbors and observing that they are collectively equivalent to the symmetry of $\mu$.
\end{proof}

In the following lemma, we provide a sufficient condition for reversibility of the generalised Bernoulli--Laplace chain, which is also necessary if $m = d$.

\begin{lemma}
\label{lem:revGeneral}
    The $(d, m, n, \mu)$ generalised Bernoulli--Laplace chain is reversible if $\mu$ is symmetric. The converse holds if $m = d$.
\end{lemma}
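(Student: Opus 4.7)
My plan is to treat the two directions separately. For the forward implication, I will lift the unlabeled chain to the labeled chain and use a lumping argument; for the converse, I will identify a pair of states whose detailed balance equation collapses directly to $\mu(\sigma) = \mu(\sigma^{-1})$.

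Suppose $\mu$ is symmetric. By Lemma~\ref{lem:rev}, the $(d, mn, \mu)$ labeled chain is reversible with respect to the uniform distribution. The $(d, m, n, \mu)$ generalised chain is obtained from the labeled chain by quotienting out the group action $G = \prod_{j \in [m]} S_{dn}$ that permutes labels of balls of the same colour. Because $G$ commutes with the one-step labeled dynamics and acts transitively on each fibre of the projection, the quotient is a well-defined Markov chain; and the uniformity of the labeled stationary distribution along each fibre lets reversibility descend to the lumped chain, whose stationary distribution then agrees with \eqref{eq:StationaryDistribution}. This part is essentially bookkeeping once the group action is identified.

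For the converse with $m = d$, fix $\sigma \in S_d$ with $\sigma \ne \mathrm{id}$, and let $F_\sigma \subseteq [d]$ be its fixed-point set. Consider the extremal \emph{sorted} state $\x_0$ with $x_0(i, i) = dn$ and $x_0(i, j) = 0$ for $i \ne j$ --- a valid configuration precisely because $m = d$ --- and let $\y = \y_\sigma$ be the state obtained from $\x_0$ after one step using the permutation $\sigma$: for $i \notin F_\sigma$ one has $y(i, i) = dn - 1$ and $y(i, \sigma^{-1}(i)) = 1$, while urns $i \in F_\sigma$ are unchanged. A brief case analysis shows that the unique one-step route from $\x_0$ to $\y$ uses the permutation $\sigma$ (the ball drawn from each urn is forced since each urn holds a single colour), so $P(\x_0, \y) = \mu(\sigma)$; the unique reverse route uses $\sigma^{-1}$ together with the forced selection of the lone off-colour ball in each non-fixed-point urn, giving $P(\y, \x_0) = \mu(\sigma^{-1})/(dn)^{|[d] \setminus F_\sigma|}$. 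A direct computation from \eqref{eq:StationaryDistribution} yields $\pi(\y)/\pi(\x_0) = (dn)^{|[d] \setminus F_\sigma|}$, and detailed balance between $\x_0$ and $\y$ collapses to $\mu(\sigma) = \mu(\sigma^{-1})$.

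The only subtle point in the plan is the uniqueness-of-paths analysis in the converse direction, where one must verify that no alternative permutation-plus-ball-choice realises either one-step transition. This is easy in our setting because the extremality of $\x_0$ forces both the colour of the ball drawn from each urn and, by matching against $\y$, the target urn of each moved ball. It is at exactly this step that the hypothesis $m = d$ is used, to guarantee that $\x_0$ is a legitimate configuration at all; for $m > d$ there is no analogous extremal state, which is why the converse is not claimed in that range.
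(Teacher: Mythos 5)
Your proof is correct and follows essentially the same route as the paper: the forward direction projects the reversible labeled chain (Lemma~\ref{lem:rev}) down to the unlabeled one, and the converse writes out detailed balance at the sorted state $\zeta_0 = \x_0$ (with all balls of colour $i$ in urn $i$) and its one-step neighbours $\y_\sigma$. The paper is terse in the converse, whereas you supply the explicit bookkeeping --- $P(\x_0,\y_\sigma)=\mu(\sigma)$, $P(\y_\sigma,\x_0)=\mu(\sigma^{-1})/(dn)^{|[d]\setminus F_\sigma|}$, $\pi(\y_\sigma)/\pi(\x_0)=(dn)^{|[d]\setminus F_\sigma|}$, and the uniqueness of the realizing permutation in each direction --- which I verified to be accurate.
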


\begin{proof}
    Since the $(d, m, n, \mu)$ generalised Bernoulli--Laplace chain is a projection of the $(d, N, \mu)$ labeled Bernoulli--Laplace chain with $N = mn \ge 2$, it follows from Lemma~\ref{lem:rev} that the latter chain, and hence also the former chain, is reversible. For the converse, suppose $m = d$ and the generalised chain is reversible. Note that the number of balls of a given colour in the system is $nd$, which is equal to $nm$, which further equals the number of balls in a given urn. Let $\zeta_0 \in \Omega_n$ be such that for each $i \in [d]$, all balls of colour $i$ are in urn $i$. Then, writing down the detailed balance equation for $\zeta_0$ along with each of its neighbors, it follows that $\mu$ is symmetric.
\end{proof}

\subsection{Relationship to the labeled chain}
\label{sec:labeled}

Recall that we defined the balanced and labeled chains in Section~\ref{sec:introduction}. In the following, our goal is to show that the worst case total variation distance from stationarity at time $t$ is equal for the two chains.

\begin{proof}[Proof of Lemma~\ref{lem:equiv}]
    Let $t \ge 0$. We first note that the inequality
    \begin{equation*}
        d_{\mathrm{bal}}(t) \le d_{\mathrm{lab}}(t)
    \end{equation*}
    follows directly from the fact that the balanced chain is a projection of the labeled chain. Therefore, it suffices to prove the reverse inequality. Let $(\Xb_t)_{t \ge 0}$ and $(\Xl_t)_{t \ge 0}$ denote the balanced and labeled chains, respectively. Further, let $\Omb$ and $\Oml$ denote their respective state spaces. Similarly, let $\pib$ and $\pil$ denote their respective stationary distributions. Now, let $\zeta_0 \in \Omb$ be the configuration consisting of $n$ balls of colour $i$ in urn $i$, for each $i \in [d]$. Then we claim
    \begin{equation}
    \label{eqn:baleqlab}
        \normTV{\cP{\Xb_t = \cdot}{\Xb_0 = \zeta_0} - \pib} = \normTV{\cP{\Xl_t = \cdot}{\Xl_0 = \y} - \pil},
    \end{equation}
    for all $\y \in \Oml$. Note that taking a supremum over all $\y \in \Oml$ in the above equation, and observing that the left-hand side is less than or equal to $d_{\mathrm{bal}}(t)$, implies the desired result. We will now show \eqref{eqn:baleqlab}. Fix any state $\y \in \Oml$. Consider the projection map $\Pi: \Oml \rightarrow \Omb$ defined implicitly by assigning colour $j$ to all balls in urn $j$ in the configuration $\y$. Then $(\Pi(\Xl_t))_{t \ge 0}$ conditioned on $\Xl_0 = \y$ is a copy of $(\Xb_t)_{t \ge 0}$ conditioned on $\Xb_0 = \zeta_0$. Now, let $\zeta \in \Omb$. Recall that the stationary distribution of the labeled chain is uniform. Therefore, we have
    \begin{equation}
    \label{eqn:proj}
        \pib(\zeta) = \frac{\abs{\Pi^{-1}(\zeta)}}{\abs{\Oml}}.
    \end{equation}
    
    We claim that the distribution $\cP{\Xl_t = \cdot}{\Xl_0 = \y}$ is uniform over $\Pi^{-1}(\zeta)$. To see this, suppose that $\y_1, \y_2 \in \Pi^{-1}(\zeta)$. It suffices to construct two coupled copies $(\Xl_{1,t})_{t \ge 0}$ and $(\Xl_{2,t})_{t \ge 0}$ of the labeled chain with initial state $\y$ such that
    \begin{equation}
    \label{eqn:iff}
        \Xl_{1,t} = \y_1 \iff \Xl_{2,t} = \y_2.
    \end{equation}
    We simulate a step of the labeled chain in the following manner: assume that the balls are placed in positions indexed by $[n]$ within each urn; in each step, we choose a position in $[n]$ uniformly for each urn and then pick the balls corresponding to the chosen positions. Note that one could initially assign any arbitrary positions to the balls in a given urn in $\y$. For $i \in [d]$, let $B_i(\y) \coloneqq \{b_{i,j}: j \in [n]\}$ be the set of balls in urn $i$ in $\y$. For the first chain $\X_1^{\mathrm{lab}}$, suppose we fix the ordering so that $b_{i,j}$ is in position $j$ within urn $i$. Now, since $\Pi(\y_1) = \Pi(\y_2)$, it follows that
    \begin{equation*}
        \abs{B_k(\y_1) \cap B_i(\y)} = \abs{B_k(\y_2) \cap B_i(\y)},
    \end{equation*}
    for all $k \in [d]$. Thus, there exists a permutation $\sigma_i$ on the set $B_i$ such that
    \begin{equation*}
        \sigma_i(B_k(\y_1) \cap B_i(\y)) = B_k(\y_2) \cap B_i(\y),
    \end{equation*}
    for all $k \in [d]$. Now, for the second chain, we fix the ordering so that the ball $\sigma_i(b_{i,j})$ is in position $j$ within urn $i$. Finally, the two chains are simulated using the same choice of positions for each urn in each step. Then it is clear that \eqref{eqn:iff} holds and hence the claim follows.

    Finally, it follows from the definition of total variation distance that
    \begin{align*}
        \normTV{\cP{\Xl_t = \cdot}{\Xl_0 = \y} - \pil} &= \frac{1}{2} \sum_{\x \in \Oml} \abs{\cP{\Xl_t = \x}{\Xl_0 = \y} - \pil(\x)}\\
        &= \frac{1}{2} \sum_{\zeta \in \Omb} \sum_{\x \in \Pi^{-1}(\zeta)} \abs{\frac{1}{\abs{\Pi^{-1}(\zeta)}} \cP{\Xb_t = \zeta}{\Xb_0 = \zeta_0} - \frac{1}{\abs{\Oml}}}\\
        &= \frac{1}{2} \sum_{\zeta \in \Omb} \frac{1}{\abs{\Pi^{-1}(\zeta)}} \sum_{\x \in \Pi^{-1}(\zeta)} \abs{\cP{\Xb_t = \zeta}{\Xb_0 = \zeta_0} - \pib(\zeta)}\\
        &= \frac{1}{2} \sum_{\zeta \in \Omb} \abs{\cP{\Xb_t = \zeta}{\Xb_0 = \zeta_0} - \pib(\zeta)}\\
        &= \normTV{\cP{\Xb_t = \cdot}{\Xb_0 = \y} - \pib}.
    \end{align*}
    The second inequality above uses the uniformity of $\cP{\Xl_t = \cdot}{\Xl_0 = \y}$ over $\Pi^{-1}(\zeta)$ and $\pil$ over $\Oml$, and the third equality uses \eqref{eqn:proj}. This finishes the proof.
\end{proof}

\subsection{Concentration of stationary mass}

 In the following, we work with the labeled variant of the generalised Bernoulli--Laplace chain in order to derive properties of the stationary distribution $\pi_n$ of the unlabeled chain. Let $\widetilde{X}$ be a random configuration sampled from the stationary distribution of the labeled chain, and let $X$ be the projection of $\widetilde{X}$ on to the state space of the unlabeled chain.

\begin{lemma}
\label{lem:CentreMacroOrder}
    For positive real number $L$, we have
    \begin{equation*}
        \pi_n(\Meso{L}) \ge 1 - dm \cdot \e^{-L^2/(2n)},
    \end{equation*}
    In particular, for positive numbers $C$ and $\delta \le 1$, we have
    \begin{equation*}
        \pi_n(\CC{C}) \ge 1 - dm \cdot \e^{-C^2/2} \text{ and } \pi_n(\Mac) \ge 1 - dm \cdot \e^{-\delta^2 n/2}.
    \end{equation*}
\end{lemma}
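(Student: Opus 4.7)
The plan is to reduce to the labeled chain, identify the marginal distribution of each entry $X(i,j)$ as hypergeometric, and then apply a standard Chernoff-type lower tail bound followed by a union bound.

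First, I would use the setup already established: the labeled chain is transitive, so its stationary distribution is uniform over arrangements of the $dmn$ distinguishable balls with $dn$ of each colour and $mn$ in each urn. Let $\widetilde{X}$ be a uniform such arrangement and let $X$ be its projection, so $X \sim \pi_n$. Fix $i \in [d]$ and $j \in [m]$. Conditional on the urn capacities, the positions occupied by urn $i$ form a uniform random subset of size $mn$ of the $dmn$ total slots, and among the balls placed into those slots, I am counting how many carry colour $j$ (there are $dn$ such balls in total). Hence
\[
    X(i,j) \sim \mathrm{Hypergeometric}(dmn, dn, mn), \qquad \Es{\pi_n}{X(i,j)} = \frac{mn \cdot dn}{dmn} = n.
\]

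Next, I would invoke the sharp Chernoff lower-tail bound. Hoeffding's classical convex-domination theorem states that if $H$ is hypergeometric and $B$ is binomial with the same mean and same number of draws, then $\E{\phi(H)} \le \E{\phi(B)}$ for every convex $\phi$. Applying this with $\phi(x)=\e^{-tx}$, $t>0$, the moment generating function of $-X(i,j)$ is dominated by that of $-Y$, where $Y \sim \mathrm{Bin}(mn, 1/m)$ has mean $n$. Consequently the standard Chernoff bound $\P{Y \le n - L} \le \Exp{-L^2/(2n)}$ (valid for all $L \ge 0$, trivially when $L \ge n$) transfers to the hypergeometric:
\[
    \Ps{\pi_n}{X(i,j) \le n-L} \le \Exp{-\frac{L^2}{2n}}.
\]

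Finally, a union bound over the $dm$ pairs $(i,j) \in [d] \times [m]$ yields
\[
    \pi_n(\Meso{L}^{\complement}) \le \sum_{i \in [d],\, j \in [m]} \Ps{\pi_n}{X(i,j) \le n-L} \le dm \cdot \Exp{-\frac{L^2}{2n}},
\]
which is the claimed bound. The specialisations to $\CC{C}$ and $\Mac$ follow by substituting $L = C\sqrt{n}$ and $L = \delta n$ respectively. No step here is especially delicate; the only point requiring a little care is obtaining the exponent $1/2$ uniformly in $m$, which is why it is preferable to go through the Chernoff bound (which gives $\Exp{-L^2/(2n)}$ for every $m$) rather than the Hoeffding sampling-without-replacement bound (which would give $\Exp{-2L^2/(mn)}$ and degrade with $m$).
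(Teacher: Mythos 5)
Your proof is correct and arrives at exactly the bound claimed. The overall structure (work with the labeled chain, compute $\Es{\pi_n}{X(i,j)} = n$, obtain a one-sided concentration bound of $\Exp{-L^2/(2n)}$, union bound over the $dm$ pairs) matches the paper's. The one place you diverge is the machinery for the concentration step. The paper writes $X(i,j)$ as a sum of indicator variables (one per ball of colour $j$, indicating whether that ball sits in urn $i$), shows this family is \emph{negatively related} in the sense of Janson via an explicit swapping construction, and then applies Janson's concentration inequality for negatively related indicators. You instead identify the marginal law of $X(i,j)$ directly as $\mathrm{Hypergeometric}(dmn, dn, mn)$ and invoke Hoeffding's convex-domination theorem to reduce to the binomial Chernoff bound. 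Both are standard and correct routes to the same hypergeometric tail bound; the difference is essentially cosmetic here, since the hypergeometric is the canonical example of a negatively related family. Your route is arguably the more classical one for this specific distribution, while the paper's has the slight advantage of packaging the argument in a form that would generalise if the joint law were negatively related but not exactly hypergeometric (which is not needed here). One small remark: you are careful to note that the Chernoff form yields the exponent $L^2/(2n)$ uniformly in $m$, whereas the Hoeffding sampling-without-replacement form would degrade with $m$; this is the right thing to be careful about, and the paper's use of Janson's Theorem 4 is making the same choice implicitly.
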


\begin{proof}
    Suppose that $i \in [d]$ and $j \in [m]$. Let $B = \{b_{j,k} \colon j \in [m], k \in [nd]\}$ be the set of (distinguishable) balls, where $b_{j,k}$ denotes the $k^{\mathrm{th}}$ ball of colour $j$. Further, let $X(i,j)$ be the random variable that counts the number of balls of colour $j$ in urn $i$, in the configuration $X$. We shall first obtain tail bounds on $X(i,j)$ and later use a union bound over $i \in [d], j \in [m]$. For $k \in [nd]$, let $E_k$ denote the event that the ball $b_{j,k}$ is in urn $i$ in $\widetilde{X}$. Then we have
    \begin{equation*}
        X(i,j) = \sum_{k \in [nd]} \mathbf{1}_{E_k}.
    \end{equation*}
    We now want to argue that the random variables $(\mathbf{1}_{E_k})_{k \in [nd]}$ are related negatively. To prove this formally, we will use the notion of a family of indicator random variables being \emph{negatively related} \cite[page~5]{Jan} (see also \cite[Definition~2.1.1]{BHJ}). We remark here that being negatively related is weaker than the more well known notion of being \emph{negatively associated} \cite{JP}.
    
    We claim that $(\mathbf{1}_{E_k})_{k \in [nd]}$ is a negatively related family of random variables. To show this, we will closely follow the argument in \cite[Example~1]{Jan}. Consider the random variables $(J_{k, \ell})_{k, \ell \in [nd]}$ defined as follows: if $\widetilde{X}$ contains ball $b_{j, \ell}$ in urn $i$, then do nothing, else swap $b_{j, \ell}$ with a ball chosen uniformly at random from urn $i$; now define $J_{k, \ell}$ as the indicator of the event that $b_{j, k}$ is contained in urn $i$ in this new configuration. It is clear that $(J_{k,\ell})_{k \in [nd]}$ has the same distribution as $(\mathbf{1}_{E_k})_{k \in [nd]}$ given $\mathbf{1}_{E_\ell} = 1$, and that $J_{k, \ell} \le \mathbf{1}_{E_k}$ for $k \ne \ell$.

    Note that $\E{\mathbf{1}_{E_k}} = 1/d$ since $b_{j,k}$ is equally likely to be in each of the $d$ urns under the stationary distribution. This further yields
    \begin{equation*}
        \E{X(i,j)} = \sum_{k \in [nd]} \E{\mathbf{1}_{E_k}} = \sum_{k \in [nd]} 1/d = n.
    \end{equation*}
    Applying a suitable concentration inequality from \cite[Theorem~4]{Jan} to the random variable $X(i,j)$, we get
    \begin{equation*}
        \P{X(i,j) \le n - L} \le \Exp{-\frac{L^2}{2n}}
    \end{equation*}
    for each $L > 0$. Taking a union bound over $i \in [m], j \in [d]$ yields the desired result. In particular, setting $L = C \sqrt{n}$ and $L = \delta n$ yields the result in the two special regimes.
\end{proof}

\subsection{Growth of balls}

In the following, we investigate further properties of the state space $\Omega_n$. In particular, we aim to identify $\Omega_n$ with a suitable subset of $\mathbb{Z}^{(d-1)(m-1)}$. We have the following observation on the growth of balls inside of $\Mac$ with respect to the $\ell^1$-norm.

\begin{lemma}
\label{lem:Identify}
    Let $\delta \in (0, 1/2]$. There exists a constant $c_0 > 0$, depending only on $m$ and $d$, such that for all $r < \delta n$, and all $\eta \in \Mac$, we have  
    \begin{equation*}
        c_0 r^{(d-1)(m-1)} \le \abs{\set{\zeta \in \Omega_n \colon \normo{\zeta - \eta} \le r/2}} \le (r+1)^{(d-1)(m-1)},
    \end{equation*}
    for all $n$ large enough.
\end{lemma}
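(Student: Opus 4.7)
The plan is to identify elements of $\Omega_n$ locally near $\eta$ with tuples of integers. Since $\Omega_n$ is the set of $d \times m$ nonnegative integer matrices with fixed row sums $mn$ and fixed column sums $dn$, the $d+m$ linear constraints have a single redundancy, so a configuration $\zeta \in \Omega_n$ is uniquely determined by its values on any chosen set of $(d-1)(m-1)$ ``free'' coordinates, for instance $\{(i,j) \colon 1 \le i \le d-1,\ 1 \le j \le m-1\}$. The non-free entries are then recovered by
\begin{align*}
    \zeta(d,j) &= dn - \sum_{i<d}\zeta(i,j) \text{ for } j < m,\\
    \zeta(i,m) &= mn - \sum_{j<m}\zeta(i,j) \text{ for } i < d,\\
    \zeta(d,m) &= mn - \sum_{j<m}\zeta(d,j).
\end{align*}

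For the upper bound, I would note that any $\zeta$ with $\normo{\zeta - \eta} \le r/2$ satisfies $|\zeta(i,j) - \eta(i,j)| \le r/2 \le r$ on each free coordinate, leaving at most $r+1$ integer values per free coordinate. Since $\zeta$ is uniquely determined by its free coordinates via the identities above, the bound $(r+1)^{(d-1)(m-1)}$ follows immediately.

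For the lower bound, I would construct an explicit family of admissible $\zeta$ by perturbing each free coordinate of $\eta$ independently within a range of size $\Theta(r)$. Setting $K \coloneqq \lfloor r/(8(d-1)(m-1)) \rfloor$, consider all tuples of integers $(k_{i,j})_{i<d,\, j<m}$ with $|k_{i,j}| \le K$, and define $\zeta(i,j) \coloneqq \eta(i,j) + k_{i,j}$ on free coordinates. The three identities above then force each non-free entry to differ from that of $\eta$ by at most $(d-1)(m-1)\, K$, so $\normo{\zeta - \eta} \le 4(d-1)(m-1)\, K \le r/2$. Moreover, since $\eta \in \Mac$ gives $\eta(i,j) \ge (1-\delta)n \ge n/2$, and the magnitude of the change to any entry is at most $(d-1)(m-1)\, K \le r/8 < \delta n/8 < n/2$ (using $\delta \le 1/2$ and $r < \delta n$), all entries of $\zeta$ remain nonnegative, so $\zeta \in \Omega_n$. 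This produces at least $(2K+1)^{(d-1)(m-1)}$ valid configurations, which is bounded below by $c_0 r^{(d-1)(m-1)}$ for an appropriate $c_0 > 0$ depending only on $m$ and $d$; for the small values of $r$ where $K$ might vanish, the claim is trivial once $c_0$ is chosen small enough, since $\eta$ itself always lies in the ball.

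The argument is entirely elementary, and I do not anticipate a conceptual obstacle. The only minor technical point is the bookkeeping in the lower bound: one must pick the constant in the definition of $K$ small enough so that the changes to the $d + m - 1$ non-free entries, each of which can be amplified by up to a factor of $(d-1)(m-1)$, still leave the total $\ell^1$-mass under $r/2$, while simultaneously using $\eta \in \Mac$ together with $\delta \le 1/2$ to guarantee nonnegativity of every entry.
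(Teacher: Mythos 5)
Your proof is correct and takes essentially the same route as the paper: identify $(d-1)(m-1)$ free coordinates via the row/column sum constraints, use them to bound the count from above, and for the lower bound independently perturb each free coordinate within a suitably scaled window while tracking the induced change in the dependent entries. The only differences are cosmetic (the paper uses a window of width $r/(2d^2m^2)$ where you use $r/(8(d-1)(m-1))$), and if anything your write-up is slightly more explicit about verifying nonnegativity of $\zeta$ from $\eta \in \Mac$ and $r < \delta n$ and about handling the degenerate case $K = 0$.
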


\begin{proof}
    We start with the upper bound. We claim that a configuration $\eta$ is uniquely determined by the coordinates $(\eta_{i,j})_{i \in [d-1], j \in [m-1]}$. To see this, note that we can write
    \begin{align}
    \begin{split}
    \label{eq:RelationsRecon}
        \eta(i,m) = nm - \sum_{k \in [m-1]} \eta(i,k) \text{ for all } i \in [d-1] \text{ , and} \\
        \eta(d,j) = nd - \sum_{k \in [d-1]} \eta(k,j) \text{ for all } j \in [m-1],
    \end{split}
    \end{align}
    to obtain $\eta(d, m)$ as there are exactly $nmd$ balls in the system, $nd$ of each colour. Since each of the coordinates $(\eta_{i, j})_{i \in [d-1], j \in [m-1]}$ can take at most $r+1$ different values, this gives the upper bound.
    
    For the lower bound, we construct a specific subset of configurations $\zeta$ with $\normo{\zeta - \eta} \le r/2$. To this end, let us define $(\Delta_{i,j})_{i \in [d-1],j \in [m-1]}$, each of which takes integer values in $[-r/(2d^2m^2),r/(2d^2m^2)]$. For $i \in [d-1]$ and $j \in [m-1]$, we set $\zeta_{i,j} = \eta_{i,j} + \Delta_{i,j}$. Then it follows from \eqref{eq:RelationsRecon} that for any choice of $(\Delta_{i,j})$, the remaining coordinates of $\zeta$ satisfy $\zeta_{i,j} \in [\eta_{i,j} - r/(2dm), \eta_{i,j} + r/(2dm)]$. Thus, the resulting configuration $\zeta \in \Omega_n$ satisfies $\normo{\zeta - \eta} \le r/2$. Since we have $(2\floor{r/(2d^2m^2)} + 1)$ many choices for each of the coordinates of $(\zeta_{i,j})_{i \in [d-1],j \in [m-1]}$, this yields the desired lower bound.
\end{proof}

Let $G$ be the graph on $\Omega$ supporting the transitions of $\PY$, the transition matrix of the mean-field chain. In Section~\ref{sec:spectral}, we shall require the growth of balls with respect to the graph distance $d_G$ in the graph $G$. In the following, we will show that the distance metric $d_G$ is equivalent to the $\ell^1$ distance. Consequently, Lemma~\ref{lem:Identify} implies that balls with respect to the metric $d_G$ on $\Mac$, for some fixed $\delta > 0$, grow similarly to balls in $\mathbb{Z}^{(d-1)(m-1)}$ with respect to the $\ell^{1}$-norm.  

\begin{lemma}
\label{lem:MetricComparison}
    Fix $\delta > 0$. Then, for all $\eta, \zeta \in \Mac$, and $n \in \N$ sufficiently large, we have
    \begin{equation}
    \label{eq:MetricEquivalance}
        \frac{1}{4} \normo{\eta - \zeta} \le d_G(\eta,\zeta) \le \frac{d-1}{2d} \normo{\eta - \zeta}. 
    \end{equation}
    This statement remains true when we consider the graph distance of $G$ restricted to $\Mac$ with $\delta > 0$ sufficiently small, or to $\cC$ for some fixed constant $C > 0$.
\end{lemma}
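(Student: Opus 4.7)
The plan is to sandwich $d_G(\eta,\zeta)$ between matching bounds by a one-step argument on the low end and by a cycle decomposition of $\Delta \coloneqq \zeta - \eta$ on the high end. For the lower bound, observe that every edge of $G$ corresponds to an elementary swap altering exactly four entries of the configuration by $\pm 1$, so $\normo{\cdot - \zeta}$ can change by at most $4$ along any single edge. Applied to a shortest $\eta$-to-$\zeta$ path this immediately gives $d_G(\eta,\zeta) \ge \normo{\eta - \zeta}/4$.

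For the upper bound, note that every row and every column of $\Delta$ sums to $0$, since $\eta, \zeta \in \Omega_n$ share the same marginals. A standard greedy peeling procedure---start at a non-zero entry of the current residual, follow an alternating row/column walk through its support while respecting signs, close it into a simple cycle at the first revisit of a vertex, and subtract the largest sign-consistent integer multiple of the corresponding $\pm 1$ cycle matrix---yields a decomposition
\begin{equation*}
    \Delta = \sum_\ell \alpha_\ell C_\ell, \qquad \alpha_\ell \in \N,
\end{equation*}
in which each $C_\ell$ is a simple signed cycle matrix of length $2 k_\ell$ with $2 \le k_\ell \le \min(d,m) \le d$ and whose non-zero entries agree in sign with those of $\Delta$. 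Since no sign cancellations occur, we obtain $\normo{\Delta} = 2 \sum_\ell \alpha_\ell k_\ell$.

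The key combinatorial claim is that each length-$2k$ signed cycle matrix equals the sum of exactly $k-1$ elementary swap matrices. This is proved by induction on $k$: the base case $k = 2$ is immediate, while for $k \ge 3$ one uses a swap supported on the two cycle edges incident to a distinguished cycle row $r_1$ together with the cycle edge adjacent to its cycle-neighbour $r_2$; its net effect cancels $r_1$ from the cycle, leaving a simple signed cycle matrix of length $2(k-1)$ to which the inductive hypothesis applies. Realising each $C_\ell$ in this way and applying it $\alpha_\ell$ times transforms $\eta$ into $\zeta$ in $\sum_\ell \alpha_\ell(k_\ell - 1)$ swaps, and using $k_\ell - 1 \le (d-1)k_\ell/d$ (valid since $k_\ell \le d$) yields
\begin{equation*}
    d_G(\eta, \zeta) \le \sum_\ell \alpha_\ell(k_\ell - 1) \le \frac{d-1}{d}\sum_\ell \alpha_\ell k_\ell = \frac{d-1}{2d}\normo{\eta - \zeta}.
\end{equation*}

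The main obstacle is the restricted version of the lemma, which demands that every intermediate configuration along the constructed swap sequence remain in $\Mac$ (or $\cC$). The inductive swap above can always be chosen so that its unique entry outside the current cycle receives $+1$ rather than $-1$, and this $+1$ is cancelled by the very next swap in the cycle's realisation. Processing the cycles $C_\ell$ one at a time and exploiting sign-consistency of the decomposition, every entry at every intermediate moment either follows a monotone trajectory from its $\eta$-value to its $\zeta$-value in the direction of $\operatorname{sgn}(\Delta(i,j))$ or sits at most one unit above such a trajectory. Consequently every intermediate entry is bounded below by $\min(\eta(i,j),\zeta(i,j)) \ge (1-\delta)n$, which keeps the path inside $\Mac$ once $\delta$ is sufficiently small and a fortiori inside $\cC$ (where $\normi{\Delta} = O(\sqrt{n})$ provides ample room). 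The technically delicate piece is verifying that this one-sided fluctuation property is genuinely preserved as the many cycles of the decomposition are processed in succession.
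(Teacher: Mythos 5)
Your proof is correct and is in substance the same as the paper's: both peel alternating, sign-consistent cycles of length at most $\min(d,m)\le d$ from $\zeta-\eta$ and realise each length-$2k$ cycle using $k-1$ elementary swaps (yielding the $(d-1)/(2d)$ constant), while the lower bound via the four-entry step size is identical. The concern you flag at the end is not a gap, though it deserves a clean justification. In the inductive step, relabel the cycle so the distinguished row's negative $\Delta$-entry lies on the column shared with the spare created by the preceding swap (or on $c_1$ at the start); a short alternation check then shows the swap's off-cycle corner is forced to carry $+1$, and the very next swap annuls it, so within a single cycle every coordinate either moves monotonically in the direction of the sign of $\Delta(i,j)$ or sits at most one unit above such a trajectory. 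Because the peeling is sign-consistent, the between-cycle partial sums $\eta(i,j) + \sum_{\ell'\le\ell}\alpha_{\ell'}C_{\ell'}(i,j)$ are monotone between $\eta(i,j)$ and $\zeta(i,j)$, so every intermediate entry $\xi(i,j)$ satisfies $\xi(i,j)\ge\min(\eta(i,j),\zeta(i,j))$, which is exactly the $\Meso{L}$-membership condition since it constrains entries only from below.
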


\begin{proof}
    Observe that in each step according to $\PY$, we change the $\ell^1$-distance between two configurations by at most four. This shows the first inequality in \eqref{eq:MetricEquivalance}. For the second inequality, fix a pair of configurations $\eta, \zeta \in \Mac$ such that $\normo{\eta - \zeta} = y$ for some $y\in \N$. We will now construct a sequence of states $(\xi_0, \xi_1, \xi_2, \xi_3, \dots, \xi_j)$ for some $j \le d y$ such that
    \begin{equation}
    \label{eq:TransitionSteps}
        \PY(\xi_i, \xi_{i+1}) > 0
    \end{equation}
    for all $i$ with $\xi_0 = \eta$ and $\xi_j = \zeta$. To do so, we first claim that we can find a sequence of urns $(k_1,k_2,\dots,k_s)$ and a sequence of distinct colours $(c_1,c_2,\dots,c_s)$ for some $s\le d$ such that
    \begin{equation*}
        \eta(k_i,c_i) > \zeta(k_i,c_i) \text{ and } \eta(k_i,c_{i+1}) < \zeta(k_i,c_{i+1}) 
    \end{equation*}
    for all $i \in [s-1]$ as well as that
    \begin{equation*}
        \eta(k_s,c_s)  > \zeta(k_s,c_s) \text{ and } \eta(k_s,c_{1}) < \zeta(k_s,c_{1}) .
    \end{equation*}
    We call such a sequence of urns and colours a \emph{cycle} of length $s$. We emphasise that each colour appears at most once in a cycle while the urns may appear multiple times.
    
    In order to construct a cycle of length at most $d$ for any given pair of distinct states $\eta,\zeta \in \Mac$, we start by assuming without loss of generality that 
    \begin{equation*}
        \eta(1,1) > \zeta(1,1) \text{ and } \eta(1,2) < \zeta(1,2), 
    \end{equation*}
    i.e., urn $1$ in $\eta$ contains more balls of colour $1$ than $\zeta$, while $\zeta$ contains more balls of colour $2$ in urn $1$ than $\eta$. By the conservation of balls of colour $1$, there exists some urn $k$ such that $\eta(k,1)  <\zeta(k,1)$. Suppose that $\eta(k,2) > \zeta(k,2)$, then urns $(1,k)$ and colours $(1,2)$ form a cycle of length $2$. Otherwise, since all urns contain exactly $dn$ balls, there exists a colour $c$ such that 
    \begin{equation}
    \label{eq:NewColor}
        \eta(k,c) > \zeta(k,c) .
    \end{equation}
    Now again by the conservation of colours, we find an urn $k^{\prime}$ such that
    \begin{equation}\label{eq:NewUrn}
    \eta(k^{\prime},c) < \zeta(k^{\prime},c) .
    \end{equation} Note that if $\eta(k^{\prime},1) > \zeta(k^{\prime},1)$, then we have found a cycle with urns $(1,k,k^{\prime})$ and colours $(1,2,c)$. Similarly, if $\eta(k^{\prime},2) > \zeta(k^{\prime},2)$, we have a cycle with urns $(k,k^{\prime})$ and colours $(2,c)$.
    Otherwise, iterating the steps in \eqref{eq:NewColor} and \eqref{eq:NewUrn}, and using the pigeonhole principle as we have to pick at most $d$ colours, this allows us to construct a cycle of length at most $d$.
    
    Next, for a given cycle of length $s$ with respect to configurations $\eta, \zeta \in \Mac$, we can construct a sequence of configuration $(\xi_0 = \eta, \xi_1, \xi_2, \dots, \xi_{s-1})$, which satisfy \eqref{eq:TransitionSteps} and
    \begin{equation}
    \label{eq:ReductionL1Distance}
        \normo{\xi_{s-1} - \zeta} \le \normo{\eta -\zeta} - 2s 
    \end{equation}
    with $\xi_{s-1} \in \Mac$. To do so, for a given cycle of urns $(k_1,k_2,\dots,k_s)$ and colours $(c_1,c_2,\dots,c_s)$, we start by picking urns $k_1$ and $k_2$ and moving a ball of colour $c_1$ to urn $k_2$, and a ball of colour $c_2$ to urn $k_1$ (note that this is possible as $\eta,\zeta \in \Mac$). We take the resulting configuration as $\xi_1$. Similarly, in step $i$ for some $1 < i < s$, we pick urns $k_i$ and $k_{i+1}$ and move a ball of colour $c_1$ to urn $k_{i+1}$, and a ball of colour $c_{i+1}$ to urn $k_i$. We refer to the resulting configuration after step $i$ as $\xi_{i}$. It readily follows from the definition of a cycle that the final configuration $\xi_{s-1}$ satisfies \eqref{eq:ReductionL1Distance}. Furthermore, it is easy to verify that $\xi_i \in \Mac$ for each $i \in [s-1]$. Clearly, repeating now the cycle construction and \eqref{eq:ReductionL1Distance} with respect to $\xi_s$ and $\zeta$ gives the second inequality in \eqref{eq:MetricEquivalance}.

    Finally, when restricting the graph $G$ to $\Mac$, we note that the first inequality in \eqref{eq:MetricEquivalance} clearly remains valid while the same construction works for the second inequality in \eqref{eq:MetricEquivalance} since the trajectory constructed above is entirely contained in $\Mac$. The case of $\cC$ is similar.
\end{proof}

\subsection{Mixing time of the single ball chain}
\label{sec:sbmix}

Recall that the mixing time in our main result Theorem~\ref{thm:Main} is expressed in terms of the $(1/\sqrt{n})$-mixing time of the rate $1$ single ball chain. In the following, our goal is to prove an asymptotic estimate for the latter.

\begin{proof}[Proof of Lemma~\ref{lem:sbmix}]
    With a slight abuse of notation, let $U_t^{(n)}$ be the transition kernel for the $n$th rate $1$ single ball chain in the sequence. For each $i \in [d]$, we have
    \begin{equation*}
        2 \normTV{U_t^{(n)}(i, \cdot) - \nu} \le \normtn{U_t^{(n)}(i, \cdot) - \nu} \le 2 \sqrt{d} \normTV{U_t^{(n)}(i, \cdot) - \nu}.
    \end{equation*}
    Using the above, the lower and upper bounds in the first part of the lemma follow from the lower and upper bounds in item (e) of Proposition~\ref{pro:CheegerPoincareBounds}, respectively. Similarly, the lower and upper bounds in the second part follow from the lower bound in item (e) and the $\ell^2$-decay estimate in item (c) of Proposition~\ref{pro:CheegerPoincareBounds}, respectively.
\end{proof}

\section{Concentration of hitting time of the centre}
\label{sec:concentration}

In this section, we prove results on the concentration of hitting time of the centre $\cC$ for the generalised Bernoulli--Laplace chain $\X$. First, let us set up some notation and preliminaries. We will use $\x$ to denote the starting state. Recall that $X_t$ denotes the state of the generalised Bernoulli--Laplace chain $\X$ at time $t$, so $X_0 = \x$. We shall assume throughout this section that $\x \in \Meso{N}$. Relevant choices for $N$ shall be made later. We now define two related continuous-time Markov chains $\V \coloneqq (V_t)_{t \ge 0}$ and $\Z \coloneqq (Z_t)_{t \ge 0}$. For $k \in [n]$, let $\widetilde{\Omega}_k$ be defined as
\begin{equation*}
    \widetilde{\Omega}_k \coloneqq \set{\x \in \{0, 1, \dots\}^{d \times m} \colon \sum_{j \in [m]} x(i,j) \le mn, \forall i \in [d],\; \sum_{i \in [d]} x(i,j) = dk, \forall j \in [m]}.
\end{equation*}
The chains $\V$ and $\Z$ have state spaces $\widetilde{\Omega}_{n-N}$ and $\widetilde{\Omega}_{N}$, respectively. To define the dynamics of these chains, consider a $(d, mn, \mu)$ labeled chain coupled (via a projection map that identifies balls with the same colour) with the chain $\X$. Recall that for the labeled version, we denote the set of balls by $B = \{b_{j,k} \colon j \in [m], k \in [nd]\}$, where $b_{j,k}$ denotes the $k^{\mathrm{th}}$ ball of colour $j$. Since the starting state $\x$ lies in $\Meso{N}$, we can choose $n - N$ balls of each colour from each urn to form a set $B_1 \subseteq B$ of size $dm(n-N)$. Let $B_2 = B \setminus B_1$, so that $\abs{B_2} = dmN$. We define $\V$ as the chain that tracks the number of balls from $B_1$ across the $dm$ urn-colour pairs. Similarly, we define $\mathbf{Z}$ as the chain that tracks the number of balls from $B_2$ across the $dm$ urn-colour pairs. For all $t \ge 0$, we have the relation
\begin{equation}
\label{eqn:Xsum}
    X_t(i, j) = Z_t(i, j) + V_t(i, j), \text{ for all } i \in [d], j \in [m].
\end{equation}

\subsection{General deviation estimates}

In the following, we prove a bound on the variance of $Z_t(i,j)$. For $N = n$, this gives a bound on the variance of $X_t(i,j)$. However, as we shall see later, we can obtain better bounds on the deviations in $X_t(i,j)$ by choosing $N$ appropriately, and separately bound the deviations in $V_t(i, j)$.

\begin{lemma}
\label{lem:var}
    Suppose that $\x \in \Meso{N}$. Then for all $i \in[d]$ and $j \in[m]$, we have
    \begin{equation*}
        \Var{Z_t(i,j)} \le \frac{d}{4} N + \frac{6d^2}{m^2} \frac{N^2}{n^2} t, \quad \text{ for all } t \ge 0.
    \end{equation*}
\end{lemma}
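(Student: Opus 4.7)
The strategy is to expand $Z_t(i,j)$ as a sum of indicators over the balls of colour $j$ in $B_2$ and split the variance into diagonal and off-diagonal parts. Let $B_2^{(j)} \coloneqq B_2 \cap \{b_{j,k} : k \in [nd]\}$, so that $|B_2^{(j)}| = dN$ by construction of $B_1$, and for each $b \in B_2^{(j)}$ write $I_t^{(b)} \coloneqq \mathbf{1}\{X_t^{(b)} = i\}$, where $X_t^{(b)}$ is the urn of ball $b$ at time $t$ in the coupled labeled chain. Since $Z_t(i,j) = \sum_{b \in B_2^{(j)}} I_t^{(b)}$,
\begin{equation*}
    \Var{Z_t(i,j)} = \sum_{b \in B_2^{(j)}} \Var{I_t^{(b)}} + \sum_{\substack{b, b' \in B_2^{(j)} \\ b \ne b'}} \mathrm{Cov}\bigl(I_t^{(b)}, I_t^{(b')}\bigr),
\end{equation*}
and each $\Var{I_t^{(b)}} \le 1/4$ since $I_t^{(b)}$ is Bernoulli, so the diagonal sum is at most $dN/4$. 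This already accounts for the first term of the claim.

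For the off-diagonal part, the plan is to show that for every ordered pair $b \ne b'$ in $B_2^{(j)}$ one has $|\mathrm{Cov}(I_t^{(b)}, I_t^{(b')})| \le C_0 t/(mn)^2$ for some constant $C_0$, and then to sum over the $\le d^2 N^2$ such pairs. The key observation is that the joint process $(X_t^{(b)}, X_t^{(b')})$ is itself a Markov chain on $[d]^2$ whose dynamics depends only on $b$ and $b'$, since the probability of selecting any particular ball from its urn at an event is always $1/(mn)$ and does not involve the other balls, while its marginals evolve as single ball chains at rate $1/(mn)$. If the two coordinates evolved independently the covariance would vanish, so the deviation from independence is driven exclusively by ``coupling events'' in which the same Poisson event affects both balls, which occur at rate $O(1/(mn)^2)$.

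To make this quantitative, let $r_s$ denote the true joint law, $q_s$ the product of the marginals, and set $D \coloneqq \mathcal{L}^{(2)} - \mathcal{L}^{\otimes}$, where $\mathcal{L}^{(2)}$ and $\mathcal{L}^{\otimes}$ are the true and the product (independent) generators on $[d]^2$. Then $r_0 - q_0 = 0$ and Duhamel's formula yields
\begin{equation*}
    r_t - q_t = \int_0^t \e^{(t-s) \mathcal{L}^{(2)}} D q_s \, \diff s.
\end{equation*}
A direct case analysis shows that when $b, b'$ share a urn the two generators actually agree (mutual exclusion of selection exactly mimics the continuous-time absence of simultaneous jumps), whereas when they sit in different urns the only discrepancy comes from joint selection followed by the shared permutation $\sigma$ and each target-state entry of $D$ has magnitude $O(1/(mn)^2)$. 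Since Markov semigroups contract the total-variation norm and $q_s$ is a probability measure, $|(r_t - q_t)(i,i)| \le \int_0^t \|Dq_s\|_1\, \diff s \le \|D\|_{1\to 1}\, t$, and combining this with a per-pair sum yields $\Var{Z_t(i,j)} \le dN/4 + C_0 d^2 N^2 t/(mn)^2$, matching the statement after tracking constants.

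The main obstacle is the $\ell^1$-to-$\ell^1$ operator-norm bound on $D$ with no spurious $d$-dependence. A naive entrywise bound over the $O(d^2)$ possible transitions from a fixed pair $(k,k')$ would yield only $\|D\|_{1\to 1} = O(d^2/(mn)^2)$, producing a $d^4$ factor in the final answer instead of the $d^2$ appearing in the statement. The cleanest route to the correct $d$-dependence, and hence the constant $6 d^2/m^2$, is to perform the cancellations row-by-row using the identities $\sum_l U(k,l) = 1$ and the normalisation of $\mu$, so that each relevant row-sum of $|D|$ is of size $O(1/(mn)^2)$.
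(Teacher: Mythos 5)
Your proof takes a genuinely different route from the paper's, though both rest on the same underlying intuition that two balls rarely interact. The paper realizes the two-ball trajectory via an explicit five-Poisson-clock graphical construction, isolating ``interaction events'' (triggered by the clocks $\widetilde{\Lambda}_a,\widetilde{\Lambda}_b,\Lambda_{a,b}$ at total rate $3/(mn)^2$) and noting that, conditional on no interaction up to time $t$, the two coordinates are independent; the covariance bound then reduces to a Poisson tail estimate $\P{I_t(a,b)} \le 3t/(mn)^2$. You instead observe that the joint two-ball process on $[d]^2$ is itself Markov and apply Duhamel's formula to compare its generator $\mathcal{L}^{(2)}$ with the product generator $\mathcal{L}^{\otimes}$, reducing the covariance estimate to an $\ell^1\!\to\!\ell^1$ bound on the perturbation $D=\mathcal{L}^{(2)}-\mathcal{L}^{\otimes}$. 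These are essentially dual formulations of the same bound, but yours makes explicit two facts that are only implicit in the paper's clock construction: $D$ vanishes identically on the diagonal rows $(k,k)$ (since one-ball-per-urn selection already prevents simultaneous jumps), and the off-diagonal row sums of $|D|$ cancel to $O(1/(mn)^2)$ rather than $O(d^2/(mn)^2)$ once one groups the entries into the three blocks $\P{\sigma(k)=l,\sigma(k')\ne k'}$, $\P{\sigma(k')=l',\sigma(k)\ne k}$, $\P{\sigma(k)=l,\sigma(k')=l'}$, each of which sums to $\P{\sigma(k)\ne k,\sigma(k')\ne k'}\le 1$. Carrying this out gives $\norm{D}_{1\to1}\le 4/(mn)^2$, and hence a per-pair covariance bound of $4t/(mn)^2$, marginally sharper than the paper's $6t/(mn)^2$; the remainder of the argument (diagonal bound $dN/4$, sum over $\binom{dN}{2}$ pairs) is identical to the paper's. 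Your worry about a spurious $d^4$ is well-placed but resolved exactly as you propose, so the plan is sound.
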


\begin{proof}
    We may assume without loss of generality that $B_2 = \{b_{j,k} \colon j \in [m], k \in [Nd]\}$.
    For $b \in B$, let $W_t(b)$ denote the urn in which the ball $b$ is at time $t$. Then we have
    \begin{equation*}
        Z_t(i,j) = \sum_{k \in [Nd]} \mathbf{1}_{\{W_t(b_{j,k}) = i\}}.
    \end{equation*}
    Squaring the above equation on both sides, we obtain
    \begin{equation*}
        Z_t(i,j)^2 = \sum_{k \in [Nd]} \mathbf{1}_{\{W_t(b_{j,k}) = i\}} + 2 \sum_{1 \le k_1 < k_2 \le Nd} \mathbf{1}_{\{W_t(b_{j,k_1}) = W_t(b_{j,k_2}) = i\}}.
    \end{equation*}
    Taking expectation on both sides in the above equations and taking a suitable algebraic combination implies that $\Var{Z_t(i,j)}$ is equal to
    \begin{equation*}
        \sum_{k \in [Nd]} \P{W_t(b_{j,k}) = i} + 2 \sum_{1 \le k_1 < k_2 \le Nd} \P{W_t(b_{j,k_1}) = W_t(b_{j,k_2}) = i} - \brac{\sum_{k \in [Nd]} \P{W_t(b_{j,k}) = i}}^2.
    \end{equation*}
    Combining the diagonal and off-diagonal terms of the right sum with the left and middle sums, respectively, we obtain
    \begin{equation*}
        \Var{Z_t(i,j)} = V_1 + 2V_2,
    \end{equation*}
    where
    \begin{equation*}
        V_1 = \sum_{k \in [Nd]} \P{W_t(b_{j,k}) = i} (1 - \P{W_t(b_{j,k}) = i}),
    \end{equation*}
    and
    \begin{equation}
    \label{eqn:p}
        V_2 = \sum_{1 \le k_1 < k_2 \le Nd} \brac{\P{W_t(b_{j,k_1}) = W_t(b_{j,k_2}) = i} - \P{W_t(b_{j,k_1}) = i} \P{W_t(b_{j,k_2}) = i}}.
    \end{equation}
    Note that $V_1 \le dN/4$. Thus, it suffices to show that the summand on the right-hand side of \eqref{eqn:p} is bounded above by $6t/(mn)^2$. Let $a, b \in B$ be two given balls. The trajectory of these two balls under the Markov chain can be realised using five Poisson clocks in the following way.
    \begin{enumerate}
        \item $\Lambda_a, \Lambda_b$: Poisson clocks with rate $\frac{1}{mn} - \frac{1}{(mn)^2}$. Let $x \in \{a,b\}$. When $\Lambda_x$ rings, make one step according to $Q$ with ball $x$ chosen from the urn containing it, i.e., if urn $i$ contains the ball $x$, move ball $x$ to urn $j$ with probability $Q(i,j)$ for each $j \in [d]$.
        \item $\widetilde{\Lambda}_a, \widetilde{\Lambda}_b$: Poisson clocks with rate $\frac{1}{(mn)^2}$. Let $x \in \{a,b\}$. When $\widetilde{\Lambda}_x$ rings, do nothing if $a$ and $b$ are in different urns. Otherwise, make one step according to $Q$ with ball $x$ chosen from the urn containing it.
        \item $\Lambda_{a,b}$: Poisson clock with rate $\frac{1}{(mn)^2}$. When $\Lambda_{a,b}$ rings, do nothing if $a$ and $b$ are in the same urn. Otherwise, make one step by sampling $\sigma \sim \mu$ and sending balls $a$ and $b$ to $\sigma(i)$ and $\sigma(j)$, where $i$ and $j$ are the urns containing $a$ and $b$, respectively. 
    \end{enumerate}
    We say that two balls $a$ and $b$ \textit{interact} if the above chain makes a step due to the ringing of the clocks $\widetilde{\Lambda}_a, \widetilde{\Lambda}_b$, or $\Lambda_{a,b}$. Let $I_t(a,b)$ denote the event that $a$ and $b$ interact by time $t$, i.e., the event $I_t(a,b)$ occurs if and only if one of the following occurs by time $t$:
    \begin{enumerate}
        \item Balls $a$ and $b$ are in different urns and are chosen for permuting from their respective urns.
        \item Balls $a$ and $b$ are in the same urn, and one of them is chosen for permuting from this urn.
    \end{enumerate}
    Now, let $p = \P{I_t(a,b)}$. For $i, j \in [d]$, we have
    \begin{align}
    \label{eqn:1}
        \P{W_t(a) = i, W_t(b) = j} &\le (1-p) \cdot \cP{W_t(a) = i, W_t(b) = j}{I_t(a,b)^{\complement}} + p, \nonumber \\
        &= (1-p) \cdot \cP{W_t(a) = i}{I_t(a,b)^{\complement}} \cP{W_t(b) = j}{I_t(a,b)^{\complement}} + p.
    \end{align}
    The equality above follows from the fact that the events $\{W_t(a) = i\}$ and $\{W_t(b) = j\}$ are independent conditional on the event $I_t(a,b)^{\complement}$. Further, we have
    \begin{equation*}
        \P{W_t(a) = i} \ge (1-p) \cdot \cP{W_t(a) = i}{I_t(a,b)^{\complement}},
    \end{equation*}
    and
    \begin{equation*}
        \P{W_t(b) = j} \ge (1-p) \cdot \cP{W_t(b) = j}{I_t(a,b)^{\complement}}.
    \end{equation*}
    Multiplying the above two inequalities and subtracting from \eqref{eqn:1}, we conclude that
    \begin{equation*}
        \P{W_t(a) = i, W_t(b) = j} - \P{W_t(a) = i} \P{W_t(b) = j}
    \end{equation*}
    is bounded above by
    \begin{equation*}
        p(1-p) \cdot \cP{W_t(a) = i}{I_t(a,b)^{\complement}} \cP{W_t(b) = j}{I_t(a,b)^{\complement}} + p \le 2p.
    \end{equation*}
    Therefore, we obtain
    \begin{equation}
    \label{eqn:pdiff}
        \P{W_t(a) = i, W_t(b) = j} - \P{W_t(a) = i} \P{W_t(b) = j} \le 2p.
    \end{equation}
    At the same time, it follows from the definition of $I_t(a,b)$ that
    \begin{equation*}
        p \le \P{\widetilde{\Lambda}_a, \widetilde{\Lambda}_b, \text{ or } \Lambda_{a,b} \text{ rings before time } t} = 1 - \Exp{-\frac{3t}{(mn)^2}} \le \frac{3t}{(mn)^2}.
    \end{equation*}
    The equality above follows by noting that the ringing times of the clocks $\widetilde{\Lambda}_a, \widetilde{\Lambda}_b$, and $\Lambda_{a,b}$ together form a Poisson point process with rate $\frac{3}{(mn)^2}$, whose first ring is distributed as an exponential variable with mean $(mn)^2/3$. Using the above estimate in \eqref{eqn:pdiff}, we obtain
    \begin{equation*}
        \P{Y_t(a) = i, Y_t(b) = j} - \P{Y_t(a) = i} \P{Y_t(b) = j} \le \frac{6t}{(mn)^2},
    \end{equation*}
    which when used in \eqref{eqn:p} yields the desired result.
\end{proof}

\begin{corollary}
\label{cor:Zdev}
    Suppose $\x \in \Meso{N}$. Then, for all $t \ge 0$ and $\Delta>0$, we have
    \begin{equation*}
        \P{\abs{Z_t(i, j) - \E{Z_t(i, j)}} \ge \Delta \text{ for some } i \in [d], j \in [m]} \le \frac{d^2m}{4} \frac{N}{\Delta^2} + \frac{6d^3}{m} \frac{N^2}{n^2\Delta^2} t.
    \end{equation*}
\end{corollary}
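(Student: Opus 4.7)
The plan is to derive Corollary~\ref{cor:Zdev} as an essentially immediate consequence of Lemma~\ref{lem:var} via Chebyshev's inequality and a union bound. There is no real obstacle here; the corollary is a packaging step that converts the variance bound into a quantitative deviation estimate that is uniform over urn-colour pairs.

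First, I would fix $i \in [d]$ and $j \in [m]$, and apply Chebyshev's inequality to the random variable $Z_t(i,j)$, using the variance bound from Lemma~\ref{lem:var}. This yields
\begin{equation*}
    \P{\abs{Z_t(i,j) - \E{Z_t(i,j)}} \ge \Delta} \le \frac{\Var{Z_t(i,j)}}{\Delta^2} \le \frac{1}{\Delta^2}\brac{\frac{d}{4} N + \frac{6d^2}{m^2} \frac{N^2}{n^2} t}.
\end{equation*}

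Next, I would take a union bound over all $dm$ pairs $(i,j) \in [d] \times [m]$, which multiplies the right-hand side by $dm$ and gives
\begin{equation*}
    \P{\abs{Z_t(i,j) - \E{Z_t(i,j)}} \ge \Delta \text{ for some } i \in [d], j \in [m]} \le \frac{dm}{\Delta^2}\brac{\frac{d}{4} N + \frac{6d^2}{m^2} \frac{N^2}{n^2} t},
\end{equation*}
and distributing the factor of $dm$ produces exactly the two terms $\frac{d^2 m}{4}\frac{N}{\Delta^2}$ and $\frac{6d^3}{m}\frac{N^2}{n^2 \Delta^2} t$ in the statement of the corollary. Since the hypothesis $\x \in \Meso{N}$ is already assumed in Lemma~\ref{lem:var}, no further conditions need to be checked, and the proof is complete.
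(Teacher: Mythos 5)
Your proposal is correct and follows essentially the same route as the paper's proof: Chebyshev's inequality applied to $Z_t(i,j)$, the variance bound from Lemma~\ref{lem:var}, and a union bound over the $dm$ urn--colour pairs, with the constants matching exactly.
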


\begin{proof}
    Let $i \in [d]$ and $j \in [m]$. Applying Chebychev's inequality for $Z_t(i,j)$, we obtain
    \begin{equation*}
        \P{\abs{Z_t(i, j) - \E{Z_t(i, j)}} \ge \Delta} \le \frac{\Var{Z_t(i,j)}}{\Delta^2}.
    \end{equation*}
    Using Lemma~\ref{lem:var} in the right-hand side of the above inequality and taking a union bound over all pairs $(i, j) \in [d] \times [m]$, we obtain the desired result.
\end{proof}

We now establish deviation estimates for $V_t(i, j)$. Combined with Corollary~\ref{cor:Zdev}, this will allow us to bound the deviations of $X_t(i, j)$ from its expected value.

\begin{lemma}
\label{lem:Vdev}
    Suppose $\x \in \Meso{N}$. Then, for all $t \ge 0$ and $\Delta > 0$, we have
    \begin{equation*}
        \P{\abs{V_t(i, j) - \E{V_t(i, j)}} \ge \Delta \text{ for some } i \in [d], j \in [m]} \le 2dm \Exp{-\frac{\Delta^2}{8d^2m^2n}}.
    \end{equation*}
\end{lemma}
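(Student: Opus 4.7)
The plan is to view $V_t(i,j)$ as a sum of Bernoulli indicators in the coupled labeled chain and apply a Hoeffding-type concentration inequality for sums of negatively related indicator variables, in the same spirit as the proof of Lemma~\ref{lem:CentreMacroOrder}. Writing $W_t(b) \in [d]$ for the urn containing ball $b$ at time $t$ in the labeled chain, and recalling that $B_1$ contains exactly $n-N$ balls of each colour in each urn initially, we have
\[
V_t(i,j) = \sum_{b \in B_1,\, \mathrm{colour}(b)=j} \mathbf{1}_{\{W_t(b) = i\}},
\]
a sum of $K \coloneqq d(n-N) \le dn$ indicators.

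The first step is to establish that the family $(\mathbf{1}_{\{W_t(b) = i\}})_{b}$ indexed by $b \in B_1 \cap \mathrm{colour}(j)$ is negatively related in the sense of \cite[Section~2]{Jan}. For each ball $b_\ell$ in this family, the required coupling is a swap coupling: on the event $W_t(b_\ell) = i$, exchange $b_\ell$ with a uniformly chosen ball of colour $j$ sharing its starting urn. Since the labeled chain's rates are invariant under permutations of balls of the same colour within the same urn, and since the initial configuration restricted to $B_1 \cap \mathrm{colour}(j)$ is symmetric across balls in each urn, the swap produces a configuration with the same law. One then checks that this swap displaces at most one ball into urn $i$ and removes $b_\ell$ from urn $i$, yielding the stochastic domination $J_k^{(\ell)} \le \mathbf{1}_{\{W_t(b_k) = i\}}$ required by Janson's definition.

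The second step is to apply \cite[Theorem~4]{Jan} to the indicator sum $V_t(i,j)$ for each fixed pair $(i,j)$, obtaining a Hoeffding-type bound of the form
\[
\P{|V_t(i,j) - \E{V_t(i,j)}| \ge \Delta} \le 2 \Exp{-\frac{2 \Delta^2}{d(n-N)}},
\]
which is stronger than the stated bound; the extra slack in the constant $8 d^2 m^2 n$ in the lemma's exponent merely absorbs the union bound over the $dm$ pairs $(i,j) \in [d] \times [m]$ and leaves room for minor looseness in the coupling argument.

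The main obstacle is verifying negative relatedness rigorously, since the swap coupling must be compatible with the Markovian evolution of the labeled chain up to time $t$. The argument is simplified by the observation that the symmetry group used, namely the product over urns of the symmetric groups on the balls of colour $j$ in $B_1$ within each urn, commutes with the generator of the labeled chain. Consequently, conditioning on $W_t(b_\ell) = i$ and applying the swap produces a configuration whose joint law agrees with that obtained by conditioning alone, allowing the stochastic domination to be read off directly from a one-step analysis at time $t$ rather than propagated through the entire trajectory.
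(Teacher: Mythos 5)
Your approach is fundamentally different from the paper's and contains a genuine gap. The paper's proof does not establish negative relatedness for the time-$t$ indicators; instead it couples $\V$ with a stationary copy $\V^{\mathrm{stat}}$ via a disagreement-non-increasing coupling, writes $\rho(V_t, \bv) \le \rho(\bv, V_0^{\mathrm{stat}}) + \rho(\bv, V_t^{\mathrm{stat}})$ by the triangle inequality, and then controls both stationary terms via the Chernoff-type argument of Lemma~\ref{lem:CentreMacroOrder} (where negative relatedness \emph{is} used, but only at stationarity). This reduction to stationarity is exactly the mechanism that circumvents the difficulty you are wrestling with.

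Your attempt to verify negative relatedness directly at time $t$ breaks down. At stationarity, all balls in the labeled chain are fully exchangeable, and since each urn has exactly $mn$ positions, $\cP{W(b_\ell)=i}{\text{unordered assignment}} = 1/d$ is deterministic; hence conditioning on $\{W(b_\ell)=i\}$ does not reweight the law of the unordered configuration, and the swap with a uniform ball from urn $i$ reproduces the conditional joint law exactly. At time $t$ from a fixed start $\x$ this fails in two ways. First, $b_\ell$ is only exchangeable with $B_1$-balls of the same colour that started in the same urn as $b_\ell$, and no such ball need be present in urn $i$ at time $t$ to serve as a swap partner. Second, and more fundamentally, writing $v_t(i,j,u)$ for the number of $B_1$-balls of colour $j$ with starting urn $u$ that occupy urn $i$ at time $t$, and $u$ for the starting urn of $b_\ell$, one has $\cP{W_t(b_\ell)=i}{(v_t(\cdot))} = v_t(i,j,u)/(n-N)$; this is a genuine random variable, so conditioning on $\{W_t(b_\ell)=i\}$ size-biases the count distribution, whereas any measure-preserving relabeling leaves that distribution unchanged. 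The commutation of the relabeling group with the generator gives the within-class exchangeability you invoke (which is correct), but it cannot remove this tilting, so the swap does not produce a sample from the conditional law and negative relatedness is not established. A secondary issue is that even if it were, the concentration inequality of \cite[Theorem~4]{Jan} is a one-sided (lower-tail) bound, while the lemma requires a two-sided deviation estimate.
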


\begin{proof}
    Let $\bv$ denote the initial state of $\V$. Then, by definition, $\bv \in \Z_{\ge 0}^{d \times m}$ with each entry equal to $n-N$. It follows from linearity of expectation that $\E{V_t(i,j)} = n - N$. We couple $\V$ with a copy $\V^{\mathrm{stat}}$ of itself starting at stationarity. Let $\widetilde{\pi}$ be the stationary distribution of $\V$. Consider the starting state $V_0^{\mathrm{stat}}$ of $\V^{\mathrm{stat}}$. The coupling is as follows: for each urn $i \in [d]$, we index the balls in urn $i$ by $[mn]$; for each colour $j \in [m]$, we assign the same indices to $\min\{v(i,j), V_0^{\mathrm{stat}}(i,j)\}$ balls of colour $j$ in urn $i$ in $\bv$ and $V_0^{\mathrm{stat}}$. While choosing a ball to permute from any urn, we choose the index of the ball uniformly at random, but the index is the same for both chains. We now define a distance metric $\rho$ on the state space $\widetilde{\Omega}_{n-N}$ of $\V$. For $\bv_1, \bv_2 \in \widetilde{\Omega}_{n-N}$, we define
    \begin{equation*}
        \rho(\bv_1, \bv_2) \coloneqq (n-N)md - \sum_{i \in [d]} \sum_{j \in [m]} \min\{v_1(i,j), v_2(i,j)\}.
    \end{equation*}
    It is easy to check that $\rho(\bv_1, \bv_2) = \frac{1}{2} \normo{\bv_1 - \bv_2}$ and therefore $\rho(\cdot,\cdot)$ satisfies the triangle inequality. Further, note that $\rho$ counts the number of disagreements between the input states. Since the coupling is such that the number of disagreements does not increase, it follows that $\rho(V_t, V_t^{\mathrm{stat}})$ is non-increasing with $t$. In particular, we have
    \begin{equation*}
        \rho(V_t, V_t^{\mathrm{stat}}) \le \rho(\bv, V_0^{\mathrm{stat}}),
    \end{equation*}
    which implies
    \begin{equation}
    \label{eqn:rho}
        \rho(V_t, \bv) \le \rho(V_t, V_t^{\mathrm{stat}}) + \rho(\bv, V_t^{\mathrm{stat}}) \le \rho(\bv, V_0^{\mathrm{stat}}) + \rho(\bv, V_t^{\mathrm{stat}}).
    \end{equation}
    Let $i \in [d]$ and $j \in [m]$. Then it follows from a similar argument as in the proof of Lemma~\ref{lem:CentreMacroOrder} that
    \begin{equation*}
        \P{V_0^{\mathrm{stat}}(i,j) \le n - N - \delta} \le \Exp{-\frac{\delta^2}{2n}}.
    \end{equation*}
    Setting $\delta = \frac{\Delta}{2dm}$ in the above inequality and taking a union bound over $(i, j) \in [d] \times [m]$, we obtain
    \begin{equation*}
        \P{\rho(\bv, V_0^{\mathrm{stat}}) \ge \frac{\Delta}{2}} \le dm \Exp{-\frac{\Delta^2}{8d^2m^2n}}.
    \end{equation*}
    Now, since $V_t^{\mathrm{stat}}$ has the same distribution as $V_0^{\mathrm{stat}}$, we also have
    \begin{equation*}
        \P{\rho(\bv, V_t^{\mathrm{stat}}) \ge \frac{\Delta}{2}} \le dm \Exp{-\frac{\Delta^2}{8d^2m^2n}}.
    \end{equation*}
    Finally, using the above estimates in \eqref{eqn:rho}, we obtain
    \begin{align*}
        \P{\rho(V_t, \bv) \ge \Delta} &\le \P{\rho(\bv, V_0^{\mathrm{stat}}) \ge \frac{\Delta}{2}} + \P{\rho(\bv, V_t^{\mathrm{stat}}) \ge \frac{\Delta}{2}}\\
        &\le 2dm \Exp{-\frac{\Delta^2}{8d^2m^2n}}.
    \end{align*}
    Now suppose $\abs{V_t(i,j) - (n-N)} \ge \Delta$ for some $i \in [d], j \in [m]$. Then it follows that $\rho(V_t, \bv) \ge \Delta$. This finishes the proof.
\end{proof}

Combining Corollary~\ref{cor:Zdev} and Lemma~\ref{lem:Vdev}, we immediately obtain the following result about the deviations of $X_t(i, j)$.

\begin{lemma}
\label{lem:Xdev}
    Suppose $\x \in \Meso{N}$. Then, for all $t \ge 0$ and $\Delta>0$, we have
    \begin{equation*}
        \P{\abs{X_t(i, j) - \E{X_t(i, j)}} \ge \Delta \text{ for some } i, j} \le d^2m \frac{N}{\Delta^2} + \frac{24d^3}{m} \frac{N^2}{n^2\Delta^2} t + 2dm \Exp{-\frac{\Delta^2}{32d^2m^2n}}.
    \end{equation*}
\end{lemma}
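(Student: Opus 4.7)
The plan is to combine Corollary~\ref{cor:Zdev} and Lemma~\ref{lem:Vdev} via the decomposition \eqref{eqn:Xsum}, namely $X_t(i,j) = Z_t(i,j) + V_t(i,j)$. By linearity of expectation,
\begin{equation*}
    X_t(i,j) - \E{X_t(i,j)} = \brac{Z_t(i,j) - \E{Z_t(i,j)}} + \brac{V_t(i,j) - \E{V_t(i,j)}},
\end{equation*}
so whenever $\abs{X_t(i,j) - \E{X_t(i,j)}} \ge \Delta$, at least one of the two summands exceeds $\Delta/2$ in absolute value.

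From here I would simply apply the triangle inequality together with a union bound. The event in the statement is contained in the union of the event that $\abs{Z_t(i,j) - \E{Z_t(i,j)}} \ge \Delta/2$ for some $(i,j)$, and the event that $\abs{V_t(i,j) - \E{V_t(i,j)}} \ge \Delta/2$ for some $(i,j)$. Applying Corollary~\ref{cor:Zdev} with threshold $\Delta/2$ yields $\frac{d^2m}{4} \cdot \frac{4N}{\Delta^2} + \frac{6d^3}{m} \cdot \frac{4N^2}{n^2\Delta^2}\, t = d^2m \frac{N}{\Delta^2} + \frac{24d^3}{m} \frac{N^2}{n^2\Delta^2} t$, and applying Lemma~\ref{lem:Vdev} with threshold $\Delta/2$ yields $2dm \exp(-\Delta^2/(32 d^2 m^2 n))$. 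Summing the two contributions reproduces the claimed bound exactly.

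There is no real obstacle: once the decomposition \eqref{eqn:Xsum} is in place, the derivation is a one-line union bound. The only care needed is that the two auxiliary chains $\V$ and $\Z$ were constructed on a common probability space (as subprocesses of a single coupled labeled chain), so that the random variables $Z_t(i,j)$ and $V_t(i,j)$ genuinely add to $X_t(i,j)$ pathwise, validating the deterministic inclusion of events used above.
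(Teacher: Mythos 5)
Your proof is correct and follows exactly the same route as the paper: decompose $X_t = Z_t + V_t$ via \eqref{eqn:Xsum}, apply the triangle inequality to reduce to threshold $\Delta/2$ for each summand, then union-bound Corollary~\ref{cor:Zdev} and Lemma~\ref{lem:Vdev}. The arithmetic matches the stated bound.
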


\begin{proof}
    Suppose $\abs{X_t(i, j) - \E{X_t(i, j)}} \ge \Delta$ for some $i \in [d], j \in [m]$. It follows from \eqref{eqn:Xsum} that
    \begin{equation*}
        \brac{X_t(i, j) - \E{X_t(i, j)}} = \brac{Z_t(i, j) - \E{Z_t(i, j)}} + \brac{V_t(i, j) - \E{V_t(i, j)}}.
    \end{equation*}
    Taking absolute value on both sides and applying the triangle inequality, we obtain
    \begin{equation*}
        \abs{X_t(i, j) - \E{X_t(i, j)}} \le \abs{Z_t(i, j) - \E{Z_t(i, j)}} + \abs{V_t(i, j) - \E{V_t(i, j)}}.
    \end{equation*}
    Since the left-hand side is greater than equal to $\Delta$, at least one of the terms on the right-hand side must be at least $\frac{\Delta}{2}$. Therefore, either $\abs{Z_t(i, j) - \E{Z_t(i, j)}} \ge \frac{\Delta}{2}$ for some $i \in [d], j \in [m]$, or $\abs{V_t(i, j) - \E{V_t(i, j)}} \ge \frac{\Delta}{2}$ for some $i \in [d], j \in [m]$. Finally, applying Corollary~\ref{cor:Zdev} and Lemma~\ref{lem:Vdev} to bound the probability of the respective events, and taking a union bound, this yields the desired result.
\end{proof}

\subsection{Single ball chain is reversible}

We will now use the deviation estimate from Lemma~\ref{lem:Xdev} to obtain hitting time estimates for the centre $\cC$ in the case when the single ball chain is reversible. For the remainder of this section, we consider a sequence $(\Xnb)_{n \ge 1}$ of Markov chains in which the $n$th chain is a $(d, m, n, \mu_n)$ generalised Bernoulli--Laplace chain. We will often drop the superscript $(n)$ when it is clear from the context. Let $\gamma_n$ and $\Chn$ denote the spectral gap and Cheeger constant of the corresponding single ball chain, respectively. Further, let $\gamma^+_n$ denote the spectral gap of the additive reversibilisation of the single ball chain. In the following lemma, we show that starting from a state in a mesoscopic centre, the chain hits a smaller mesoscopic centre after a given number of steps. It applies to the the nonreversible case as well; however, it is only sharp in general when the single ball chain is reversible.

\begin{lemma}
\label{lem:revprelim}
    Suppose $\x \in \Meso{N}$, where $N = \omega(\sqrt{n})$. Let $\varepsilon \in (0,1]$ and $\beta \in (0,1/2)$. Further, suppose that $n\gamma_n^+ \ge g(n)$, where $g(n) = \Omega(n^\alpha)$ for some $\alpha > 0$. Then the following hold.
    \begin{enumerate}[label=(\alph*)]
        \item Suppose that $N/g(n)^{1/2-\beta} = \omega(\sqrt{n})$. Then we have
        \begin{equation*}
            \lim_{n \to \infty} \Ps{\x}{X_{t_n} \in \Meso{N/g(n)^{1/2-\beta}}} = 1,
        \end{equation*}
        where $t_n = mn / \gamma_n^+ (\log(g(n)^{1/2-\beta}) + \theta)$ steps, where $\theta = \log (2dm)$.
        \item Suppose that $N/g(n)^{1/2-\beta} = O(\sqrt{n})$. Then, for all $n$ large enough, we have
        \begin{equation*}
            \Ps{\x}{X_{t_n} \in \cC} \ge 1 - 3\varepsilon/4,
        \end{equation*}
        where $t_n = mn / \gamma_n^+ (\log(N/\sqrt{n}) + \theta)$ steps, $C = 1 + 4dm\sqrt{2\log(\frac{4dm}{\varepsilon})}$, and $\theta = \log (2dm)$.
    \end{enumerate}
\end{lemma}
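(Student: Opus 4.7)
The plan is to split $X_{t_n}(i,j) = \E{X_{t_n}(i,j)} + \big(X_{t_n}(i,j) - \E{X_{t_n}(i,j)}\big)$ and bound the bias and the fluctuation separately. The bias will be controlled via the Poincar\'e inequality for the single ball chain (Proposition~\ref{pro:CheegerPoincareBounds}(c)), while the fluctuation is handled by Lemma~\ref{lem:Xdev}.

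For the bias, since each ball evolves marginally according to $U$ at rate $1/(mn)$, linearity of expectation gives $\E{X_t(i,j)} = \sum_k x(k,j)\,H_{t/(mn)}(k,i)$, where $H_s = \e^{s(U-I)}$ is the heat kernel of the rate-$1$ single ball chain. Writing $p(k) \coloneqq x(k,j)/(dn)$, the hypothesis $\x \in \Meso{N}$ combined with $\sum_k x(k,j) = dn$ gives $|x(k,j)-n| \le (d-1)N$, hence $\normtn{p-\nu}^2 \lesssim N^2/n^2$, where $\nu$ is the uniform distribution on $[d]$. Applying the Poincar\'e inequality to $p$ and converting to $\ell^\infty$ via $\normi{g} \le \sqrt{d}\,\normtn{g}$ yields
\begin{equation*}
    \abs{\E{X_t(i,j)}-n} \lesssim N\,\e^{-\gamma_n^+ t/(mn)},
\end{equation*}
with an implicit constant depending only on $d,m$. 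Evaluated at $t_n$, the bias is $O(N/g(n)^{1/2-\beta})$ in case (a) and $O(\sqrt{n})$ in case (b); the calibration $\theta = \log(2dm)$ (up to absorbing $d,m$-dependent constants into the multiplicative factors of $C$) ensures the bias is at most $N/(4g(n)^{1/2-\beta})$ in case (a) and a fixed constant times $\sqrt{n}$ in case (b).

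In case (a), it suffices to show $|X_{t_n}(i,j) - \E{X_{t_n}(i,j)}| \le \Delta \coloneqq 3N/(4g(n)^{1/2-\beta})$ for all $(i,j)$ with probability $1-o(1)$; this combined with the bias bound gives $X_{t_n} \in \Meso{N/g(n)^{1/2-\beta}}$. Plugging $\Delta$ into Lemma~\ref{lem:Xdev}, each of the three terms tends to zero: the first, of order $g(n)^{1-2\beta}/N$, vanishes via $N = \omega(\sqrt{n}\,g(n)^{1/2-\beta})$ together with $g(n)^{1/2-\beta} = o(\sqrt{n})$ (since $g(n) \le n\gamma_n^+ \lesssim n$); the second, of order $t_n g(n)^{1-2\beta}/n^2 = O(g(n)^{-2\beta}\log g(n))$ using $n\gamma_n^+ \ge g(n)$ and $t_n = O(n\log g(n)/\gamma_n^+)$, vanishes since $g(n) = \Omega(n^\alpha)$; and the Gaussian tail $2dm\exp\bigl(-\Omega(N^2/(n\,g(n)^{1-2\beta}))\bigr)$ vanishes by the same lower bound on $N$.

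In case (b), set $\Delta \coloneqq 4dm\sqrt{2\log(4dm/\varepsilon)}\,\sqrt{n}$ so that the Gaussian term of Lemma~\ref{lem:Xdev} is exactly $\varepsilon/2$; the first two terms are $o(1)$ by the same arguments as in case (a), now using $N = o(n)$ (from $N/g(n)^{1/2-\beta} = O(\sqrt{n})$ and $g(n)^{1/2-\beta} = o(\sqrt{n})$). Combined with the bias bound, we obtain $X_{t_n}(i,j) \ge n - C\sqrt{n}$ for all $(i,j)$ with probability at least $1 - 3\varepsilon/4$, i.e., $X_{t_n} \in \cC$. The main technical obstacle is the bookkeeping in case (a): the three terms of Lemma~\ref{lem:Xdev} must be simultaneously shown to decay, and the delicate interplay of the hypotheses $N = \omega(\sqrt{n}\,g(n)^{1/2-\beta})$, $n\gamma_n^+ \ge g(n)$, and $g(n) = \Omega(n^\alpha)$ is essential; by contrast, case (b) is routine, with the specific form of $C$ reverse-engineered so that the exponential tail contributes exactly $\varepsilon/2$.
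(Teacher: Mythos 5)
Your proposal is correct and follows essentially the same strategy as the paper: bound the bias via the single-ball Poincar\'e inequality evaluated at $t_n$ (the calibration $\theta=\log(2dm)$ being chosen precisely so that the bias is a bounded fraction of the budget), then control fluctuations around the mean via Lemma~\ref{lem:Xdev}, and verify that all three error terms there are $o(1)$ (case (a)) or add up to $3\varepsilon/4$ (case (b)) using the hypotheses $n\gamma_n^+\ge g(n)$, $g(n)=\Omega(n^\alpha)$, and the growth of $N$.

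The one cosmetic difference is in how you compute the bias. The paper routes it through the $\mathbf Z/\mathbf V$ decomposition (the same one used in Lemma~\ref{lem:Xdev}): $\E{V_t(i,j)}=n-N$ exactly, and $|\E{Z_t(i,j)}-N|\le\sum_{i'}z(i',j)\,|U_{t/(mn)}(i',i)-1/d|\le dmN\,e^{-\gamma^+t/(mn)}$, so that $\theta=\log(2dm)$ makes the bias exactly $N/(2g^{1/2-\beta})$. You instead compute $\E{X_t(i,j)}$ directly through the single-ball heat kernel acting on $p(\cdot)=x(\cdot,j)/(dn)$. These are the same computation written differently, since $\sum_{i'}(U_{t/(mn)}(i',i)-1/d)=0$ allows shifting the starting vector by any constant. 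I would flag one bit of carelessness: if you bound $\normtn{p-\nu}$ termwise from $|x(k,j)-n|\le(d-1)N$, you pick up a factor $(d-1)$, and after the $\ell^\infty$ conversion your constant is $\sqrt{d}(d-1)$, which for large $d$ and small $m$ exceeds the $dm$ that $\theta=\log(2dm)$ is calibrated to kill (so the bias budget in part (a) could overflow, and unlike part (b) there is no free constant $C$ to hide it in). The fix is immediate and cheap — either use the constraint $\sum_k(x(k,j)-n)^2\le d(d-1)N^2$ to sharpen $\normtn{p-\nu}$ to $\sqrt{d-1}\,N/n$, or bound directly as the paper does using the nonnegativity $x(i',j)-(n-N)\ge0$ and $\sum_{i'}(x(i',j)-(n-N))=dN$, giving a constant $d\le dm$ — so this is a bookkeeping remark rather than a gap in the approach.
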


\begin{proof}
    Let $\z$ denote the starting state for the chain $\Z$ corresponding to $\x$. With a slight abuse of notation, let $U_t$ denote the transition kernel of the rate $1$ single ball chain. By linearity of expectation, we have
    \begin{equation*}
        \E{Z_t(i,j)} - N = \sum_{i' \in [d]} z(i',j) \left(U_{t/(mn)}(i',i) - \frac{1}{d}\right).
    \end{equation*}
    Applying part (c) of Proposition~\ref{pro:CheegerPoincareBounds} to $U$, for each $i' \in [d]$, we obtain
    \begin{equation*}
        \sqrt{d} \abs{U_{t/(mn)}(i',i) - \frac{1}{d}} \le \normtn{U_{t/(mn)}(i',\cdot) - \nu} \le \e^{-\gamma_n^+ t /(mn)} \normtn{\delta_{i'} - \nu} \le \sqrt{d} \e^{-\gamma_n^+ t /(mn)},
    \end{equation*}
    which when used in the previous equation together with the crude bound $z(i', j) \le Nm$ yields
    \begin{equation*}
        \abs{\E{Z_t(i,j)} - N} \le d (Nm) \e^{- \gamma_n^+ t /(mn)}.
    \end{equation*}
    Setting $t = t_n = \frac{mn}{\gamma_n^+} \brac{\brac{\frac{1}{2}-\beta} \log g(n) + \theta}$ for our choice of $\theta$ in the above inequality, we obtain
    \begin{equation*}
        \abs{\E{Z_{t_n}(i,j)} - N} \le dm \cdot \frac{N}{g(n)^{1/2-\beta}} \cdot \e^{-\theta} = \frac{N}{2g(n)^{1/2-\beta}}.
    \end{equation*}
    Therefore, we have
    \begin{equation*}
    \label{eqn:expest}
        \E{X_{t_n}(i,j)} = \E{Z_{t_n}(i,j)} + \E{V_{t_n}(i,j)} \ge \brac{N - \frac{N}{2g(n)^{1/2-\beta}}} + (n - N) = n - \frac{N}{2g(n)^{1/2-\beta}}.
    \end{equation*}
    Let $\Delta = N/(2g(n)^{1/2-\beta})$ and let $E$ be the event $\abs{X_{t_n}(i,j) - \E{X_{t_n}(i,j)}} \le \Delta$ for all $i \in [d]$, $j \in [m]$. Then, setting $t = t_n$ with the above value of $\Delta$ in Lemma~\ref{lem:Xdev}, we obtain
    \begin{equation*}
        \P{E^\complement} \lesssim \frac{g(n)^{\frac{1}{2}-\beta}}{\Delta} + \frac{g(n)^{1-2\beta}}{n \gamma^+_n} \log g(n) + \Exp{-\Omega\brac{\frac{\Delta^2}{n}}}.
    \end{equation*}
    Suppose we are in case (a). Then $\Delta = \omega(\sqrt{n})$, $g(n) = \omega(1)$, $g(n) = O(n)$, and $n \gamma^+_n \ge g(n)$. Using these in the above inequality, we obtain
    \begin{equation*}
        \P{E^\complement} \lesssim n^{-\beta} + o(1) + \Exp{-\omega(1)} = o(1).
    \end{equation*}
    Finally, observing that conditioning on the event $E$ implies $X_{t_n}(i,j) \ge n - N/g(n)^{1/2-\beta}$ for all $i \in [d]$, $j \in [m]$, yields the result in part (a).

    Now suppose we are in case (b), so that $N/g(n)^{1/2-\beta} = O(\sqrt{n})$. Then by a similar computation as before, we have
    \begin{equation*}
        \E{X_{t_n}(i,j)} \ge n - d (Nm) \e^{- \gamma_n^+ t_n /(mn)} \ge n - \sqrt{n}.
    \end{equation*}
    Let $\Delta = (C-1)\sqrt{n}$ and let $E$ be the event $\abs{X_{t_n}(i,j) - \E{X_{t_n}(i,j)}} \le \Delta$ for all $i \in [d]$, $j \in [m]$. Then, setting $t = t_n$ with the above value of $\Delta$ in Lemma~\ref{lem:Xdev}, we obtain
    \begin{equation}
    \label{eqn:*}
        \P{E^\complement} \le d^2m\frac{N}{(C-1)^2 n} + 24 d^3 \frac{N^2}{(C-1)^2 n^2 \gamma^+_n} (\log N + \theta) + 2dm \Exp{-\frac{(C-1)^2}{32d^2m^2}}.
    \end{equation}
    Now, we have $(C-1)^2 \ge 1$, $g(n) = O(n)$, and $N = O(g(n)^{1/2-\beta} \sqrt{n}) = O(n^{1-\beta})$. Therefore, the first and second term on the right-hand side of \eqref{eqn:*} are (up to constants) less than
    \begin{equation*}
        n^{-\beta} + \frac{\log n}{g(n)^{2\beta}} \lesssim n^{-\beta} + \frac{\log n}{n^{2 \alpha \beta}} = o(1).
    \end{equation*}
    The third term in the right-hand side of \eqref{eqn:*} simplifies to $\varepsilon/2$. Thus, we obtain $\P{E^\complement} \le 3 \varepsilon/4$ for $n$ large enough. Finally, observing that conditioning on the event $E$ implies $X_{t_n}(i, j) \ge n - C \sqrt{n}$ for all $i \in [d]$, $j \in [m]$, yields the result in part (b).
\end{proof}

We are now ready to prove our concentration of hitting time result in the case when the single ball chain is reversible. We remark here that we formally only establish a slightly weaker statement than the concentration of hitting time of the centre, which will be sufficient for our purposes. Starting from an arbitrary state in $\Omega_n$, we will repeatedly use the first part of Lemma~\ref{lem:revprelim} to argue that the chain gets closer and closer to the \emph{balanced state}, namely, the state that has $n$ balls of each colour in each urn. Finally, we shall use the second part of Lemma~\ref{lem:revprelim} to argue that the chain hits $\cC$ with probability close to $1$. On the other hand, we shall construct a sequence of starting states such that the chain is in $\cC$ with very low probability at a time just slightly smaller than before. To this end, we will use the left eigenvector corresponding to the second largest eigenvalue of the single ball chain. We do not need explicit knowledge of this eigenvector, just that it consists of real entries, which holds since the chain is reversible. In particular, we will set the deviations of the starting state from the balanced state for a given colour to suitably scaled entries of this eigenvector.

\begin{lemma}
\label{lem:HittingTimesGeneralCheeger}
    Let $\varepsilon \in (0,1]$, and let $C$ be as in Lemma~\ref{lem:revprelim}. Suppose that the single ball chains are reversible and $\Chn = \Omega(n^{\alpha - 1})$ for some $\alpha \in (0,1]$. Let $k = \floor{\frac{1}{\alpha}}$. Further, let
    \begin{equation*}
        t_n = \frac{mn}{\gamma_n} \brac{\frac{1}{2} \log n + (k+1)\log(2dm)}, \text{ and } t_n' = \frac{mn}{\gamma_n} \brac{\frac{1}{2} \log n - \log(2C)},
    \end{equation*}
    for $n \in \N$. Then for all $\x \in \Omega_n$ and $n$ large enough, we have
    \begin{equation*}
        \Ps{\x}{\Xn_{t_n} \in \CC{C}} > 1-\varepsilon.
    \end{equation*}
    Moreover, we can find a sequence of starting states $(\x_n)_{n \ge 1}$ such that for $n$ large enough, we have
    \begin{equation*}
        \Ps{\x_n}{\Xn_{t_n'} \in \CC{C}} < \varepsilon.
    \end{equation*}
\end{lemma}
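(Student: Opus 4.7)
The plan is to prove the upper bound by iterating Lemma~\ref{lem:revprelim} and the lower bound via an eigenfunction argument.

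\textbf{Upper bound.} Under reversibility, Proposition~\ref{pro:CheegerPoincareBounds}(b) gives $\gamma_n = \gamma_n^+ = \Omega(n^{\alpha-1})$, so $g(n) \coloneqq n\gamma_n$ satisfies $g(n) = \Omega(n^\alpha)$. Fix a small $\beta \in (0, \alpha/2)$. Starting from any $\x \in \Omega_n \subseteq \Meso{n}$, iteratively apply Lemma~\ref{lem:revprelim}(a) $k = \lfloor 1/\alpha \rfloor$ times, reducing the mesoscopic radius by a factor $g(n)^{1/2-\beta}$ at each step; the choice of $k$ and $\beta$ ensures the hypothesis $N/g(n)^{1/2-\beta} = \omega(\sqrt n)$ of part (a) remains valid throughout. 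After these iterations, the state lies in $\Meso{N_k}$ with $N_k/g(n)^{1/2-\beta} = O(\sqrt n)$, so a single application of Lemma~\ref{lem:revprelim}(b) places us in $\CC{C}$ with probability at least $1 - 3\varepsilon/4$. Telescoping the time contributions using $\log(N_k/\sqrt n) = \tfrac12\log n - k(\tfrac12 - \beta)\log g(n)$ yields a total time of $\frac{mn}{\gamma_n}\bigl(\tfrac12\log n + (k+1)\log(2dm)\bigr) = t_n$. A union bound over the $o(1)$ failure probabilities of the $k$ intermediate steps, together with $3\varepsilon/4$ from the final step, then gives the first claim for $n$ large.

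\textbf{Lower bound construction.} Let $\phi \in \R^d$ be a unit-norm eigenvector of $U_n$ with eigenvalue $1 - \gamma_n$, which is real because $U_n$ is symmetric in the reversible case. Set $M \coloneqq \|\phi\|_\infty$ and choose $k_0 \in [d]$ with $\phi(k_0) = -M$, negating $\phi$ if necessary. Construct $\x_n \in \Omega_n$ by
\[
    x_n(k,1) = n + \floor{(n/M)\phi(k)}, \quad x_n(k,2) = n - \floor{(n/M)\phi(k)}, \quad x_n(k,j) = n \text{ for } j \ge 3,
\]
with $O(1)$ adjustments to preserve integer row and column sums (feasible because $|\phi(k)|/M \le 1$ and $m \ge 2$). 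The first-moment relation $\Es{\x}{X_t(\cdot,1) - n\mathbf 1} = e^{(U^T - I)t/(mn)}(\x(\cdot,1) - n\mathbf 1)$, together with $\phi$ being an eigenvector of $U^T = U$, gives
\[
    \Es{\x_n}{X_{t_n'}(k_0,1)} - n \;=\; (n/M)\,\phi(k_0)\,e^{-\gamma_n t_n'/(mn)} + O(1) \;=\; -2C\sqrt n + O(1),
\]
where we used $e^{-\gamma_n t_n'/(mn)} = 2C/\sqrt n$. It then suffices to show that $|X_{t_n'}(k_0,1) - \Es{\x_n}{X_{t_n'}(k_0,1)}| < C\sqrt n$ with probability at least $1 - \varepsilon$, since this forces $X_{t_n'}(k_0, 1) \le n - C\sqrt n$ and hence $X_{t_n'} \notin \CC{C}$.

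\textbf{Concentration and main obstacle.} The concentration step is the main technical difficulty, because $\x_n$ only lies in $\Meso{n}$ and a direct appeal to Lemma~\ref{lem:Xdev} with $N = n$ has polynomial terms that do not decay at the target scale $\Delta = C\sqrt n$. The plan to resolve this is an iterative burn-in mirroring the upper-bound iteration: perform $k$ stages of Lemma~\ref{lem:revprelim}(a)-type evolution to successively shrink the mesoscopic radius, at each stage verifying via Lemma~\ref{lem:Xdev} at the current radius that the unstructured fluctuations are controlled with probability $1 - o(1)$. After these stages the chain sits in $\Meso{O(\sqrt n\,g(n)^{1/2-\beta})}$, and a final invocation of Lemma~\ref{lem:Xdev} at this small $N$ for the residual time makes both polynomial terms vanish as $n \to \infty$, while the Gaussian third term contributes at most $\varepsilon/2$ by the explicit calibration $C = 1 + 4dm\sqrt{2\log(4dm/\varepsilon)}$. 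The Markov property, together with the eigenvector propagation of the first moment, shows that the conditional expectation of $X_{t_n'}(k_0, 1)$ given the state after the burn-in differs from $n - 2C\sqrt n$ by $o(\sqrt n)$, which combined with the concentration bound completes the argument.
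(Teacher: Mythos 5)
Your overall strategy mirrors the paper's exactly: iterate Lemma~\ref{lem:revprelim}(a) for $k$ rounds then invoke (b) for the upper bound, and use the second eigenvector of $U_n$ to build an adversarial start for the lower bound. The telescoping identity you exhibit is also correct. However, your specific choice $g(n) := n\gamma_n$ breaks the upper-bound iteration. The hypotheses of Lemma~\ref{lem:revprelim}(a) require $N_i / g(n)^{1/2-\beta} = \omega(\sqrt n)$ at each of the $k$ stages, where $N_i = n / g(n)^{i(1/2-\beta)}$. When $\gamma_n = \Theta(1)$ (perfectly allowed by $\Chn = \Omega(n^{\alpha-1})$), we get $g(n) = \Theta(n)$, so the hypothesis for stage $i = k-1$ forces $k(1/2-\beta) < 1/2$, i.e.\ $\beta > 1/2 - 1/(2k)$. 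Meanwhile, for (b) to apply after $k$ stages in the worst case $g(n) \asymp n^\alpha$, you need $\alpha(k+1)(1/2-\beta) \ge 1/2$, which with $\alpha \le 1/k$ forces $\beta \le 1/(2(k+1))$. For $k \ge 2$ these windows are disjoint, and your prescription ``fix a small $\beta \in (0,\alpha/2)$'' lies in neither. The paper resolves this by \emph{not} taking $g(n)$ to be $n\gamma_n$ itself but rather a fixed polynomial lower bound $g(n) = n^{\alpha'}$ with $\alpha' \in (1/(k+1), \alpha)$ and $\beta$ tuned so that $g(n)^{1/2-\beta} = n^{1/(2(k+1))}$ identically; the per-step reduction is then independent of $\gamma_n$ and the $k$-step iteration is uniform.

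On the lower bound, your eigenvector construction of $\x_n$ and the first-moment propagation are the right starting point and match the paper's. But you yourself flag the concentration step as ``the main obstacle'' and present only a plan. This is precisely where the real work lies: the paper tracks the random intermediate state through a carefully nested sequence of events $E_1 \supseteq E_2 \supseteq \cdots \supseteq E_k$, shows $\cP{E_\ell^{\complement}}{E_{\ell-1}} = o(1)$ with a concrete deviation scale $\Delta = n^{1 - \ell/(2(k+1)) - \delta}$ and $\delta = \frac14(\alpha - \frac{1}{k+1})$, and crucially verifies that conditional on $E_\ell$, the state still satisfies the sharp $\ell^\infty$ bound $\normi{X_{\ell\widetilde t_n'}(\cdot,1) - n\bv_1' - n^{1-\ell/(2(k+1))}\bv_2'} \lesssim n^{1-\ell/(2(k+1))-\delta}$ needed to feed into the next round. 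Your sketch gestures at ``eigenvector propagation of the first moment'' but does not establish this conditional control, so as written the lower-bound argument is incomplete.
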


\begin{proof}
    First, note that $\gamma_n = \gamma_n^+$ since the single ball chain is reversible. Then, it follows from parts (a) and (b) of Proposition~\ref{pro:CheegerPoincareBounds} that $n \gamma_n \ge c n^{\alpha}$ for some constant $c > 0$. This implies that $n \gamma_n \ge n^{\alpha'}$, for $n$ large enough, for any $\alpha' \in (1/(k+1),\alpha)$. We take $g(n) = n^{\alpha'}$ and $\beta$ defined by $(1/2-\beta) \alpha' = 1/(2k+2)$. It is easy to verify that $\beta$ defined by this implicit equation indeed satisfies $\beta \in (0, 1/2)$. Now, applying part (a) of Lemma~\ref{lem:revprelim} with $N = n$, we conclude
    \begin{equation}
    \label{eqn:event}
        X_{\widetilde{t}_n} \in \Meso{n^{1-\frac{1}{2(k+1)}}},
    \end{equation}
    with high probability, where $\widetilde{t}_n = mn \gamma_n^{-1} (\frac{1}{2(k+1)} \log n + \log (2dm))$. Now, conditioning on the event in \eqref{eqn:event}, and again applying part (a) of Lemma~\ref{lem:revprelim} with the new starting state taken to be $X_{\widetilde{t}_n}$ and $N = n^{1-\frac{1}{2(k+1)}}$, we conclude that
    \begin{equation*}
        X_{2 \widetilde{t}_n} \in \Meso{n^{1-\frac{1}{k+1}}},
    \end{equation*}
    with high probability. Continuing this process for $k$ steps, we conclude
    \begin{equation*}
        X_{k \widetilde{t}_n} \in \Meso{n^{1-\frac{k}{2(k+1)}}},
    \end{equation*}
    with high probability. Finally, conditioning on the above event and applying part (b) of Lemma~\ref{lem:revprelim} with the starting state taken to be $X_{k \widetilde{t}_n}$ and $N = n^{1-\frac{k}{2(k+1)}}$, we obtain
    \begin{equation*}
        X_{t_n} \in \cC,
    \end{equation*}
    with probability at least $1 - \varepsilon$, for $n$ large enough. This finishes the proof of the first part.

    Let $1 > \lambda_2 \ge \dots \ge \lambda_d$ be the eigenvalues of $U^T$ (or equivalently, $U$), and let $\bv_1, \bv_2, \dots, \bv_d$ be the corresponding eigenvectors in an orthonormal basis of $\R^d$. Further, let $\bv_1'$ and $\bv_2'$ be equal to $\bv_1$ and $\bv_2$ normalised to have $\ell^\infty$-norm equal to $1$ with at least one of the largest (in absolute value) entries equal to $-1$. We know that $\bv_1' = \sqrt{d} \bv_1 = (1, 1, \dots, 1) \in \mathbb{Z}^d$. Let $a, b$ be arbitrary positive real parameters. Suppose that the starting state $\x$ is such that
    \begin{equation*}
        \normi{x(\cdot, 1) - n \bv_1' - a \bv_2'} \le b,
    \end{equation*}
    so that $\normt{x(\cdot, 1) - n \bv_1' - a \bv_2'} \le b \sqrt{d}$. We know that there exist $a_2, \dots, a_d \in \R$ such that
    \begin{equation}
    \label{eqn:x1}
        x(\cdot, 1) = n \bv_1' + a \bv_2' + \sum_{k = 2}^d a_k \bv_k.
    \end{equation}
    Further, it follows from linearity of expectation that
    \begin{equation*}
        \E{X_t(\cdot, 1)} = U^T_{t/(mn)} x(\cdot, 1).
    \end{equation*}
    Plugging in the expression for $x(\cdot, 1)$ from \eqref{eqn:x1} in the above equation, we obtain
    \begin{equation*}
        \E{X_t(\cdot, 1)} = n \bv_1' + a \e^{-\frac{\gamma_n t}{mn}} \bv_2' + \sum_{k = 2}^d a_k \e^{-\frac{(1-\lambda_k)t}{mn}} \bv_k,
    \end{equation*}
    which further implies
    \begin{equation}
    \label{eqn:expdev}
        \normt{\E{X_t(\cdot, 1)} - n \bv_1' - a \e^{-\frac{\gamma_n t}{mn}} \bv_2'} \le \e^{-\frac{\gamma_n t}{mn}} \brac{\sum_{k = 2}^d a_k^2}^{1/2} \le \e^{-\frac{\gamma_n t}{mn}} b \sqrt{d}.
    \end{equation}
    We also note that the same bound holds if the $\ell^2$-norm is replaced with the $\ell^\infty$-norm since the former dominates the latter.

    We define the state $\x_n$ in the following way: the count vector $x_n(\cdot,1)$ is set equal to the closest lattice point to $n(\bv_1' + \bv_2')$ in $\mathbb{Z}^d$, and the remaining counts $x_n(i, j)$ for $i \in [d]$, $j \in [m] \setminus \{1\}$ are chosen arbitrarily such that $\x_n \in \Omega_n$. Then we have
    \begin{equation*}
        \normi{x_n(\cdot, 1) - n\bv_1' - n\bv_2'} \le \frac{1}{2}.
    \end{equation*}
    Let $\widetilde{t}_n' = mn/(2\gamma_n (k+1)) \log n$. Setting $t = \widetilde{t}_n'$ in \eqref{eqn:expdev}, we obtain
    \begin{equation*}
        \normi{\E{X_{\widetilde{t}_n'}(\cdot,1)} - n \bv_1' - n^{1-\frac{1}{2(k+1)}} \bv_2'} \le d n^{-\frac{1}{2(k+1)}}.
    \end{equation*}
    By a similar argument as we used to prove \eqref{eqn:expdev}, we also have
    \begin{equation*}
        \abs{\E{X_{\widetilde{t}_n'}(i, j)} - n} \le d n^{1-\frac{1}{2(k+1)}}, \text{ for all } i \in [d], j \in [m].
    \end{equation*}
    Applying Lemma~\ref{lem:Xdev} with $t = \widetilde{t}_n'$, $N = n$, and $\Delta = n^{1-\frac{1}{2(k+1)}-\delta}$, where $\delta = \frac{\alpha}{4} - \frac{1}{4(k+1)}$, we obtain
    \begin{equation}
    \label{eqn:PEC}
        \P{E_1^\complement} \lesssim n^{\frac{1}{k+1}+2\delta-1} + \frac{n^{\frac{1}{k+1}+2\delta}\log n}{n\gamma_n} + \Exp{-\Omega\brac{n^{1-\frac{1}{k+1}-2\delta}}},
    \end{equation}
    where $E_1$ is the event that $\abs{X_{\widetilde{t}_n'}(i, j) - \E{X_{\widetilde{t}_n'}(i, j)}} \le \Delta$ for all $i \in [d]$, $j \in [m]$. The first and third terms in \eqref{eqn:PEC} are clearly $o(1)$ since $\frac{1}{k+1} + 2 \delta < \frac{1}{2} + 2 \cdot \frac{1}{4} = 1$. The second term is $o(1)$ since $\frac{n^{\frac{1}{k+1} + 2\delta}}{n \gamma_n} = O(n^{\frac{1}{k+1} + 2\delta-\alpha})$ and $\frac{1}{k+1} + 2\delta-\alpha = \frac{1}{2}\brac{\frac{1}{k+1} - \alpha} < 0$. Moreover, using the triangle inequality, it follows that conditional on the event $E_1$, we have
    \begin{equation*}
        \normi{X_{\widetilde{t}_n'}(\cdot,1) - n \bv_1' - n^{1-\frac{1}{2(k+1)}} \bv_2'} \le 2d n^{1-\frac{1}{2(k+1)}-\delta},
    \end{equation*}
    and
    \begin{equation*}
        \abs{X_{\widetilde{t}_n'}(i, j) - n} \le 2d n^{1-\frac{1}{2(k+1)}}, \text{ for all } i \in [d], j \in [m].
    \end{equation*}
    In particular, the above inequality implies $X_{\widetilde{t}_n'} \in \Meso{2d n^{1-\frac{1}{2(k+1)}}}$. Suppose $k \ge 2$. Then, conditioning on the event $E_1$ and applying a similar argument as above with the new starting point taken to be $X_{\widetilde{t}_n'}$, $t = \widetilde{t}_n'$, $N = 2d n^{1-\frac{1}{2(k+1)}}$, and $\Delta = n^{1-\frac{2}{2(k+1)}-\delta}$, we obtain
    \begin{equation*}
        \normi{X_{2\widetilde{t}_n'}(\cdot,1) - n \bv_1' - n^{1-\frac{2}{2(k+1)}} \bv_2'} \le 4d^2 n^{1-\frac{2}{2(k+1)}-\delta},
    \end{equation*}
    and $X_{2\widetilde{t}_n'} \in \Meso{4d^2 n^{1-\frac{2}{2(k+1)}}}$, with high probability. Repeating the above argument $k$ times, we conclude that $X_{k\widetilde{t}_n'} \in \Meso{(2d)^k n^{1-\frac{k}{2(k+1)}}}$ and
    \begin{equation*}
        \normi{X_{k \widetilde{t}_n'}(\cdot,1) - n \bv_1' - n^{1-\frac{k}{2(k+1)}} \bv_2'} \le (2d)^k n^{1-\frac{k}{2(k+1)}-\delta},
    \end{equation*}
    with high probability. For $\ell \in [k] \setminus \{1\}$, at the $\ell$th step of the argument, the probability of the bad event $E_\ell^\complement$ conditioned on $E_{\ell-1}$ is given by
    \begin{align*}
        \cP{E_\ell^\complement}{E_{\ell-1}} &\lesssim n^{\frac{\ell+1}{2(k+1)}+2\delta-1} + \frac{n^{\frac{1}{k+1}+2\delta}\log n}{n \gamma_n} + \Exp{-\Omega \brac{n^{1 - \frac{\ell}{k+1} - 2\delta}}}\\
        &\lesssim n^{-\brac{\frac{1}{2} - 2\delta}}+ n^{-\frac{1}{2}\brac{\alpha - \frac{1}{k+1}}} \log n + \Exp{-\Omega \brac{n^{\frac{1}{k+1} - 2\delta}}}.
    \end{align*}
    The first and second terms are $o(1)$ by the same argument as before. For the third term, note that $k \le \alpha^{-1}$, which implies $\alpha \le k^{-1}$, which further implies
    \begin{equation*}
        \delta = \frac{1}{4} \brac{\alpha - \frac{1}{k+1}} \le \frac{1}{4} \brac{\frac{1}{k} - \frac{1}{k+1}} = \frac{1}{4k(k+1)} \le \frac{1}{4(k+1)}.
    \end{equation*}
    Therefore, $\frac{1}{k+1} - 2\delta \ge \frac{1}{2(k+1)} > 0$ and consequently the third term is also $o(1)$. Now, we have
    \begin{equation*}
        \P{E_k^\complement} \le \P{E_1^\complement} + \sum_{\ell=2}^k \cP{E_\ell^\complement}{E_{\ell-1}} = o(1).
    \end{equation*}
    Finally, conditioning on the event $E_k$ and applying a similar argument with the new starting point taken to be $X_{k\widetilde{t}_n'}$, $t = \widetilde{t}_n'- mn \log(2C)/\gamma_n$, $N = (2d)^k n^{1-\frac{k}{2(k+1)}}$, and $\Delta = (C-1) \sqrt{n}$, we obtain
    \begin{equation}
    \label{eqn:xfindev}
        \normi{X_{t_n'}(\cdot,1) - n \bv_1' - 2C \sqrt{n} \bv_2'} \le (2d)^{k+1} C n^{\frac{1}{2}-\delta} + (C-1)\sqrt{n},
    \end{equation}
    with probability at least
    \begin{equation*}
        1 - d^2m \frac{(2d)^k n^{1-\frac{k}{2(k+1)}}}{(C-1)^2n} - \frac{24d^3}{m} \frac{(2d)^{2k}n^{2-\frac{k}{k+1}}}{n^2 (C-1)^2 n} \frac{mn \log n}{2\gamma_n} - 2dm \Exp{-\frac{(C-1)^2}{32d^2m^2}} = 1 - o(1) - \frac{\varepsilon}{2}.
    \end{equation*}
    Finally, note that for $n$ large enough, the right-hand side of \eqref{eqn:xfindev} is strictly smaller than $C \sqrt{n}$, which implies $\abs{X_{t_n'}(i', 1) - (n - 2C \sqrt{n})} < C \sqrt{n}$, where $i'$ is such that $\bv_2'(i) = -1$. Therefore, $X_{t_n'} \not \in \cC$ with probability at least $1 - \varepsilon$.
\end{proof}

\subsection{Single ball chain is nonreversible}

We will now prove the concentration of the hitting time of the centre in the case when the single ball chain is nonreversible. As before, we formally only establish a slightly weaker statement than the concentration of hitting time of the centre, which will be sufficient for our purposes. Since we only consider the case $\Chn = \Omega(1)$, we do not need a multi-step argument as in the reversible case. But we shall require the following variance estimate, which in the regime of interest is stronger than the one we proved in Lemma~\ref{lem:var}.

\begin{lemma}
\label{lem:var2}
    Let $X_0 = \x \in \Omega_n$. Then for all $i \in [d]$, $j \in [m]$, and $t \ge 0$, we have
    \begin{equation*}
        \Var{X_t(i, j)} \le C_0\frac{n}{\Ch},
    \end{equation*}
    for some constant $C_0$ depending only on $d$ and $m$.
\end{lemma}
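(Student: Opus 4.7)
The plan is to improve upon Lemma~\ref{lem:var}, whose bound $\tfrac{dn}{4} + \tfrac{6d^2}{m^2}t$ grows linearly in $t$, so as to obtain a bound that is uniform in $t$. First I would work in the labeled representation and write $X_t(i,j) = \sum_{k=1}^{dn}\mathbf{1}\{W_t(b_{j,k}) = i\}$, so that
\[
\Var{X_t(i,j)} \le \frac{dn}{4} + \sum_{k \ne \ell}\bigl|\mathrm{Cov}_{\x}(\mathbf{1}\{W_t(b_{j,k})=i\},\mathbf{1}\{W_t(b_{j,\ell})=i\})\bigr|,
\]
reducing the problem to bounding the pair-covariance sum by $O(n/\Ch)$.

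I would then split into two time regimes. For the short-time regime $t \le n/\Ch$, Lemma~\ref{lem:var} with $N=n$ already yields $\Var{X_t(i,j)} \le \tfrac{dn}{4} + \tfrac{6d^2}{m^2}\cdot \tfrac{n}{\Ch} = O(n/\Ch)$, so nothing further is needed there.

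For long times $t > n/\Ch$, I would refine the Poisson clock coupling from the proof of Lemma~\ref{lem:var}. Two balls $a,b$ follow independent single-ball chains up until the first firing of one of the shared clocks $\widetilde{\Lambda}_a, \widetilde{\Lambda}_b, \Lambda_{a,b}$. After such an interaction at time $s$, I would re-couple the pair to a fresh pair of independent single-ball chains; by Proposition~\ref{pro:CheegerPoincareBounds}(c), each marginal relaxes to uniform in $\ell^2$ at rate $\gamma^+ \asymp \Ch$ on the single-ball timescale $t/(mn)$, giving an effective covariance decay of order $\exp(-c\,\Ch\,(t-s)/n)$ after each interaction. Integrating this decay against the total interaction rate $3/(mn)^2$ should produce a pairwise covariance bound of order $1/(n\Ch)$ uniformly in $t$, and summing over the $\binom{dn}{2} = O(n^2)$ same-colour pairs would give the desired $O(n/\Ch)$.

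The hard part will be making the re-decorrelation argument rigorous: between successive $(a,b)$-interactions each ball's marginal is perturbed by interactions with the other $dmn-2$ balls, so $(W_t(a), W_t(b))$ is not literally a product of two independent single-ball chains. A plausible way around this is the observation that any non-$(a,b)$-interaction still acts on each ball individually as a single-ball transition conditional on that ball being selected, so the marginal of each ball agrees exactly with the single-ball heat kernel and only the joint law of $(W_t(a), W_t(b))$ deviates from the product. I would then write a renewal-type decomposition of the covariance indexed by the $(a,b)$-interaction times, and control each term using the $\ell^2$-decay of $U^+$, summing a geometric series whose rate is governed by $\Ch$ to obtain the uniform per-pair bound.
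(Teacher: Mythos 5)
Your proposal is correct in spirit and genuinely different from the paper's proof, but the ``hard part'' you flag at the end is not actually hard, and you are overcomplicating the argument by worrying about re-decorrelation. The key observation you are missing is that in the five-Poisson-clock coupling from the proof of Lemma~\ref{lem:var}, the pair trajectory $(W_s(a),W_s(b))_{s\ge 0}$ is a \emph{self-contained} Markov chain: its transitions depend only on the five clocks and the permutation samples attached to them, not on the other $dmn-2$ balls. So there is no ``perturbation by the other balls'' to worry about, and no renewal decomposition is needed. The cleanest implementation is to condition on $\sigma$, the time of the last ring among $\widetilde\Lambda_a,\widetilde\Lambda_b,\Lambda_{a,b}$ in $[0,t]$ (setting $\sigma=0$ on the event of no ring). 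Conditional on that ring history and on $(W_\sigma(a),W_\sigma(b))$, the evolution on $(\sigma,t]$ proceeds via $\Lambda_a,\Lambda_b$ only, which are independent Poisson clocks, so the two balls evolve as \emph{literally independent} single-ball chains at rate $\tfrac{1}{mn}-\tfrac{1}{(mn)^2}$. Writing $\alpha=\cE{\mathbf{1}\{W_t(a)=i\}}{\mathcal{G}}$ and $\beta=\cE{\mathbf{1}\{W_t(b)=i\}}{\mathcal{G}}$ with $\mathcal{G}$ the conditioning $\sigma$-algebra, one has $\mathrm{Cov}=\mathrm{Cov}(\alpha,\beta)$; by Proposition~\ref{pro:CheegerPoincareBounds}(b)--(c), $|\alpha-\tfrac1d|,|\beta-\tfrac1d|\lesssim e^{-c\Ch(t-\sigma)/n}$, and integrating against the density of $\sigma$ (last point of a rate-$3/(mn)^2$ Poisson process on $[0,t]$) gives exactly your claimed per-pair bound $O(1/(n\Ch))$, uniformly in $t$, without needing the short-time/long-time split. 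Summing over $O(n^2)$ pairs and adding the $O(n)\le O(n/\Ch)$ diagonal closes the argument.

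For comparison, the paper takes a different, arguably slimmer route: it considers the Doob martingale $M(s)=\cE{X_t(i,j)}{X_s}-\E{X_t(i,j)}$, uses $\Var{X_t(i,j)}=\E{\ang{M}_t}$, and bounds each jump $|\Delta M(s)|$ directly by linearity of expectation and the $\ell^2$-decay from Proposition~\ref{pro:CheegerPoincareBounds}(c), obtaining $|\Delta M(s)|\lesssim e^{-c\Ch(t-s)/(mn)}$. Integrating this against the rate-one jump process gives the same $O(n/\Ch)$. The martingale route avoids the pair-covariance decomposition and the Poisson clock coupling altogether; your route reuses the coupling infrastructure of Lemma~\ref{lem:var} and makes the interaction structure explicit. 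Both are sound, and both hinge on the same $\ell^2$-decay estimate.
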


\begin{proof}
    Fix a time $t$, urn $i$, and colour $j$. Consider the centered continuous time Doob's martingale
    \begin{equation*}
        M(s) \coloneqq \cE{X_t(i, j)}{X_s} - \E{X_t(i, j)}, \quad 0 \le s \le t.
    \end{equation*}
    This is a pure jump martingale with possible jumps at the times the chain makes a step. We have $M(t) = X_t(i, j) - \E{X_t(i, j)}$ and $M(0) = 0$. Note that $M$ is right-continuous and square-integrable. Let $\ang{M}_t$ denote the quadratic variation of $M$ (see \cite[Definition~5.3, Chapter 1]{KS}). Then, we have
    \begin{equation}
    \label{eqn:vari}
        \Var{X_t(i, j)} = \E{M(t)^2} = \E{\ang{M}_t},
    \end{equation}
    since $M(t)^2 - \ang{M}_t$ is a martingale and $\ang{M}_0 = 0$. Suppose that the chain makes a step at time $s$. Let $\Delta M(s)$ be the martingale jump caused by this step. Then we have
    \begin{align}
    \label{eqn:DMs}
        \abs{\Delta M(s)} &= \abs{\cE{X_t(i, j)}{X_s} - \cE{X_t(i, j)}{X_{s^-}}} \nonumber \\
        &\le \max_{\substack{y, z \in \Omega \\ \normo{y - z} \le 2d}} \abs{\cE{X_{t-s}(i,j)}{X_0 = y} - \cE{X_{t-s}(i,j)}{X_0 = z}}.
    \end{align}
    The last inequality above follows by noting that a step in the chain changes at most $2d$ out of the $dm$ coordinates of the state, each by at most one. Now, for any $y \in \Omega$, linearity of expectation yields
    \begin{align*}
        \cE{X_{t-s}(i,j)}{X_0 = y} - n &= \sum_{i' \in [d]} y(i',j) \left(U_{(t-s)/(mn)}(i',i) - \frac{1}{d}\right)\\
        &= \sum_{i' \in [d]} y(i',j) \left(U^T_{(t-s)/(mn)}(i,i') - \frac{1}{d}\right).
    \end{align*}
    Let $z \in \Omega$ be such that $\normo{y - z} \le 2d$. Writing a similar equation as above for the starting state $z$ and subtracting from the above, we obtain
    \begin{equation*}
        \cE{X_{t-s}(i,j)}{X_0 = y} - \cE{X_{t-s}(i,j)}{X_0 = z} = \sum_{i' \in [d]} (y(i',j) - z(i',j)) \left(U^T_{(t-s)/(mn)}(i,i') - \frac{1}{d}\right).
    \end{equation*}
    Taking absolute value on both sides, we obtain
    \begin{equation}
    \label{eqn:absdif}
        \abs{\cE{X_{t-s}(i,j)}{X_0 = y} - \cE{X_{t-s}(i,j)}{X_0 = z}} \le \normo{y - z} \normTV{U^T_{(t-s)/(mn)}(i,\cdot) - \nu}.
    \end{equation}
    Applying part (c) of Proposition~\ref{pro:CheegerPoincareBounds} to $U^T$, we obtain
    \begin{align*}
        \normTV{U^T_{(t-s)/(mn)}(i,\cdot) - \nu} &\le \frac{1}{2} \normtn{U^T_{(t-s)/(mn)}(i,\cdot) - \nu}\\
        &\le \frac{1}{2} \e^{-\gamma^+ (t-s)/(mn)} \normtn{\delta_i - \nu}\\
        &\le \frac{\sqrt{d}}{2} \e^{-\gamma^+ (t-s) /(mn)}.
    \end{align*}
    Using the above estimate in \eqref{eqn:absdif} and applying part (b) of Proposition~\ref{pro:CheegerPoincareBounds} to $U^T$, we obtain
    \begin{equation*}
        \abs{\cE{X_{t-s}(i,j)}{X_0 = y} - \cE{X_{t-s}(i,j)}{X_0 = z}} \le d^{3/2} \e^{-c \Ch (t-s) /(mn)},
    \end{equation*}
    for some constant $c > 0$ depending only on $d$. Using the above estimate in \eqref{eqn:DMs} yields
    \begin{equation*}
        \abs{\Delta M(s)} \le d^{3/2} \e^{-c \Ch (t-s) /(mn)}.
    \end{equation*}
    Since the chain makes steps at rate $1$, we can use the standard characterisation of the quadratic variation as the sum of the squared increments~\cite[Theorem~5.8, Chapter~1]{KS} to obtain 
    \begin{equation*}
        \E{\ang{M}_t} \le \int_{0}^t d^3 \e^{-2c \Ch (t-s) /(mn)} ds = \frac{d^3m}{2c} \cdot \frac{n}{\Ch} (1 - \e^{-2c\Ch t /(mn)}) \le \frac{d^3m}{2c} \cdot \frac{n}{\Ch}.
    \end{equation*}
    Finally, using the above bound in \eqref{eqn:vari} yields the desired result.
\end{proof}

\begin{lemma}
\label{lem:nonrevup}
    Let $\varepsilon \in (0, 1]$. Suppose $\Chn \ge c_1$ for some constant $c_1 > 0$. For constant $\theta > 0$, let
    \begin{equation*}
        t_n = mn (\tmsb + \theta), \text{ and } t_n' = mn (\tmsb - \theta),
    \end{equation*}
    for $n \in \N$. Then there are constants $\theta, C > 0$ so that $\pi(\cC) \ge 1 - \varepsilon$, and the following hold.
    \begin{enumerate}[label=(\alph*)]
        \item For all $\x \in \Omega_n$ and $n$ large enough, we have
        \begin{equation*}
            \Ps{\x}{\Xn_{t_n} \in \CC{C}} > 1-\varepsilon.
        \end{equation*}
        \item We can find a sequence of starting states $(\x_n)_{n \ge 1}$ such that for $n$ large enough, we have
        \begin{equation*}
            \Ps{\x_n}{\Xn_{t_n'} \in \CC{C}} < \varepsilon.
        \end{equation*}
    \end{enumerate}
\end{lemma}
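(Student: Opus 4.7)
Since $\Chn \ge c_1 > 0$ implies via Proposition~\ref{pro:CheegerPoincareBounds} that $\gamma_n = \Omega(1)$, a single-step argument now suffices in place of the iterated one from Lemma~\ref{lem:HittingTimesGeneralCheeger}. For part (a), linearity of expectation together with monotonicity of total variation and the definition of $\tmsb$ gives, for every $\x \in \Omega_n$ and every $(i,j)$,
\begin{align*}
    \abs{\Es{\x}{X_{t_n}(i,j)} - n} &= \abs{\sum_{i'} x(i',j)\brac{U_{\tmsb+\theta}(i',i) - \tfrac{1}{d}}} \\
    &\le 2 \sum_{i'} x(i',j)\normTV{U_{\tmsb+\theta}(i',\cdot) - \nu} \le 2d\sqrt{n}.
\end{align*}
Combined with $\Var{X_{t_n}(i,j)} \le C_0 n/c_1$ from Lemma~\ref{lem:var2}, Chebyshev's inequality and a union bound over $(i,j) \in [d] \times [m]$ yield $X_{t_n}(i,j) \ge n - (2d+K)\sqrt{n}$ simultaneously for all $(i,j)$ with probability at least $1 - \varepsilon$, where $K$ is chosen as a function of $c_1,d,m,\varepsilon$. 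Taking $C = \max\set{2d+K,\sqrt{2\log(2dm/\varepsilon)}}$ then both ensures $\pi(\cC) \ge 1-\varepsilon$ via Lemma~\ref{lem:CentreMacroOrder} and delivers part (a), valid for any constant $\theta \ge 0$.

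For part (b), we adapt the eigenvector construction from the proof of Lemma~\ref{lem:HittingTimesGeneralCheeger} to the nonreversible setting. Let $\lambda_n$ be a non-principal eigenvalue of $U_n$ attaining $1 - \Re(\lambda_n) = \gamma_n$, and let $\bv_1,\dots,\bv_k$ be a Jordan chain of left eigenvectors at $\lambda_n$ (with $k=1$ when $\lambda_n$ is semisimple), normalized so that $\|\Re(\bv_k)\|_\infty = 1$. Since every $\bv_j$ satisfies $\sum_i \bv_j(i) = 0$, we may choose $x_n(\cdot,1)$ to be (a rounding of) $n\mathbf{1} + n\Re(\bv_k)$ and complete it to a valid $\x_n \in \Omega_n$. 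Using the Jordan semigroup identity
\begin{equation*}
    \bv_k\,e^{s(U_n - I)} \;=\; e^{s(\lambda_n - 1)} \sum_{j=0}^{k-1} \frac{s^j}{j!}\,\bv_{k-j}
\end{equation*}
at $s = \tmsb - \theta$, together with the scaling $\tmsb^{k-1} e^{-\gamma_n \tmsb} \asymp n^{-1/2}$ that governs the TV mixing of the single ball chain at $\tmsb$ (and which is driven by the same Jordan structure), one obtains
\begin{equation*}
    \abs{\Es{\x_n}{X_{t_n'}(i^*,1)} - n} \;\ge\; c' \sqrt{n}\, e^{\gamma_n \theta}
\end{equation*}
for some $i^* \in [d]$ and some constant $c' > 0$. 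Chebyshev's inequality plus Lemma~\ref{lem:var2} concentrates $X_{t_n'}(i^*,1)$ around its mean up to $K\sqrt{n}$, and the conservation law $\sum_{i'} X_{t_n'}(i',1) = dn$ forces some coordinate to drop below $n - C\sqrt{n}$ as soon as $\theta$ is chosen as a large enough constant so that $c' e^{\gamma_n \theta} > (d-1)C + K$; this gives $X_{t_n'} \notin \cC$ with probability at least $1-\varepsilon$.

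The main obstacle lies in part (b) when $U_n$ has either complex spectrum or a non-trivial Jordan block at $\lambda_n$. In the former case the phase $e^{is\Im(\lambda_n)}$ forces one to replace $\Re(\bv_k)$ by an appropriate real combination of $\Re(\bv_k)$ and $\Im(\bv_k)$ at $s = \tmsb - \theta$ (or to consider the two starting states $n\mathbf{1} + n\Re(\bv_k)$ and $n\mathbf{1} + n\Im(\bv_k)$ and retain the better one) so that the resulting deviation has magnitude comparable to $\|\bv_1\|_\infty$. In the latter case one must keep track of the polynomial prefactor $\tmsb^{k-1}$ generated by the Jordan chain --- fortunately the same prefactor governs the TV mixing scale at $\tmsb$, so it cancels in the relevant ratio and leaves the clean dependence $e^{\gamma_n \theta}$ that permits a constant choice of $\theta$. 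Verifying that the ratio $\|\bv_1\|_\infty/\|\bv_k\|_\infty$ stays bounded away from zero uniformly in $n$ is the most delicate step, and is what anchors the constant $c'$ in the deviation bound above.
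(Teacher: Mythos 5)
Your argument for part (a) is correct; you replace the $\ell^2$-Poincar\'e contraction bound $|\Es{\x}{X_{t_n}(i,j)}-n| \le d \,\e^{-\gamma_n^+\theta}\sqrt{n}$ used in the paper by the cruder TV-monotonicity bound $2d\sqrt{n}$; both are $O(\sqrt{n})$, and Lemma~\ref{lem:var2}, Chebyshev, and a union bound finish either way.

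Part (b), however, has a genuine gap, and it is one you yourself flag as ``the most delicate step.'' The Jordan-chain construction requires, at a minimum, (i) that the TV decay of the single-ball chain is governed by the slowest mode in the sense that $\tmsb^{k-1}\e^{-\gamma_n \tmsb} \asymp n^{-1/2}$ uniformly in $n$, and (ii) that $\normi{\bv_1}$ is bounded below uniformly in $n$ after $\bv_k$ is $\ell^\infty$-normalised. Neither claim is established, and for a general sequence $(\mu_n)$ with $\Chn \ge c_1$ both can fail: the Jordan structure of $U_n$ can degenerate (nearby eigenvalues merging) in a way that blows up the chain-norm ratios even while the Cheeger constant stays bounded below. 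The phase-alignment issue you raise for complex $\lambda_n$ is in fact repairable by your ``best of two starting states'' device, since $\max\{|\Re z|,|\Im z|\} \ge |z|/\sqrt{2}$ coordinatewise; it is (i) and (ii) that cannot be waved away without further argument.

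The paper's proof of (b) sidesteps the spectrum of $U_n$ entirely, and you should compare your approach with it. By the definition of $\tmsb$ there is $i_0 \in [d]$ with $\normTV{U_{\tmsb}(i_0,\cdot)-\nu} = 1/\sqrt{n}$ exactly, hence some $i$ with $|U^T_{\tmsb}(i,i_0)-1/d| = |U_{\tmsb}(i_0,i)-1/d| \ge 2/(d\sqrt{n})$. Applying the $\ell^2$-Poincar\'e contraction (Proposition~\ref{pro:CheegerPoincareBounds}(c)) to $U^T$ \emph{backwards} in time from $\tmsb$ to $\tmsb-\theta$ then gives $\normTV{U^T_{\tmsb-\theta}(i,\cdot)-\nu} \ge \e^{\gamma_n^+\theta}/(d\sqrt{n})$; since $\gamma_n^+ \asymp \Chn \ge c_1$, this factor can be made arbitrarily large by choosing the constant $\theta$ large. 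A purely combinatorial starting state --- reorder the urns so that $U^T_{\tmsb-\theta}(i,\cdot)-1/d$ is non-increasing, then fill the colour-$1$ balls greedily into urns $1, 2, \dots$, which occupies at most $\ceil{d/2}$ urns since $m\ge 2$ --- then has mean deviation $\Omega(\e^{\gamma_n^+\theta}\sqrt{n})$ in some coordinate at time $t_n'$, and Lemma~\ref{lem:var2} with Chebyshev concludes. The single equality $\normTV{U_{\tmsb}(i_0,\cdot)-\nu} = n^{-1/2}$, combined with the Poincar\'e inequality, does all the work; no eigenvector or Jordan information is needed.
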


\begin{proof}
    Let $i \in [d]$ and $j \in [m]$. By the linearity of expectation, we have
    \begin{equation*}
        \E{X_t(i,j)} - n = \sum_{i' \in [d]} x(i',j) \left(U_{t/(mn)}(i',i) - \frac{1}{d}\right).
    \end{equation*}
    Setting $t = t_n$ and applying part (c) of Proposition~\ref{pro:CheegerPoincareBounds} to $U$, for each $i' \in [d]$, we have
    \begin{align*}
        \sqrt{d} \abs{U_{t/(mn)}(i',i) - \frac{1}{d}} \le \normtn{U_{t/(mn)}(i',\cdot) - \nu} \le \e^{-\gamma_n^+\theta} \normtn{U_{\tmsb}(i', \cdot) - \nu} \le \sqrt{\frac{d}{n}} \e^{-\gamma_n^+\theta},
    \end{align*}
    which when used in the previous equation yields
    \begin{equation*}
        \abs{\E{X_t(i,j)} - n} \le d \e^{-\gamma_n^+ \theta} \sqrt{n}.
    \end{equation*}
    By virtue of part (b) of Proposition~\ref{pro:CheegerPoincareBounds}, it is possible to choose $\theta$ large enough, depending only on $d$ and $c_1$, such that $d \e^{-\gamma_n^+ \theta} \le 1$, which implies
    \begin{equation}
    \label{eqn:Edev}
        \abs{\E{X_t(i,j)} - n} \le \sqrt{n}.
    \end{equation}
    Assuming $C > 1$ and applying Chebychev's inequality for the random variable $X_t(i,j)$, we obtain
    \begin{equation*}
        \P{\abs{X_t(i,j)-\E{X_t(i,j)}} \ge (C-1) \sqrt{n}} \le \frac{\Var{X_t(i,j)}}{(C-1)^2 n}.
    \end{equation*}
    Using Lemma~\ref{lem:var2} in the right-hand side of the above inequality, we obtain
    \begin{equation*}
        \P{\abs{X_t(i,j)-\E{X_t(i,j)}} \ge (C-1) \sqrt{n}} \le \frac{C_0}{(C-1)^2\Chn} \le \frac{C_0}{(C-1)^2 c_1},
    \end{equation*}
    for some constant $C_0 > 0$. Now choosing $C$ large enough so that the right-hand side above becomes smaller than $\varepsilon/(dm)$ and taking a union bound over all pairs $(i, j) \in [d] \times [m]$, we obtain
    \begin{equation*}
        \P{\abs{X_t(i,j)-\E{X_t(i,j)}} \ge (C-1) \sqrt{n} \text{ for all } i \in [d], j \in [m]} \le \varepsilon.
    \end{equation*}
    Combining this with \eqref{eqn:Edev} finishes the proof of the result in part (a). Observe from the proof that the statement in part (a) still holds if one or both of $\theta$ and $C$ are increased. Therefore, we may assume $C$ is large enough so that $\pi(\cC) \ge 1 - \varepsilon$.

    We will now prove part (b). It follows from the definition of $\tmsb$ that there exists $i_0 \in [d]$ such that
    \begin{equation*}
        \normTV{U_{\tmsb}(i_0, \cdot) - \nu} = \frac{1}{\sqrt{n}}.
    \end{equation*}
    By the definition of total variation distance, the above implies that there exists $i \in [d]$ such that
    \begin{equation*}
        \abs{U_{\tmsb}(i_0, i) - \frac{1}{d}} \ge \frac{2}{d\sqrt{n}}.
    \end{equation*}
    Setting $t = t_n'$ and applying part (c) of Proposition~\ref{pro:CheegerPoincareBounds} to $U^T$, we obtain
    \begin{align}
    \label{eqn:TVlow}
        \normTV{U^T_{t_n'/(mn)}(i, \cdot) - \nu} &\ge \frac{1}{2\sqrt{d}} \normtn{U^T_{t_n'/(mn)}(i, \cdot) - \nu} \nonumber \\
        &\ge \frac{e^{\gamma_n^+ \theta}}{2\sqrt{d}} \normtn{U^T_{\tmsb(1/\sqrt{n})}(i, \cdot) - \nu} \nonumber \\
        &= \frac{e^{\gamma_n^+ \theta}}{d \sqrt{n}}.
    \end{align}

    
    Let $I^+ = \set{i' \in [d] \colon U^T_{t_n/(mn)}(i,i') - \frac{1}{d} \ge 0}$ and $I^- = [d] \setminus I^+$. Then $\abs{I^+} + \abs{I^-} = d$, which implies that at least one of $I^+$ and $I^-$ has size at least $\ceil{d/2}$. Let us assume that $\abs{I^+} \ge \ceil{d/2}$. The argument for the other case is similar. We may further assume that
    \begin{equation*}
        U^T_{t_n'/(mn)}(i,i') - \frac{1}{d}
    \end{equation*}
    is non-increasing as $i'$ increases. We construct the state $\x_n \in \Omega$ in the following manner: put all balls of colour $1$ in urn $1$, followed by urn $2$ (if $d > m$), and so on until all balls of colour $1$ are exhausted. Note that such a configuration contains balls of colour $1$ in at most the first $\ceil{d/2}$ urns since $m \ge 2$. Let us now look at balls of colour $1$ in urn $i$. It follows from linearity of expectation that
    \begin{equation*}
        \Es{\x_n}{\Xn_{t_n'}(i,1)} - n = \sum_{i' \in [d]} x_n(i',1) \brac{U^T_{t_n'/(mn)}(i,i') - \frac{1}{d}} \ge x_n(1, 1) \brac{U^T_{t_n'/(mn)}(i, 1) - \frac{1}{d}}.
    \end{equation*}
    Using \eqref{eqn:TVlow} in the right-hand side above, we obtain
    \begin{equation*}
        \Es{\x_n}{\Xn_{t_n'}(i,1)} - n \ge (\min\{m,d\} \cdot n ) \cdot \frac{e^{\gamma_n^+ \theta}}{d^2 \sqrt{n}} \ge \frac{\e^{\gamma_n^+ \theta}}{d^2} \sqrt{n}.
    \end{equation*}
    Again, by virtue of part (b) of Proposition~\ref{pro:CheegerPoincareBounds}, it is possible to choose $\theta$ large enough, depending only on $d$ and $c_1$, such that it is greater than the choice of $\theta$ in part (a) and $\e^{\gamma_n^+ \theta}/d^2 \ge 2dC$, where $C$ is as chosen in the proof of part (a). This implies
    \begin{equation}
    \label{eqn:expdev2}
        \Es{\x_n}{\Xn_{t_n'}(i,1)} - n \ge 2dC \sqrt{n}.
    \end{equation}
    Using the variance estimate from Lemma~\ref{lem:var2} in Chebychev's inequality yields
    \begin{align*}
        \Ps{\x_n}{\abs{\Xn_{t_n'}(i,1) - \Es{\x_n}{\Xn_{t_n'}(i,1)}} \ge dC \sqrt{n}} &\le \frac{\Var{\Xn_{t_n'}(i,1)}}{d^2C^2n} \le \frac{C_0}{d^2C^2\Chn} \le \frac{C_0}{(C-1)^2 c_1} < \varepsilon.
    \end{align*}
    Combining the above estimate with \eqref{eqn:expdev2} and observing that $\abs{\Xn_{t_n'}(i,1) - n} > dC \sqrt{n}$ implies that $\Xn_{t_n'} \not \in \cC$, we get the desired result.
\end{proof}

\section{Spectral profile of the restricted mean-field chain}
\label{sec:spectral}

In this section, our main object of study is the \emph{restricted mean-field chain} $\Yrb =(\Yr_t)_{t \ge 0}$, where we consider the $(d, m, n)$ mean-field chain, restricted to $\cC$ for some fixed constant $C > 0$. More precisely, it is the rate one continuous-time Markov chain where in each step, we pick two urns uniformly at random, and choose a ball from each of them uniformly at random. The balls are then swapped if and only if the resulting configuration remains in the set $\cC$. Let $\PYr$ be the transition matrix of the chain $\Yrb$, and let $\piR$ be its stationary measure, which is given by $\pic$. In the following, we will obtain estimates on the spectral profile $\sPr(\delta)$ of the chain $\Yrb$, while recalling from Section~\ref{sec:preliminaries} that the it is defined as
\begin{equation}
\label{def:SpectralProfile}
    \sPr(\delta) \coloneqq \min\left\{ \frac{\big\langle (I-\PYr)f,f \big\rangle_{\piR}}{\langle f,f\rangle_{\piR}} \, \colon \, f \colon \cC \to \R_{\ge 0} , \, \piR(\mathrm{Supp}(f))\le \delta \right\}, \text{ for } \delta > 0.
\end{equation}

\begin{proposition}
\label{pro:SpectralProfileMeanField}
    There exists some constants $C_0, \beta > 0$, depending only on $C$, $d$ and $m$, such that for all $\delta \in (0, 1/2]$ and all $n$ sufficiently large, we have
    \begin{equation*}
        \sPr(\delta)^{-1} \le C_0 \delta^{\beta} n.
    \end{equation*}
\end{proposition}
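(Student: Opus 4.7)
The plan is to deduce the claimed bound from a Faber--Krahn inequality that exploits the polynomial volume growth of $\cC$ established in the previous subsection. By Lemma~\ref{lem:Identify}, the centre $\cC$ embeds into $\mathbb{Z}^{k}$ with $k := (d-1)(m-1)$ as a set whose $\ell^{1}$-balls of radius $r \le c\sqrt{n}$ have cardinality $\Theta(r^{k})$, and by Lemma~\ref{lem:MetricComparison} this $\ell^{1}$-metric is bi-Lipschitz equivalent to the intrinsic graph distance induced by the mean-field chain on $\cC$. Hence the underlying graph of $\Yrb$ is a bounded-degree finite graph with $k$-dimensional polynomial volume growth up to the diameter scale $\Theta(\sqrt{n})$.

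First I would record the routine structural estimates needed for the functional inequality. Stirling's formula applied to the product-of-multinomials expression~\eqref{eq:StationaryDistribution} yields $\pi_{n}(\x) \asymp n^{-k/2}$ uniformly over $\x \in \cC$, which combined with $\pi_n(\cC) = \Theta(1)$ gives $|\cC| \asymp n^{k/2}$ and shows that $\piR$ is equivalent to the uniform measure on $\cC$ up to constants depending only on $C,d,m$. Whenever $\x, \y \in \cC$ differ by a single mean-field swap, the rate $\PYr(\x,\y) = \PY(\x,\y)$ is of order $1$, since each factor $x(i,a)/(mn)$ in the transition probability is $\Theta(1)$; each $\x \in \cC$ has only $O(1)$ neighbours in the resulting graph. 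Crucially, the Dirichlet form $\big\langle (I-\PYr)f,f\big\rangle_{\piR}$ sums only over edges within $\cC$, so the rejection step at $\partial\cC$ does not appear in any lower bound on it, and the boundary of $\cC$ plays no role.

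The main step is a Faber--Krahn inequality: for every non-empty $A \subseteq \cC$,
\[
  \lambda_{1}^{\mathrm{Dir}}(A) \;:=\; \inf_{\substack{f \colon \cC \to \R_{\ge 0},\, f \not\equiv 0 \\ \mathrm{Supp}(f) \subseteq A}} \frac{\big\langle (I-\PYr)f,f\big\rangle_{\piR}}{\langle f,f\rangle_{\piR}} \;\gtrsim\; |A|^{-2/k}.
\]
This is the classical Coulhon--Saloff-Coste conclusion drawn from polynomial volume growth of order $k$ together with bounded geometry, applied at scale $|A|^{1/k} \le \sqrt{n}$, on which the volume-growth lower bound from Lemma~\ref{lem:Identify} remains valid. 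With it in hand, for any $f \ge 0$ with $\piR(\mathrm{Supp}(f)) \le \delta$ the support $S$ has $|S| \lesssim \delta\, n^{k/2}$, and therefore
\[
  \frac{\big\langle (I-\PYr)f,f\big\rangle_{\piR}}{\langle f,f\rangle_{\piR}} \;\ge\; \lambda_{1}^{\mathrm{Dir}}(S) \;\gtrsim\; |S|^{-2/k} \;\gtrsim\; \frac{1}{\delta^{2/k}\, n}.
\]
Taking the infimum over admissible $f$ gives $\sPr(\delta) \gtrsim 1/(\delta^{2/k} n)$, i.e.\ $\sPr(\delta)^{-1} \lesssim \delta^{2/k}\, n$, which is the claim with $\beta = 2/k = 2/((d-1)(m-1))$.

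The main obstacle will be executing the Faber--Krahn step itself. Although the Coulhon--Saloff-Coste argument is classical on quasi-transitive graphs such as $\mathbb{Z}^{k}$, our graph is an irregular, bounded-degree finite subgraph whose volumes match those of $\mathbb{Z}^{k}$ only up to scale $\sqrt{n}$, and the transition rates vary by bounded factors across $\cC$. I would handle this by a direct comparison of Dirichlet forms with the simple random walk on the $\ell^{1}$-ball of radius $C'\sqrt{n}$ in $\mathbb{Z}^{k}$: the bi-Lipschitz equivalence from Lemma~\ref{lem:MetricComparison}, combined with uniform upper and lower bounds on the stationary measure and on the transition rates, transports the known Faber--Krahn bound for that model walk to $\Yrb$ with only constant-factor losses depending on $C,d,m$.
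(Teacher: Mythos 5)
Your route captures the right geometry and lands on the same exponent $\beta = 2/((d-1)(m-1))$, but the central Faber--Krahn step is not justified by the argument you give. Coulhon--Saloff-Coste's theorem converts polynomial volume growth into a Faber--Krahn inequality \emph{on vertex-transitive (Cayley) graphs}; the restricted chain $\Yrb$ on $\cC$ is not transitive, and polynomial volume growth plus bounded geometry does \emph{not} by itself imply Faber--Krahn on an arbitrary graph. What is actually needed is a Faber--Krahn (equivalently Nash) inequality for the reflected walk on a finite convex lattice polytope of bounded aspect ratio. That statement is true, but it requires its own argument --- for example an extension/reflection construction across the facets of $\cC$, or the relative isoperimetric inequality for convex lattice sets combined with discrete Cheeger. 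Your final paragraph defers the issue to ``the known Faber--Krahn bound'' for the reflected walk on an $\ell^{1}$-ball of $\mathbb{Z}^{k}$, but that object has exactly the same difficulty: a finite lattice ball with reflecting boundary is not transitive either, so the cited bound is not a standard off-the-shelf result.

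By contrast, the paper avoids Faber--Krahn theory entirely. It introduces an auxiliary chain $\Yab$ on $\cC$ whose step jumps to a uniform element of $B(r,\eta)\cap\cC$ with $r = R(2\ell)$; by Lemma~\ref{lem:ConvexCentreOfMass} this chain escapes any set of size $\ell$ in a single step with probability bounded away from zero, yielding a \emph{constant} lower bound on its spectral profile (Lemma~\ref{lem:SpectralAuxChain}). A canonical-paths comparison then relates the Dirichlet forms of $\Yrb$ and $\Yab$ at cost $O(r^{2})$ (Lemma~\ref{lem:SpectralComparison}), and the growth estimate $R(\delta|\cC|)\lesssim\delta^{1/k}\sqrt{n}$ from Lemma~\ref{lem:RGrowth} delivers the result. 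Note also that the Dirichlet-form comparison you sketch in your final paragraph would need essentially the same canonical-paths machinery as Lemma~\ref{lem:SpectralComparison}, so your route does not save that work; it only adds the Faber--Krahn burden. If you want to pursue your approach, the missing lemma to establish is Faber--Krahn for reflected walks on convex lattice polytopes, which is manageable but is not ``classical Coulhon--Saloff-Coste.''
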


\subsection{Preliminaries}

In order to prove Proposition~\ref{pro:SpectralProfileMeanField}, we require some preparation. Recall that $\PY$ denotes the transition matrix of the mean-field Bernoulli--Laplace chain and $d_G$ denotes the graph distance with respect to the unweighted transition graph $G$ of the mean-field chain. Throughout this section, whenever we refer to metric balls, we assume that the underlying  metric is $d_G$ unless otherwise specified. We will require an estimate on the intersection of balls with the centre, as well as a refined control on the growth of balls. For all $\eta \in \Omega_n$ and $r \in \N$, consider the set of configurations
\begin{equation*}
    B(r, \eta) \coloneqq \left\{ \zeta \in \Omega_n \, \colon \, (\PY)^{s}(\eta, \zeta) > 0 \text{ for some } s \le r \right\},
\end{equation*} 
and for all $\ell \in \N$, define
\begin{equation*}
    R(\ell) \coloneqq \inf \big\{ r \in \N \, \colon \, \min_{\eta \in \cC} \abs{B(r, \eta) \cap \cC} \ge \ell \big\}.
\end{equation*}
In other words, $B(r, \eta)$ denotes the ball of radius $r$ centered at $\eta$, and $R(\ell)$ describes the minimal radius of a ball with any centre in $\cC$ that contains at least $\ell$ elements from $\cC$. As a quick observation, we note that $\cC \subseteq B_1(2 d^2 m^2 C \sqrt{n}, \eta) \subseteq B(d^2 m^2 C \sqrt{n}, \eta)$ for every $\eta \in \cC$, so $R(|\cC|) \le d^2 m^2 C \sqrt{n}$. In the following, our goal is to determine the growth of $R(\ell)$ with $\ell$, which shall be useful later. We start with an observation about the centre.

\begin{lemma}
\label{lem:ConvexCentreOfMass}
    Let $C, C_0 > 0$ be constants. Then there exists some constant $c > 0$, depending only on $d,m,C$ and $C_0$, such that for all $r \le C_0 \sqrt{n}$, we have
    \begin{equation*}
        \min_{\eta \in \cC} \frac{\abs{B(r,\eta) \cap \cC}}{\abs{B(r,\eta)}} \ge c 
    \end{equation*}
    for $n$ sufficiently large.
\end{lemma}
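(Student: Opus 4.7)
The plan is to show that for each $\eta \in \cC$ and each admissible $r$, a graph ball $B(c_3 r, \eta^*)$ of comparable radius fits inside $B(r, \eta) \cap \cC$, where $\eta^* \in \cC$ is a suitably chosen ``deep interior'' configuration close to $\eta$. Lemma~\ref{lem:Identify} then gives $|B(c_3 r, \eta^*)| \gtrsim r^{(d-1)(m-1)} \gtrsim |B(r, \eta)|$, yielding the desired constant lower bound on the ratio. Throughout, Lemma~\ref{lem:MetricComparison} lets us switch between $d_G$ and $\normo{\cdot}$-distances up to multiplicative constants depending only on $d$ and $m$.

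The central construction is as follows. Let $\mathbf{n}$ denote the balanced configuration with $\mathbf{n}(i,j) = n$ for all $i, j$. For $\eta \in \cC$, the conservation laws $\sum_j \eta(i,j) = mn$ and $\sum_i \eta(i,j) = dn$ combined with $\eta(i,j) \ge n - C\sqrt{n}$ give the uniform bound $\normo{\mathbf{n} - \eta} \le 2dmC\sqrt{n}$: indeed by conservation $\sum_{i,j}\max(0,\eta(i,j)-n) = \sum_{i,j}\max(0, n - \eta(i,j))$, and each is at most $dmC\sqrt{n}$. Pick $\epsilon \coloneqq \min\{1,\, \alpha r/\sqrt{n}\}$ with a constant $\alpha = \alpha(d, m, C) > 0$ to be chosen, and take $\eta^* \in \Omega_n$ to be a row- and column-sum-preserving integer approximation of $(1 - \epsilon)\eta + \epsilon \mathbf{n}$. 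The lattice rounding perturbs each coordinate by at most $O(dm)$, which is absorbed into implicit constants provided $r$ is larger than a fixed constant. Then $\eta^*(i, j) \ge n - (1-\epsilon) C \sqrt{n} - O(1)$ and $\normo{\eta^* - \eta} \le 2\epsilon dm C\sqrt{n} + O(1)$, so $\eta^*$ has depth $\gtrsim \epsilon\sqrt{n}$ inside $\cC$ while $d_G(\eta, \eta^*) \lesssim \epsilon\sqrt{n}$. Substituting $\epsilon = \alpha r/\sqrt{n}$, both quantities become linear in $r$, with the ratio of (depth) to ($\normo{\cdot}$-shift) equal to $1/(2dm)$, independent of $\eta$. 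Choosing $\alpha$ small enough, the depth exceeds $4c_3 r$ while $d_G(\eta, \eta^*) \le (1 - c_3) r$ for some constant $c_3 > 0$; the triangle inequality and Lemma~\ref{lem:MetricComparison} then give $B(c_3 r, \eta^*) \subseteq B(r, \eta) \cap \cC$. Since $\eta^* \in \Mac$ for any fixed $\delta > 0$ and $n$ large, Lemma~\ref{lem:Identify} provides $|B(c_3 r, \eta^*)| \gtrsim r^{(d-1)(m-1)} \gtrsim |B(r, \eta)|$.

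The above construction is valid only when $\epsilon = \alpha r/\sqrt{n} \le 1$, i.e., $r \le \sqrt{n}/\alpha$. For $r$ in the complementary range $r \in (\sqrt{n}/\alpha, C_0\sqrt{n}]$, we instead take $\epsilon = 1$ so that $\eta^*$ is essentially $\mathbf{n}$ itself, with depth $C\sqrt{n}$ and $d_G(\eta, \mathbf{n}) \le dm C\sqrt{n}$. A smaller constant $c_3 = \min\{C/(4C_0),\, 1/2\}$ ensures both $B(c_3 r, \mathbf{n}) \subseteq \cC$ (requiring $4 c_3 r \le C\sqrt{n}$) and $B(c_3 r, \mathbf{n}) \subseteq B(r, \eta)$ (requiring $c_3 r + dmC\sqrt{n} \le r$, which holds since $r > 2dmC\sqrt{n}$ in this regime after possibly enlarging $\alpha$). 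Finally, for $r$ bounded by an absolute constant, $|B(r, \eta)| = O(1)$ and $|B(r, \eta) \cap \cC| \ge 1$ (since $\eta \in \cC$), so the ratio is trivially a constant. These regimes together cover all $r \le C_0 \sqrt{n}$.

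The main obstacle is that $\eta$ may lie at a ``corner'' of $\cC$ where many of the $dm$ facet inequalities $\zeta(i,j) \ge n - C\sqrt{n}$ are simultaneously tight, ruling out purely local perturbations that gain slack in all tight coordinates without paying an equally large $\normo{\cdot}$-cost in the loose ones. The conservation laws resolve this by forcing $\normo{\eta - \mathbf{n}} = O(\sqrt{n})$ independently of the combinatorial pattern of tight coordinates at $\eta$, so a single global shift toward the balanced state $\mathbf{n}$ gains slack uniformly across all coordinates. The delicate quantitative step is calibrating $\epsilon$ so that depth and displacement both scale linearly in $r$ with $\eta$-independent constants, enabling the triangle-inequality argument above.
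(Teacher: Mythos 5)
Your proof is correct and takes essentially the same approach as the paper: both shift $\eta$ toward the balanced configuration $\mathbf{n}$ to gain slack in every tight coordinate simultaneously, using the conservation laws to bound $\normo{\eta - \mathbf{n}}$ uniformly, and then invoke Lemmas~\ref{lem:MetricComparison} and~\ref{lem:Identify} to conclude; your multiplicative parametrization $\eta^*=(1-\epsilon)\eta+\epsilon\mathbf{n}$ is a slightly cleaner variant of the paper's normalized shift, since it avoids the paper's case split on whether $\eta\in\CC{C/2}$. One small slip: in the regime $r\in(\sqrt{n}/\alpha,C_0\sqrt{n}]$ the phrase ``after possibly enlarging $\alpha$'' should read ``after possibly shrinking $\alpha$'' --- you need $\alpha\le 1/(2dmC)$ so that $\sqrt{n}/\alpha\ge 2dmC\sqrt{n}$, consistent with the smallness of $\alpha$ already required in the first regime.
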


\begin{proof}
    Fix some configuration $\eta \in \cC$. Note that by the equivalence of the graph distance $d_G$ and the $\ell^1$ norm due to Lemma~\ref{lem:MetricComparison}, and Lemma~\ref{lem:Identify} on the growth rate of balls in $\Omega_n$, it suffices to show that there exists some constant $c > 0$ such that for all $r \le C_0 \sqrt{n}$, we get that
    \begin{equation*}
        \min_{\eta \in \cC}\frac{\abs{B_1(r,\eta) \cap \cC}}{\abs{B_1(r,\eta)}} \ge  c  
    \end{equation*}
    with $n$ large enough. Here, we stress that $B_1(\eta,r)$ denotes a ball of radius $r$ around $\eta$ in the $\ell^1$-distance. We may assume that $r$ is at least a large enough constant since $\eta \in B_1(r, \eta) \cap \cC$. Let $\zeta$ denote the balanced configuration, where each urn contains exactly $n$ many balls per colour. Suppose that $r \le C \sqrt{n}/2$. Let us define
    \begin{equation*}
        \xi = \begin{cases}
            \eta, &\text{ if } \eta \in \CC{C/2},\\
            \sqbrac{\eta + \frac{r}{3} \frac{\zeta - \eta}{\normo{\zeta - \eta}}}_{\Omega_n}, &\text{ otherwise},
        \end{cases} 
    \end{equation*}
    where $[\x]_{\Omega_n}$ denotes the point in $\Omega_n$ closest (in the $\ell^1$-distance) to $\x \in \R^{d \times m}$. It is easy to check that $\xi \in \CC{C - r/(24dm\sqrt{n})}$, which implies $B_1(r/(24dm), \xi) \subseteq \cC$. But we also have $B_1(r/(24dm), \xi) \subseteq B_1(r, \eta)$ since $\normo{\xi - \eta} \le r/2$. Therefore, we obtain
    \begin{equation*}
        \abs{\set{\zeta \in B_1(r,\eta) \cap \cC}} \ge \abs{B_1(r/(24dm), \xi)} \ge c \abs{B_1(r, \eta)},
    \end{equation*}
    for some constant $c$ depending only on $d$ and $m$, where the last inequality follows from Lemma~\ref{lem:Identify}. If $C_0 \le C/2$, this allows us to conclude. Otherwise, using from Lemma~\ref{lem:Identify} that 
    \begin{equation*}
         |B_1(C\sqrt{n}/2, \eta)| \ge c_2 |B_1(r, \eta)|
    \end{equation*}
    for all $r \in [C\sqrt{n}/2, C_0\sqrt{n}]$, for some constant $c_2 > 0$, this finishes the proof. 
\end{proof}

Next, we make an observation on the growth of $R(\ell)$.

\begin{lemma}
\label{lem:RGrowth}
    There exist some constant $C_0 > 0$, depending only on $C$, $d$, and $m$, such that for all $\delta \in (0, 1]$ and $n$ sufficiently large, we have
    \begin{equation*}
        R(\delta \abs{\cC}) \le C_0 \delta^{\alpha} \sqrt{n}, 
    \end{equation*}
    where $\alpha = (d-1)^{-1}(m-1)^{-1}$.
\end{lemma}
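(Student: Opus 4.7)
The plan is to exploit the three preceding lemmas in sequence. First I would nail down the order of magnitude $\abs{\cC} \asymp n^{(d-1)(m-1)/2}$. Let $\eta_{\mathrm{bal}} \in \cC$ denote the balanced configuration with $\eta_{\mathrm{bal}}(i,j) = n$ for all $(i,j)$. Every $\zeta$ with $\normo{\zeta-\eta_{\mathrm{bal}}} \le C\sqrt{n}$ lies in $\cC$, so the lower bound in Lemma~\ref{lem:Identify}, applied at $\eta_{\mathrm{bal}}$ (which lies in $\Mac$ for, say, $\delta=1/2$), gives $\abs{\cC} \gtrsim n^{(d-1)(m-1)/2}$. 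Conversely, the urn-sum constraint $\sum_j \zeta(i,j) = mn$ forces $\abs{\zeta(i,j) - n} \le (m-1)C\sqrt{n}$ for every $\zeta \in \cC$, so $\cC$ is contained in an $\ell^1$-ball of radius $O(\sqrt{n})$ around $\eta_{\mathrm{bal}}$ and the upper bound in Lemma~\ref{lem:Identify} yields $\abs{\cC} \lesssim n^{(d-1)(m-1)/2}$.

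Next I would fix $\eta \in \cC$ and set $r \coloneqq C_0 \delta^{\alpha}\sqrt{n}$ for a constant $C_0 > 0$ to be chosen, and try to verify that $\abs{B(r,\eta) \cap \cC} \ge \delta\abs{\cC}$. Since $\delta \le 1$ forces $r \le C_0\sqrt{n}$, Lemma~\ref{lem:ConvexCentreOfMass} supplies a constant $c > 0$ with $\abs{B(r,\eta) \cap \cC} \ge c\abs{B(r,\eta)}$. The metric comparison in Lemma~\ref{lem:MetricComparison} gives $B(r,\eta) \supseteq B_1\!\brac{\tfrac{2d}{d-1}r,\eta}$, and Lemma~\ref{lem:Identify} (whose hypothesis $r < \delta_0 n$ is trivially satisfied for $n$ large) then yields $\abs{B(r,\eta)} \gtrsim r^{(d-1)(m-1)}$. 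Using $\alpha(d-1)(m-1) = 1$, this rearranges to
\begin{equation*}
    \abs{B(r,\eta) \cap \cC} \gtrsim C_0^{(d-1)(m-1)}\, \delta\, n^{(d-1)(m-1)/2}.
\end{equation*}
Combining with the upper bound $\abs{\cC} \lesssim n^{(d-1)(m-1)/2}$ from the first paragraph, choosing $C_0$ sufficiently large (depending only on $C$, $d$, $m$) ensures $\abs{B(r,\eta) \cap \cC} \ge \delta\abs{\cC}$ for every $\eta \in \cC$, which in turn gives $R(\delta\abs{\cC}) \le r = C_0 \delta^{\alpha}\sqrt{n}$.

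There is no serious obstacle here; the content is essentially combinatorial. The only point that requires a little care is staying inside the $r = O(\sqrt{n})$ regime where Lemma~\ref{lem:ConvexCentreOfMass} applies, and this is precisely what determines the exponent $\alpha = ((d-1)(m-1))^{-1}$: it is the unique value for which the $\delta$-scaling of the ball volume $r^{(d-1)(m-1)}$ balances against the $\delta$-fraction of $\abs{\cC} \asymp n^{(d-1)(m-1)/2}$ that we need to capture while keeping $r/\sqrt{n}$ bounded.
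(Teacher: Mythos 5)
Your approach follows the paper's, invoking Lemma~\ref{lem:ConvexCentreOfMass} to pass from $\abs{B(r,\eta)}$ to $\abs{B(r,\eta)\cap\cC}$, and then bounding ball volumes via Lemmas~\ref{lem:MetricComparison} and~\ref{lem:Identify}. However, the closing step contains a circularity in the constant tracking. You set $r = C_0\delta^\alpha\sqrt{n}$ and invoke Lemma~\ref{lem:ConvexCentreOfMass} over the full range $r \le C_0\sqrt{n}$, but the constant $c$ it supplies depends on the parameter feeding that range, namely on the very $C_0$ you are trying to choose. For $r$ much larger than $C\sqrt{n}$, which occurs when $\delta$ is of constant order and $C_0$ is large, one has $\abs{B(r,\eta)\cap\cC} \le \abs{\cC} \asymp n^{(d-1)(m-1)/2}$ while $\abs{B(r,\eta)} \asymp (C_0\sqrt{n})^{(d-1)(m-1)}$, so $c = c(C_0) \asymp C_0^{-(d-1)(m-1)}$. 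This exactly cancels the factor $C_0^{(d-1)(m-1)}$ you gain from the ball volume; the product $c(C_0)\,C_0^{(d-1)(m-1)}$ does not grow as $C_0 \to \infty$, so the final ``choose $C_0$ large'' step does not close the estimate uniformly over $\delta \in (0,1]$.

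The paper resolves this with a two-regime structure. It first proves the bound only for $\delta < \delta_0$, with $\delta_0$ small enough that $C_0\delta_0^\alpha \le 1$, so $r \le \sqrt{n}$; in that window Lemma~\ref{lem:ConvexCentreOfMass} is applied with its parameter fixed at $1$ rather than $C_0$, which pins $c$ to a genuine constant depending only on $d,m,C$, and then $c\cdot C_0^{(d-1)(m-1)}$ can be made large. For $\delta \ge \delta_0$ one uses the trivial bound $R(\delta\abs{\cC}) \le R(\abs{\cC}) \le 2d^2m^2C\sqrt{n}$ and, at the very end, enlarges the reported $C_0$ to at least $2d^2m^2C/\delta_0^\alpha$, which covers this regime without disturbing the first since the inequality being established is one-sided. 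Your ingredients are the right ones; the argument needs this case split so that the constant fed into Lemma~\ref{lem:ConvexCentreOfMass} is fixed before, rather than after, $C_0$ is chosen.
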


\begin{proof}
    It suffices to show the result for $\delta \in (0, \delta_0)$ for some constant $\delta_0 > 0$ since then we can take $C_0$ to be large enough so that $R(\delta \abs{\cC}) \le 2 d^2 m^2 C \sqrt{n} \le C_0 \delta_0^\alpha \sqrt{n}$. By Lemma~\ref{lem:ConvexCentreOfMass}, there exists a constant $c > 0$ such that for all $r \le \sqrt{n}$, we have
    \begin{equation*}
        \min_{\eta \in \cC} \frac{\abs{B(r,\eta) \cap \cC}}{\abs{B(r,\eta)}} \ge c,
    \end{equation*}
    for $n$ sufficiently large. Therefore, it suffices to show that there exists some constants $C_0, \delta_0 > 0$ with $C_0 \delta_0^\alpha \le 1$ such that for $\delta \in (0, \delta_0)$ and $\eta \in \cC$, we have $|B(C_0 \delta^\alpha \sqrt{n}, \eta)| \ge c^{-1} \delta |\cC|$ for $n$ sufficiently large. By Lemma~\ref{lem:MetricComparison}, it suffices to show the same result with respect to balls in the $\ell^1$-norm. By Lemma~\ref{lem:Identify}, there exists a constant $c_0 > 0$ such that for all $\delta, C_0 > 0$ and $\eta \in \cC$, we have
    \begin{equation}
    \label{eqn:volest}
        |B_1(C_0 \delta^\alpha \sqrt{n}, \eta)| \ge c_0 (C_0 \delta^\alpha \sqrt{n})^{\alpha^{-1}} = c_0 C_0^{\alpha^{-1}} \delta n^{\frac{\alpha^{-1}}{2}}
    \end{equation}
    for $n$ large enough. Note that $\cC \subseteq B_1(2d^2m^2C\sqrt{n}, \eta)$. Therefore, by Lemma~\ref{lem:Identify}, we obtain
    \begin{equation*}
        |\cC| \le B_1(2d^2m^2C\sqrt{n}, \eta) \le (8d^2m^2C\sqrt{n})^{\alpha^{-1}}
    \end{equation*}
    for $n$ large enough. Now choosing $C_0$ large enough ensures that \eqref{eqn:volest} combined with the above inequality implies $|B_1(C_0 \delta^\alpha \sqrt{n}, \eta)| \ge c^{-1} \delta |\cC|$. Further, choosing $\delta_0$ small enough ensures $C_0 \delta_0^\alpha \le 1$. This finishes the proof.
\end{proof}

\subsection{Bounds on the spectral profile for an auxiliary chain}
\label{sec:specAux}

Recall that $\piR$ and $\PYr$ denote the stationary distribution and the transition matrix, respectively, of the chain $\Yrb$, which we obtain in the same way as $\Y$, but suppressing all moves which lead outside of the set $\cC$. Similar to \eqref{def:SpectralProfile}, we define for every set $A \subseteq \cC$ the quantity
\begin{equation}
\label{def:SpectralProfileSet}
    \sPr(A) \coloneqq \min\set{ \frac{\big\langle (I-\PYr)f, f \big\rangle_{\piR}}{\langle f, f\rangle_{\piR}} \,\colon\, f \colon \cC \rightarrow \R \text{ and } f\vert_{A^{\complement}} = 0}. 
\end{equation} 
In the following, we set $r = R(2\ell)$ for all $\ell \le \frac{1}{2} \abs{\cC}$. We consider an auxiliary chain $\Yab = (\Ya_t)_{t \ge 0}$ on the state space $\cC$ with transition matrix $\PYa$ given by
\begin{equation*}
    \PYa(\eta, \zeta) \coloneqq \begin{cases}
        \frac{1}{|B(r, \eta)|} & \text{ if } \zeta \in (B(r, \eta) \cap \cC) \setminus \{\eta\},\\
        1 - \frac{|B(r, \eta) \cap \cC| - 1}{|B(r, \eta)|}& \text{ if } \eta = \zeta,\\
        0 & \text{ otherwise.} 
    \end{cases}
\end{equation*}
In words, in every step starting from some configuration $\eta$, we select an element $\zeta$ from the ball $B(r, \eta)$ uniformly at random and move to $\zeta$ if $\zeta \in \cC$. Otherwise, the chain remains in place. One can check using the detailed balance equations that $\Yab$ is reversible with respect to its the stationary distribution $\pia$, which is given by
\begin{equation*}
    \pia(\eta) = \frac{|B(r, \eta)|}{\sum_{\zeta \in \cC} |B(r, \zeta)|}, \text{ for all } \eta \in \cC.
\end{equation*}
In fact, for $n$ large enough, $\pia$ is uniform on $\cC$ since $|B(r, \eta)|$ is constant over $\eta \in \cC$. We have the following result on the spectral profile $\sPa(A)$ of $\Yab$ defined similarly as in \eqref{def:SpectralProfileSet}. 

\begin{lemma}
\label{lem:SpectralAuxChain}
    There exists a constant $c = c(C, d, m) > 0$ such that for all $\delta \in (0, 1/2]$ and $\emptyset \ne A \subseteq \cC$ with $\abs{A} \le \delta \abs{\cC}$, and $\ell = |A|$ in the definition of $\Yab$, we have
    \begin{equation*}
        \sPa(A) \ge c
    \end{equation*}
    for all $n$ sufficiently large.
\end{lemma}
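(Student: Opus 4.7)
The plan is to read $\sPa(A)$ as the principal Dirichlet eigenvalue of $\PYa$ on $A$ and reduce it to a uniform lower bound on the one-step escape probability from $A$. For $\eta\in A$, write $p_\eta\coloneqq\sum_{\zeta\in A}\PYa(\eta,\zeta)$, so $1-p_\eta$ is the probability that the auxiliary chain leaves $A$ in one step. From the definition of $\PYa$ (including its holding probability), a direct calculation gives
\begin{equation*}
    1-p_\eta \;=\; \frac{|B(r,\eta)\cap\cC \cap A^{\complement}|}{|B(r,\eta)|}.
\end{equation*}

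Next I would use the choice $r=R(2\ell)$ with $\ell=|A|$ together with Lemma~\ref{lem:ConvexCentreOfMass} to bound this from below. By definition of $R$, we have $|B(r,\eta)\cap\cC|\ge 2|A|$, and since $|B(r,\eta)\cap A|\le|A|$ this yields $|B(r,\eta)\cap\cC\cap A^{\complement}|\ge |B(r,\eta)\cap\cC|/2$. By Lemma~\ref{lem:ConvexCentreOfMass} (applicable since $r\le C_0\sqrt{n}$ by Lemma~\ref{lem:RGrowth} with $\ell\le|\cC|/2$), we have $|B(r,\eta)\cap\cC|\ge c\,|B(r,\eta)|$ for some constant $c=c(C,d,m)>0$. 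Combining gives
\begin{equation*}
    1-p_\eta \;\ge\; \frac{c}{2}\qquad\text{for every }\eta\in A.
\end{equation*}

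Finally I would convert this uniform escape bound into the Dirichlet bound using reversibility and a Cauchy--Schwarz step. For $f\colon\cC\to\R$ with $f\vert_{A^{\complement}}=0$, expand
\begin{equation*}
    \big\langle (I-\PYa)f,f\big\rangle_{\pia} \;=\; \sum_{\eta\in A}\pia(\eta)f(\eta)^2 \;-\; \sum_{\eta,\zeta\in A}\pia(\eta)\PYa(\eta,\zeta)f(\eta)f(\zeta).
\end{equation*}
Applying $2ab\le a^2+b^2$ to the cross term and using reversibility $\pia(\eta)\PYa(\eta,\zeta)=\pia(\zeta)\PYa(\zeta,\eta)$ to symmetrise, the double sum is bounded above by $\sum_{\eta\in A}\pia(\eta)p_\eta f(\eta)^2$. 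Therefore
\begin{equation*}
    \big\langle (I-\PYa)f,f\big\rangle_{\pia} \;\ge\; \sum_{\eta\in A}\pia(\eta)(1-p_\eta)f(\eta)^2 \;\ge\; \tfrac{c}{2}\,\langle f,f\rangle_{\pia},
\end{equation*}
which gives $\sPa(A)\ge c/2$, as required.

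The only subtle point is checking that $r=R(2\ell)$ indeed satisfies $r\le C_0\sqrt n$ so that Lemma~\ref{lem:ConvexCentreOfMass} applies; this follows from Lemma~\ref{lem:RGrowth} applied with $\delta=2\ell/|\cC|\le 1$. Everything else is a clean one-step calculation using only the explicit form of $\PYa$ and its reversibility, so I do not expect any real obstacle beyond bookkeeping of constants.
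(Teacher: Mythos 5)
Your proof is correct and takes essentially the same route as the paper: both arguments reduce $\sPa(A)$ to a uniform lower bound on the one-step escape probability from $A$, using the same combinatorial inputs (the definition of $R(2\ell)$, the bound $|A|=\ell$, and Lemma~\ref{lem:ConvexCentreOfMass}). The only cosmetic difference is that you derive the passage from escape probability to the Dirichlet form explicitly via AM--GM and reversibility, whereas the paper invokes the standard one-step bound $1-\sPa(A)\le\max_{\eta}\mathbb{P}_{\eta}(\tau_{A^{\complement}}>1)$ and then splits according to whether the chain holds or moves.
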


\begin{proof}
    Let $r = R(2\ell)$. This implies $|B(r, \eta) \cap \cC| \ge 2\ell \ge 2$ for each $\eta \in \cC$. Therefore, by Lemma~\ref{lem:ConvexCentreOfMass}, we have
    \begin{equation}
    \label{eq:MinimumBigChain}
        \max_{\eta \in \cC} \PYa(\eta, \eta) = 1 - \min_{\eta \in \cC} \frac{|B(r, \eta) \cap \cC| - 1}{|B(r, \eta)|} < 1 - c_0
    \end{equation}
    for some constant $c_0 > 0$. Let $\tau_{A^{\complement}}$ denote the first time a chain leaves the set $A$. Then from the definition of the spectral profile, distinguishing whether the first step according to $\PYa$ moves the chain outside of the set $A$,  we get 
    \begin{equation*}
        1 - \sPa(A) \le \max_{\eta \in \cC} \PYa(\tau_{A^{\complement}} > 1 \, | \,  \eta_0= \eta) . 
    \end{equation*}
    Thus, together with \eqref{eq:MinimumBigChain}, there exists a constant $c > 0$ such that
    \begin{equation*}
        1 - \sPa(A)  \le \max_{\eta \in \cC  } \left(\PYa(\eta, \eta) + \frac{1}{2}(1-\PYa(\eta,\eta))\right) \le 1 - c, 
    \end{equation*}
    where the first inequality follows from the fact that $\abs{A} = \ell$ while $\abs{B(r, \eta)} \ge 2\ell$, and the second inequality follows from \eqref{eq:MinimumBigChain}. This allows us to conclude.
\end{proof}

\subsection{Comparison of Dirichlet forms}

Next, we compare the Dirichlet forms associated to the chains $\Yrb$ and $\Yab$ with transition matrices $\PYr$ and $\PYa$ on $\cC$, respectively. 

\begin{lemma}
\label{lem:SpectralComparison}
    There exists a constant $\tilde{C} > 0$ such that for all $r \in \N$ and $A \subseteq \cC$ with $r = R(2\ell)$ and $\ell = \abs{A} \le \delta \abs{\cC}$ and $\delta \in (0, 1/2]$, we have
    \begin{equation}
    \label{eq:DirichletRatio}
        \sup_{f \colon \cC \rightarrow \R,\, f\vert_{A^{\complement}} = 0,\, f \neq 0} \frac{\langle (I-\PYa)f, f\rangle_{\pia}}{\langle (I-\PYr)f, f\rangle_{\piR}} \le \tilde{C} r^2
    \end{equation}
    for all $n$ sufficiently large. 
\end{lemma}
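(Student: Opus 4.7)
My plan is to apply the Diaconis--Saloff-Coste comparison of Dirichlet forms via canonical paths (or flows). First observe that $\pia$ and $\piR$ are comparable up to constants depending only on $C, d, m$ on $\cC$: $\pia$ is exactly uniform on $\cC$ for $n$ large by the computation in Section~\ref{sec:specAux}, while $\piR$ is proportional to $\pi$ as in \eqref{eq:StationaryDistribution}, whose multinomial coefficients differ by $O(1)$ across $\cC$ by Stirling's formula applied to near-balanced configurations. Hence $\langle f, f\rangle_{\pia} \asymp \langle f, f\rangle_{\piR}$, and \eqref{eq:DirichletRatio} reduces to showing $\mathcal{E}_{\PYa}(f,f) \le \tilde{C}\, r^2\, \mathcal{E}_{\PYr}(f,f)$ for $f\colon\cC\to\R$ with $f\vert_{A^{\complement}}=0$.

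For each ordered pair $(\eta,\zeta) \in \cC\times\cC$ with $\zeta \in (B(r,\eta)\cap\cC)\setminus\{\eta\}$, I would assign a canonical path $\gamma_{\eta,\zeta}$ from $\eta$ to $\zeta$ lying entirely in $\cC$, consisting of edges of $\PYr$, of length at most $C r$ for a constant depending only on $d, m, C$. Existence of such a path of length $O(r)$ follows by adapting the cycle construction in the proof of Lemma~\ref{lem:MetricComparison}, which stays inside $\cC$ as long as both endpoints are in $\cC$, by possibly enlarging $C$. Then the standard canonical-path inequality yields $\mathcal{E}_{\PYa}(f,f) \le A\,\mathcal{E}_{\PYr}(f,f)$ where
\begin{equation*}
    A = \max_{(x,y) \in E(\PYr)} \frac{1}{\piR(x)\PYr(x,y)} \sum_{\gamma_{\eta,\zeta} \ni (x,y)} \abs{\gamma_{\eta,\zeta}}\,\pia(\eta)\,\PYa(\eta,\zeta).
\end{equation*}

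The ingredients to bound $A \le \tilde{C}\,r^2$ are: (i) $\abs{\gamma_{\eta,\zeta}} \le C r$; (ii) $\piR(x)\PYr(x,y) \asymp 1/\abs{\cC}$ on edges inside $\cC$, since the mean-field transition probability along any single swap is $\Theta(1)$ when $x(i,j) = \Theta(n)$, which holds throughout $\cC$; (iii) $\pia(\eta)\PYa(\eta,\zeta) = 1/(\abs{\cC}\cdot\abs{B(r,\eta)})$; and (iv) the polynomial ball-growth estimates from Lemmas~\ref{lem:Identify} and~\ref{lem:MetricComparison}. The main obstacle is obtaining the sharp $r^2$ bound on the congestion: a naive single-path count using that there are at most $O(\abs{B(Cr,x)}^2)$ pairs with path through a fixed edge $(x,y)$ gives a bound of order $r^{(d-1)(m-1)+1}$, which is too weak when $(d-1)(m-1) > 1$. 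To recover $r^2$ one must use a flow argument, distributing the mass $\pia(\eta)\PYa(\eta,\zeta)$ uniformly over a well-chosen family of paths from $\eta$ to $\zeta$ of length $O(r)$ inside $\cC$; this averaging mirrors the diffusive scaling $\Theta(1/r^2)$ of the spectral gap of a simple random walk on a ball of diameter $r$ in $\mathbb{Z}^{(d-1)(m-1)}$, independently of the dimension, and combines with the Cauchy--Schwarz factor of $\abs{\gamma}$ to yield the claimed $r^2$ congestion.
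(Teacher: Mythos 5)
Your setup is correct and matches the paper's: you compare Dirichlet forms via canonical paths, correctly note that $\pia$ and $\piR$ are comparable on $\cC$, correctly estimate $Q^{\textup{res}}(x,y)\asymp 1/\abs{\cC}$ and $Q^{\textup{aux}}(\eta,\zeta)\asymp 1/(\abs{\cC}\,r^{(d-1)(m-1)})$, and correctly take paths of length $O(r)$. You also correctly locate the crux: the naive count of $O(r^{2(d-1)(m-1)})$ ordered pairs $(\eta,\zeta)$ routed through a fixed edge yields congestion $O(r^{(d-1)(m-1)+1})$, which overshoots $r^2$ whenever $(d-1)(m-1)>1$.

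The gap is in the resolution of this overshoot. You assert that one must switch to a flow argument, distributing each $Q^{\textup{aux}}$-weight over a family of paths, and appeal to the diffusive scaling of the spectral gap in $\mathbb{Z}^{(d-1)(m-1)}$; but you neither construct the flow nor verify its congestion, so the step that actually produces $r^2$ is left entirely unjustified. Moreover, a flow is not necessary. The paper keeps a single deterministic path per pair, but chooses it as a discretization of the straight segment $\xi_t=(1-t)\eta+t\zeta$ lying inside the sausage $S(\eta,\zeta,C_0)$ from \eqref{def:Sausage}. The decisive geometric input is Lemma~\ref{lem:SausageIntersection} (see Figure~\ref{fig:Tubes}): for $\eta$ at $\ell^1$-distance $i$ from a fixed edge endpoint $e_+$, the targets $\zeta\in B_1(r,\eta)$ whose sausage contains $e_+$ lie in a thin cone of cardinality $O\bigl(i\max\{1,(r/i)^{(d-1)(m-1)}\}\bigr)$, not the full $O(r^{(d-1)(m-1)})$. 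Summing over shells $\abs{S_i}\lesssim i^{(d-1)(m-1)-1}$ around $e_+$ sharpens the pair count to $O(r^{(d-1)(m-1)+1})$ as in \eqref{eq:NumberOfPaths}, which, against $Q^{\textup{aux}}\asymp r^{-(d-1)(m-1)}/\abs{\cC}$, $\abs{\Gamma_{\eta,\zeta}}=O(r)$, and $Q^{\textup{res}}\asymp 1/\abs{\cC}$, gives $O(r^2)$ directly. A random-interpolation flow would likely also work, but it is an alternative rather than a necessity, and as written you would still need to carry it out.
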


We use the canonical path method to compare the respective Dirichlet forms; see Chapter~13 in~\cite{LP} for an introduction. For all $\eta, \zeta \in \cC$, we let $\Gamma_{\eta, \zeta}$ be a sequence of configurations obtained in the following way. We define a family of vectors $(\xi_t)_{t \ge 0}$ such that $\xi_{t} \in \R^{d\times m}$ by
\begin{equation}
\label{def:Interpolation}
    \xi_t(i,j) =  (1-t) \eta(i,j) + t  \zeta(i,j) 
\end{equation}
for all $i \in [d]$ and $j\in [m]$. For $C_0 > 0$, we let $S(\eta,\zeta,C_0) \subseteq \R^{d\times m}$ be defined as 
\begin{equation}
\label{def:Sausage}
   S(\eta,\zeta) = S(\eta,\zeta,C_0) \coloneqq \bigcup_{ t \in [0,1]} \widetilde{B}_1(\xi_t,C_0), 
\end{equation}
where $\widetilde{B}_1(\xi_t,C_0)$ denotes the $\ell^1$-ball of radius $C_0$ in $\R^{d \times m}$ around $\xi_t$. Our key observation is that there exists constants $C_0, C_1 > 0$, only depending on $d$ and $m$, such that for any pair of configurations $(\eta, \zeta) \in \cC \times\cC$, we can pick a path $\Gamma_{\eta, \zeta}$ from $\eta$ to $\zeta$ with respect to the graph distance $d_G$, fully contained in $\cC$ with $\Gamma_{\eta, \zeta} \subseteq S(\eta,\zeta,C_0)$ and 
\begin{equation}
\label{eq:LengthGamma}
    \abs{\Gamma_{\eta, \zeta}} \le C_1 \normo{\eta - \zeta}. 
\end{equation} 
To see this, we first note that for every $\xi_t$ and $t \ge 0$, there exists some configuration $\tilde{\xi}_t \in \cC$ such that
\begin{equation*}
    \normo{\xi_t - \tilde{\xi}_t} \le 2md
\end{equation*}
by first assigning $\lfloor (1-t) \eta(i,j) + t  \zeta(i,j) \rfloor$ balls of colour $j$ to urn $i$, for all $i \in [d], j \in [m]$, and filling up the remaining positions in $\tilde{\xi}_t$ according to an arbitrary rule. Let $k = \normo{\eta - \zeta}$ and note that
\begin{equation*}
    \normo{\tilde{\xi}_{(\ell-1) k^{-1}} - \tilde{\xi}_{\ell k^{-1}} } \le  \normo{ \tilde{\xi}_{(\ell-1) k^{-1}} - \xi_{(\ell-1) k^{-1}}} +  \normo{ \xi_{(\ell-1) k^{-1}} - \xi_{\ell k^{-1}} } +  \normo{ \xi_{\ell k^{-1}} - \tilde{\xi}_{\ell k^{-1}} } \le 4md + 1
\end{equation*}
for all $\ell \in [k]$. Now take $\Gamma_{\eta, \zeta}$ as the shortest path connecting the elements $\tilde{\xi}_{(\ell-1) k^{-1}}$ to $\tilde{\xi}_{\ell k^{-1}}$ for all $\ell \in [k]$. Let $G_{\cC}$ denote the graph supporting the transitions of $\PYr$. Then by  Lemma~\ref{lem:MetricComparison} on the equivalence of distances $d_{G_{\cC}}$ and $\ell^1$, choosing the constants $C_0, C_1>0$ large enough, we get the claim. Our goal is now to show that there exists some constant $C^{\prime}>0$, depending only on $d,m$ and $C$, such that for any edge $e=\{e_+, e_-\} \in E(G_{\cC})$, we have 
\begin{equation}
\label{eq:NumberOfPaths}
    \abs{\{(\eta,\zeta) \in \cC \times \cC \colon \zeta \in  B(\eta,r) \text{ and } e \in \Gamma_{\eta,\zeta} \}} \le C^{\prime} r^{(d-1)(m-1)+1}. 
\end{equation}

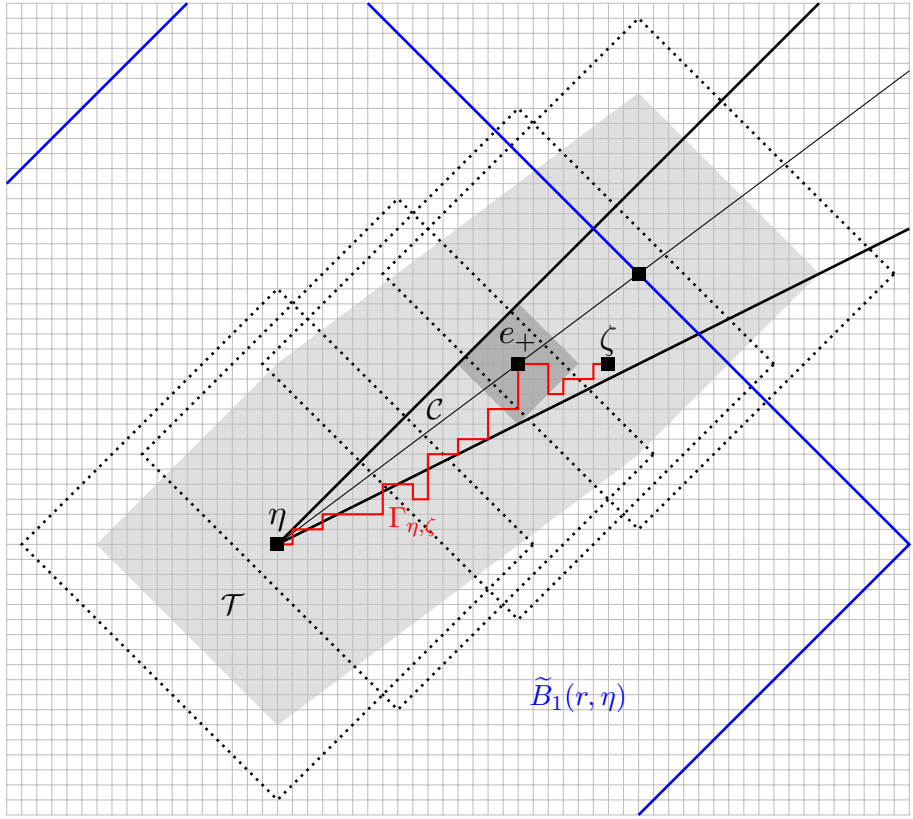
\begin{figure}
\centering
\begin{tikzpicture}[scale=0.2]

\draw[gray!50,thin,step=1] (-10,-10) grid (50,44);

\coordinate (eta) at (8,8);
\coordinate (eplus) at (24,20);
\coordinate (scaled) at (32,26);
\coordinate (zeta) at (30,20);

\draw[fill,gray,opacity=0.7] (28,20)--(24,24)--(20,20)--(24,16);

\draw[fill,gray!50,opacity=0.5] (44,26)--(32,38)--(8,20)--(-4,8)--(8,-4)--(32,14);

\draw[line width=1pt,dotted] (8+17,8)--(8,8+17)--(8-17,8)--(8,8-17)--(8+17,8);
\draw[line width=1pt,dotted] (16+17,14)--(16,14+17)--(16-17,14)--(16,14-17)--(16+17,14);
\draw[line width=1pt,dotted] (24+17,20)--(24,20+17)--(24-17,20)--(24,20-17)--(24+17,20);
\draw[line width=1pt,dotted] (32+17,26)--(32,26+17)--(32-17,26)--(32,26-17)--(32+17,26);

\draw[line width=1pt] (8,8) -- (50,29);
\draw[line width=1pt] (8,8) -- (44,44);
\draw (8,8) -- (50,39.5);

\draw[blue,line width=1pt] (14,44)--(50,8)--(32,-10);
\draw[blue,line width=1pt] (2,44)--(-10,32);

\draw[red,line width=0.8pt] (eta)--++(1,0)--++(0,1)--++(2,0)--++(0,1)--++(1,0)--++(3,0)--++(0,2)--++(2,0)--++(0,-1)--++(1,0)--++(0,3)--++(2,0)--++(0,1)--++(2,0)--++(0,2)--++(2,0)--++(0,3)--++(2,0)--++(0,-2)--++(1,0)--++(0,1)--++(2,0)--++(0,1)--++(1,0);

\node[fill,scale=0.7] at (eta) {};
\node[fill,scale=0.7] at (eplus) {};
\node[fill,scale=0.7] at (scaled) {};
\node[fill,scale=0.7] at (zeta) {};

\node[scale=1.2] at ($(eta)+(0,1.6)$) {$\eta$};
\node[scale=1.2] at ($(eplus)+(0,1.6)$) {$e_+$};
\node[scale=1.2] at ($(zeta)+(0,1.6)$) {$\zeta$};
\node[scale=1] at (18.5,17) {$\mathcal{C}$};
\node[scale=1] at (5,4) {$\mathcal{T}$};
\node[scale=1,blue] at (28,-2) {$\widetilde{B}_1(r, \eta)$};
\node[scale=1,red] (etaG) at (17,9.5) {$\Gamma_{\eta,\zeta}$};

\end{tikzpicture}
\caption{Illustration of the construction in Lemma~\ref{lem:SausageIntersection}. Let $\zeta$ be such that the canonical path $\Gamma_{\eta, \zeta}$ contains $e_+$. Then $\zeta$ is contained inside the cone $\mathcal{C}$, and hence the tube $\mathcal{T}$, which is in turn contained inside the union of the dotted balls.}
\label{fig:Tubes}
\end{figure}

\begin{lemma}
\label{lem:SausageIntersection}
    There exist a constant $c, C_2 > 0$, depending only on $d$, $m$, and $C$, such that for all $\eta, e_+ \in \cC$ with $\normo{\eta - e_+} = i \ne 0$, we have
    \begin{equation}
    \label{eq:SausageIntersection}
        \abs{\set{\zeta \in B_1(r,\eta) \, \colon \, e_+ \in S(\eta,\zeta)}} \le C_2i \cdot \max \set{1, \brac{\frac{r}{i}}^{(d-1)(m-1)}}
    \end{equation}
    for all $r \in \N$ with $r \le cn$, and $n$ large enough.
\end{lemma}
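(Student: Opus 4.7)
The plan is to identify $\Omega_n$ near $\eta$ with a subset of $\Z^D$ where $D = (d-1)(m-1)$ using Lemma~\ref{lem:Identify}, and then reduce to counting lattice points in a thin cone along $w \coloneqq e_+ - \eta$. Write $\Pi \colon \Omega_n \to \Z^D$ for the projection $\zeta \mapsto (\zeta(i,j))_{i \in [d-1], j \in [m-1]}$; by Lemma~\ref{lem:Identify} and the relations \eqref{eq:RelationsRecon}, $\Pi$ is injective on $B_1(r, \eta) \cap \Omega_n$ for $r \le cn$ and distorts the $\ell^1$-norm by at most a constant factor. Setting $\bv = \Pi(\zeta) - \Pi(\eta)$ and, by abuse of notation, $w = \Pi(e_+) - \Pi(\eta)$, one has $\normo{w} \asymp i$ and $\normo{\bv} \lesssim r$, and the sausage condition $e_+ \in S(\eta,\zeta)$ from \eqref{def:Sausage} implies that $\normo{t\bv - w} \le C_0$ in $\R^D$ for some $t \in [0,1]$ (since $\Pi$ is $\ell^1$-contractive), which is what I will count.

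Assume first that $i \ge 4 C_0 D$; the complementary regime $i \lesssim C_0$ is handled by the trivial bound $|B_1(r,\eta)| \lesssim r^D$ from Lemma~\ref{lem:Identify}, which is $\le C_2 i \max\{1,(r/i)^D\}$ once $C_2$ is chosen large enough. The triangle inequality applied to $\normo{t\bv - w} \le C_0$ forces $t\normo{\bv} \ge i - C_0$, so if $r < i - C_0$ the set is empty. Otherwise set $\lambda \coloneqq 1/t \in [1, r/(i - C_0)]$; coordinatewise the constraint becomes $|v_j - \lambda w_j| \le C_0 \lambda$ for every $j \in [D]$. Pick $j^* \in [D]$ with $|w_{j^*}| \ge i/(2D)$ (possible since $\normo{w} \asymp i$). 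The inequality at $j^*$ confines $\lambda$ to an interval of length $O(v_{j^*} C_0 / i^2)$, and one checks that the union of the intervals for $v_j$ ($j \ne j^*$) as $\lambda$ ranges over that interval has total length $O(C_0 \lambda)$. Since $v_{j^*}$ takes at most $O(r)$ integer values and fixes $\lambda \asymp v_{j^*} D / i$, summing gives
\begin{equation*}
    \sum_{v = 1}^{O(r)} \left( \frac{C_0 v}{i} + 1 \right)^{D - 1} \;\lesssim\; \frac{r^D}{i^{D-1}} + r \;\lesssim\; i \left(\frac{r}{i}\right)^D
\end{equation*}
whenever $r \ge i$. For the intermediate range $i - C_0 \le r \le i$, one verifies directly from $\lambda \in [(i - C_0)/r, 1]$ that $\normo{\bv - w} \lesssim C_0$, giving $O(1) \le C_2 i$ lattice points.

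The main obstacle is handling the geometry in $\ell^1$ rather than in Euclidean norm: there is no clean ``transverse area times axial length'' volume identity to invoke, so the count has to be decomposed coordinate by coordinate, with the distinguished index $j^*$ playing the role that a projection onto the cone axis would play in the Euclidean setting. The exponent $(d-1)(m-1)$ is precisely the intrinsic lattice dimension of $\Omega_n$ from Lemma~\ref{lem:Identify}, while the sausage thickness $C_0$ only affects multiplicative constants, through the aperture $C_0/i$ of the cone.
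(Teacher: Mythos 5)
Your proof is correct, but it takes a genuinely different route from the paper's. The paper covers the intersection of the cone with $\widetilde B_1(r,\eta)$ by a chain of $i+1$ overlapping $\ell^1$-balls of radius $\Theta(r/i)$ centred along the segment from $\eta$ to $e_+$, then applies the lattice-ball volume bound (Lemma~\ref{lem:Identify}) to each ball and sums: this gives $(i+1)\cdot\big(\Theta(r/i)\big)^{(d-1)(m-1)}$ directly, with no case split and no coordinate bookkeeping. You instead project to $\Z^{(d-1)(m-1)}$, fix a distinguished coordinate $j^*$ with $|w_{j^*}|\gtrsim i$ to serve as a parameter along the cone axis, and count transverse slices $(v_j)_{j\ne j^*}$ for each integer value of $v_{j^*}$. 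This is a legitimate ``slice-and-count'' argument; the tube covering in the paper is the geometric analogue of your arithmetic slicing, and both land on the same bound. What the paper's route buys is that the volume lemma absorbs all the $\ell^1$-geometry in one stroke, so there is no need for the distinguished-index device or the separate tracking of the drift term $|w_j|\,|I_{v_{j^*}}|$; what your route buys is a completely explicit picture of where the exponent comes from. A few minor slips worth fixing: since $t\in(0,1]$ you have $\lambda=1/t\in[1,\,r/(i-C_0)]$, not $\lambda\in[(i-C_0)/r,\,1]$ (you swapped $t$ and $\lambda$ in the intermediate-range case); and since $\Pi$ only satisfies $\normo{w}\asymp i$ rather than $\normo{w}=i$, the emptiness threshold is $r<c_1 i-C_0$ for some $c_1\in(0,1)$ depending on $d,m$ rather than $r<i-C_0$ — harmless, since it just re-scales constants. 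Also, the ``one checks'' step for the union of $v_j$-intervals should record that the drift contribution is $|w_j|\cdot|I_{v_{j^*}}|\lesssim i\cdot v_{j^*}C_0/i^2\asymp C_0\lambda$, so it does not dominate; as written it is a little terse but the computation goes through.
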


\begin{proof}
    Suppose that $\zeta \in B_1(r, \eta)$ is such that $e_+ \in S(\eta, \zeta)$. Then by the definition of $S(\eta, \zeta)$, it follows that
    \begin{equation*}
        \normo{e_+ - \xi_{t_0}} \le C_0,
    \end{equation*}
    for some $t_0 \in [0, 1]$, where $(\xi_t)_{t \in [0,1]}$ is as defined in \eqref{def:Interpolation} and $C_0$ is as in \eqref{def:Sausage}. Further, we have $\zeta = \eta + t_0^{-1} (\xi_{t_0} - \eta)$, which implies that $\zeta$ is contained inside the cone
    \begin{equation*}
        \mathcal{C} \coloneqq \{\eta + s (\mathbf{x} - \eta) \,\colon\, \mathbf{x} \in \widetilde{B}_1(C_0, e_+), \, s \ge 0\},
    \end{equation*}
    subtended by $\widetilde{B}_1(C_0, e_+)$ at the point $\eta$. It is easy to check that $\mathcal{C} \cap \widetilde{B}_1(r, \eta)$ is contained inside
    \begin{equation*}
        \mathcal{T} \coloneqq \bigcup_{s \in [0, r/i]} \widetilde{B}_1(2C_0 r/i, \eta + s(e_+ - \eta)),
    \end{equation*}
    which is in turn contained inside
    \begin{equation*}
        \bigcup_{s \in \{0\} \cup [i]} \widetilde{B}_1((2C_0+1) r/i, \eta + s(r/i^2)(e_+ - \eta)).
    \end{equation*}
    Now, since $\zeta \in \mathcal{C} \cap \widetilde{B}_1(r, \eta)$, the above implies
    \begin{equation}
    \label{eqn:zetainc}
        \zeta \in \bigcup_{s \in \{0\} \cup [i]} \widetilde{B}_1((2C_0+1) r/i, \eta + s(r/i^2)(e_+ - \eta)).
    \end{equation}
    See Figure~\ref{fig:Tubes} for a visualisation. Suppose $n$ is large enough so that $\cC \subseteq \MC{1/2}$, and set $c = (3(2C_0 + 1))^{-1}$ so that $r \le cn$ implies $(2C_0 + 1)r/i \le (2C_0 + 1)r \le n/3$. The same argument as for the upper bound in Lemma~\ref{lem:Identify} yields
    \begin{equation*}
        |\Omega_n \cap \widetilde{B}_1((2C_0+1) r/i, \eta + s(r/i^2)(e_+ - \eta))| \le \brac{\floor{\frac{2(2C_0+1)r}{i}} + 1}^{(d-1)(m-1)}
    \end{equation*}
    for all $s \in \{0\} \cup [i]$. Using the above estimate in \eqref{eqn:zetainc}, we obtain
    \begin{equation*}
        \abs{\set{\zeta \in B_1(r,\eta) \, \colon \, e_+ \in S(\eta,\zeta)}} \le (i + 1)\brac{\floor{\frac{2(2C_0+1)r}{i}} + 1}^{(d-1)(m-1)},
    \end{equation*}
    which implies the desired result.
 \end{proof}

\begin{proof}[Proof of Lemma~\ref{lem:SpectralComparison}]
    We start by verifying the claim \eqref{eq:NumberOfPaths}. Let $C_0 > 0$ be the constant from \eqref{def:Sausage}. Fix an edge $e=\{e_+,e_-\}$. Note that if $e$ is used in $\Gamma_{\eta, \zeta}$ for some $\eta,\zeta \in \cC$ with $d_G(\eta, \zeta) \le r$, then the construction of $\Gamma_{\eta, \zeta}$ together with Lemma~\ref{lem:MetricComparison} imply
    \begin{equation*}
        \normo{e_+ - \eta} \le 4r + C_0.
    \end{equation*}
    Now using Lemma~\ref{lem:Identify} to estimate the growth of balls around $e_+$, we can partition the sites of the $\ell^1$-ball $B_1(e_+,4r+\floor{C_0})$ of radius $4r+\floor{C_0}$ around $e_+$ into sets $\{e_+\} = S_0, S_1, S_2, \dots, S_{4r+\floor{C_0}}$ such that for some constant $C_3 > 0$, we have
    \begin{equation*}
        \abs{S_j} \le C_3 j^{(d-1)(m-1)-1}
    \end{equation*}
    for all $j \in [4r+\floor{C_0}]$, and such that for all $\eta \in S_j$, we have
    \begin{equation*}
        \normo{\eta - e_+} \ge  j.
    \end{equation*}
    Such a partition can be obtained by sorting points in $B_1(e_+,4r+\floor{C_0})$ in ascending order of $\ell^1$-distance from $e_+$ and then partitioning the sequence into appropriately sized sets. Fix some $\eta \in S_j$. Then by Lemma~\ref{lem:SausageIntersection}, and the construction of the paths $\Gamma_{\eta, \zeta}$, we obtain
    \begin{equation*}
        \abs{\set{\zeta \in B_1(4r, \eta) \,\colon\, e \in \Gamma_{\eta,\zeta}}} \le C_4 j \cdot \max \set{1, (r/j)^{(d-1)(m-1)}}
    \end{equation*}
    for some constant $C_4 > 0$. Summing now over the different choices of $\eta$, we see that
    \begin{align}
    \label{eq:EdgesContained}
        \abs{\set{(\eta, \zeta) \in \cC^2 \colon e \in \Gamma_{\eta, \zeta}}} &\le \abs{B_1(4r, e_+)} + \sum_{j=1}^{4r+\floor{C_0}} \sum_{\eta \in S_j} \abs{\set{\zeta \in B_1(4r, \eta) \colon e \in \Gamma_{\eta, \zeta}}} \nonumber \\
        &\le C^{\prime} r^{(d-1)(m-1)+1}, 
    \end{align}
    for some constant $C^{\prime} > 0$, which yields the desired claim. Let $\Er$ and $\Ea$ denote the set of edges $e = \set{e_+, e_{-}}$ such that $\PYr(e_+, e_{-}) > 0$ and $\PYa(e_+, e_{-}) > 0$, respectively. With the bound \eqref{eq:EdgesContained} at hand, we estimate the \textit{congestion ratio} $B$ given by
    \begin{equation}
    \label{def:CongestionRatio}
        B = \max_{e \in \Er} \frac{1}{Q^{\textup{res}}(e)} \sum_{\eta, \zeta \colon e \in \Gamma_{\eta,\zeta}} Q^{\textup{aux}}(\eta,\zeta) \abs{\Gamma_{\eta, \zeta}},   
    \end{equation}
    where we set for all $(x,y)$ and $(x^{\prime},y^{\prime})$ such that $\{ x,y\} \in \Er$ and $\{ x^{\prime},y^{\prime}\} \in \Ea$, 
    \begin{align*}
        Q^{\textup{res}}(x,y) &\coloneqq \piR(x) \PYr(x,y) \ge \frac{c_{d,m}}{\abs{\cC}},\\
        Q^{\textup{aux}}(x^{\prime}, y^{\prime}) &\coloneqq \pia(x^{\prime}) \PYa(x^{\prime}, y^{\prime}) \le  \frac{C_{d,m}}{\abs{\cC}} r^{-(d-1)(m-1)},
    \end{align*}
    for some constants $c_{d,m}, C_{d,m} > 0$. Here, recall that $\piR$ and $\pia$ denote the stationary distributions of the restricted and auxiliary chains, respectively. Since $\Gamma_{\eta, \zeta}$ has length at most $4 C_1 r$, for some constant $C_1 > 0$, by \eqref{eq:LengthGamma} and Lemma~\ref{lem:MetricComparison}, we conclude that
    \begin{equation*}
        B \le \tilde{C} r^2
    \end{equation*}
     for some constant $\tilde{C} > 0$. Applying Theorem~13.20 in \cite{LP} gives the desired result \eqref{eq:DirichletRatio}.
\end{proof}

We have now all tools in order to show an upper bound on the spectral profile of $\Yrb$. 

\begin{proof}[Proof of Proposition~\ref{pro:SpectralProfileMeanField}]
    Using Lemma~\ref{lem:SpectralAuxChain} to obtain a bound on the spectral profile of the auxiliary chain with transition matrix $\PYa$, and using Lemma~\ref{lem:SpectralComparison} to compare the respective Dirichlet forms, we see that for some constant $C_1 > 0$, and all $\delta \in (0, 1/2]$,
    \begin{equation*}
        \sPr(\delta)^{-1} \le C_1 R(2 \delta \abs{\cC})^2
    \end{equation*}
    for all $n$ large enough. Using Lemma~\ref{lem:RGrowth}, we conclude by choosing $\beta = 2(d-1)^{-1}(m-1)^{-1}$.
\end{proof}

\subsection{A bound on the spectral gap}

We have developed in the previous parts tools to estimate the spectral profile of the restricted mean-field chain. We will now state a similar result for the spectral gap of the chain $\Yrb$. Intuitively, we can use the same approach as for the spectral profile bounds since the state space $\cC$ can be treated as suitable subset of $\mathbb{Z}^{d\times m}$ with a diameter at most of order $\sqrt{n}$. Indeed, the bound on the inverse spectral profile from Proposition~\ref{pro:SpectralProfileMeanField} yields a bound on the relaxation time as a quick consequence.

\begin{corollary}
\label{cor:RelaxationMFRes}
    There exists a constant $C_0 > 0$, depending only on $d, m$ and $C$, such that the relaxation time $\tr^{\textup{res}}$ of the mean-field chain $\Yrb$, restricted to $\cC$, is at most $C_0 n$.  
\end{corollary}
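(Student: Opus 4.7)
The plan is to deduce the bound on the relaxation time $\tr^{\textup{res}} = 1/\gamma$, where $\gamma$ denotes the spectral gap of $\PYr$, from the spectral profile estimate in Proposition~\ref{pro:SpectralProfileMeanField} via the classical inequality $\gamma \ge \sPr(1/2)$ valid for reversible chains. Reversibility of $\Yrb$ is inherited from $\Y$, which is reversible by Lemma~\ref{lem:revGeneral} since the uniform measure on transpositions in $S_d$ is symmetric, and restriction to $\cC$ preserves reversibility as recalled in Section~\ref{sec:preliminaries}.

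To establish the inequality $\gamma \ge \sPr(1/2)$, I would take a real eigenfunction $\phi\colon \cC \to \R$ of $\PYr$ realising the spectral gap, so that $(I-\PYr)\phi = \gamma \phi$ and $\Es{\piR}{\phi} = 0$. Let $A \coloneqq \{x \in \cC : \phi(x) > 0\}$; after replacing $\phi$ by $-\phi$ if necessary, I may assume $\piR(A) \le 1/2$. Setting $\phi_{+} \coloneqq \max(\phi, 0)$, which is non-negative with $\piR(\mathrm{Supp}(\phi_{+})) \le \piR(A) \le 1/2$, a short pointwise computation using $\phi_{+} \ge \phi$ globally together with $\phi_{+} = \phi$ on $A$ yields $(I-\PYr)\phi_{+}(x) \le \gamma \, \phi_{+}(x)$ for $x \in A$, while $\phi_{+} \equiv 0$ on $A^{\complement}$. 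Taking the inner product against $\phi_{+}$ with respect to $\piR$ gives
\begin{equation*}
    \frac{\langle (I-\PYr)\phi_{+}, \phi_{+}\rangle_{\piR}}{\langle \phi_{+}, \phi_{+}\rangle_{\piR}} \le \gamma,
\end{equation*}
and since the left-hand side is bounded below by $\sPr(1/2)$ by the definition of the spectral profile, the claim follows.

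Combining this with Proposition~\ref{pro:SpectralProfileMeanField} applied at $\delta = 1/2$ gives $\gamma \ge \sPr(1/2) \ge (C_0 \, 2^{-\beta} \, n)^{-1}$ for $n$ large enough, hence $\tr^{\textup{res}} \le C_0 \, 2^{-\beta} \, n$, which after absorbing the numerical factor $2^{-\beta}$ into the constant yields the stated bound. There is no substantial obstacle here since all the genuine work is already contained in Proposition~\ref{pro:SpectralProfileMeanField}; the only additional ingredient is the standard reduction $\gamma \ge \Lambda(1/2)$ for reversible chains, which is a routine truncation argument on eigenfunctions.
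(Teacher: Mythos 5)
Your proof is correct, but it takes a somewhat different route from the paper. The paper invokes \cite[Lemma~2.2]{GMT} to assert $\tr^{\textup{res}} \asymp \widetilde{\Lambda}_n^{\textup{res}}(1/2)^{-1}$, then passes from the modified spectral profile $\widetilde{\Lambda}$ to the ordinary one $\Lambda$ via \eqref{eq:ModifiedSP}, and finally applies Proposition~\ref{pro:SpectralProfileMeanField}; you instead prove the single inequality $\gamma \ge \sPr(1/2)$ directly by the truncation argument on the second eigenfunction (taking $\phi_+ = \max(\phi,0)$ supported on a set of $\piR$-measure at most $1/2$, and observing $(I-\PYr)\phi_+ \le \gamma\phi_+$ pointwise on the support). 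Your approach is more self-contained and a bit more economical: it proves only the direction that is actually needed, avoids the detour through the modified spectral profile, and sidesteps the external reference. The paper's citation has the minor advantage of also delivering a matching lower bound on $\tr^{\textup{res}}$ (not needed here), and the paper additionally remarks that canonical paths would give a third route. Two small points for precision: you correctly ground reversibility of $\Yrb$ in the symmetry of the uniform measure on transpositions and the restriction-preserves-reversibility fact; and your claim that $\piR(A) \le 1/2$ after a possible sign flip does hold, since $\{\phi>0\}$ and $\{\phi<0\}$ are disjoint nonempty sets (both nonempty because $\Es{\piR}{\phi}=0$ and $\phi \not\equiv 0$), so at most one of them can have measure exceeding $1/2$.
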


\begin{proof}
    It follows from \cite[Lemma~2.2]{GMT} that the spectral gap of $\Yrb$ is of the same order as the modified spectral profile $\widetilde{\Lambda}_n^{\textup{res}}(1/2)$ of $\Yrb$. Equivalently, $\tr^{\textup{res}}$ is of the same order as $\widetilde{\Lambda}_n^{\textup{res}}(1/2)^{-1}$, i.e.,
    \begin{equation*}
        \widetilde{\Lambda}_n^{\textup{res}}(1/2)^{-1} \le \tr^{\textup{res}} \le 2 \widetilde{\Lambda}_n^{\textup{res}}(1/2)^{-1}.
    \end{equation*}
    Now using \eqref{eq:ModifiedSP} and Proposition~\ref{pro:SpectralProfileMeanField} for $\delta = 1/2$ yields the desired result.
\end{proof}

We remark that one can also use the method of canonical paths~\cite[Corollary~13.21]{LP} with the same canonical paths construction as in Section~\ref{sec:specAux} to obtain a $O(n)$ bound the relaxation time.

\section{Comparison of Dirichlet forms}
\label{sec:comparison}

In this section, we shall compare the Dirichlet forms for the additive reversibilisation $\Xiplusb$ of the generalised Bernoulli--Laplace chain $\X$ induced on the centre $\cC$ and the restriction $\Yrb$ of the mean-field chain $\Y$ to the same centre $\cC$. Similar arguments will apply when the centre $\cC$ is replaced by an $\delta$-macroscopic centre $\Mac$. As in Section~\ref{sec:spectral}, we rely on the method of canonical paths to compare Dirichlet forms. We shall require the following lemma, for which recall the Cheeger constant $\Chn=\Ch$ of the single ball chain.

\begin{lemma}
\label{lem:tree}
    There is a tree $T = ([d], E)$ such that for every edge $e = (i,j) \in E$, we have
    \begin{equation*}
        \Ps{\sigma \sim \mu}{i \text{ and } j \text{ are in the same cycle in the cycle decomposition of } \sigma} \ge \frac{1}{d} \Ch.
    \end{equation*}
\end{lemma}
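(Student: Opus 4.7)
The plan is to construct $T$ as a spanning tree of the graph on $[d]$ whose edges $\{i,j\}$ are exactly those with $w(i,j) \coloneqq \Ps{\sigma \sim \mu}{i \text{ and } j \text{ lie in a common cycle of } \sigma} \ge \Ch/d$; the key step is then to show that this threshold graph is connected.

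First, I would record the pointwise inequality $U(i,j) \le w(i,j)$ for $i \ne j$, which is immediate because the event $\{\sigma(i) = j\}$ already implies that $i$ and $j$ share a cycle of $\sigma$. Note also that $w$ is symmetric, so it may naturally be viewed as edge weights on the complete graph on $[d]$.

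Next, I would prove connectedness by contradiction. Suppose the threshold graph $G \coloneqq ([d], \{\{i,j\} \colon w(i,j) \ge \Ch/d\})$ is disconnected. Then there exists a partition $[d] = A \sqcup A^c$ with $A$ nonempty and, by relabeling, $|A| \le d/2$, such that every crossing pair $(i,j)$ with $i \in A$ and $j \in A^c$ satisfies $w(i,j) < \Ch/d$. Combining the pointwise bound with $|A^c| = d - |A| \le d - 1$ yields
\begin{equation*}
    \sum_{i \in A,\, j \in A^c} U(i,j)
    \;\le\; \sum_{i \in A,\, j \in A^c} w(i,j)
    \;<\; |A|\,|A^c|\,\frac{\Ch}{d}
    \;\le\; |A|\,(d-1)\,\frac{\Ch}{d}
    \;<\; |A|\,\Ch.
\end{equation*}
On the other hand, since $\nu_0$ is uniform on $[d]$ and $\nu_0(A) = |A|/d \le 1/2$, the definition of the Cheeger constant forces
\begin{equation*}
    \sum_{i \in A,\, j \in A^c} U(i,j) \;=\; d \sum_{i \in A,\, j \in A^c} \nu_0(i) U(i,j) \;\ge\; d \cdot \nu_0(A) \cdot \Ch \;=\; |A|\,\Ch,
\end{equation*}
contradicting the previous display. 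Therefore $G$ is connected, and I would conclude by letting $T$ be any spanning tree of $G$.

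There is no substantive obstacle; the only delicate point is that the $1/d$ slack in the threshold is precisely what the bound $|A^c| \le d-1$ absorbs, allowing the simple pointwise comparison $U(i,j) \le w(i,j)$ to defeat the Cheeger lower bound. A minor bonus: the same argument would in fact give the stronger threshold $\Ch/(d-1)$, but $\Ch/d$ is what is needed downstream.
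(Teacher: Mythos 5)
Your proof is correct and takes essentially the same approach as the paper: both use the Cheeger inequality to exhibit a crossing edge $(i,j)$ with $U(i,j)\ge \Ch/d$ across any proper cut of $[d]$ (with $U(i,j)\le w(i,j)$ trivially upgrading this to the cycle-containment probability). The paper packages this as an iterative tree-growth, with a separate case for $\nu(V)>1/2$ using the Cheeger constant of $U^T$; your threshold-graph-plus-spanning-tree formulation, choosing a union of components $A$ with $|A|\le d/2$, neatly avoids the two-case analysis.
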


\begin{proof} Recall $U$ from \eqref{def:SingleBall}.
    Observe that it suffices to show $U(i, j) \ge \Ch/d$ for each edge $(i, j) \in E$ since this corresponds to permutations $\sigma$ that map $i$ to $j$, in particular, implying that $i$ and $j$ are in the same cycle in the cycle decomposition of $\sigma$.
    
    We construct the tree $T$ iteratively. We start with a single vertex so that $V = \{1\}$ and $E = \emptyset$. At each step, we add a vertex $j \in V^{\complement}$ to $V$ and an edge $(i,j)$ to $E$ satisfying
    \begin{equation*}
        U(i,j) \ge \frac{1}{d} \Ch,
    \end{equation*}
    where $i \in V$. We show below that such a vertex-edge pair exists if $V \ne [d]$. It follows from the definition of Cheeger constant that
    \begin{equation}
    \label{eqn:Che}
        \Ch = \min_{A \colon \nu(A) \le \frac{1}{2}} \frac{\sum_{i \in A, j \in A^{\complement}} \nu(i) U(i,j)}{\nu(A)} = \min_{A \colon \nu(A) \le \frac{1}{2}} \frac{\sum_{i \in A^{\complement}, j \in A} \nu(i) U(i,j)}{\nu(A)}.
    \end{equation}
    Suppose that $\nu(V) \le \frac{1}{2}$. Then the first part of the above inequality yields
    \begin{equation*}
        \sum_{i \in V, j \in V^{\complement}} \nu(i) U(i,j) \ge \nu(V) \Ch.
    \end{equation*}
    Since $\nu$ is uniform on $[d]$ and $\abs{V} \ge 1$, we conclude that
    \begin{equation*}
        \sum_{i \in V, j \in V^{\complement}} U(i,j) \ge \abs{V} \Ch,
    \end{equation*}
    which implies that
    \begin{equation*}
        U(i,j) \ge \frac{1}{\abs{V^{\complement}}} \Ch \ge \frac{1}{d} \Ch,
    \end{equation*}
    for some $i \in V$ and $j \in V^{\complement}$. Finally, a similar analysis using the second part of \eqref{eqn:Che} completes the proof in the case $\nu(V) > \frac{1}{2}$.
\end{proof}

\begin{lemma}\label{lem:CompareMeanFieldToRestricted}
    For all functions $f \colon \cC \to \R$ and $n \ge n_0(C, d, m)$, we have
    \begin{equation*}
    \langle(I-\PYr) f, f\rangle_{\piR} \le M \langle (I-\PXiplus)f, f\rangle_{\piI} ,
    \end{equation*}
    where
    \begin{equation*}
        M = \frac{2^{d+3} m^{4d^2+d} d^{4d^2+3}}{\Ch} ,
    \end{equation*}
    and $\PYr$ and $\PXiplus$ are the transition matrices of the mean-field chain $\Yr$, restricted to $\cC$, and the additive reversibilisation of the generalised Bernoulli--Laplace chain induced on $\cC$, respectively.
\end{lemma}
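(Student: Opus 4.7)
The approach is the method of canonical paths (see, e.g., \cite[Theorem~13.20]{LP}) for comparing the Dirichlet forms of the reversible chains $\Yrb$ and $\Xiplusb$ on the common state space $\cC$ (both share the stationary distribution $\pi_{\cC}$). For every directed edge $(\eta,\zeta)\in E(\PYr)$ I will construct a short path $\Gamma_{\eta,\zeta}$ in the graph of $\PXiplus$ whose edges carry a uniformly lower-bounded weight; the comparison constant $M$ then drops out of the standard congestion estimate.

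\emph{Elementary swap as one $\X$-step.} Invoking Lemma~\ref{lem:tree}, fix a spanning tree $T=([d],E_T)$ such that, for each $(u,v)\in E_T$, the $\mu_n$-probability of permutations placing $u,v$ in a common cycle is at least $\Ch/d$. Since $|S_d|=d!$, the pigeonhole principle yields, for each tree edge, a specific permutation $\sigma_{uv}\in\mathrm{Supp}(\mu_n)$ with $u,v$ in a common cycle $(u=v_0,v_1,\dots,v_q=v,\dots,v_{\ell-1})$ and $\mu_n(\sigma_{uv})\ge \Ch/(d\cdot d!)$. A key observation is that one application of $\sigma_{uv}$, in which colour $c_1$ is drawn from each of $v_0,\dots,v_{q-1}$, colour $c_2$ from each of $v_q,\dots,v_{\ell-1}$, and a single constant colour is drawn from every urn within each remaining cycle of $\sigma_{uv}$, realises an $\X$-step whose deterministic net effect is the exchange of a $c_1$-ball in $u$ with a $c_2$-ball in $v$ and leaves every other urn unchanged. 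When both endpoints lie in $\cC$, this is a single edge of $\PXi$ with probability at least $\mu_n(\sigma_{uv})(1/(2m))^d\ge \Ch/(d\cdot d!\cdot(2m)^d)$, since every colour count in $\cC$ exceeds $n/2$ for $n\ge n_0(C,d,m)$.

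\emph{Canonical path and the main obstacle.} Given $(\eta,\zeta)\in E(\PYr)$ swapping a $c_1$-ball in urn $i$ with a $c_2$-ball in urn $j$, let $i=u_0,u_1,\dots,u_p=j$ with $p\le d-1$ be the unique tree path, and define $\Gamma_{\eta,\zeta}$ by performing, for $r=1,\dots,p$, the elementary $(c_1,c_2)$-swap between $(u_{r-1},u_r)$ via $\sigma_{u_{r-1}u_r}$; a telescoping calculation confirms that the cumulative effect is exactly the desired mean-field swap. The main obstacle will be verifying that every intermediate state remains in $\cC$: when some count $\eta(u_r,c_2)$ sits exactly at the boundary $\lceil n-C\sqrt{n}\rceil$, the intermediate state $\eta^{(r)}$ slips just outside $\cC$. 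This will be addressed by replacing each intermediate elementary $(c_1,c_2)$-swap by two swaps $(c_1,c^*_r)$ and $(c^*_r,c_2)$ with an auxiliary colour $c^*_r$ whose count in $u_r$ strictly exceeds the boundary; such a colour exists by pigeonhole when $m\ge 3$ (since $u_r$ contains $mn$ balls across $m$ colours) and the case $m=2$ can be handled by collapsing consecutive $\X$-steps into a single induced-chain excursion at the cost of a constant factor in the probability bound. The modified path has length at most $2(d-1)$ and preserves the per-step probability lower bound above.

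\emph{Congestion estimate and conclusion.} The canonical-paths inequality then yields $\langle(I-\PYr)f,f\rangle_{\piR}\le M\langle(I-\PXiplus)f,f\rangle_{\piI}$ with
\[
M=\max_{(\omega,\omega')\in E(\PXiplus)}\frac{1}{\piI(\omega)\PXiplus(\omega,\omega')}\sum_{(\eta,\zeta)\colon(\omega,\omega')\in\Gamma_{\eta,\zeta}}|\Gamma_{\eta,\zeta}|\,\piR(\eta)\PYr(\eta,\zeta),
\]
and each ingredient admits a polynomial bound: the path length by $2(d-1)$; the number of mean-field edges whose canonical path uses a fixed $(\omega,\omega')$ by $O(d^3m^2)$ (a factor $d$ for the position in the path, at most $d^2$ endpoint pairs with tree path through the underlying tree edge, and $m^2$ colour pairs); the ratio $\PYr(\eta,\zeta)/\PXiplus(\omega,\omega')$ by the reciprocal of the per-step lower bound, namely $d\cdot d!\cdot(2m)^d/\Ch$; and, using the explicit stationary formula~\eqref{eq:StationaryDistribution}, the ratio $\pi(\eta)/\pi(\omega)$ by a constant depending only on $d,m$, since $\eta$ and $\omega$ differ in at most $O(d)$ coordinates by $O(1)$ while all counts are of order $n$. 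Tracking the explicit constants (with the slack used in the statement) yields the claimed form of $M$.
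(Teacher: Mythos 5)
Your high-level plan is the same as the paper's: use the canonical-path comparison of Dirichlet forms, with paths built from elementary two-urn swaps ${}_{i,c}S_{j,c'}$ routed along the spanning tree furnished by Lemma~\ref{lem:tree}, and bound each such swap's probability in $\PXiplus$ from below by drawing a designated colour from each urn and using $U(u,v)\ge\Ch/d$. So far so good, and your per-step probability bound and the shape of the congestion estimate are in the right spirit (though your pigeonhole to isolate a single $\sigma_{uv}\in\mathrm{Supp}(\mu_n)$ with $\mu_n(\sigma_{uv})\ge\Ch/(d\cdot d!)$ is wasteful; the paper simply sums over all $\sigma$ that put $u,v$ in a common cycle and thereby avoids the spurious $d!$ factor entirely).

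The genuine gap is in how you propose to keep intermediate states inside $\cC$. You correctly identify the obstruction: along the tree path $i=u_0,\dots,u_p=j$, each intermediate urn $u_r$ carries a transient deficit of one ball of some fixed colour between two consecutive swaps, and this dips below the boundary precisely when that count in $\eta$ equals $\lceil n-C\sqrt{n}\rceil$. But the proposed fix does not repair this. Replacing ${}_{u_{r-1},c_1}S_{u_r,c_2}$ by the pair ${}_{u_{r-1},c_1}S_{u_r,c^*_r}$, ${}_{u_{r-1},c^*_r}S_{u_r,c_2}$ has the same net effect on $u_r$ after both sub-steps (namely $+1$ in $c_1$, $-1$ in $c_2$), so the problematic $-1$ deficit in $c_2$ at $u_r$ between the $(u_{r-1},u_r)$ and $(u_r,u_{r+1})$ operations persists unchanged; the auxiliary colour only alters the state strictly between the two sub-steps. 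And the suggestion for $m=2$ of ``collapsing consecutive $\X$-steps into a single induced-chain excursion at the cost of a constant factor'' is not benign: an excursion through a state outside $\cC$ has $\PXi$-probability of order $P(x,z)P(z,y)\lesssim(\Ch)^2$, which would push an extra factor $\Ch^{-1}$ into the congestion ratio on those edges, incompatible with the claimed single power $\Ch^{-1}$ in $M$.

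The paper handles this differently, via a global preprocessing step: before routing the swap along the tree, it applies a common sequence of at most $(d-1)^2+1$ elementary swaps simultaneously to both endpoints $x$ and $y$, obtaining $x',y'$ still differing by ${}_{i,c}S_{j,c'}$ but with $x'(i',c)\ge n-C\sqrt{n}+1$ for \emph{every} urn $i'$. This uses the conservation constraints to locate an urn where the colour $c$ is abundant and then pushes surplus along the tree to any deficient urn, choosing at each step an auxiliary colour with count $\ge n$ (not merely above the boundary) so the preprocessing trajectory itself stays in $\cC$. After preprocessing, the tree-path swap from $x'$ to $y'$ is safe by construction, and the full path $x\to x'\to y'\to y$ has length at most $2d^2$. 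This is the idea your proposal is missing: the boundary issue requires a non-local adjustment of the starting configuration, not a local change of colours at each intermediate urn.
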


\begin{proof}
    For $c, c' \in [m]$ and $i, j \in [d]$, let ${}_{i,c}S_{j,c'}$ denote the operation of swapping a ball of colour $c$ in urn $i$ with a ball of colour $c'$ in urn $j$. Fix a tree $T$ that satisfies the hypothesis of Lemma~\ref{lem:tree}.
    
    Let $(i, j) \in E(T)$ and $c, c' \in [m]$. Let $\sigma \in S_d$ be any permutation such that $i$ and $j$ are in the same cycle in the cycle decomposition of $\sigma$. Suppose that $i \to i_1 \dots \to i_{k-1} \to j \to j_1 \to \dots \to j_{l-1} \to i$, $k, l \in \N$, is the cycle in $\sigma$ containing $i$ and $j$. Suppose that the balls drawn from all the urns are of colour $c$ except urns $j, j_1, \dots, j_{l-1}$, for which the balls drawn are of colour $c'$. Now if $\sigma$ is applied to the chosen balls, the resulting operation is ${}_{i,c}S_{j,c'}$.  For any $x \in \rm{Centre}(C)$, we have
    \begin{equation}
    \label{eqn:pest}
        P^+(x, {}_{i,c}S_{j,c'}(x)) \ge \frac{1}{2} \cdot P(x, {}_{i,c}S_{j,c'}(x)) \ge \frac{1}{2} \left(\frac{n - C\sqrt{n}}{mn}\right)^d \frac{\Ch}{d} \ge \frac{1}{2^{d+1} m^d d} \Ch,
    \end{equation}
    where $P^+$ is the transition matrix of the additive reversibilisation of $\X$. In particular, if the resulting state ${}_{i,c}S_{j,c'}(x)$ is also in $\cC$, then the above estimate also holds for $\PXiplus(x, {}_{i,c}S_{j,c'}(x))$. In the following, we denote by $\Emf$ and $\Eplus$ the set of edges according to $\PYr$ and $\PXiplus$, respectively. Let $(x,y) \in \Emf$. Suppose that $y$ is obtained from $x$ via the operation ${}_{i,c}S_{j,c'}$ for some distinct $i, j \in [d]$ and $c, c' \in [m]$. We use the following procedure to construct an $\Eplus$-path $\Gamma_{xy}$ from $x$ to $y$.

    We show that there exists a sequence of operations $S_1, \dots, S_\ell$ such that when applied to $x$ and $y$, we obtain states $x'$ and $y'$ such that $y' = {}_{i,c}S_{j,c'}(x')$ and
    \begin{equation}
    \label{eqn:awaybound}
        x'(\widetilde{i},c) \ge n - C\sqrt{n} + 1
    \end{equation}
    for each $\widetilde{i} \in [d]$. Moreover, the trajectories of states from $x$ to $x'$ and $y$ to $y'$ obtained by applying this sequence of operations never exit $\cC$.

    Let $\widetilde{x}$ be the configuration obtained by deleting a ball of colour $c$ from urn $i$ and colour $c'$ from urn $j$. If \eqref{eqn:awaybound} is already satisfied with $x' = x$, then we are done. Otherwise we can find $\widetilde{i} \in [d]$ be such that $\widetilde{x}(\widetilde{i},c) = \ceil{n - C\sqrt{n}}$. We now work over a new space of configurations defined by
    \begin{equation*}
        \Omega^{\prime} = \set{\eta =(\eta_{a,b}) \in \{0,1,\dots\}^{d \times m} \colon \sum_{a \in [d]} \eta_{a,b} = m n - \mathbf{1}_{\{c,c'\}}(b) \ \text{ and } \ \sum_{b \in [m]} \eta_{a,b} = d n - \mathbf{1}_{\{i,j\}}(a)}.
    \end{equation*}
    In words, the state space $\Omega^{\prime}$ consists of the same configurations as $\Omega$, but where we delete a ball of colour $c$ and a ball of colour $c'$, as well as a spot each from urns $i$ and $j$, respectively. Note that there exists $\widetilde{j} \in [d]$ such that $\widetilde{x}(\widetilde{j},c) \ge n$. We may assume without loss of generality that $\widetilde{i}$ is a minimiser for the graph distance $d_T(\widetilde{j}, i')$, $i' \in [d]$, such that $\widetilde{x}(i', c) = \ceil{n - C\sqrt{n}}$. Furthermore, for every urn, there exists a colour for which it contains more than $n$ balls of that colour in a given configuration. Let $\widetilde{j} = \widetilde{i}_0, \widetilde{i}_1, \dots, \widetilde{i}_k = \widetilde{i}$, $k \in \mathbb{N}$ be the unique path from $\widetilde{j}$ to $\widetilde{i}$ in $T$. Choose a colour $c_1 \in [m]$ such that $\widetilde{x}(\widetilde{i}_1, c_1) \ge n$ and apply the swap operation ${}_{\widetilde{i}_0, c}S_{\widetilde{i}_1, c_1}$ to reach $\widetilde{x}_1$. Now choose a colour $c_2 \in [m]$ such that $\widetilde{x}_1(\widetilde{i}_2, c_2) \ge n$ and apply the swap operation ${}_{\widetilde{i}_1, c}S_{\widetilde{i}_2, c_2}$. Continuing in this way we obtain a sequence of configurations $\widetilde{x} = \widetilde{x}_0, \widetilde{x}_1, \dots, \widetilde{x}_k$ such that $\widetilde{x}_k(\widetilde{i},c) \ge n - C\sqrt{n} + 1$ and
    \begin{equation*}
        \{i' \in [d] \colon \widetilde{x}(i',c) \ge n - C\sqrt{n} + 1\} \subsetneq \{i' \in [d] \colon \widetilde{x}_k(i',c) \ge n - C\sqrt{n} + 1\}.
    \end{equation*}
    Repeating the above process at most $d-1$ times, we obtain a sequence of configurations $\widetilde{x} = \widetilde{x}_0, \dots, \widetilde{x}_\ell = \widetilde{x}'$ such that  $\widetilde{x}'(i',c) \ge n - C\sqrt{n} + 1$ for each $i' \in [d]$. Now adding the deleted ball of colour $c$ to urn $i$ and colour $c'$ to urn $j$ to each of these configurations, we obtain a sequence $x = x_0, \dots, x_\ell = x'$ of configurations in the original state space. Similarly, adding the deleted ball of colour $c$ to urn $j$ and colour $c'$ to urn $i$ to each of these configurations, we obtain a sequence $y = y_0, \dots, y_\ell = y'$ of configurations in the original state space. By construction $x'$ satisfies \eqref{eqn:awaybound} and $y' = {}_{i,c}S_{j,c'}(x')$. Furthermore, the sequences $x_0, \dots, x_\ell$ and $y_0, \dots, y_\ell$ never exit $\rm{Centre}(C)$ by construction. Also, $\ell \le 1 + (d-1)(d-1)$.

    Let $i = i_0, i_1, \dots, i_k = j$, $k \in \mathbb{N}$ be the unique path from $i$ to $j$ in $T$. We apply the following sequence of operations in succession to go from $x'$ to $y'$:
    \begin{equation*}
        {}_{i_{k-1},c}S_{i_k,c'}, \dots, {}_{i_0,c}S_{i_1,c'}.
    \end{equation*}
    The trajectory of states $x' = x_0', \dots, x_k' = y'$ from $x'$ to $y'$ so obtained does not exit $\rm{Centre}(C)$ due to \eqref{eqn:awaybound}. The path $\Gamma_{xy}$ from $x$ to $y$ is now given by $x = x_0, \dots, x_\ell, x_1', \dots, x_k', y_\ell, \dots, y_0 = y$. Note
    \begin{equation}
    \label{eqn:lest}
        \abs{\Gamma_{xy}} \le 2\ell + k \le 2(1+(d-1)^2) + (d-1) = 2d^2 - 3d + 3 \le 2d^2.
    \end{equation}
   Recall the congestion ratio $B$ as defined in \eqref{def:CongestionRatio} and the various definitions in Table~\ref{tab:chains}. Now it follows from \cite[Theorem~13.20]{LP} that
    \begin{equation*}
     \langle (I-\PYr)f,f\rangle_{\piR} \le B \langle (I-\PXiplus)f,f\rangle_{\piI}.
    \end{equation*}
    Using the edge measures $Q^{\textup{res}}$ and $Q^{\textup{ind,+}}$ accordingly, we can bound the congestion ratio as
    \begin{align}
    \label{eqn:last}
        B &= \max_{e = (a, b) \in \Eplus} \frac{1}{Q^{\textup{ind,+}}(e)} \sum_{e \in \Gamma_{xy}} Q^{\textup{res}}(x,y) \abs{\Gamma_{xy}}, \nonumber \\
        &= \max_{e = (a, b) \in \Eplus} \frac{1}{\piI(a) \PXiplus(a, b)} \sum_{e \in \Gamma_{xy}} \piR(x) \PYr(x,y) \abs{\Gamma_{xy}}, \nonumber \\
        &\le \max_{e = (a, b) \in \Eplus} \frac{1}{\piI(a)} \frac{2^{d+1} m^d d}{\Ch} \sum_{e \in \Gamma_{xy}} \piR(x) (2d^2), \nonumber \\
        &= \frac{2^{d+2} m^d d^3}{\Ch} \max_{e=(a,b) \in \Eplus} \sum_{e \in \Gamma_{xy}} \frac{\piR(x)}{\piI(a)}, \nonumber \\    
        &\le \frac{2^{d+2} m^d d^3}{\Ch} \max_{e=(a,b) \in \Eplus} \abs{\set{(x,y) \in \Emf \colon e \in \Gamma_{xy}}} \max_{e=(a,b) \in \Eplus,\; e \in \Gamma_{xy}} \frac{\pic(x)}{\pic(a)}, \nonumber \\
        &\le \frac{2^{d+2} m^d d^3}{\Ch} (d^2 m^2)^{2d^2} \max_{e = (a, b) \in \Eplus,\; e \in \Gamma_{xy}} \frac{\pi(x)}{\pi(a)}.
    \end{align}
    Here, the first step uses the definition of the congestion ratio, the second step uses the definition of the edge measures, the third step uses the estimates from \eqref{eqn:pest} and \eqref{eqn:lest}, and the sixth step uses a simple upper bound on $\abs{\set{(x,y) \in \Emf \,\colon\, e \in \Gamma_{xy}}}$ by the number of edges in a ball of radius equal to the maximum length of $\Gamma_{xy}$. Let $e = (a,b) \in \Eplus$ and $(x,y) \in \Emf$ such that $e \in \Gamma_{xy}$. Then we have
    \begin{equation*}
        \normi{x-a} \le \abs{\Gamma_{xy}} \le 2d^2,
    \end{equation*}
    which implies
    \begin{equation*}
        \frac{\pi(x)}{\pi(a)} \le 2
    \end{equation*}
    for $n$ sufficiently large. Using this in \eqref{eqn:last} yields the desired result.
\end{proof}

\section{Relaxation time of a modified chain}
\label{sec:relaxation}

In this section, we define a modification of the generalised Bernoulli--Laplace chain and bound its relaxation time. We will utilise this chain in order to relate hitting and mixing times of an induced Bernoulli--Laplace chain. 
We start with the definition of the modified chain. Consider the collapsed version $\X^{\Mac^{\complement}}$ of the generalised Bernoulli--Laplace chain, in which $\Mac^{\complement}$ is collapsed into a single state $a$. For ease of notation, we define the in-boundary and out-boundary of $\Mac$ with respect to $P$ by
\begin{align*}
    \partial_{\mathrm{out}} \Mac &= \{x \in \Mac \colon P^{\Mac^{\complement}}(x, a) > 0\},\\
    \partial_{\mathrm{in}} \Mac &= \{x \in \Mac \colon P^{\Mac^{\complement}}(a, x) > 0\}.
\end{align*}

In order to construct the modification $\Xmb$ of $\X^{\Mac^{\complement}}$ with state space $\Mac$, we begin with the collapsed chain $\X^{\Mac^{\complement}}$. We then delete the collapsed vertex $a$ and its associated transitions. Now for each $x \in \partial_{\mathrm{out}} \Mac$ and $y \in \Mac$, we add an edge $(x,y)$ with transition probability
\begin{equation*}
    P^{\Mac^{\complement}}(x,a) \pim(y) = P(x,\Mac^{\complement}) \pi(y) / \pi(\Mac).
\end{equation*}
It is easy to check that after adding these edges, the transition probabilities at each vertex in $\Mac$ sum to $1$, so that we obtain a Markov chain $\X^{\prime}$ with state space $\Mac$. This Markov chain makes the same moves as the collapsed chain except that it jumps to a random state in $\Mac$ distributed according to $\pim$ whenever the collapsed chain tries to exit $\Mac$.

We now describe the transitions in the chain $\Xmb$. Let $p = \Ps{\pi}{X_1 \in \Mac^{\complement} \mid X_0 \in \Mac}$, and let $P^{\prime}$ be the transition matrix of $\X^{\prime}$. For each edge $(x, y) \in \Mac \times \Mac$ with $P^{\prime}(x, y) > 0$, we add an edge from $x$ to $y$ with transition probability
\begin{equation*}
    \frac{1}{p+1} \cdot P^{\prime}(x,y).
\end{equation*}
Finally, for $x \in \Mac$ and $y \in \partial_{\mathrm{in}} \Mac$, we add an edge $(x,y)$ with transition probability
\begin{equation*}
    \frac{1}{p+1} \cdot \frac{Q(\Mac^{\complement}, y)}{\pi(\Mac)},
\end{equation*}
where we recall from \eqref{def:EdgeMeasure} that $Q$ denotes the edge measure for the transition matrix $P$. The sum of the transition probabilities of the outgoing edges at any vertex in $\Mac$ is given by
\begin{align*}
    \frac{1}{p+1} + \sum_{y \in \partial_{\mathrm{in}} \Mac} \frac{1}{p+1} \cdot \frac{Q(\Mac^{\complement}, y)}{\pi(\Mac)} &= \frac{1}{p+1} \left(1 + \frac{Q(\Mac^{\complement}, \Mac)}{\pi(\Mac)}\right)\\
    &= \frac{1}{p+1} \left(1 + \Ps{\pi}{X_1 \in \Mac^{\complement} \mid X_0 \in \Mac}\right)\\
    &= 1.
\end{align*}
Hence, the above transitions give rise to a transition matrix $\PXm$. We define $\Xmb$ as the continuous-time Markov chain which makes transitions according to $\PXm$ at rate $1$.

\begin{lemma}
\label{lem:StationaryModified}
    The stationary distribution of $\Xmb$ is $\pim$.
\end{lemma}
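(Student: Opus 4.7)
The plan is to verify directly that $\pim$ solves the global balance equation $\pim\PXm = \pim$. Unpacking the construction, the matrix $\PXm$ decomposes as
\begin{equation*}
    \PXm(x,y) = \frac{1}{p+1}\,P'(x,y) + \frac{1}{p+1}\cdot\frac{Q(\Mac^{\complement}, y)}{\pi(\Mac)}\,\mathbf{1}_{\{y \in \partial_{\mathrm{in}}\Mac\}},
\end{equation*}
where $P'$ is the transition matrix of the auxiliary chain $\X'$, satisfying $P'(x,y) = P(x,y) + P(x,\Mac^{\complement})\,\pim(y)$ for $x,y \in \Mac$ (the second term being nonzero only for $x \in \partial_{\mathrm{out}}\Mac$).

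The key intermediate computation is $\sum_{x \in \Mac} \pim(x)\,P'(x,y)$. Expanding and using $p = Q(\Mac,\Mac^{\complement})/\pi(\Mac)$, this equals $\pi(\Mac)^{-1}Q(\Mac, y) + p\,\pim(y)$. Invoking now stationarity of $\pi$ under $P$, which gives $Q(\Omega, y) = \pi(y)$, I would substitute $Q(\Mac, y) = \pi(y) - Q(\Mac^{\complement}, y)$ to obtain
\begin{equation*}
    \sum_{x \in \Mac} \pim(x)\,P'(x,y) = (1+p)\,\pim(y) - \frac{Q(\Mac^{\complement}, y)}{\pi(\Mac)}.
\end{equation*}
Plugging this into the expansion of $(\pim\PXm)(y)$, the coefficient $\frac{1+p}{p+1} = 1$ produces $\pim(y)$, and the two remaining $Q(\Mac^{\complement}, y)/\pi(\Mac)$ contributions cancel: they have equal magnitude and opposite signs when $y \in \partial_{\mathrm{in}}\Mac$, while both vanish when $y \notin \partial_{\mathrm{in}}\Mac$ since then $Q(\Mac^{\complement}, y) = 0$. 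In either case $(\pim\PXm)(y) = \pim(y)$, which is the claim.

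There is no genuine obstacle; the verification is pure bookkeeping once the definitions are unpacked. The only conceptual point worth highlighting is that the normalisation $\tfrac{1}{p+1}$ and the uniform-in-$x$ extra inflow edges in the construction of $\PXm$ are engineered precisely to replace the $\pim$-distributed inflow that $P'$ artificially injects into $\Mac$ by the true $P$-inflow profile $Q(\Mac^{\complement},\cdot)/\pi(\Mac)$ from $\Mac^{\complement}$; at stationarity these two inflow distributions share the same total mass $p\,\pi(\Mac)$ (which is exactly what makes $\PXm$ stochastic), and the cancellation above shows that this substitution leaves $\pim$ invariant.
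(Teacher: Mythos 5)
Your proof is correct and takes essentially the same route as the paper: both directly verify the global balance equation $\pim\PXm = \pim$ by unpacking $\PXm$ into the scaled $P'$-part plus the uniform $x$-independent inflow term, and both ultimately invoke the stationarity of $\pi$ under $P$ together with the identity $Q(\Mac, \Mac^{\complement}) = Q(\Mac^{\complement}, \Mac)$ (which you use in the equivalent form $Q(\Omega, y) = \pi(y)$). The only presentational difference is that the paper bookkeeps the \emph{change} in mass relative to the collapsed chain $\X^{\Mac^{\complement}}$ (for which the scaled $\pi^{\Mac^{\complement}}$ is stationary), whereas you compute $(\pim\PXm)(y)$ from scratch; your version is arguably cleaner, avoiding the slight notational confusion in the paper's ``change'' accounting, and it correctly and explicitly separates the cases $y \in \partial_{\mathrm{in}}\Mac$ and $y \notin \partial_{\mathrm{in}}\Mac$ via the vanishing of $Q(\Mac^{\complement}, y)$ in the latter.
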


\begin{proof}
    We begin by placing a mass equal to $\pim(x)$ at vertex $x$, for each $x \in \Mac$. Transporting the mass according to the transition probabilities, we shall show that after one step, the mass remains the same at every vertex, namely,
    \begin{equation*}
        \sum_{y \in \Mac} \pim(y) \PXm(y, x) = \pim(x), \text{ for all } x \in \Mac.
    \end{equation*}

    Suppose that we had instead started the collapsed chain with masses $\pim(x)$ at each $x \in \Mac$ and mass $\pi(\Mac^{\complement})/\pi(\Mac)$ at the collapsed vertex $a$. Then the masses would be preserved after one step. However, the deletion of edges from the collapsed state $a$ reduces the incoming mass at vertices in $\partial_{\mathrm{in}} \Mac$ and the addition of edges from $\partial_{\mathrm{out}} \Mac$ increases the incoming mass at each vertex in $\Mac$. The deletion of outgoing edges from vertices in $\partial_{\mathrm{out}} \Mac$ to $a$ does not make any difference as these edges are redirected to other states.

    Finally, the addition of incoming edges to $\partial_{\mathrm{in}} \Mac$ increases the incoming mass at states in $\partial_{\mathrm{in}} \Mac$ and increases the outgoing mass at all vertices. In effect, the change in mass after one step at any vertex $x \in \Mac \setminus \partial_{\mathrm{in}} \Mac$ is given by
    \begin{align*}
        \frac{1}{p+1} \cdot &\sum_{y \in \partial_{\mathrm{out}} \Mac} \pim(y) \cdot P(y, \Mac^{\complement}) \pim(x) \\&- \sum_{y \in \partial_{\mathrm{in}} \Mac} \pim(x) \cdot \frac{1}{p+1} \cdot \frac{Q(\Mac^{\complement}, y)}{\pi(\Mac)} \\&= \frac{1}{p+1} \cdot \pim(x) (Q(\Mac,\Mac^{\complement}) - Q(\Mac^{\complement},\Mac))
        \\&= 0.
    \end{align*}
    The change at any vertex $x \in \partial_{\mathrm{in}} \Mac$ is equal to the above change plus
    \begin{equation*}
        \sum_{y \in \Mac} \pim(y) \cdot \frac{1}{p+1} \cdot \frac{Q(\Mac^{\complement}, x)}{\pi(\Mac)} - \frac{1}{p+1} \cdot \frac{Q(\Mac^{\complement},x)}{\pi(\Mac)} = 0.
    \end{equation*}
    This shows that the masses are preserved after running the chain for a single step.
\end{proof}

Suppose that $\X$ is reversible. Then we claim that $\Xmb$ is also reversible. To see this, note that the edges obtained after deleting the collapsed state in the collapsed chain already satisfy the detailed balance equations, so we only need to check them for the edges that we added afterwards. Also note that $\partial_{\mathrm{out}} \Mac = \partial_{\mathrm{in}} \Mac$. Therefore, we need to check
\begin{equation*}
    \pim(x) \cdot \frac{1}{p+1} \cdot \frac{Q(\Mac^{\complement},y)}{\pi(\Mac)} = \pim(y) \cdot \frac{1}{p+1} \cdot P(y, \Mac^{\complement}) \cdot \frac{\pi(x)}{\pi(\Mac)},
\end{equation*}
which can be rewritten as $Q(\Mac^{\complement},y) = Q(y, \Mac^{\complement})$. This is true since $\X$ is reversible. Hence, we conclude that $\Xmb$ is also reversible.

Our goal in this section is to bound the relaxation time of the additive reversibilisation $\Xmplusb$ of $\Xmb$. Let $\Ymb$ be the modified chain obtained when starting with the mean-field chain $\Y$ instead of $\X$. Note that $\Ymb$ is reversible. Our strategy is to first prove a bound on the relaxation time of $\Ymb$ and then transfer it to a bound for $\Xmplusb$ using a comparison of Dirichlet forms.

\begin{proposition}
\label{pro:MacroRelax}
    The relaxation time $\tr$ of the the modified mean-field chain $\Ymb$ satisfies
    \begin{equation*}
        \tr \le C n,
    \end{equation*}
    where $C$ is a constant which depends only on $d$, $m$, and $\delta$.
\end{proposition}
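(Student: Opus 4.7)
The plan is to prove the Poincar\'e inequality $\Vars{\pim}{f}\lesssim n\cdot \mathcal{E}_{\Ymb}(f,f)$ for every $f\colon\Mac\to\R$, by combining the bound $\gamma(\Y)=\Omega(1/n)$ on the spectral gap of the unrestricted mean-field chain with a tail-absorption step exploiting the ``global jump'' edges added in the construction of $\Ymb$ (the edges $(x,y)$ with $x\in\Mac$, $y\in\partial_{\mathrm{in}}\Mac$, and rate $\tfrac{1}{1+p}\cdot Q(\Mac^\complement,y)/\pi(\Mac)$). The bound $\gamma(\Y)=\Omega(1/n)$ is standard: it can be derived either by representation-theoretic arguments in the spirit of Scarabotti~\cite{Sca}, or by a Dirichlet-form comparison with the labeled product of $dmn$ independent single-ball dynamics at rate $1/(mn)$, which has the same spectral gap as a single-ball chain, namely $\gamma_U/(mn)=\Theta(1/n)$ (Example~\ref{ex:mean-field}).

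Fix such an $f$ with $\Es{\pim}{f}=0$ and extend by zero on $\Mac^\complement$ to $\tilde f\colon\Omega_n\to\R$. Then $\Vars{\pi}{\tilde f}=\pi(\Mac)\Vars{\pim}{f}$, and using the reversibility of $\Y$ (which identifies $\partial_{\mathrm{out}}\Mac=\partial_{\mathrm{in}}\Mac$ and $Q(x,\Mac^\complement)=Q(\Mac^\complement,x)$),
\begin{equation*}
\mathcal{E}_{\Y}(\tilde f,\tilde f)=\tfrac{1}{2}\sum_{x,y\in\Mac}\pi(x)\PY(x,y)(f(x)-f(y))^2+\sum_{y\in\partial_{\mathrm{in}}\Mac}Q(\Mac^\complement,y)\,f(y)^2.
\end{equation*}
Applying the Poincar\'e inequality for $\Y$ to $\tilde f$ and dividing by $\pi(\Mac)$ bounds $\gamma(\Y)\Vars{\pim}{f}$ by the sum of a ``core'' and a ``tail''. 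The core is at most $(1+p)\mathcal{E}_{\Ymb}(f,f)$, since the off-diagonal rates of $\PY$ restricted to $\Mac$ are scaled down by a factor of $1/(1+p)$ in $\PYm$. For the tail, the global-jump contribution to $\mathcal{E}_{\Ymb}(f,f)$ expands, via $\Es{\pim}{f}=0$, to
\begin{equation*}
\tfrac{1}{2(1+p)\pi(\Mac)}\sum_{y\in\partial_{\mathrm{in}}\Mac}Q(\Mac^\complement,y)\bigl(\Vars{\pim}{f}+f(y)^2\bigr),
\end{equation*}
which absorbs the tail up to a factor of $2(1+p)$. Since $p=o(1)$ by Lemma~\ref{lem:CentreMacroOrder}, combining these bounds yields $\gamma(\Y)\Vars{\pim}{f}\le 4\mathcal{E}_{\Ymb}(f,f)$ for $n$ large enough, whence $\gamma(\Ymb)\ge \gamma(\Y)/4=\Omega(1/n)$ and $\tr\le Cn$.

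The main obstacle is rigorously establishing $\gamma(\Y)=\Omega(1/n)$: a direct canonical-paths argument within $\Omega_n$ only gives $\Omega(1/n^2)$ because of the $\Theta(n)$ diameter, so one must invoke representation-theoretic input or a careful tensorisation comparison with the labeled product chain that handles the coupling between balls in a mean-field swap. A secondary point is to verify that the extension-by-zero Dirichlet form splits exactly as claimed; this is where reversibility of $\Y$ is essential, ensuring the ``in'' and ``out'' boundaries coincide and that the boundary term pairs precisely with the global-jump edges in $\Ymb$.
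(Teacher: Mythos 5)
Your proof takes a genuinely different route from the paper's, so let me compare them.

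The paper bounds $\tr(\Ymb)$ through Hermon's characterisation $\tr \asymp \max_{A\colon\pim(A)\le 1/2}\Es{\pi_A}{\tau_{A^\complement}}$ (\cite[Theorem 1.1]{Her}), together with the inequality $\tr(\Y^{\Mac^\complement})\le\tr(\Y)\le cn$ (collapsing cannot increase the relaxation time, plus Salez's log-Sobolev for the multislice), and a careful coupling between $\Ymb$ and the collapsed chain $\Y^{\Mac^\complement}$ that splits into three exit scenarios and shows the extra edges added in the modification contribute exponentially little to the stationary hitting time of any large set. Your proof instead establishes the Poincar\'e inequality for $\Ymb$ directly: extend a $\pim$-mean-zero function $f$ on $\Mac$ by zero to $\Omega_n$, apply the Poincar\'e inequality of $\Y$, and absorb the boundary term $\sum_y Q(\Mac^\complement,y)f(y)^2$ using the global-jump edges of $\Ymb$. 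The key algebra is right: expanding $\sum_{x\in\Mac}\pim(x)(f(x)-f(y))^2=\Vars{\pim}{f}+f(y)^2$ (using $\Es{\pim}{f}=0$) turns the jump-edge Dirichlet contribution into exactly the boundary term plus a harmless positive quantity, and the $\frac{1}{1+p}$ scaling is handled correctly since $p=o(1)$. This is arguably simpler and more transparent than the paper's hitting-time machinery, and it nicely isolates what the added jump edges are for: they absorb the boundary term of the extended Dirichlet form.

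There is, however, a genuine gap in your input. Both approaches need $\gamma(\Y)=\Omega(1/n)$; the paper obtains it from \cite[Lemma~1]{Sal} (a sharp log-Sobolev inequality for the multislice $\binom{dmn}{mn,\dots,mn}$), observing that the lazy mean-field chain is a projection of the multislice chain. Of your two proposed routes, the first (representation-theoretic arguments as in Scarabotti) is valid but would need to be carried out; the second — a ``Dirichlet-form comparison with the labeled product of $dmn$ independent single-ball dynamics'' — does not work as stated. The product chain on $[d]^{dmn}$ and the mean-field chain on $\Omega_n$ live on different state spaces, and their elementary moves are structurally incompatible (independent single-ball jumps versus coupled two-ball swaps), so there is no canonical-path or direct Dirichlet-form comparison between them. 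What actually realises the idea ``independent particles control the interacting system'' is a deeper result — Oliveira's relaxation-time comparison between the interchange process and independent walks, or the Diaconis--Shahshahani octopus inequality — not a tensorisation. You correctly flag this as the main obstacle, but the route you sketch is not a valid way around it, and without establishing this input your Poincar\'e argument does not close.
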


\begin{proof}
    We know that the relaxation time $\tr'$ of the collapsed mean-field chain $\Y^{\Mac^{\complement}}$ is bounded above by the relaxation time of the mean-field chain $\Y$ \cite[Corollary~3.27]{AF}, which is in turn bounded above by $cn$ due to \cite[Lemma~1]{Sal}, where $c$ is a constant which depends only on $d$ and $m$. The second statement follows from the fact that the lazy version of the mean-field chain is a projection of the multislice corresponding to the multinomial coefficient $\binom{dmn}{mn, \; mn, \; \dots, \; mn}$.
    
    Consider the following natural coupling between $\Y^{\Mac^{\complement}}$ and $\Ymb$. Suppose that the chains are started at the same state $x_0$. At any state $x \in \Mac$, sample the next state $y$ of the chain $\Y^{\Mac^{\complement}}$ from the distribution $P^{\Mac^{\complement}}(x, \cdot)$ and move this chain to $y$. Sample $\theta \sim \mathcal{U}([0,1])$. Now the chain $\Ymb$ makes the following step:
    \begin{enumerate}
        \item move to $y$ if $y \in \Mac$ and $\theta < 1/(p+1)$,
        \item jump to a state $z \in \Mac$ distributed according to $\pim$ if $y = a$ and $\theta < 1/(p+1)$,
        \item otherwise jump to $\partial_{\mathrm{in}} \Mac$ with probability of jumping to any given state $z$ equal to $Q(\Mac^{\complement},z)/Q(\Mac^{\complement},\Mac)$.
    \end{enumerate}
    Finally, we say that the coupling breaks in the cases (2) and (3) above, and the chains evolve independently thereafter. Here, $p = \Ps{\pi}{X_1 \in \Mac^{\complement} \mid X_0 \in \Mac}$ as defined previously.

    We shall now use the fact that for irreducible reversible Markov chains, the relaxation time is equal to the maximum stationary hitting time of large sets \cite[Theorem~1.1]{Her} to get a bound on $\tr$. More precisely, applying \cite[Theorem~1.1]{Her} to the modified mean-field chain $\Ymb$ yields
    \begin{equation*}
        \tr \asymp \max_{\substack{A \subseteq \Mac\\ \pim(A) \le 1/2}} \Es{\pi_A}{\tau_{A^\complement}},
    \end{equation*}
    where $\tau_{A^{\complement}}$ is the exit time of set $A$ for $\Ymb$. Let $A \subseteq \Mac$ be such that $\pi(A)/\pi(\Mac) \le 1/2$. This implies $\pi(A) \le \pi(\Mac)/2 \le 1/2$. Applying \cite[Theorem~1.1]{Her} to the collapsed mean-field chain $\Y^{\Mac^\complement}$ yields
    \begin{equation*}
        \Es{\pi_A}{\tau_{A^{\complement}}'} \le 2 \tr' \le 2cn
    \end{equation*}
    for some constant $c > 0$ and all $n$ large enough. Here $\tau_{A^{\complement}}'$ denotes the exit time of set $A$ for $\Y^{\Mac^\complement}$. We shall use the above coupling to obtain a bound on $\Es{\pi_A}{\tau_{A^{\complement}}}$. We define three stopping times:
    \begin{itemize}
        \item $\tau_1$ is the first time that $\Ymb$ exits $A$ via a step made according to case (1),
        \item $\tau_2$ is the first time that $\Ymb$ jumps to stationarity following case (2), and
        \item $\tau_3$ is the first time that $\Ymb$ jumps to $\partial_{\mathrm{in}} \Mac$ following case (3).
    \end{itemize}
    Let $\tau \coloneqq \min \{\tau_1, \tau_2, \tau_3\}$. Further, let $E_i$ be the event $\{\tau = \tau_i\}$ for $i \in [3]$. Then $\tau$ is almost surely finite since $\tau \le \tau_{A^{\complement}}'$, and $\mathbf{1}_{E_1} + \mathbf{1}_{E_2} + \mathbf{1}_{E_3} = 1$. We now bound $\Es{\pi_A}{\mathbf{1}_{E_i}\tau_{A^{\complement}}}$ for $i \in [3]$. We have
    \begin{equation}
    \label{eqn:E1}
        \Es{\pi_A}{\mathbf{1}_{E_1}\tau_{A^{\complement}}} = \Es{\pi_A}{\mathbf{1}_{E_1}\tau_{A^{\complement}}'} \le \Es{\pi_A}{\tau_{A^{\complement}}'} \le 2cn.
    \end{equation}
    Let $\widetilde{\tau}_{A^{\complement}}$ be an independent copy of $\tau_{A^{\complement}}$. For $x \in \Mac$, let $(\tau_{A^{\complement}})_x$ denote the exit time of $A$ when the modified chain is started at the state $x$. We have
    \begin{align*}
        \mathbf{1}_{E_2}\tau_{A^{\complement}} &= \mathbf{1}_{E_2} (\tau + \sum_{y \in A} \mathbf{1}_{\{\Ymb \text{ jumps to } y \in A \text{ at time } \tau\}} (\widetilde{\tau}_{A^{\complement}})_y)\\
        &\le \tau_{A^{\complement}}' + \sum_{y \in A} \mathbf{1}_{E_2 \cap \{\Ymb \text{ jumps to } y \in A \text{ at time } \tau_2\}} (\widetilde{\tau}_{A^{\complement}})_y.
    \end{align*}
    Taking expectation on both sides, we obtain
    \begin{align*}
        \Es{x}{\mathbf{1}_{E_2}\tau_{A^{\complement}}} &\le \Es{x}{\tau_{A^{\complement}}'} + \sum_{y \in A} \Ps{x}{E_2} \pim(y) \Es{y}{\widetilde{\tau}_{A^{\complement}}}\\
        &= \Es{x}{\tau_{A^{\complement}}'} + \Ps{x}{E_2} \pim(A) \sum_{y \in A} \pi_A(y) \Es{y}{\tau_{A^{\complement}}}\\
        &\le \Es{x}{\tau_{A^{\complement}}'} + \frac{1}{2} \Es{\pi_A}{\tau_{A^{\complement}}},
    \end{align*}
    for each $x \in A$. Taking the sum over $x$ weighted by $\pi_A(x)$, we obtain
    \begin{equation}
    \label{eqn:E2}
        \Es{\pi_A}{\mathbf{1}_{E_2}\tau_{A^{\complement}}} \le \Es{\pi_A}{\tau_{A^{\complement}}'} + \frac{1}{2} \Es{\pi_A}{\tau_{A^{\complement}}} \le 2cn + \frac{1}{2} \Es{\pi_A}{\tau_{A^{\complement}}}.
    \end{equation}
    Note that $E_3$ is the same as the event $\{\tau_3 \le \tau_{A^{\complement}}^\prime\}$. Therefore, we obtain
    \begin{align}
    \label{eqn:E3int}
        \Es{x}{\mathbf{1}_{E_3} \tau_{A^{\complement}}} &\le \sum_{k = 0}^\infty \Ps{x}{\tau_3 = k \le \tau_{A^{\complement}}^\prime} (k + \max_{z \in \partial_{\mathrm{in}} \Mac} \Es{z}{\tau_{A^{\complement}}}) \nonumber \\
        &\le \sum_{k=0}^\infty \Ps{x}{\tau_3 = k} \Ps{x}{\tau_{A^{\complement}}^\prime \ge k} (k + \max_{z \in \partial_{\mathrm{in}} \Mac} \Es{z}{\tau_{A^{\complement}}}),
    \end{align}
    for all $x \in A$. Now, we have $\Ps{x}{\tau_3 = k} = p/(p+1)^{k+1}$. Further, we have
    \begin{align*}
        \max_{z \in \partial_{\mathrm{in}} \Mac} \Es{z}{\tau_{A^{\complement}}} &\le \max_{z \in \Mac} \Es{z}{\tau_{A^{\complement}}}\\
        &\le \max_{\substack{z \in \Mac,\, B \subseteq \Mac\\ \pim(B) \ge \frac{1}{2}}} \Es{z}{\tau_B}\\
        &= t_H\Big(\frac{1}{2}\Big) \lesssim \tm \lesssim \tr \log \left(\frac{4 \pi(\Mac)}{\pi_*}\right)\\
        &\lesssim n \cdot \tr,
    \end{align*}
    where $\tm$ is the mixing time of $\Ymb$, and
    \begin{equation*}
        t_H(\alpha) \coloneqq \max_{\substack{A \subseteq \Mac,\, x \in \Mac\\ \pim(A) \ge \alpha}} \Es{x}{\tau_A},
    \end{equation*}
    for $\alpha \in (0,1)$. The relationship $t_H(1/2) \le \tm$ is discussed in \cite[Section~1]{Her} while the relationship between $\tm$ and $\tr$ is standard \cite[Theorem~20.6]{LP}. Similarly, we also have
    \begin{equation}\label{eq:MacroRelMix}
        \max_{z \in \Mac \cup \{a\}} \Es{z}{\tau_{A^{\complement}}^\prime} \le t_H'\Big(\frac{1}{2}\Big) \lesssim \tm' \lesssim n \cdot \tr' \lesssim n^2, 
    \end{equation}
    where $\tm'$ and $t_H'(1/2)$ denote the relevant quantities for $\Y^{\Mac^\complement}$. For $x \in A$ and $\alpha \in \N$, we have
    \begin{equation*}
        \Ps{x}{\tau_{A^{\complement}}^\prime \ge 2 \alpha \cdot \max_{z \in \Mac \cup \{a\}} \Es{z}{\tau_{A^{\complement}}^\prime}} \le 2^{-\alpha},
    \end{equation*}
    via repeated application of Markov's inequality. Therefore, we obtain
    \begin{equation*}
        \Ps{x}{\tau_{A^{\complement}}^\prime \ge k} \le \Exp{-c_1 k/n^2},
    \end{equation*}
    for some constant $c_1 > 0$. It follows from the definition of $p$ that
    \begin{equation*}
        p = \frac{\sum_{y \in \partial \Mac} \pi(y) P(y, \Mac^{\complement})}{\pi(\Mac)} \le 2 \pi(\partial \Mac) \le 2 \pi(\MC{\delta/2}^{\complement}) \le \e^{-c_2 n}
    \end{equation*}
    for some constant $c_2 > 0$. Using the above estimates in \eqref{eqn:E3int}, we obtain that 
    \begin{align*}
        \Es{x}{\mathbf{1}_{E_3} \tau_{A^{\complement}}} &\le \sum_{k = 0}^\infty \Ps{x}{\tau_3 = k \le T'_{A^{\complement}}} (k + \max_{z \in \partial \Mac} \Es{z}{\tau_{A^{\complement}}}),\\
        &\lesssim p \cdot \sum_{k=0}^{n^3} k + \sum_{k = n^3}^\infty \Ps{x}{\tau_3 = k} \cdot \e^{-c_1 n} \cdot n \cdot \tr,\\
        &\lesssim n^6 \e^{-c_2 n} + n \e^{-c_1 n} \cdot \tr.
    \end{align*}
    Taking now the sum over $x$ weighted by $\pi_A$, we obtain
    \begin{equation}
    \label{eqn:E3}
        \Es{\pi_A}{\mathbf{1}_{E_3} \tau_{A^{\complement}}} \lesssim n^6 \e^{-c_2 n} + n \e^{-c_1 n} \cdot \tr.
    \end{equation}
    Adding \eqref{eqn:E1}, \eqref{eqn:E2}, and \eqref{eqn:E3} yields
    \begin{equation*}
        \Es{\pi_A}{\tau_{A^{\complement}}} \le 4cn + c_3(n^6 \e^{-c_2 n} + n \e^{-c_1 n} \cdot \tr) + \frac{1}{2} \Es{\pi_A}{\tau_{A^{\complement}}}
    \end{equation*}
    for some constant $c_3 > 0$, which implies
    \begin{equation*}
        \Es{\pi_A}{\tau_{A^{\complement}}} \le 8cn + 2c_3 n^6 \e^{-c_2 n} + 2c_3 n \e^{-c_1 n} \cdot \tr.
    \end{equation*}
    Taking the maximum over all $A \subseteq \Mac$ with $\pim(A) \le 1/2$, we obtain
    \begin{equation*}
        c_4 \tr \le \max_{\substack{A \subseteq \Mac\\ \pim(A) \le 1/2}} \Es{\pi_A}{\tau_{A^{\complement}}} \le (8c + o(1))n + o(1) \tr,
    \end{equation*}
    for some constant $c_4 > 0$, which yields the desired result.
\end{proof}

\begin{corollary}
\label{cor:RelaxationTimeModified}
    The relaxation time $\tr^+$ of the additive reversibilisation $\Xmplusb$ of $\Xmb$ satisfies
    \begin{equation*}
        \tr^{\textup{mod},+} \le Cn/\Ch,
    \end{equation*}
    where $C$ is a constant which depends only on $d$, $m$, and $\delta$.
\end{corollary}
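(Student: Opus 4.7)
The plan is to deduce the corollary from Proposition~\ref{pro:MacroRelax} via a Dirichlet-form comparison between the two reversible chains $\Ymb$ and $\Xmplusb$, both reversible with respect to $\pim$. Using the standard identity $\langle (I - \PXmplus)f, f\rangle_{\pim} = \langle (I - \PXm)f, f\rangle_{\pim}$ (which holds for any chain and its additive reversibilisation) and the variational characterisation of the spectral gap for reversible chains, it suffices to establish
\[
    \langle (I - \PYm)f, f\rangle_{\pim} \le \frac{C}{\Ch}\,\langle (I - \PXm)f, f\rangle_{\pim}
\]
for every $f \colon \Mac \to \R$. Combined with Proposition~\ref{pro:MacroRelax}, this will give $\tr^{\textup{mod},+} \le (C/\Ch)\,\tr^{\textup{mod,mf}} \le C^{\prime} n/\Ch$, as desired.

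The main input will be a canonical-paths argument essentially identical to the proof of Lemma~\ref{lem:CompareMeanFieldToRestricted}, but with the centre $\cC$ replaced by the macroscopic centre $\Mac$. For every pair $(x, y) \in \Mac \times \Mac$ lying on an edge of the mean-field graph (i.e., $y$ is obtained from $x$ by a single swap between two urns), I would construct a canonical path $\Gamma_{xy}$ of length $O(d^2)$ consisting of general-chain edges. The construction mirrors Lemma~\ref{lem:CompareMeanFieldToRestricted}: using the tree $T$ from Lemma~\ref{lem:tree}, ferry auxiliary balls of the relevant colour along a tree path to create enough slack, execute the desired swap via a cycle in $T$ (which in the general chain occurs with probability $\Omega(\Ch/m^d)$ by Lemma~\ref{lem:tree}), and finally return the auxiliary balls. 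Since the cushion $\delta n$ of $\Mac = \Meso{\delta n}$ far exceeds the constant length of $\Gamma_{xy}$, all intermediate states remain in $\Mac$ for large $n$, with the exception of a thin boundary layer; this latter case is handled by working with the slightly shrunken centre $\Meso{(\delta - \varepsilon)n}$ and noting that it differs from $\Mac$ on a set of $\pim$-mass at most $\mathrm{e}^{-c^{\prime} n}$. The resulting congestion-ratio computation is identical to the one culminating in \eqref{eqn:last} and yields the factor $O(1/\Ch)$ for the comparison of the internal-edge contributions to the two Dirichlet forms.

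The auxiliary transitions introduced in the modification step---redirected edges from $\partial_{\mathrm{out}}\Mac$ to $\pim$ and new jump edges into $\partial_{\mathrm{in}}\Mac$---carry total probability from any state at most $p_X, p_Y \le \pi(\Mac^{\complement})/\pi(\Mac) \le \mathrm{e}^{-cn}$ by Lemma~\ref{lem:CentreMacroOrder}. A short expansion using $(f(x) - f(y))^2 \le 2(f(x)^2 + f(y)^2)$, combined with the crude bound $\max_x f(x)^2 \le \abs{\Mac}\cdot \langle f, f\rangle_{\pim}$, will show that the contribution of these edges to $\langle (I - \PYm)f, f\rangle_{\pim}$ is at most $\mathrm{e}^{-cn/2}\langle f, f\rangle_{\pim}$. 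For mean-zero $f$, the Poincar\'e inequality for $\Ymb$ combined with the $O(n)$ bound on $\tr^{\textup{mod,mf}}$ from Proposition~\ref{pro:MacroRelax} gives $\langle f, f\rangle_{\pim} \le Cn\,\langle (I - \PYm)f, f\rangle_{\pim}$, which lets me absorb the auxiliary contribution into the internal-edge contribution at the cost of a $(1 + o(1))$ multiplicative factor. The auxiliary edges of $\PXm$ contribute non-negatively to the symmetrised Dirichlet form and hence can only help the desired lower bound.

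The main technical obstacle will be the boundary-layer issue in the canonical-paths construction above; the shrink-and-absorb workaround is routine but requires some care to make rigorous. The only other new element over the proof of Lemma~\ref{lem:CompareMeanFieldToRestricted} is the bookkeeping for the auxiliary edges, which becomes straightforward once one exploits the exponentially small $\pim$-mass of $\Mac^{\complement}$ and the spectral-gap bound on $\Ymb$ from Proposition~\ref{pro:MacroRelax} to handle the absorption step.
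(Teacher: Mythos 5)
The overall strategy agrees with the paper's: combine the $O(n)$ relaxation bound from Proposition~\ref{pro:MacroRelax} with a Dirichlet-form comparison, and replicate the canonical-paths construction of Lemma~\ref{lem:CompareMeanFieldToRestricted} on $\Mac$ to handle the internal edges. Your concern about a ``boundary layer'' for the internal paths is actually unfounded --- the Lemma~\ref{lem:CompareMeanFieldToRestricted} construction ferries balls to create slack precisely so that all intermediate states stay inside the centre, and this works with even more room to spare when the cushion is $\delta n$ rather than $C\sqrt{n}$.

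The genuine gap is in your treatment of the auxiliary (boundary) edges of $\PYm$. The bound $\max_x f(x)^2 \le \abs{\Mac}\,\langle f, f\rangle_{\pim}$ is false: it would hold if $\pim$ were uniform on $\Mac$, but $\pim$ is the restriction of the multinomial measure from \eqref{eq:StationaryDistribution}, and states near the boundary of $\Mac$ have stationary mass smaller than the balanced state by a factor $\e^{-c\delta^2 n}$. The correct crude bound is $\max_x f(x)^2 \le (\min_{z\in\Mac}\pim(z))^{-1}\langle f,f\rangle_{\pim}$, and this inverse minimum is exponentially large, cancelling the exponential smallness of $p^{\textup{mf}}$. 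Worse, the claim that the auxiliary contribution is exponentially small relative to $\langle f, f\rangle_{\pim}$ is false on conceptual grounds: take $f = \mathbf{1}_{\{x_0\}}$ for some $x_0 \in \partial_{\textup{out}}\Mac$ with $P^{\textup{mf}}(x_0,\Mac^\complement)$ of constant order. Then the redirected-edge contribution at $x_0$ is $\asymp \pim(x_0)P^{\textup{mf}}(x_0,\Mac^\complement)\asymp \pim(x_0) = \langle f,f\rangle_{\pim}$, so the auxiliary Dirichlet term is of constant order relative to $\langle f,f\rangle_{\pim}$, not $\e^{-cn/2}$. The absorption step therefore cannot get off the ground.

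The paper's fix is different and direct: for each auxiliary edge $(x,y)$ of $\PYm$, set $\Gamma_{xy}$ to be the corresponding single auxiliary edge of $\PXmplus$. This is well-defined because the modified chains built from $\PY$ and $P$ have the same boundary structure --- a single step of either chain changes each coordinate $x(i,j)$ by at most one, so $\partial_{\textup{out}}\Mac$ and $\partial_{\textup{in}}\Mac$ coincide for the two chains. The required weight comparison then reduces to the pointwise estimate $P^{\textup{mf}}(x,\Mac^\complement) \le 1 \lesssim P(x,\Mac^\complement)/\Ch$ for boundary $x$, which follows from Lemma~\ref{lem:tree}: any boundary state can exit $\Mac$ via a move along a tree edge, giving $P(x,\Mac^\complement) \gtrsim \Ch$. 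This routing produces the $O(1/\Ch)$ congestion factor for the auxiliary edges in a single line, matching the factor obtained for the internal edges.
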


\begin{proof}
    This follows from Proposition~\ref{pro:MacroRelax} combined with a comparison of Dirichlet forms argument. The comparison of Dirichlet forms argument is almost identical to the proof of Lemma~\ref{lem:CompareMeanFieldToRestricted}. The main difference is that for any edge $(x,y)$ between a non-boundary state and a boundary state that was previously not present in the mean-field chain, we define the path $\Gamma_{xy}$ to be the same edge in the additive reversibilisation of $\Xmb$.
\end{proof}

We end this section by a comparison between the generalised Bernoulli--Laplace chain $\X$ and the modified chain $\Xmb$, which will be used several times in the sequel.

\begin{lemma}
\label{lem:TimeInMacro}
    For all $\delta > 0$, there exists a coupling $\mathbf{P}$ between $\X$ and $\Xmb$ and a constant $c > 0$ such that for any $x \in \MC{\delta/(4d^2m^2)}$,
    \begin{equation*}
       \mathbf{P}(X_t = \Xm_t \text{ for all } t \in [0,n^5] \, | \, X_0 = \Xm_0 = x) \ge 1 - \Exp{-c n}
    \end{equation*}
    for all $n$ large enough. 
\end{lemma}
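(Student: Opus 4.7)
The plan is to construct the coupling via a shared rate-$1$ Poisson clock, applying at each tick the maximal coupling of the one-step distributions $P(x,\cdot)$ and $\PXm(x,\cdot)$ when both chains sit at a common state $x \in \Mac$. A direct inspection of the construction of $\PXm$ yields
\[
    \normTV{P(x,\cdot) - \PXm(x,\cdot)} \le \frac{p}{p+1} + P(x, \Mac^{\complement})\,\mathbf{1}_{x \in \partial_{\mathrm{out}} \Mac},
\]
where $p \coloneqq \cPs{\pi}{X_1 \in \Mac^{\complement}}{X_0 \in \Mac}$. Since $\pi(\Mac) \ge 1/2$ and $\pi(\partial_{\mathrm{out}} \Mac) \le \pi(\Mac \setminus \MC{\delta - 1/n}) \le dm \exp(-\delta^2 n/3)$ by Lemma~\ref{lem:CentreMacroOrder}, we obtain $p \le 2dm \exp(-\delta^2 n/3)$.

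I would then condition on the good event $E \coloneqq \{X_t \in \MC{\delta/2} \text{ for all } t \in [0, n^5]\}$. On $E$, the state $X_t$ avoids $\partial_{\mathrm{out}} \Mac$ for all $n$ large (since $\MC{\delta/2} \subseteq \MC{\delta - 1/n}$), so the per-ring TV distance is uniformly bounded by $p$. Extending the coupling so that the chains evolve independently after the first failure time $\tau_{\mathrm{fail}}$, the standard compensator bound for the first jump of a counting process with state-dependent rate gives
\[
    \cP{\tau_{\mathrm{fail}} \le n^5}{E} \le n^5 p = o(\exp(-\delta^2 n/6)).
\]
Hence the proof reduces to showing $\P{E^{\complement}} \le \exp(-cn)$ for some $c > 0$, which is the main obstacle.

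The key tool is the stationary comparison $\Ps{x_0}{X_t \in A} \le \pi(A)/\pi(x_0)$, valid for any $A \subseteq \Omega_n$ and $t \ge 0$ regardless of reversibility, via the identity $\pi(A) = \sum_y \pi(y) P_t(y,A) \ge \pi(x_0) P_t(x_0, A)$. For $A = \MC{\delta/4}^{\complement}$, Lemma~\ref{lem:CentreMacroOrder} gives $\pi(A) \le dm \exp(-\delta^2 n/32)$. For the denominator, I would expand $\log(\pi(x_0)/\pi(x_{\mathrm{bal}}))$ using Stirling's formula, where $x_{\mathrm{bal}}(i,j) = n$. The first-order Taylor terms vanish because $\sum_i (x_0(i,j) - n) = 0$ for every colour $j$, and the second-order contribution is bounded by $\sum_{i,j}(x_0(i,j) - n)^2/(2n) \le \delta^2 n/(32 d^3 m)$, using $\abs{x_0(i,j) - n} \le \delta n/(4 d^2 m)$ derived from $x_0 \in \MC{\delta/(4d^2m^2)}$ and the constraint $\sum_j x_0(i,j) = mn$. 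Combined with the trivial lower bound $\pi(x_{\mathrm{bal}}) \ge 1/\abs{\Omega_n} \ge 1/\mathrm{poly}(n)$, this yields $\pi(x_0) \ge \exp(-\delta^2 n/(32 d^3 m) - O(\log n))$. Since $1/(32 d^3 m) < 1/32$ strictly for $d, m \ge 2$, the ratio $\pi(A)/\pi(x_0)$ is at most $\exp(-c_0 n)$ for some $c_0 > 0$. A union bound over integer times $k \in \{0, 1, \ldots, \lfloor n^5 \rfloor\}$ contributes $n^5 \exp(-c_0 n) \le \exp(-c_0 n/2)$ to the probability that $X_k \notin \MC{\delta/4}$ at some integer time, while the standard tail bound for $\mathrm{Poisson}(1)$ shows that the number of jumps in any unit interval exceeds $\delta n/4$ with probability at most $\exp(-\Omega(n \log n))$, so that $X_t \in \MC{\delta/2}$ for $t \in [k, k+1]$ once $X_k \in \MC{\delta/4}$. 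Combining yields $\P{E^{\complement}} \le \exp(-cn)$, as required.
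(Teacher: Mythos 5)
Your proof is correct and takes a genuinely different route from the paper's for the main step, namely bounding the probability that $X_t$ ever leaves a macroscopic centre on the time interval $[0,n^5]$. The paper couples $\X$ with a stationary copy $\Xsb$ via an explicit ``matched/unmatched ball'' coupling under which $\normo{X_t-\Xs_t}$ is non-increasing, chooses $\Xs_0$ to be $\ell^1$-close to $x$ with probability $1-\e^{-cn}$ using Lemma~\ref{lem:CentreMacroOrder}, and then union-bounds over jumps of $\Xsb$ using stationary concentration. You instead avoid constructing any auxiliary chain and use the elementary reversibility-free bound $\Ps{x_0}{X_t\in A}\le\pi(A)/\pi(x_0)$, combined with a Stirling lower bound on $\pi(x_0)$ for $x_0\in\MC{\delta/(4d^2m^2)}$; the key arithmetic point is that the exponent $\delta^2 n/(32 d^3 m)$ coming from the Stirling expansion is strictly dominated by the exponent $\delta^2 n/32$ from $\pi(\MC{\delta/4}^{\complement})$, because $d^3m\ge 16$. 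This is a genuine simplification in that it eliminates the monotone-coupling construction, at the cost of the Stirling computation and a constant-dependence requirement ($d^3m$ large enough) that happens to be harmless here. Two small caveats: the bound $(1+c)\log(1+c)-c\le c^2/2$ you implicitly use fails for $c<0$ (one only gets $\le c^2$ for $|c|\le 1/2$), so the claimed constant $1/(32d^3m)$ should really be $1/(16d^3m)$; this is inconsequential since $d^3m\ge 16>2$. Also, your statement $\cP{\tau_{\mathrm{fail}}\le n^5}{E}\le n^5 p$ is more cleanly phrased as $\mathbf{P}(\{\tau_{\mathrm{fail}}\le n^5\}\cap E)\le n^5 p$, avoiding subtleties of conditioning on the trajectory event $E$; the final additive decomposition into $\P{E^\complement}+n^5p$ is then immediate. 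Both the initial maximal-coupling setup with per-tick TV cost $\le p/(p+1)$ off $\partial_{\mathrm{out}}\Mac$, and the final union bound, match the structure of the paper's argument.
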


\begin{proof}
    Let $\Xsb = (\Xs_t)_{t \ge 0}$ be a generalised Bernoulli--Laplace urn started from the stationary distribution $\pi$. Recalling Lemma~\ref{lem:CentreMacroOrder}, we can choose $\Xs_0$ such that
    \begin{equation*}
        \normo{\Xs_t - X_t} \coloneqq \sum_{i, j} \abs{\Xs_t(i,j) - X_t(i,j)}
    \end{equation*}
    satisfies that for any $\delta>0$, and any $X_0=x \in \MC{\delta/(4d^2m^2)}$ with some constant $c_1 > 0$
    \begin{equation}
    \label{eq:StationaryComparison}
        \P{\normo{\Xs_0  - X_0} \le \frac{\delta n}{2}} \ge 1 - \Exp{-c_1 n}
    \end{equation}
    for all $n$ large enough. We claim that there exists a coupling $\mathbf{P}$ such that
    \begin{equation}
    \label{eq:CouplingBalls}
        \mathbf{P}\Big(\normo{\Xs_t - X_t} \le \normo{\Xs_0 - X_0} \text{ for all } t \ge 0 \Big) = 1.
    \end{equation}
    To see this, consider the labeled version of the generalised Bernoulli--Laplace urn model, where all balls are distinguishable. At time $0$, we place the balls in $\Xs_0$ and $X_0$ such that all but at most $\normo{\Xs_0 - X_0}$ many balls are in the same urn. We refer to these balls as \textit{matched}, and all other balls as \textit{unmatched}. Note that by construction $\Xs_0$ and $X_0$ contain the same number of matched balls in every urn. For the coupling $\mathbf{P}$, we proceed as follows. First, we pick the balls for $X_0$ uniformly at random. For every matched ball, we select the corresponding ball in $\Xs_0$. For every unmatched ball, pick an unmatched ball from the corresponding urn uniformly at random. Then for both processes, move the selected balls according to the same permutation $\mu$ to their target urn. It is easy to verify that this coupling has the desired properties, and that the number of unmatched balls is a non-increasing function, giving the claim \eqref{eq:CouplingBalls}. Now by Lemma~\ref{lem:CentreMacroOrder}, and a union bound over the jumps until time $n^5$ for the embedded discrete-time chain, we get that for some constant $c_2 > 0$, and all $n$ large enough,
    \begin{equation*}
        \P{\Xs_t \in \MC{\delta/2} \text{ for all } t \in [0, n^5]} \ge 1 - \Exp{-c_2 n}. 
    \end{equation*}
    Together with \eqref{eq:StationaryComparison} and \eqref{eq:CouplingBalls}, as well as a union bound for $\Xmb$ on using at least one of the newly added edges, this yields the desired result.
\end{proof}

\section{Relaxation and mixing times of the induced chain}
\label{sec:relMixInduced}

In this section, we establish bounds on the relaxation and infinity mixing time of the induced chain $\Xib$. We start with the following lemma on the relaxation time.

\begin{lemma}
\label{lem:RelaxationTimeInduced}
    We can bound the relaxation time $\tr^{\textup{ind},+}$ of the additive reversibilisation $\Xiplusb$ of the induced chain $\Xib$ by
    \begin{equation}
            \tr^{\textup{ind},+} \le C_0n/\Ch,
        \end{equation}
     where $C_0$ is a constant which depends only on $d$, $m$, and $C$.
\end{lemma}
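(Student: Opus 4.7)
The plan is to obtain the bound as a direct consequence of the spectral gap estimate for the restricted mean-field chain in Corollary~\ref{cor:RelaxationMFRes} together with the Dirichlet form comparison in Lemma~\ref{lem:CompareMeanFieldToRestricted}. The two ingredients essentially fit together mechanically; no new analytic work is required.

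First, I would observe that the induced chain $\Xib$ and the restricted mean-field chain $\Yrb$ share the same stationary distribution $\piI = \piR = \pi_{\cC}$, as recorded in Table~\ref{tab:chains}. Since additive reversibilisation preserves the stationary distribution, $\Xiplusb$ also has stationary measure $\pi_{\cC}$. In particular, for every $f \colon \cC \to \R$, the variances $\Vars{\piI}{f}$ and $\Vars{\piR}{f}$ coincide.

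Next, since $\Xiplusb$ is reversible, its spectral gap admits the variational characterisation
\begin{equation*}
    \gamma^{\textup{ind},+} = \min_{f \colon \cC \to \R \textup{ non-constant}} \frac{\langle (I-\PXiplus)f, f \rangle_{\piI}}{\Vars{\piI}{f}}.
\end{equation*}
For any such $f$, Lemma~\ref{lem:CompareMeanFieldToRestricted} gives
\begin{equation*}
    \langle (I-\PXiplus)f, f \rangle_{\piI} \ge M^{-1} \langle (I-\PYr)f, f \rangle_{\piR},
\end{equation*}
with $M = M(C,d,m)/\Ch$. Dividing by $\Vars{\piR}{f} = \Vars{\piI}{f}$ and taking the infimum over $f$ yields $\gamma^{\textup{ind},+} \ge \gamma^{\textup{res}}/M$, where $\gamma^{\textup{res}}$ is the spectral gap of $\Yrb$ (which is reversible, so its spectral gap equals the reciprocal of $\tr^{\textup{res}}$).

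Finally, Corollary~\ref{cor:RelaxationMFRes} provides $\gamma^{\textup{res}} = 1/\tr^{\textup{res}} \ge 1/(C_1 n)$ for some constant $C_1 = C_1(C,d,m)$, and combining this with the previous display produces
\begin{equation*}
    \gamma^{\textup{ind},+} \ge \frac{\Ch}{C_0 n},
\end{equation*}
for a suitable constant $C_0 = C_0(C,d,m) > 0$, which is equivalent to the claimed bound on $\tr^{\textup{ind},+}$. There is no real obstacle here; the content lies entirely in the preceding spectral profile estimate and canonical path comparison, and this lemma is essentially their immediate corollary.
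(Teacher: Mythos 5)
Your proof is correct and takes essentially the same approach as the paper: combine the Dirichlet form comparison from Lemma~\ref{lem:CompareMeanFieldToRestricted} with the $O(n)$ bound on $\tr^{\textup{res}}$ from Corollary~\ref{cor:RelaxationMFRes} via the variational characterisation of the spectral gap. The paper states this more tersely, but the content is identical.
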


\begin{proof}  
    We see from Lemma \ref{lem:CompareMeanFieldToRestricted} that the relaxation time $\tr^{\textup{ind},+}$ of the additive reversibilisation of the induced chain satisfies
    \begin{equation}
        \tr^{\textup{ind},+} \le C_1 \tr^{\textup{res}} / \Ch
    \end{equation}
    for some constant $C_1$, where $\tr^{\textup{res}}$ denotes the relaxation time of the mean-field restricted chain. Using now that $\tr^{\textup{res}} = O(n)$ by Corollary~\ref{cor:RelaxationMFRes}, we conclude.
\end{proof}

In the following, we record a consequence of Proposition~\ref{pro:SpectralProfileMeanField}, Lemma~\ref{lem:CompareMeanFieldToRestricted} and Lemma~\ref{lem:RelaxationTimeInduced} on the $\ell^{\infty}$-mixing time. In this context, for all $\varepsilon > 0$, we define the \emph{infinity mixing time} $\tm^{\infty}(\varepsilon)$ of a Markov chain $(\eta_t)_{t \ge 0}$ with state space $\Omega$ and stationary distribution $\nu$ as
\begin{equation}
\label{def:LinfMix}
    \tm^{\infty}(\varepsilon) \coloneqq \inf \set{t \ge 0 \colon \max_{x,y \in \Omega} \abs{\cP{\eta_t = y}{\eta_0 = x}/\nu(y)-1} \le \varepsilon}.
\end{equation}
The following statement crucially relies on a relation by Goel, Mentenegro and Tetali between the spectral profile and the $\ell^{\infty}$-mixing time~\cite{GMT}.

\begin{lemma}
\label{lem:InfiniteMixingBound}
    Let $C > 0$ and consider the induced chain $\Xib$ on $\cC$ with transition matrix $P^{\textup{ind}}$, and recall the Cheeger constant $\Ch = \Chn$. Let $\Xib$ have $\ell^{\infty}$-mixing time $\tm^{\infty,\textup{ind}}$. Then there exists some constant $C_0 > 0$ such that for all $\varepsilon \in (0, \frac{1}{4})$,
    \begin{equation}
    \label{eq:Profile1}
        \tm^{\infty,\textup{ind}}(\varepsilon) \le C_0 \ \log\Big(\frac{4}{\varepsilon}\Big) \frac{n}{\Chn}. 
    \end{equation}
    Moreover, for all $c > 0$, there exists some $R_0 > 0$, depending only on $d, m, C$, such that for all $r \ge R_0$, we get that
    \begin{equation}
    \label{eq:Profile2}
        \tm^{\infty,\textup{ind}}(r) \le c \frac{n}{\Chn} \ . 
    \end{equation}  
\end{lemma}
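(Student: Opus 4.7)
The plan is to first transfer the spectral profile estimate of Proposition~\ref{pro:SpectralProfileMeanField} to the additive reversibilisation $\Xiplusb$ via the Dirichlet form comparison of Lemma~\ref{lem:CompareMeanFieldToRestricted}, and then invoke the spectral-profile $\ell^{\infty}$-mixing time framework of Goel--Montenegro--Tetali~\cite{GMT}. The non-reversibility of $\Xib$ is not an essential obstruction because $\Xib$, its time reversal, and $\Xiplusb$ all share the same Dirichlet form.

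For the transfer, since $\piR = \piI = \pic$, taking the infimum in Lemma~\ref{lem:CompareMeanFieldToRestricted} over $f : \cC \to \R_{\ge 0}$ with $\piI(\mathrm{supp}(f)) \le \delta$ yields $\Lambda^{\textup{ind},+}(\delta)^{-1} \le M\, \sPr(\delta)^{-1}$ with $M = O(1/\Chn)$. Combined with Proposition~\ref{pro:SpectralProfileMeanField}, this produces constants $C_1, \beta > 0$ such that
\begin{equation*}
    \Lambda^{\textup{ind},+}(\delta)^{-1} \le \frac{C_1\, \delta^{\beta}\, n}{\Chn} \qquad \text{for all } \delta \in (0, 1/2].
\end{equation*}

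For the second step, I would apply Corollary~2.1 of \cite{GMT} to $\Xiplusb$ and pass to $\Xib$ via the standard Cauchy--Schwarz decomposition $\ell^{\infty}(2t) \le \ell^{2}_{\mathrm{chain}}(t) \cdot \ell^{2}_{\mathrm{reversal}}(t)$, using that both $L^{2}$-decays are governed by the common Dirichlet form and therefore by $\Lambda^{\textup{ind},+}$. This produces a bound of the form
\begin{equation*}
    \tm^{\infty,\textup{ind}}(\varepsilon) \lesssim \int_{c_0/\abs{\cC}}^{1/2} \frac{d\delta}{\delta\, \Lambda^{\textup{ind},+}(\delta)} + \frac{\log(4/\varepsilon)}{\gamma^{\textup{ind},+}},
\end{equation*}
with $c_0 > 0$ an absolute constant, where the tail is controlled via the spectral gap bound $\gamma^{\textup{ind},+} \gtrsim \Chn/n$ from Lemma~\ref{lem:RelaxationTimeInduced}. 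Plugging in the spectral-profile estimate, the first integral is $\lesssim \frac{n}{\Chn}\int_0^{1/2} \delta^{\beta-1}\,d\delta = O(n/\Chn)$ (using $\beta > 0$), while the tail contributes $\lesssim (n/\Chn)\log(4/\varepsilon)$; this gives \eqref{eq:Profile1}. For \eqref{eq:Profile2}, given $c > 0$, I would pick $R_0 \ge 8$ satisfying $(4/R_0)^{\beta} \le c\beta/(2C_1)$; then for $r \ge R_0$ one has $4/r \le 1/2$, the $\log$-tail is absent, and the truncated small-$\delta$ integral is $\le (2C_1/\beta)(4/r)^{\beta}\, n/\Chn \le c\, n/\Chn$.

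The main technical obstacle is making the non-reversible passage in the second step rigorous: the reversible GMT bound applies directly to $\Xiplusb$, but one needs to combine it with the Cauchy--Schwarz decomposition of $\ell^{\infty}$-mixing of $\Xib$ into the $L^{2}$-mixing of $\Xib$ and of its time reversal, carefully tracking constants across the doubling of time. Once this is set up, the remaining work is the elementary integral computation, and it is precisely the polynomial improvement $\delta^{\beta}$ from Proposition~\ref{pro:SpectralProfileMeanField} that removes the $\log(1/\piI_{\min})$ factor a naive spectral-gap argument would produce.
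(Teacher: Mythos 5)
Your proposal is correct and essentially matches the paper's proof: Dirichlet-form comparison (Lemma~\ref{lem:CompareMeanFieldToRestricted}) to transfer the spectral profile estimate of Proposition~\ref{pro:SpectralProfileMeanField} to $\Xiplusb$, then the Goel--Montenegro--Tetali spectral-profile framework, then the elementary split integral with the polynomial small-$\delta$ part absorbed by $\delta^\beta$ and the tail controlled by the relaxation time from Lemma~\ref{lem:RelaxationTimeInduced}. The one ``main technical obstacle'' you flag -- passing the $\ell^{\infty}$-bound from the reversible $\Xiplusb$ to the nonreversible $\Xib$ -- is not an obstacle in the paper: \cite[Theorem~1.1]{GMT} is already stated for general (possibly nonreversible) chains and bounds $\tm^{\infty,\textup{ind}}(\varepsilon)$ directly in terms of the modified spectral profile $\sPit$ of the additive symmetrisation, so the Cauchy--Schwarz decomposition you describe (which is precisely the mechanism behind that nonreversible version, and is what forces the paper to work with $\sPit$ rather than $\Lambda^{\textup{ind},+}$ so the integral can run past $\delta = 1/2$) never needs to be carried out inline.
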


\begin{proof}
    Recall that by Proposition~\ref{pro:SpectralProfileMeanField}, there exists some $\delta_0 > 0$ such that the restricted mean-field chain satisfies a bound of $O(\delta^{\beta} n)$ on the inverse spectral profile for all $\delta \le \delta_0$. Using Lemma~\ref{lem:CompareMeanFieldToRestricted} and \eqref{eq:ModifiedSP}, we get the same result for the modified spectral profile $\sPit$ of the additive symmetrisation of the induced chain, i.e., there exist constants $C_1$ and $\delta_0$, which depend only on $d$, $m$, and $C$, such that
    \begin{equation}
    \label{eq:SpectralProfileFull}
        \frac{1}{\sPit(\delta)} \le C_1 \delta^{\beta} \frac{n}{\Ch} 
    \end{equation}
    for all $n$ large enough, and $\delta < \delta_0$. In particular, there exists a constant $C_2>0$, depending only on $d,m,\delta_0$ and $C$, such that
    \begin{equation}\label{eq:FirstBound}
        \int_{4 \min_{y \in \cC} \piI(y)}^{\delta_0} \frac{2}{\sPit(s)s} \diff s \le C_2 \frac{n}{\Ch}. 
    \end{equation}
    Let $\tr^{\textup{ind},+}$ denote the relaxation time of the additive symmetrisation of the induced chain. Then
    \begin{equation*}
        \frac{1}{\sPit(s)} \le \tr^{\textup{ind},+}
    \end{equation*}
    for all $s > 0$ by the variational characterisation of the spectral gap. This implies
    \begin{equation}
    \label{eq:SecondBound}
        \int_{\delta_0}^{\frac{4}{\varepsilon}} \frac{2}{\sPit(s)s} \diff s \le 2\left(\log\Big(\frac{4}{\varepsilon}\Big)+ \log\Big(\frac{1}{\delta_0}\Big)\right) \tr^{\textup{ind},+} \le C_3 \log\Big(\frac{4}{\varepsilon}\Big) \frac{n}{\Ch}
    \end{equation}
    for all $\varepsilon \in (0,\frac{1}{4})$ and a constant $C_3>0$. Here, we used Lemma \ref{lem:RelaxationTimeInduced} for the second inequality. Now Theorem~1.1~in~\cite{GMT} states that for all $\varepsilon>0$,
    \begin{equation}
    \label{eq:GMTresult}
        \tm^{\infty,\textup{ind}}(\varepsilon) \le \int_{4 \min_{y \in \cC} \piI(y)}^{4/\varepsilon} \frac{2}{\sPit(s)s} \diff s. 
    \end{equation}     
    Combining \eqref{eq:FirstBound} and \eqref{eq:SecondBound} to bound the right-hand side in \eqref{eq:GMTresult}, we obtain the first bound \eqref{eq:Profile1}. For the second bound \eqref{eq:Profile2}, note that we can simply plug  \eqref{eq:SpectralProfileFull} into the right hand-side of \eqref{eq:GMTresult} and choose $4\varepsilon^{-1}$ sufficiently small to conclude.
\end{proof}

\section{Time spent in the centre}
\label{sec:timeinCOM}

In the following, we investigate the time the generalised Bernoulli--Laplace chain $\X$ spends within the centre. We will prove the following result on the time $\X$ takes to exit $\CC{C^\prime}$ when starting from some $\eta \in \cC$ with respect to some constants $C^\prime > C > 0$.

\begin{proposition}
\label{pro:TimeCoM}
    Consider the chain $\X = (X_{t})_{t \ge 0}$. Then for all $\varepsilon, C, \theta > 0$, there exists some $C^\prime = C^\prime(\varepsilon, C, \theta) > 0$ such that for all $n$ sufficiently large,
    \begin{equation*}
        \min_{\eta \in \cC} \cP{X_{t} \in \CC{C^\prime} \text{ for all } t \le \theta n}{X_0 = \eta} \ge 1 - \varepsilon. 
    \end{equation*}
\end{proposition}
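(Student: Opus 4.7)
My plan is a three-step argument combining coupling with fluctuation estimates that relies heavily on the ball-decomposition $X=Z+V$ from Section~\ref{sec:concentration}.

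First, by a union bound over $(i,j)\in[d]\times[m]$ it suffices to control a single coordinate $X_t(i,j)$, with failure probability at most $\varepsilon/(dm)$. Moreover, since $\eta\in\cC\subseteq\MC{\delta/(4d^2m^2)}$ for any fixed $\delta>0$ and $\theta n\ll n^5$, Lemma~\ref{lem:TimeInMacro} lets me replace $\X$ by the modified chain $\Xmb$ on this time horizon up to an error $\e^{-cn}$ that I absorb into $\varepsilon$. So the goal becomes to show $\Xm_t(i,j)\ge n-C'\sqrt{n}$ uniformly for $t\le\theta n$, and I can additionally use that $\Xmplusb$ has relaxation time $O(n/\Ch)$ by Corollary~\ref{cor:RelaxationTimeModified}.

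Second, choose $N=C\sqrt{n}$ and decompose $\Xm_t(i,j)=Z_t(i,j)+V_t(i,j)$ as in Section~\ref{sec:concentration}. Lemma~\ref{lem:var} gives $\Var(Z_t(i,j))=O(\sqrt{n})$ uniformly in $t\le\theta n$, so a Chebyshev bound on a time grid of spacing $1$, combined with a Poisson tail on the number of jumps in each unit subinterval (which contributes only $O(\log n)$ oscillation with failure probability $n^{-10}$), controls the $Z$-part by $O(\sqrt{n})$. For $V_t$ I would use the non-expansive coupling with a stationary copy $V^{\textup{stat}}$ from the proof of Lemma~\ref{lem:Vdev}: it gives $\rho(V_t,V^{\textup{stat}}_t)\le\rho(V_0,V^{\textup{stat}}_0)$ \emph{pathwise} for all $t\ge0$, and Lemma~\ref{lem:CentreMacroOrder} bounds $\rho(V_0,V^{\textup{stat}}_0)$ by $O(\sqrt{n})$ with probability at least $1-\varepsilon/8$. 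Hence the problem reduces to producing a uniform bound
\[
\sup_{t\le\theta n}\bigl(n-N-V^{\textup{stat}}_t(i,j)\bigr)_{+}\le C''\sqrt{n}
\]
with constant $C''$, on an event of probability at least $1-\varepsilon/8$.

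Third, I would obtain this uniform-in-time bound by exploiting that $\theta n$ is bounded by a constant multiple of the relaxation time $O(n)$ of the chain $V^{\textup{stat}}$, so only $O(1)$ \emph{effectively independent} fluctuations need to be controlled. Concretely, partition $[0,\theta n]$ into $O(1)$ blocks of length $\asymp n$; at block endpoints use the subgaussian stationary tail from Lemma~\ref{lem:CentreMacroOrder} together with a union bound, which works with a constant $C''$ since the number of endpoints is constant; and within each block apply Doob's $L^{2}$-maximal inequality to the Doob-type decomposition of $V^{\textup{stat}}_t(i,j)-(n-N)$ into martingale plus drift. The martingale part has predictable quadratic variation bounded by the number of jumps $O(n)$ inside the block; the drift part is controlled by the Poincar\'e inequality for the (reversible) stationary $V$-chain, giving $L^2$-deviation of order $\sqrt{n}$ over the block. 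Combining the two contributions yields the required $\sqrt{n}$-bound within each block.

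The main obstacle is precisely this last uniform-in-time step. A naive discretisation over a grid of $\theta n$ points using the single-time estimate of Lemma~\ref{lem:Xdev} forces $C'$ to be of order $\sqrt{\log n}$ through the Gaussian-tail union bound, which is not allowed by the statement. Bringing $C'$ down to a constant depending only on $\varepsilon,C,\theta$ requires genuinely exploiting that the time horizon is comparable to the relaxation time, either through the block plus Doob maximal inequality argument sketched above, or through a direct spectral/Poincar\'e-in-time estimate applied to the stationary $V$-chain.
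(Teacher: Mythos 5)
Your proposal takes a genuinely different route from the paper, and the high-level intuition --- that one must exploit the fact that the time horizon $\theta n$ is only a constant multiple of the relaxation time, rather than paying a $\sqrt{\log n}$ price from a naive union bound --- is exactly right. The paper does not use the $Z+V$ decomposition at all here. Instead it proves an elementary hitting-time bound for a weakly biased random walk (Lemma~\ref{lem:RandomWalkBound}), uses it to stochastically dominate each coordinate $X_t(i,j)$ while the chain is inside a nested centre $\CC{kC}$ (Lemma~\ref{lem:ExitCentre}), and then telescopes over a constant number $K = \ceil{\e^{1+\theta/(\varepsilon C)}}$ of nested centres so that the exit times $\tau'_i - \tau'_{i-1} \gtrsim \varepsilon C n/(i+1)$ sum to at least $\theta n$ while the failure probabilities $C_1 \e^{-c_1(i+1)C\varepsilon^{-1}}$ sum to at most $\varepsilon$. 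This nesting is the device that keeps $C'$ a constant.

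However, there are two concrete gaps in your sketch. First, the $Z$-part control via Chebyshev on a grid of $\theta n$ points does not work: $\Var(Z_t(i,j)) = O(\sqrt{n})$ gives a tail $O(\sqrt{n}/\Delta^2)$, and a union bound over $\theta n$ times would force $\Delta \gg n^{3/4}$. What saves you is something you did not use: with $N = C\sqrt{n}$, the set $B_2$ contains $Nd = Cd\sqrt{n}$ balls of each colour, so $0 \le Z_t(i,j) \le Cd\sqrt{n}$ \emph{deterministically}, making the variance estimate entirely unnecessary. Second, and more seriously, the uniform-in-time bound on $V^{\textup{stat}}$ via the block-plus-Doob decomposition has a circularity you do not address. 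The martingale part is fine (quadratic variation $O(n)$ gives $O(\sqrt{n})$ in $L^2$ by Doob), but the drift of $V^{\textup{stat}}_t(i,j)$ is proportional to the current deviation $V^{\textup{stat}}_t(\cdot,j) - (n-N)$, which is exactly the quantity whose supremum you are trying to bound. Invoking the Poincar\'e inequality does not break this circle: it controls the stationary $L^2$-norm, not a pathwise supremum. What is needed is a stopping-time or bootstrap argument (control the drift assuming the process stays inside $\Meso{K\sqrt{n}}$, then show this is self-consistent), and that is precisely the structure the paper supplies via the biased random walk domination in Lemmas~\ref{lem:RandomWalkBound}--\ref{lem:ExitCentre} and the layered exit-time telescoping. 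Finally, the detour through $\Xmb$ and Corollary~\ref{cor:RelaxationTimeModified} is a red herring: you never use the relaxation bound afterwards, and the $Z+V$ decomposition is set up for the chain $\X$, not the modified chain, which can perform nonlocal jumps that would break the coupling underlying Lemma~\ref{lem:Vdev}.
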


We have the following strategy for Proposition~\ref{pro:TimeCoM}. Starting from the smaller centre, we compare the time it takes to exit a larger centre by a collection weakly biased random walks, using a union bound over all colours and all urns. We start with the following basic observation on hitting times of a simple random walk with a small drift, which we use to stochastically dominate a fixed colour $i$ in a fixed urn $j$ for some $i\in [m]$ and $j \in [d]$.

\begin{lemma}
\label{lem:RandomWalkBound}
    Let $\alpha > 0$ be a constant and let $N \in \N$ with $N/2 \ge \alpha$. Let $(Z_t)_{t \ge 0}$ be a rate $1$ simple random walk on $[N]$ with a bias of $\alpha/N$ to the right with reflection, i.e., 
    \begin{equation*}
        \cP{Z_t = y}{Z_{t_{-}} = x} = \begin{cases}
            \frac{1}{2} + \frac{\alpha}{N} & \text{ if } y = x + 1 \text{ and } x < N \text{ or } x = y = N,  \\
            \frac{1}{2} - \frac{\alpha}{N} & \text{ if } y = x - 1 \text{ and } x > 1 \text{ or } x = y = 1, 
        \end{cases}
    \end{equation*}
    for all $x \in [N]$, for a jump at time $t$. For $x \in [N]$, let $\tau_{x}$ denote the first time when the walker $(Z_t)$ reaches site $x$. Then there exists some universal constants $c_0, \varepsilon_0 > 0$ such that for all $\varepsilon \in (0, \varepsilon_0)$,
    \begin{equation}
        \cP{\tau_N \le \varepsilon N^2/\alpha}{Z_0 = 1} \le \Exp{-c_0 \alpha \varepsilon^{-1}}
    \end{equation}
    for all $N$ sufficiently large.
\end{lemma}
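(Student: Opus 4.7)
\emph{Approach.} The plan is to reduce the problem for the reflected chain $(Z_t)$ to a deviation estimate for an unreflected random walk and then apply an exponential Chebyshev / Doob's maximal inequality argument. The crux is that reflection at the boundary $1$ can only \emph{help} the walker reach $N$ sooner, so a naive hitting-time comparison with the unreflected walker goes the wrong way, and one has to go through the \emph{range} of the unreflected walker instead.

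\emph{Skorohod reduction.} Let $(W_t)_{t \ge 0}$ be the unreflected rate-$1$ continuous-time random walk on $\mathbb{Z}$ with $W_0 = 1$, where each jump is $+1$ with probability $p = \tfrac{1}{2} + \alpha/N$ and $-1$ with probability $q = \tfrac{1}{2} - \alpha/N$, coupled to $(Z_t)$ via the same Poisson clock and jump signs. The standard Skorohod representation of the process reflected at $1$ yields $Z_t = W_t + L_t$ with $L_t = \sup_{s \le t}(1 - W_s)^+$. Since $L$ is non-decreasing and $W_0 = 1$ forces $\min_{s \le t} W_s \le 1$, one gets
\begin{equation*}
    \max_{s \le T} Z_s \le 1 + \max_{s \le T} W_s - \min_{s \le T} W_s ,
\end{equation*}
so on the event $\set{\tau_N \le T}$ with $T := \varepsilon N^2/\alpha$, the range of $W$ on $[0,T]$ is at least $N-1$. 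A union bound then reduces the lemma to controlling
\begin{equation*}
    \Ps{1}{\max_{s \le T} W_s \ge \tfrac{N+1}{2}} \quad \text{and} \quad \Ps{1}{\min_{s \le T} W_s \le \tfrac{3-N}{2}}.
\end{equation*}

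\emph{Exponential Chebyshev.} For each $\lambda > 0$ the process $\bigl(e^{\lambda W_t}\bigr)$ is a non-negative submartingale with the explicit formula $\Es{1}{e^{\lambda W_t}} = e^\lambda \Exp{t\phi(\lambda)}$, where $\phi(\lambda) = p(e^\lambda - 1) + q(e^{-\lambda} - 1)$ (a direct Poisson mixture computation). Doob's maximal inequality then gives
\begin{equation*}
    \Ps{1}{\max_{s \le T} W_s \ge \tfrac{N+1}{2}} \le \Exp{-\tfrac{\lambda(N-1)}{2} + \phi(\lambda) T},
\end{equation*}
and an analogous bound for the lower tail is obtained from the submartingale $(e^{-\lambda W_t})$, this time with the drift $(p-q) = 2\alpha/N$ acting \emph{against} the deviation, which in fact yields a strictly better exponent.

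\emph{Optimisation.} It remains to optimise in $\lambda$. For $\lambda$ small one has $\phi(\lambda) \le \lambda^2 + (2\alpha/N)\lambda$, so the exponent in the upper-tail bound is at most $-\lambda\bigl(N(\tfrac{1}{2} - 2\varepsilon) - \tfrac{1}{2}\bigr) + T\lambda^2$. Choosing $\lambda$ of order $(1 - 4\varepsilon)\alpha / (\varepsilon N)$ — which tends to $0$ as $N \to \infty$ for fixed $\alpha$ and $\varepsilon$, validating the Taylor expansion for $N$ large enough — produces an exponent of order $-\alpha(1-4\varepsilon)^2/\varepsilon$. For $\varepsilon < \varepsilon_0 := \tfrac{1}{8}$ this is at most $-c\alpha/\varepsilon$ for a universal $c > 0$, with an even stronger bound for the lower tail as noted above. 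Combining the two bounds via the union bound from the Skorohod step and absorbing the multiplicative factor of $2$ into $c_0$ yields the claim. The main technical point to watch is that the Taylor control on $\phi(\lambda)$ must be verified for the optimising $\lambda$, which is why the statement requires $N$ sufficiently large in terms of $\alpha, \varepsilon$.
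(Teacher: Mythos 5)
Your proof is correct, and it takes a genuinely different route from the paper's. The paper's argument proceeds by a monotone coupling: it dominates $Z$ started at $1$ by a copy $Z'$ started at $\lfloor N/2\rfloor$, so that it suffices to bound the probability that $Z'$ exits $[1,N]$ by time $T$; until that exit $Z'$ matches an unreflected walk $\tilde Z$ on $\mathbb Z$, which the paper splits additively into a rate-$(1-2\alpha/N)$ symmetric walk plus an independent positive drift jump process, and then each piece is controlled by the reflection principle together with Hoeffding and Poisson tail bounds. You instead keep the start at $1$, pass through the Skorohod map $Z_t = W_t + L_t$ with $L_t=\max\bigl(0,\,1-\min_{s\le t}W_s\bigr)$ (valid for the ``stay-put'' boundary in the lemma, and the one-sided and two-sided reflected chains coincide up to $\tau_N$, which is all that matters), reduce to the event that $W$ has range at least $N-1$, and close with the exponential martingale $e^{\lambda W_t - t\phi(\lambda)}$ and Doob's maximal inequality. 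Both methods yield the same $\Exp{-\Theta(\alpha/\varepsilon)}$ exponent; yours is somewhat more compact because the drift and diffusion are treated in one Chernoff computation, whereas the paper's decomposition stays at the level of elementary reflection-principle and Hoeffding bounds and makes the separate roles of the two pieces explicit. Two small technical points worth polishing: (i) your Taylor bound $\phi(\lambda)\le\lambda^2+(2\alpha/N)\lambda$ is off because $\sinh\lambda>\lambda$; replacing $2\alpha/N$ by $3\alpha/N$ for $\lambda\le 1$, say, repairs it and only shifts the numerical value of $\varepsilon_0$; (ii) one should say explicitly that the optimising $\lambda^{\ast}\asymp\alpha/(\varepsilon N)$ tends to $0$ for fixed $\alpha,\varepsilon$ as $N\to\infty$, which is exactly where the ``$N$ sufficiently large'' clause is used.
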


\begin{proof}
    Let $(Z^{\prime}_t)_{t \ge 0}$ denote a copy of the random walk  $(Z_t)_{t \ge 0}$ on $[N]$. Then it suffices to show that
    \begin{equation}
    \label{eq:Middle}
        \cP{\min(\tau_{1}, \tau_{N}) \le \varepsilon N^2/\alpha}{Z_0 = \lfloor N/2 \rfloor} \le \Exp{-c_0 \alpha \varepsilon^{-1}},
    \end{equation}
    since the two walkers $(Z_t)_{t \ge 0}$ and $(Z^{\prime}_t)_{t \ge 0}$ under the standard monotone coupling for simple random walks satisfy 
    \begin{equation}
    \label{eq:DominationTwoWalks}
        \mathbf{P}\big( Z_t \le Z^{\prime}_{t} \text{ for all } t \ge 0 \, \big| \, Z_0=1, Z^{\prime}_0 = \floor{N/2} \big) = 1.
    \end{equation}
    Note that until time $\min(\tau_{1}, \tau_{N})$, the walker $(Z_{t}^{\prime})_{t \ge 0}$ has the same law as a simple random walk $(\tilde{Z}_{t})_{t \ge 0}$ on the integer lattice $\mathbb{Z}$ with drift $\alpha/N$ to the right. Thus, we can write for all $t \ge 0$
    \begin{equation}
    \label{eq:RWdecomposition}
        \tilde{Z}_{t} = M_t  + S_t, 
    \end{equation}
    where $(S_t)_{t \ge 0}$ is a rate $1 - 2\alpha/N$ symmetric simple random walk on $\mathbb{Z}$, and $(M_t)_{t \ge 0}$ an independent jump process with $M_0 = 0$, which increases by $1$ at rate $2\alpha/N$. The reflection principle for simple random walks combined with standard tail bounds for simple random walks and Poisson random variables now implies
    \begin{align*}
        \P{\max_{s \in [0, \varepsilon N^2/\alpha]} \abs{S_s} \ge N/5} &\le 4 \P{S_{\varepsilon N^2/\alpha} \ge N/5}\\
        &\le 4\brac{\P{\text{Po}(\varepsilon N^2/\alpha) \ge 2 \varepsilon N^2/\alpha} + \P{\widetilde{S}_{\floor{2\varepsilon N^2/\alpha}} \ge N/5}}\\
        &\le 4 \brac{\e^{-\frac{3\varepsilon N^2}{8 \alpha}} + \e^{-\frac{\alpha}{100 \varepsilon}}},
    \end{align*}
    where $(\widetilde{S}_n)_{n \ge 0}$ is the discrete time simple random walk on the integers started at the origin. Similarly, a tail bound for Poisson random variables implies
    \begin{equation*}
        \P{M_{\varepsilon N^2/\alpha} \ge N/5} \le \P{\text{Po}(2\varepsilon N) \ge N/5} \le \Exp{-c_1 N},
    \end{equation*}
    for all $\varepsilon \in (0, \varepsilon_0)$, where $c_1, \varepsilon_0 > 0$ are some universal constants. The above estimates together imply
    \begin{equation}
    \label{eq:IntegerNoHit}
        \cP{\tilde{Z}_s \in \{1,N\} \text{ for some } s \in [0,\varepsilon N^2/\alpha]}{\tilde{Z}_s = \floor{N/2}} \le \e^{-\frac{1}{100} \alpha \varepsilon^{-1}} + \e^{-c_1 N} + \e^{-\frac{3}{8} \frac{\varepsilon N^2}{\alpha}}. 
    \end{equation}
    Finally, for all $N$ large enough, \eqref{eq:IntegerNoHit} implies \eqref{eq:Middle}, allowing us to conclude.
\end{proof}

We utilise this lemma to give a simple bound on the time to leave a given centre. 

\begin{lemma}
\label{lem:ExitCentre}
    Let $k \ge 2$, and suppose that the chain starts from a configuration $\eta \in \CC{(k-1)C}$. Let $\tau_{\CC{kC}^{\complement}}$ be the time it takes to exit $\CC{kC}$. Then there exist constants $c_1, C_1 > 0$, depending only on $d$ and $m$, such that for any fixed $C > 0$ and all $\varepsilon > 0$,
    \begin{align*}
        \max_{\eta \in \CC{(k-1)C}} \Ps{\eta}{\tau_{\CC{Ck}^{\complement}} \le \varepsilon (C/k) n} \le C_1 \Exp{-c_1 (kC) \varepsilon^{-1}},
    \end{align*}
    for all $n$ sufficiently large. 
\end{lemma}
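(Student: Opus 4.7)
The plan is to combine a union bound over the $dm$ coordinates of the configuration with a stochastic domination argument that reduces each single-coordinate bound to the random walk estimate Lemma~\ref{lem:RandomWalkBound}. Specifically, writing $T = \varepsilon C n /k$,
\begin{equation*}
    \set{\tau_{\CC{kC}^\complement} \leq T} \subseteq \bigcup_{i \in [d], j \in [m]} \set{X_t(i,j) < n - kC\sqrt{n} \text{ for some } t \leq T},
\end{equation*}
so it suffices to bound each term on the right by a $(d,m)$-dependent constant times $\exp(-c_1 kC / \varepsilon)$.

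For a fixed pair $(i, j)$, I would first compute the infinitesimal drift of the single coordinate: viewing $X_t(i, j)$ as a jump process with $\pm 1$ increments, a direct calculation from the generator of $\X$ gives
\begin{equation*}
    (p^+ - p^-)(X) = \frac{1}{mn}\bigl((U^T X(\cdot, j))(i) - X(i, j)\bigr),
\end{equation*}
which is bounded below by $-kC/(m\sqrt{n})$ uniformly on $\CC{kC}$, since every coordinate is then at least $n - kC\sqrt{n}$ and $U$ is doubly stochastic. I would stop the process at $\widetilde{\tau} \coloneqq \min\{\sigma^{(i,j)},\,\tau_{\CC{kC}^\complement}\}$, where $\sigma^{(i,j)}$ is the first time $X_t(i, j) < n - kC\sqrt{n}$; up to $\widetilde{\tau}$ the drift is controlled by the uniform bound above.

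Next, I would stochastically dominate the shifted, stopped coordinate $X_{t \wedge \widetilde\tau}(i,j) - (n - kC\sqrt{n}) + 1$ from below by a lazy biased simple random walk $Z_t$ of the form appearing in Lemma~\ref{lem:RandomWalkBound}, with $N \asymp kC\sqrt{n}$ and bias $\alpha/N$ directed away from $1$, with $\alpha \asymp kC$ chosen to accommodate the worst-case drift in $\CC{kC}$. Concretely, I would drive the two processes from a common rate-$1$ Poisson clock and, at each tick, use an auxiliary uniform variable to couple the true $\pm 1$ step of $X_t(i, j)$ with a step of $Z_t$ whose up-/down-probabilities are the worst case over configurations in $\CC{kC}$. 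Initial alignment gives $Z_0 \gtrsim C\sqrt{n}$ since $\eta \in \CC{(k-1)C}$, and the event that $X_t(i, j)$ dips below $n - kC\sqrt{n}$ before time $T$ is dominated by $\{\tau^Z_1 \leq T\}$. Rescaling time to rate $1$ and matching parameters, Lemma~\ref{lem:RandomWalkBound} (combined with a one-line Chernoff bound to absorb fluctuations in the total number of Poisson jumps in $[0, T]$) yields a bound of order $\exp(-c_0 kC/\varepsilon)$ on this event, and a union bound over $(i, j)$ closes the argument with $C_1 \asymp dm$.

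The hard part, and the step that requires the most care, is setting up the coupling so that the final constants depend only on $d$ and $m$. Two sources of trouble arise: when $1 - U(i, i)$ is small the effective jump rate of the single coordinate degenerates, so one must normalise carefully before applying Lemma~\ref{lem:RandomWalkBound} so that the ratio between bias and jump rate remains $\Theta(kC/\sqrt{n})$; and the integrated drift $\int_0^T (p^+ - p^-)(X_s)\,\mathrm{d} s$ can be as large as $\Theta(\sqrt{n})$, i.e., comparable to the very displacement $C\sqrt{n}$ one is trying to rule out. The second issue restricts the direct Lemma~\ref{lem:RandomWalkBound} argument to $\varepsilon$ smaller than a constant depending on $d, m$ (and $C$); in the remaining range of $\varepsilon$ the asserted inequality is trivial, because $C_1 \exp(-c_1 kC/\varepsilon) \geq 1$ once $\varepsilon$ exceeds a fixed threshold, provided $C_1$ is chosen appropriately large.
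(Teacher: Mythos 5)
Your proposal follows the same overall strategy as the paper: union bound over the $dm$ coordinates, a per-step bias estimate of order $kC/\sqrt{n}$ on a single coordinate, stochastic domination by a biased random walk, and an appeal to Lemma~\ref{lem:RandomWalkBound}. So structurally you are very close to the paper. But there are a few differences worth noting, one in your favor and two that need fixing.

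In your favor, you work with the one-sided events $\{X_t(i,j) < n - kC\sqrt n\}$, whereas the paper introduces two-sided exit times $\tilde\tau_{i,j}$ for the band $[n-kC\sqrt n,\,n+kC\sqrt n]$. This actually matters: for $\eta\in\CC{(k-1)C}$ the conservation constraints only give $\eta(i,j)\le n+(d-1)(k-1)C\sqrt n$, which for $d\ge 4$ (or $d=3$ with $k\ge 3$) can exceed $n+kC\sqrt n$, so $\tilde\tau_{i,j}$ can be $0$ deterministically and the claimed stochastic domination by a random walk started at site $1$ fails. Decomposing only on the down-crossings, as you do, avoids this.

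On the other hand, your random-walk parameters are off. The distance from the starting coordinate to the escape level $n-kC\sqrt n$ is only guaranteed to be $\ge C\sqrt n$ (not $kC\sqrt n$), so the interval length in Lemma~\ref{lem:RandomWalkBound} should be $N=\ceil{C\sqrt n}$, not $\asymp kC\sqrt n$, with the walk started at $1$ after reflection; the per-step bias bound then forces $\alpha\asymp kC^2/m$, not $kC$. Fortunately the final exponent $c_0\alpha/\varepsilon' = c_0 N^2/T = c_0 kC/\varepsilon$ is insensitive to $\alpha$, so this does not change the answer, but with $N\asymp kC\sqrt n$ the initial-alignment requirement of the monotone coupling fails.

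Your two ``hard-part'' observations are both genuine, and the first is handled correctly in spirit: as you note, when $1-U(i,i)$ is small the coordinate jumps rarely, but since the numerator $p^+-p^-$ also carries a factor of $(1-U(i,i))$ (because $U$ is doubly stochastic and the off-balance terms are all multiplied by off-diagonal entries), the conditional per-jump bias $\abs{p^+-p^-}/(p^++p^-)$ stays $\Theta(kC/\sqrt n)$, so one can pass to the embedded jump chain, dominate the number of jumps by a rate-$1$ Poisson, and then apply Lemma~\ref{lem:RandomWalkBound}. Your second concern is real but your proposed fix does not close it: once $\varepsilon$ exceeds the threshold $\asymp m/(dC)$ forced by $\varepsilon_0$ in Lemma~\ref{lem:RandomWalkBound}, making the inequality trivial requires $C_1\ge \exp(\Theta(kC^2/m))$, which depends on $k$ and $C$, whereas the lemma asserts $c_1,C_1$ depend only on $d,m$. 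The paper's own proof has the same unaddressed restriction, so this is a fair critique of the lemma as stated rather than a defect of your argument, but ``choose $C_1$ large'' does not resolve it uniformly in $k$.
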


\begin{proof}
    Recall that $X_t(i,j)$ denotes the number of balls of colour $j$ in urn $i$ at time $t$, and define
    \begin{equation*}
        \tilde{\tau}_{i,j} \coloneqq \inf \set{ t \ge 0 \, \colon \, X_t(i,j) \notin [n-kC\sqrt{n}, n+kC\sqrt{n}]}.
    \end{equation*}
    Note that for all $\zeta \in \CC{kC}$, for a jump at time $t$, we have
    \begin{equation}
    \label{eq:DominationBias}
        \abs{\cP{X_t(i,j) = \zeta(i,j)+1}{X_{t_-} = \zeta} - \cP{X_t(i,j) = \zeta(i, j) - 1}{X_{t_-} = \zeta}} \le \frac{2dkC}{m\sqrt{n}}. 
    \end{equation}
    Thus, when starting from a configuration $\eta \in \CC{(k-1)C}$, the time $\tau_{\CC{kC}^{\complement}} \ge \min_{i,j}(\tilde{\tau}_{i,j})$ stochastically dominates the time it takes for the minimum of $dm$ many simple random walks on $[\ceil{C\sqrt{n}}]$ with bias $2dkC/(m\sqrt{n})$ to reach site $\ceil{C\sqrt{n}}$ starting from site $1$. We conclude by Lemma~\ref{lem:RandomWalkBound} and a union bound over $i \in [d]$ and $j \in [m]$.
\end{proof}
 
We have now all tools to bound the time a random walk spends in a centre.

\begin{proof}[Proof of Proposition~\ref{pro:TimeCoM}]
    Let $K = \ceil{\Exp{1 + \theta/(\varepsilon C)}}$. Further, let $\tau^{\prime}_0 = 0$, and recursively define
    \begin{equation*}
        \tau^{\prime}_i \coloneqq \inf\left\{ t \ge \tau^{\prime}_{i-1} \, \colon \, X_t \notin \CC{(i+1)C} \right\}
    \end{equation*}
    for all $i \in \N$. Then by Lemma~\ref{lem:ExitCentre}, for $n$ large enough, we obtain that
    \begin{equation*}
        \P{\tau^{\prime}_{i} - \tau^{\prime}_{i-1} \ge \frac{\varepsilon C n}{i+1}} \ge 1 - C_1 \Exp{-c_1 ((i+1)C)\varepsilon^{-1}}
    \end{equation*}
    for all $i \in [K]$, and some constants $c_1, C_1 > 0$. A union bound over $i \in [K]$ yields that
    \begin{equation*}
        \P{\tau^{\prime}_{K} \ge n} \ge 1 - C_1 \frac{\Exp{-2 c_1 C\varepsilon^{-1}}}{1 - \Exp{-c_1 C\varepsilon^{-1}}} \ge 1 - \varepsilon, 
    \end{equation*}
    where the last inequality holds for all $C \ge C_0(\varepsilon)$. Now we may assume $C \ge C_0(\varepsilon)$ since $\eta \in \cC$ implies $\eta \in \CC{\widetilde{C}}$ for all $\widetilde{C} \ge C$. Choosing now $C^\prime = KC$, we obtain the desired result.
\end{proof}

Next, we will show that starting from a state in $\cC$, the generalised Bernoulli--Laplace chain $\X$ spends a significant amount of time in a larger centre $\CC{C^\prime}$. To this end, let
\begin{equation*}
    S_{t_1, t_2; C^\prime} = \int_{t_1}^{t_2} \mathbf{1}_{\set{X_s \in \CC{C^\prime}}} \diff s
\end{equation*}
be the amount of time the chain spends in $\CC{C^\prime}$ between times $t_1$ and $t_2$.

\begin{lemma}
\label{lem:InducedTime}
    For all $\varepsilon > 0$ and $C > 0$, there exists some $C^\prime > 0$ such that for every $\theta > 0$, there exists some $\theta^\prime > 0$ such that
    \begin{equation*}
      \min_{\x \in \cC} \cP{S_{0, \theta^{\prime} n / \Chn; C^\prime} \ge \theta n / \Chn}{X_0=\x} \ge 1 - 4\varepsilon
    \end{equation*}
    for all $n$ sufficiently large.
\end{lemma}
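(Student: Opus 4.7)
My plan is to couple the chain $\X$ starting from $\x \in \cC$ with a stationary copy $\Xsb$ via the labeled-ball / common-permutation construction that already appears within the proof of Lemma~\ref{lem:TimeInMacro}. That coupling makes $\normo{X_t - \Xs_t}$ non-increasing in $t$ almost surely, which, combined with the concentration of $\pi$ on centres from Lemma~\ref{lem:CentreMacroOrder}, will furnish a \emph{time-uniform} bound $\Ps{\x}{X_s \notin \CC{C^\prime}} \le 2\varepsilon$. A first-moment / Markov step then converts this pointwise estimate into the stated deviation bound. The key payoff of this route is that the final estimate does not depend on $\Chn$, so it applies uniformly in the regimes relevant for Theorem~\ref{thm:Main}.

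More precisely, I would first use Lemma~\ref{lem:CentreMacroOrder} to pick a constant $C^\prime_1 = C^\prime_1(\varepsilon, d, m)$ with $\pi(\CC{C^\prime_1}^{\complement}) \le \varepsilon$, and set $C^\prime \coloneqq C^\prime_1(1+dm) + dmC$. Next, I would invoke the coupling from (the proof of) Lemma~\ref{lem:TimeInMacro} between $\X$ with $X_0 = \x$ and $\Xsb$ with $\Xs_0 \sim \pi$, producing $\normo{X_t - \Xs_t} \le \normo{X_0 - \Xs_0}$ for every $t \ge 0$ almost surely. On the event $\{\Xs_0 \in \CC{C^\prime_1}\}$, which has probability $\ge 1 - \varepsilon$, the triangle inequality together with $\x \in \cC$ gives $\normo{X_0 - \Xs_0} \le dm(C+C^\prime_1)\sqrt{n}$, and the non-increase property propagates this bound to every $t$. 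For any fixed $s$, on the intersection $\{\Xs_0 \in \CC{C^\prime_1}\} \cap \{\Xs_s \in \CC{C^\prime_1}\}$ a further triangle inequality forces $X_s \in \CC{C^\prime}$; since $\Xsb$ is stationary, each of the two events has probability at least $1 - \varepsilon$ by Lemma~\ref{lem:CentreMacroOrder}, and a union bound yields $\Ps{\x}{X_s \notin \CC{C^\prime}} \le 2\varepsilon$ uniformly in $s \ge 0$.

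Setting $T \coloneqq \theta^\prime n / \Chn$ with $\theta^\prime \coloneqq 2\theta$ and applying Fubini gives $\Es{\x}{T - S_{0, T; C^\prime}} \le 2\varepsilon T$. Since $T/2 = \theta n / \Chn$, Markov's inequality yields
\begin{equation*}
    \Ps{\x}{S_{0, T; C^\prime} < \theta n / \Chn} = \Ps{\x}{T - S_{0, T; C^\prime} > T/2} \le \frac{2\varepsilon T}{T/2} = 4\varepsilon,
\end{equation*}
which is the desired estimate.

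The main obstacle I anticipate is a bookkeeping one rather than a conceptual one: carefully verifying that the coupling embedded inside the proof of Lemma~\ref{lem:TimeInMacro} indeed preserves the $\ell^1$ non-increase property for \emph{all} $t \ge 0$ and for arbitrary transition measures $\mu_n$. This should follow from the labeled-ball construction, since, in every urn, selecting a shared matched ball with probability proportional to the number of matched balls (and independent unmatched picks otherwise) keeps the marginals uniform while guaranteeing that matched pairs are swapped to a common target urn under the common permutation $\sigma$, so no matched pair can be broken. In particular, the argument is not confined to a bounded time window, which is what allows the stationary-coupling comparison above to be run over the full interval $[0, \theta^\prime n/\Chn]$, even when $\Chn \to 0$.
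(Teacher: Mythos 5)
Your proof is correct and takes essentially the same approach as the paper: the key step, the time-uniform bound $\Ps{\x}{X_s \notin \CC{C^\prime}} \le 2\varepsilon$ obtained via the non-increasing $\ell^1$-coupling to a stationary copy (the coupling from Lemma~\ref{lem:TimeInMacro}), is exactly the paper's argument, and the worry in your final paragraph is already settled since the paper explicitly asserts that this coupling is $\ell^1$-monotone for all $t \ge 0$. The only variation is cosmetic: you conclude via Markov's inequality applied to $T - S_{0,T;C^\prime}$ with $\theta^\prime = 2\theta$, whereas the paper takes $\theta^\prime = \theta/(\varepsilon(1-\varepsilon))$ and invokes the Paley--Zygmund inequality using the trivial second-moment bound $\Es{\x}{S^2} \le t^2$; both routes yield the same $1 - 4\varepsilon$ conclusion, yours being the marginally more elementary one.
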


\begin{proof}
    By Lemma~\ref{lem:CentreMacroOrder}, we can choose $C^\prime \ge 8dmC$ sufficiently large so that
    \begin{equation*}
        \pi(\CC{C^\prime/(8dm)}) \ge 1 - \varepsilon/2
    \end{equation*}
    for all $n$ large enough. Let $\x \in \cC$. We will use the coupling $\mathbf{P}$ between $\X$ and $\Xsb$ from the proof of Lemma~\ref{lem:TimeInMacro} to show that
    \begin{equation}
    \label{eqn:probCC}
        \Ps{\x}{X_s \in \CC{C^\prime}} \ge 1 - \varepsilon
    \end{equation}
    for all $s \ge 0$. Let $\mathbf{v} \in \mathbb{Z}_{\ge 0}^{d \times m}$ be the vector with all entries equal to $n$. Under the aforementioned coupling $\mathbf{P}$, we have
    \begin{align*}
        \Ps{\x}{X_s \not\in \CC{C^\prime}} &\le \Ps{\x}{\normo{X_s - \mathbf{v}} > C^\prime \sqrt{n}}\\
        &\le \Ps{\x}{\normo{X_s - \Xs_s} > (C^\prime/2) \sqrt{n}} + \P{\normo{\Xs_s - \mathbf{v}} > (C^\prime/2) \sqrt{n}}\\
        &\le \Ps{\x}{\normo{X_0 - \Xs_0} > (C^\prime/2) \sqrt{n}} + \P{\normo{\Xs_0 - \mathbf{v}} > (C^\prime/2) \sqrt{n}}\\
        &\le \P{\normo{\Xs_0 - \mathbf{v}} > (C^\prime/4) \sqrt{n}} + \P{\normo{\Xs_0 - \mathbf{v}} > (C^\prime/2) \sqrt{n}}\\
        &\le 2 \P{\Xs_0 \not \in \CC{C^\prime/(8dm)}}\\
        &\le 2 (1 - \pi(\CC{C^\prime/(8dm)})\\
        &\le \varepsilon.
    \end{align*}
    The second inequality above uses the triangle inequality, the third inequality uses the monotonicity of $\normo{X_s - \Xs_s}$ under the coupling $\mathbf{P}$, and the fourth inequality uses the fact that $X_0 = \x \in \cC$ implies $\normo{X_0 - \mathbf{v}} \le 2dmC \sqrt{n} \le (C^\prime/4) \sqrt{n}$. Therefore, we obtain \eqref{eqn:probCC} for $n$ large enough. Choosing $\theta^\prime = \theta/(\varepsilon(1-\varepsilon))$, we see that
    \begin{equation*}
       (1-\varepsilon)t \le  \Es{\x}{S_{0, t; C^\prime}} \le t. 
    \end{equation*}
    Since $\Es{\x}{S_{0, t; C^\prime})^2} \le t^2$, we get by the Paley-Zygmund inequality,
    \begin{equation*}
        \Ps{\x}{S_{0, t; C^\prime} \ge \theta n / \Chn} \ge \Ps{\x}{S_{0, t; C^\prime} \ge \varepsilon \Es{\x}{S_{0, t; C^\prime}}} \ge (1-\varepsilon)^4 \ge 1 - 4\varepsilon. \qedhere
    \end{equation*}
\end{proof}

\section{Concluding arguments}
\label{sec:conclusion}

In this following, we combine the results of the preceding sections to prove Theorem~\ref{thm:Main}. We will first show that when the Cheeger constant $\Ch = \Chn > 0$ is uniformly bounded, the generalised Bernoulli--Laplace chain exhibits cutoff at time of order $n \log n$ with a window of order at most $n \log \log n$. Our proof relies crucially on the bounds on the infinity mixing time of the induced chain and on the relaxation time of the modified chain. We then show cutoff under reversibility when the Cheeger constant satisfies $\Chn \ge n^{a-1}$ for some $a > 0$, in Section~\ref{sec:ProofMainCheeger}, relying crucially on the hit-mix characterisation for reversible Markov chains in~\cite{BHP}. 

\subsection{Uniformly bounded Cheeger constant} \label{sec:ProofMainCutoff}

In this part, we assume that $\Ch = \Chn > 0$ uniformly in $n$, but allow the underlying chains to be nonreversible. Recall from Lemma~\ref{lem:nonrevup}, that for every $\varepsilon > 0$, there exist some $C > 0$ and $\theta > 0$ such that $\pi(\cC) \ge 1 - \varepsilon$ and
\begin{equation}
\label{eq:HitCOM}
    \liminf_{n \rightarrow \infty}\min_{\x \in \Omega_n}\Ps{\x}{ X_{mn(\tmsb + \theta)} \in \cC} \ge 1 - \varepsilon,
\end{equation}
\begin{equation}
\label{eq:HitCOMLower}
    \limsup_{n \rightarrow \infty} \min_{\x \in \Omega_n} \Ps{\x}{X_{mn(\tmsb - \theta)} \in \cC} \le \varepsilon. 
\end{equation}
Using the estimate on $\pi(\cC)$ in \eqref{eq:HitCOMLower}, we conclude that
\begin{equation*}
    \max_{\x \in \Omega_n} \normTV{\Ps{\x}{X_{\frac{mn}{2\gamma_n}(\log n - \theta \log \log n)} = \cdot} - \pi} \ge (1 - \varepsilon) - 2\varepsilon = 1 - 3\varepsilon,
\end{equation*}
for $n$ large enough, which gives the desired lower bound on the mixing time in Theorem~\ref{thm:Main} for $\Ch > 0$ uniformly in $n$. Thus it remains to show that a corresponding upper bound holds. 

\begin{proof}[Proof of the upper bound in Theorem~\ref{thm:Main} for $\Ch \gtrsim 1$]
    In view of \eqref{eq:HitCOM}, it suffices to show that for every $\varepsilon > 0$ and $C > 0$, there exist some constant $\theta_0 > 0$ such that for all $t \ge \theta_0 n$, we have
    \begin{equation}
    \label{eq:TVEstimate}
        \max_{\x \in \cC} \normTV{\Ps{\x}{X_{t} = \cdot} - \pi} \le \varepsilon
    \end{equation}
    for $n$ large enough. To this end, let $\delta \in (0, 1/4)$, and set
    \begin{equation}
    \label{def:t1}
        t_1 \coloneqq n / \Ch.  
    \end{equation}
    For every $C^{\prime} > 0$, we define
    \begin{equation*}
        A_n(C^{\prime}) = \set{\tau_{\CC{C^{\prime}}^{\complement}} > t_1}
    \end{equation*}
    as the event that the chain did not leave the centre $\CC{C^{\prime}}$ until time $t_1$. Note that by Proposition~\ref{pro:TimeCoM}, for every $C > 0$ and $\delta > 0$, recalling that $\Ch > 0$ is bounded uniformly in $n$ by our assumptions, there exists some $C^{\prime} > 0$ such that
    \begin{equation}
    \label{eq:DeltaPrimeBound}
        \delta^{\prime} \coloneqq \limsup_{n \rightarrow \infty} \max_{\x \in \CC{C}} \Ps{\x}{A_n(C^{\prime})} \le \delta. 
    \end{equation}
    For $C$ and $\delta$ as above, we may assume $C^\prime > C$ and $\pi(\CC{C^\prime}) \ge 1/2$ since $\delta^\prime$ decreases monotonically as $C^\prime$ is increased and $\lim_{C^\prime \to \infty} \pi(\CC{C^\prime}) = 1$ by Lemma~\ref{lem:CentreMacroOrder}. For $\x\in \cC$, we define the probability measures
    \begin{align*}
        \nu^{\x}_1 &\coloneqq \Ps{\x}{X_{t_1} \in \cdot \,\vert\, A_n},\\
        \nu^{\x}_2 &\coloneqq \Ps{\x}{X_{t_1} \in \cdot \,\vert\, A_n^{\complement}}. 
    \end{align*}
    Let $\PXi$ denote the transition matrix of the generalised Bernoulli--Laplace chain induced on the centre $\CC{C^\prime}$. We will write $\PXi_t$ for the law at time $t$ according to $\PXi$. Note that
    \begin{equation*}
        \Ps{\x}{\set{X_{t_1} = \y} \cap A_n} \le \PXi_{t_1}(\x,\y).
    \end{equation*}
    for all $\y\in \CC{C^\prime}$. Therefore, together with \eqref{eq:DeltaPrimeBound}, we get that
    \begin{equation}
    \label{eq:deltaDeltaPrime}
        \nu^{\x}_1(\y) \le \frac{\PXi_{t_1}(\x,\y)}{1-2\delta},
    \end{equation}
    for $n$ large enough. Recall Lemma~\ref{lem:InfiniteMixingBound}, which states a bound of order $n/\Ch$ on the $\ell^{\infty}$-mixing time $t^{\infty,\,\textup{ind}}_{\mix}$ of the generalised Bernoulli--Laplace chain, induced on the centre $\CC{C^{\prime}}$. More precisely, we see that for all $n$ large enough,
    \begin{equation}
    \label{eq:InfinityBound1}
        \frac{\nu^{\x}_1(\y)}{\pi(\y)} \le \frac{\PXi_{t_1}(\x,\y)}{(1-2\delta) \pi(\y)} \le c
    \end{equation}
    for some constant $c > 0$, using the second part of Lemma~\ref{lem:InfiniteMixingBound}, and the estimates $1 - 2\delta \ge 1/2$ and $\piI(\y) = \pi(\y) / \pi(\CC{C^\prime}) \le 2\pi(\y)$. 
    
    Next, we recall the modified Markov chain $\Xmb$, which with high probability in each step makes the same transition as $\X$ except when $\X$ tries to exit $\Mac$, in which case $\Xmb$ jumps to stationarity. See Section~\ref{sec:relaxation} for a formal construction. Recall from Lemma~\ref{lem:StationaryModified} that the stationary distribution of $\Xmb$ agrees with the stationary distribution of $\X$ restricted to $\Mac$. Let
    \begin{equation}
    \label{def:t2}
        t_2 \coloneqq \theta_1 \frac{n}{\Ch}
    \end{equation}
    for some constant $\theta_1 > 0$, which we will determine later on. Using the law of total probability, we get that for all $\x \in \cC$,
    \begin{equation*}
        \normTV{\Ps{\x}{X_{t_1+t_2} \in \cdot} - \pi}  \le \P{A_n^\complement} + \P{A_n} \normTV{\Ps{\nu_1^{\x}}{X_{t_2} \in \cdot} - \pi} \le  2\delta + \normTV{\Ps{\nu_1^{\x}}{X_{t_2} \in \cdot} - \pi},
    \end{equation*}
    for $n$ large enough. Using triangle inequality in the above, we see that
    \begin{equation}
    \label{eq:FirstSplit}
    \begin{split}
        \normTV{\Ps{\x}{X_{t_1+t_2} \in \cdot} - \pi} 
        \le 2\delta &+ \normTV{\Ps{\nu_1^{\x}}{X_{t_2} \in \cdot} - \Ps{\nu_1^{\x}}{\Xm_{t_2} \in \cdot}}\\
        &+ \normTV{ \Ps{\nu_1^{\x}}{\Xm_{t_2} \in \cdot} - \pi}. 
    \end{split}
    \end{equation}
    Now noting that $\CC{C^\prime} \subseteq \MC{\delta/(4d^2m^2)}$ and $t_2 \le n^5$, it follows from Lemma~\ref{lem:TimeInMacro} that
    \begin{equation}
    \label{eq:MacroBound}
        \normTV{\Ps{\nu_1^{\x}}{X_{t_2} \in \cdot} - \Ps{\nu_1^{\x}}{\Xm_{t_2} \in \cdot}} \le \delta
    \end{equation}
    for all $n$ sufficiently large. Furthermore, using the triangle inequality, we can again split
    \begin{equation*}
        \normTV{\Ps{\nu_1^{\x}}{\Xm_{t_2} \in \cdot} - \pi} \le \normTV{\Ps{\nu_1^{\x}}{\Xm_{t_2} \in \cdot} - \piM} + \normTV{\pi - \piM}. 
    \end{equation*}
    and note that due to Lemma~\ref{lem:CentreMacroOrder},
    \begin{equation}
    \label{eq:MacroComparison}
        \lim_{n \rightarrow \infty} \normTV{\pi_n - \piM} = 0. 
    \end{equation}
    Recall that $\tr^+$ denotes the relaxation time of the additive reversibilisation of $\Xmb$. Now choosing the constant $\theta_1 > 0$ in \eqref{def:t2} sufficiently large, we get that
    \begin{equation}
    \label{eq:SecondDelta}
        \normTV{\Ps{\nu_1^{\x}}{\Xm_{t_2} \in \cdot} - \piM} \le \norm{\nu_{1}^{\x}  - \piM}_{\infty, \piM} \Exp{-t_2 / \tr^+} \le \delta 
    \end{equation}
    for all $n$ large enough. The first inequality above is similar to item (c) from Proposition~\ref{pro:CheegerPoincareBounds} and readily follows as a consequence of its proof combined with the fact that for any distribution $\rho$ on $\Mac$, we have
    \begin{equation*}
        \normTV{\rho - \piM} = \frac{1}{2} \norm{\rho - \piM}_{1,\piM} \le \norm{\rho - \piM}_{2,\piM} \le \norm{\rho - \piM}_{\infty,\piM}.
    \end{equation*}
    The second inequality in \eqref{eq:SecondDelta} follows from the bound \eqref{eq:InfinityBound1} together with the bound on $\tr^+$ given by Proposition~\ref{pro:MacroRelax}. Choosing $\delta = \varepsilon/5$, we combine \eqref{eq:FirstSplit}, \eqref{eq:MacroBound}, \eqref{eq:MacroComparison} and \eqref{eq:SecondDelta} to obtain \eqref{eq:TVEstimate} for all $t \ge t_1 + t_2 = (1+\theta_1) n/\Ch$ with $n$ large enough. Setting $\theta_0 = 1 + \theta_1$ finishes the proof.
\end{proof}

\subsection{General Cheeger constant}
\label{sec:ProofMainCheeger}

In this section, we prove our main result when $\Chn \ge n^{a-1}$ for some $a > 0$, and the underlying distribution $\mu_n$ is symmetric for all $n \in \N$. Similar to \eqref{eq:HitCOM} and \eqref{eq:HitCOMLower} for a uniformly bounded Cheeger constant, we obtain from Lemma~\ref{lem:HittingTimesGeneralCheeger} and Lemma~\ref{lem:sbmix} the lower bound on the mixing time of $\X$ in Theorem~\ref{thm:Main}. Thus, it remains to show a corresponding upper bound on the mixing time.

Our overall strategy to establish the required upper bound on the mixing time of $\X$ is to show that it only takes order $n/\Chn$ steps to mix starting from a state in $\cC$. We already know a bound of this order on the mixing time of the induced chain on $\CC{C^\prime}$ from Lemma~\ref{lem:InfiniteMixingBound} as well as that the chain spends a significant amount of time in $\CC{C^\prime}$ from Lemma~\ref{lem:InducedTime}. Informally, this already gives us ``mixing at a random time.'' To convert this to mixing at a deterministic time, we will use the hit-mix characterisation for reversible Markov chains from \cite{BHP}.

For a Markov chain on a state space $\Omega$, which performs jumps at rate $1$ according to a transition matrix $P$ and stationary distribution $\nu$, we define its $\varepsilon$-hitting time of $\alpha$-large sets starting from some $\x\in \Omega$ as
\begin{equation*}
    \textup{hit}_{\alpha,x}(\varepsilon) \coloneqq \inf \set{t \ge 0 \colon \max_{A \subseteq \Omega \colon \nu(A) \ge \alpha} \Ps{\x}{\tau_A \ge t} \le \varepsilon} 
\end{equation*}
for all $\varepsilon, \alpha \in (0, 1)$. We recall the following result on the relation between hitting and mixing times for reversible continuous-time Markov chains from \cite{BHP}; see also \cite{Her2} for general starting states $\x \in \Omega$. 

\begin{lemma}[Corollary 3.1 and Remark 3.2 in \cite{BHP}, and Proposition 1.9 in \cite{Her2}]
\label{lem:HitMixReversible}
    Let $\delta, \varepsilon \in (0, 1)$ and let $(M_t)_{t \ge 0}$ be an irreducible reversible Markov chain with mixing time $\tm^{(x)}$ and $\delta$-hitting time $\textup{hit}_{1-\delta,x}$ when starting from a state $x\in \Omega$, and spectral gap $\gamma$. Then for all $\delta \le \min\set{1-\varepsilon, \varepsilon}$,  
    \begin{equation}
        \textup{hit}_{1-\delta,x}(\varepsilon+\delta) \le \tm^{(x)}(\varepsilon) \le \textup{hit}_{1-\delta,x}(\varepsilon-\delta) + \frac{3}{2\gamma} |\log \delta|.
    \end{equation}
\end{lemma}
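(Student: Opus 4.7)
I treat the two inequalities separately, since the lower bound is elementary while the upper bound is the content of the hit-mix characterisation.

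For the lower bound, let $t = \tm^{(x)}(\varepsilon)$ and fix any $A \subseteq \Omega$ with $\pi(A) \ge 1 - \delta$. By definition of total variation distance,
\begin{equation*}
    \Ps{x}{M_t \in A} \ge \pi(A) - \normTV{\Ps{x}{M_t \in \cdot} - \pi} \ge 1 - \delta - \varepsilon,
\end{equation*}
and since $\set{M_t \in A} \subseteq \set{\tau_A \le t}$ we obtain $\Ps{x}{\tau_A > t} \le \delta + \varepsilon$; taking the supremum over admissible $A$ gives $\textup{hit}_{1-\delta, x}(\varepsilon + \delta) \le t$, which is the first inequality.

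For the upper bound, set $t_1 \coloneqq \textup{hit}_{1-\delta, x}(\varepsilon - \delta)$, $s \coloneqq \frac{3}{2\gamma}\abs{\log \delta}$, and $t \coloneqq t_1 + s$. The plan is to produce a \emph{heat-kernel regular} set $A \subseteq \Omega$ with $\pi(A) \ge 1 - \delta$ and
\begin{equation*}
    \sup_{z \in A} \normTV{\Ps{z}{M_s \in \cdot} - \pi} \le \delta,
\end{equation*}
and then conclude by the strong Markov property applied at $\tau_A$. Indeed, on $\set{\tau_A \le t_1}$, the conditional law of $M_t$ given $(\tau_A, M_{\tau_A})$ is the heat kernel started at $M_{\tau_A} \in A$ run for time $t - \tau_A \ge s$; since total variation distance from $\pi$ is non-increasing under the chain, this law is within $\delta$ of $\pi$. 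Writing $p \coloneqq \Ps{x}{\tau_A > t_1} \le \varepsilon - \delta$, the triangle inequality then yields
\begin{equation*}
    \normTV{\Ps{x}{M_t \in \cdot} - \pi} \le (1-p)\delta + p \le \varepsilon,
\end{equation*}
as required.

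The main obstacle is the quantitative construction of the good set $A$. Reversibility makes $P_s$ self-adjoint on $L^2(\pi)$ with operator norm $\e^{-\gamma s} = \delta^{3/2}$ on the orthogonal complement of constants, which suggests defining $A \coloneqq \set{z \in \Omega \colon \normTV{\Ps{z}{M_s \in \cdot} - \pi} \le \delta}$ and bounding $\pi(A^{\complement})$ via Cauchy--Schwarz followed by Chebyshev's inequality. The naive implementation of this strategy reduces the question to controlling the trace $\sum_{k \ge 2} \e^{-2\lambda_k s}$, which estimated crudely by $\abs{\Omega} \e^{-2\gamma s}$ introduces an unwanted $\abs{\Omega}$ factor. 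Avoiding this factor, and recovering the precise prefactor $3/2$, requires either truncating the spectral decomposition at a threshold calibrated to $\delta$, or invoking the refined variational identity from Basu--Hermon--Peres (and its extension in Hermon's subsequent work) which relates the hitting-time functional $\textup{hit}_{1-\delta, x}$ directly to the worst-case total variation distance over sets of stationary mass $1-\delta$; it is at this step that the full strength of reversibility is used.
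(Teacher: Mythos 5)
The paper does not prove Lemma~\ref{lem:HitMixReversible}; it imports the statement directly from Basu--Hermon--Peres~\cite{BHP} (Corollary~3.1 and Remark~3.2) and Hermon~\cite{Her2} (Proposition~1.9). There is therefore no internal proof to compare against, and your attempt must be judged as a standalone reconstruction of the cited result.

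Your argument for the lower bound is complete and correct: it is the standard observation that at the $\varepsilon$-mixing time the chain is in any set of stationary mass at least $1-\delta$ with probability at least $1-\delta-\varepsilon$, and the event $\{M_t \in A\}$ is contained in $\{\tau_A \le t\}$. Your reduction for the upper bound (strong Markov at $\tau_A$, monotonicity of total variation from stationarity, and the elementary inequality $\normTV{\Ps{x}{M_t \in \cdot} - \pi} \le (1-p)\delta + p$) is also sound, provided one can produce a set $A$ with $\pi(A) \ge 1-\delta$ from every state of which the chain $\delta$-mixes within time $\frac{3}{2\gamma}|\log\delta|$.

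The gap is precisely that last step, and you correctly diagnose it but do not close it. The Cauchy--Schwarz/Chebyshev route you sketch requires controlling $\sum_{k \ge 2} \e^{-2\lambda_k s}$, and the crude bound $|\Omega|\,\e^{-2\gamma s}$ does indeed ruin the argument; you acknowledge this. But "truncating the spectral decomposition at a threshold calibrated to $\delta$" is not an argument, and "invoking the refined variational identity from Basu--Hermon--Peres" is a pointer to the result you are supposed to be reproving. The mathematical content of the cited lemma is exactly the construction of that good set --- reversibility is used not merely through the contraction rate $\e^{-\gamma s}$ on $L^2_0(\pi)$ but through a sharper structural fact about hitting distributions and return probabilities that you have not reconstructed. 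As it stands, the upper bound in your proposal is a reduction plus a citation, not a proof.
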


Let us emphasise that the hit-mix relation from Lemma \ref{lem:HitMixReversible} requires reversibility of the underlying chain. Note that Lemma~\ref{lem:HitMixReversible} also requires an upper bound on the relaxation time of the chain being considered. Since we do not have a good estimate for the relaxation time of $\X$ but we do have it for $\Xmb$, we shall work with the latter. Let $\tm^{\textup{mod},\,\x}(\varepsilon)$ denote the mixing time of $\Xmb$ when starting from some $\x \in \Mac$. Assume in the following that $\X$, and hence also $\Xmb$, is reversible. We have the following upper bound on the mixing time of the modified chain.

\begin{lemma}
\label{lem:GeneralCheegerMixing}
    For all $\varepsilon > 0$ and $C > 0$, there exists a constant $\theta_0 > 0$ such that the mixing time of the modified chain satisfies
    \begin{equation*}
     \max_{\x\in \cC}\tm^{\textup{mod},\, \x}(\varepsilon) \le \theta_0 \frac{n}{\Chn}
    \end{equation*}
    for all $n$ large enough. 
\end{lemma}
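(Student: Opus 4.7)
The plan is to apply the hit-mix characterisation for reversible chains (Lemma~\ref{lem:HitMixReversible}) to the modified chain $\Xmb$, which is reversible since $\X$ is. Together with the bound $\tr^{\textup{mod}} = O(n/\Chn)$ from Corollary~\ref{cor:RelaxationTimeModified}, it suffices to show that for every $\delta > 0$ there exists $\theta = \theta(\delta, C) > 0$ such that for every $A \subseteq \Mac$ with $\piM(A) \ge 1-\delta$,
\begin{equation*}
    \max_{\x \in \cC} \Ps{\x}{\tau_A^{\Xmb} > \theta n/\Chn} \le \delta.
\end{equation*}
By Lemma~\ref{lem:TimeInMacro}, the chains $\Xmb$ and $\X$ coincide for time $n^5 \gg n/\Chn$ with probability $1-\Exp{-cn}$ (using the hypothesis $\Chn \ge n^{a-1}$), so this further reduces to the analogous bound on $\tau_A^X$.

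For the hitting bound on $\X$, I would first pass to a sufficiently large centre $\CC{C'}$ with $\pi(\CC{C'}) \ge 1 - \delta/10$, obtained via Lemma~\ref{lem:CentreMacroOrder}, and the restricted target $A' \coloneqq A \cap \CC{C'}$, which satisfies $\pi_{\CC{C'}}(A') \ge 1 - 2\delta$ for $n$ large. Now consider the continuous-time rate-$1$ chain $\Xib$ induced by $\X$ on $\CC{C'}$; since $\X$ is reversible, so is $\Xib$. Lemma~\ref{lem:InfiniteMixingBound} provides an $\ell^\infty$-mixing bound of order $n/\Chn$ for $\Xib$, which upgrades to a uniform lower bound $\Ps{\y}{\Xib_{T_0} \in A'} \ge c_0 > 0$ for some $T_0 = O(n/\Chn)$ and every starting state $\y \in \CC{C'}$. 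Iterating via the Markov property at multiples of $T_0$ yields
\begin{equation*}
    \Ps{\x}{\tau_{A'}^{\textup{ind}} > k T_0} \le (1-c_0)^k,
\end{equation*}
which is at most $\delta/2$ once $k = O(\log(1/\delta))$.

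The crucial remaining step is to convert this induced-chain hitting bound into a real-time estimate for $\X$. The key observation is the time-change identity: setting $\Gamma_u \coloneqq \inf\set{t \ge 0 \colon \int_0^t \mathbf{1}_{\set{X_s \in \CC{C'}}} \diff s \ge u}$, the process $(X_{\Gamma_u})_{u \ge 0}$ starting from $\x \in \CC{C'}$ has the same law as $\Xib$ as a continuous-time chain, so $\tau_{A'}^X = \Gamma_{\tau_{A'}^{\textup{ind}}}$. Lemma~\ref{lem:InducedTime} then ensures, for any given $\theta_0 > 0$, that $\Gamma_{\theta_0 n/\Chn} \le \theta_1 n/\Chn$ with probability at least $1-\delta/2$ for a constant $\theta_1 = \theta_1(\theta_0)$. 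Choosing $\theta_0$ so that $\theta_0 n/\Chn \ge k T_0$ and combining via a union bound gives the desired hitting estimate for $\X$, and hence for $\Xmb$ through the coupling of Lemma~\ref{lem:TimeInMacro}. Substituting into Lemma~\ref{lem:HitMixReversible} (with its parameter $\delta$ chosen as $\varepsilon/4$, say) then yields $\tm^{\textup{mod}, \x}(\varepsilon) = O(n/\Chn)$ uniformly in $\x \in \cC$.

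The main technical obstacle I foresee is justifying the continuous-time time-change identity between $\X$ and the induced chain $\Xib$. The paper defines $\Xib$ via its transition matrix $P^{\textup{ind}}$ at return times and treats it as a rate-$1$ continuous-time Markov chain; one must verify carefully that $\X$ viewed under $\Gamma$ is precisely this CTMC, using the memoryless property of the Poisson jump clock of $\X$ while inside $\CC{C'}$, together with the strong Markov property at each excursion away from $\CC{C'}$. Once this correspondence is in place, the iteration of the $\ell^\infty$-mixing bound, Lemma~\ref{lem:InducedTime}, and the coupling from Lemma~\ref{lem:TimeInMacro} combine in a standard manner to give the claimed bound on the mixing time.
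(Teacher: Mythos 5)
Your proposal is correct and follows the same overall architecture as the paper's proof: reduce via the hit--mix Lemma~\ref{lem:HitMixReversible} and the relaxation-time bound on the modified chain to a hitting estimate for $\Xmb$, transfer to $\X$ via the coupling of Lemma~\ref{lem:TimeInMacro}, and then combine the $\ell^\infty$-mixing bound for the induced chain (Lemma~\ref{lem:InfiniteMixingBound}) with the time-in-centre estimate (Lemma~\ref{lem:InducedTime}).

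The one genuine difference is in how the induced-chain hitting time is controlled. The paper applies Lemma~\ref{lem:HitMixReversible} a second time, now to the induced chain $\Xib$, reading off $\textup{hit}_{1-\varepsilon/4,\x}(\varepsilon/2) \le t^{\textup{ind},\x}_{\mix}(\varepsilon/4) = O(n/\Chn)$ directly from \eqref{eq:Profile1}. You instead use the $\ell^\infty$ bound to get a uniform lower bound $\Ps{\y}{\Xib_{T_0}\in A'}\ge c_0$ and iterate via the Markov property. Both are valid; your route is slightly more elementary in that it avoids a second invocation of the hit--mix machinery (and, if you take the $\ell^\infty$ error level small enough, you need no iteration at all). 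On the time-change correspondence you flag as a potential obstacle: the paper does not discharge it explicitly either --- it is implicit in the union-bound step $\Ps{\x}{\tau^{\textup{mod}}_A > t} \le \Ps{\x}{S_{0,t;C'} < t_0} + \Ps{\x}{\tau^{\textup{ind}}_{A\cap\CC{C'}}>t_0} + \Ps{\x}{X_s \ne \Xm_s \text{ for some } s\le t}$. Your description of the time-change $\Gamma_u$ is the right way to justify this: since the real chain holds Exp$(1)$ time at any state in $\CC{C'}$, the local-time-changed process has exactly the law of the rate-$1$ CTMC with kernel $P^{\textup{ind}}$, so $\{S_{0,t;C'}\ge t_0,\ \tau^X_{A'}>t\}\subseteq\{\tau^{\textup{ind}}_{A'}>t_0\}$. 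So there is no gap; the proof is sound.
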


\begin{proof}
    Due to Lemma~\ref{lem:HitMixReversible} and   Proposition~\ref{pro:MacroRelax} for a lower bound on the spectral gap of the modified chain, it suffices to show that for every $\varepsilon > 0$ and every set $A$ with $\piM(A) \ge 1 -\varepsilon/8$, we get that there exists some $\theta_1 > 0$ such that the hitting time $\tau^{\textup{mod}}_A$ of $A$ in the modified chain satisfies 
    \begin{equation}
    \label{eqn:HittingGoal}
        \min_{\x \in \cC} \cP{\tau^{\textup{mod}}_{A} \le \theta_1 \frac{n}{\Chn}}{\Xm_0 = \x} \ge 1 - \frac{7\varepsilon}{8}
    \end{equation}
    for all $n \ge n_0$, where $n_0 \in \N$ is some constant that does not depend on $A$.
    
    In order to show \eqref{eqn:HittingGoal}, we have the following strategy. First, we notice that by Lemma~\ref{lem:InfiniteMixingBound}, the mixing time of the induced chain is of order $n/\Chn$. Hence, using Lemma~\ref{lem:HitMixReversible}, it belongs after some time of order $n/\Chn$ to sets $A$ with $\piI(A) \ge 1-\varepsilon/4$ with probability at least $1-\varepsilon/2$. Since by Lemma~\ref{lem:InducedTime}, the generalised chain, and hence also with high probability the modified chain, spends a constant fraction of time in the centre, we see that $\textup{hit}_{\varepsilon/8,\x}(\varepsilon)$ for the modified chain must be of order $n/\Chn$, noting that $\piM(A) \ge 1-\varepsilon/8$ implies $\piI(A) \ge 1- \varepsilon/4$, which in return yields the desired bound~\eqref{eqn:HittingGoal}.

    Now to make this precise, note that by Lemma~\ref{lem:InducedTime}, there exists  $C^\prime > 0$ such that for every $\theta > 0$, there exists a $\theta^\prime > 0$ satisfying
    \begin{equation}
    \label{eqn:indtime}
        \min_{\x \in \cC} \cP{S_{0, \theta^\prime n / \Chn; C^\prime} \ge \theta n / \Chn}{X_0=\x} \ge 1 - \frac{\varepsilon}{4}
    \end{equation}
    for all $n$ large enough, where we recall that $S_{t_1, t_2; C^\prime}$ denotes the amount of time spent by the chain $\X$ in $\CC{C^\prime}$ during the period $[t_1, t_2]$. Note that we may assume $\pi(\CC{C^\prime}) \ge 1/2$ since the left-hand side of \eqref{eqn:indtime} monotonically increases with $C^\prime$. Now, to show \eqref{eqn:HittingGoal}, note that for every set $A$ with $\piM(A) \ge 1 - \varepsilon/8$, the stationary distribution $\piI$ of the induced chain $\Xib$ on $\CC{C^\prime}$ satisfies 
    \begin{equation}\label{eq:IntersectionBound}
        \piI(A \cap \CC{C^\prime}) \ge 1 - \frac{\varepsilon}{4}.
    \end{equation}
    Now we claim that there exists a constant $\theta_2 > 0$ such that the hitting time $\tau^{\textup{ind}}_A$ of a set $A$ in the induced chain $\Xib$ satisfies
    \begin{equation}
    \label{eq:InductedHitting}
        \max_{\x \in \cC} \cP{\tau^{\textup{ind}}_{A \cap \cC} > \theta_2 \frac{n}{\Chn}}{\Xi_0 = \x} \le \frac{\varepsilon}{2}. 
    \end{equation}
    By Lemma~\ref{lem:HitMixReversible} for the induced chain, we have
    \begin{equation*}
        \textup{hit}_{1-\varepsilon/4,\x}(\varepsilon/4+\varepsilon/4) \le t_{\mix}^{\textup{ind},\, \x}(\varepsilon/4).
    \end{equation*}
    Using Lemma~\ref{lem:InfiniteMixingBound} in the above inequality to bound the mixing time of the induced chain, we obtain
    \begin{equation*}
        \textup{hit}_{1-\varepsilon/4,\x}(\varepsilon/2) \le \theta_2 \frac{n}{\Chn},
    \end{equation*}
    for some constant $\theta_2 > 0$. Now using the definition of $\textup{hit}_{1-\varepsilon/4,\x}(\varepsilon/2)$ and taking maximum over $\x \in \cC$ yields \eqref{eq:InductedHitting}. Let $t = \theta_1 n/\Chn$ and $t_0 = \theta_2 n/\Chn$, where we will choose $\theta_1 > 0$ later. Then we have
    \begin{equation*}
         \Ps{\x}{\tau^{\textup{mod}}_{A} > t} \le \Ps{\x}{S_{0, t; C^\prime} < t_0} + \Ps{\x}{\tau^{\textup{ind}}_{A \cap \CC{C^\prime}} > t_0} + \Ps{\x}{X_s \ne \Xm_s \text{ for some } s \in [0,t]}.
    \end{equation*}
    Choosing $\theta_1$ to be sufficiently large, the first term above is bounded above by $\varepsilon/4$ by \eqref{eqn:indtime}, and the third term is bounded above by $\varepsilon/8$ for $n$ sufficiently large by Lemma~\ref{lem:TimeInMacro}. Finally, using the upper bound \eqref{eq:InductedHitting} to bound the second term, we obtain the desired upper bound in \eqref{eqn:HittingGoal}.
\end{proof}

As our final step, we convert now the mixing time bound from Lemma~\ref{lem:GeneralCheegerMixing} for the modified chain to the generalised Bernoulli--Laplace chain. 

\begin{proof}[Proof of Theorem~\ref{thm:Main} for symmetric $\mu$]
    Let $\varepsilon > 0$ and let
    \begin{equation*}
        t_0 = \frac{mn}{2 \gamma_n} \log n. 
    \end{equation*}
    Then by Lemma~\ref{lem:HittingTimesGeneralCheeger}, we find constants $C > 0$ and $\theta_0 > 0$ such that
    \begin{equation}
    \label{eq:Ending1}
        \max_{\x \in \Omega_n} \cP{ X_{t_0 + \theta_0 n/\Chn} \not \in \cC }{X_0 = \x} \le \frac{\varepsilon}{4} 
    \end{equation}
    for all $n$ large enough. Furthermore, by Lemma~\ref{lem:GeneralCheegerMixing}, there exists a constant $\theta_1 > 0$ such that
    \begin{equation}
    \label{eq:Ending2}
        \max_{\x\in \cC} \normTV{\cP{\Xm_{\theta_1 n /\Chn} \in \cdot}{\Xm_0 = \x} - \pi_{\textup{mod}}} \le \frac{\varepsilon}{4} 
    \end{equation}
    for $n$ sufficiently large. Here, recall that $\pi_{\textup{mod}}$ denotes the stationary distribution of the chain $\Xmb$. Moreover, Lemma~\ref{lem:TimeInMacro} implies that there exists a coupling $\mathbf{P}$ between $\X$ and $\Xmb$ such that
    \begin{equation}
    \label{eq:Ending3}
       \max_{\x \in \cC} \mathbf{P}\Big( X_t = \Xm_t \text{ for all } t \in \big[0, \theta_1 n / \Chn \big] \, \Big| \, X_0 = \Xm_0 = \x \Big) \ge 1 - \frac{\varepsilon}{4}
    \end{equation}
    for $n$ large enough. Setting $\theta_2 = \theta_0 + \theta_1$ and combining the bounds \eqref{eq:Ending1}, \eqref{eq:Ending2}, and \eqref{eq:Ending3}, we get
    \begin{equation*}
       \max_{\x\in \Omega_n} \normTV{\cP{X_{t_0+\theta_2 n / \Chn} \in \cdot}{X_0 = \x} - \pi_{\textup{mod}}} \le \frac{3\varepsilon}{4}. 
    \end{equation*}
    By Lemma~\ref{lem:CentreMacroOrder}, we see that for all $n$ large enough,
    \begin{equation*}
         \normTV{\pi - \pi_{\textup{mod}}} \le \frac{\varepsilon}{4}.
    \end{equation*}
    Combining the preceding two estimates, using Lemma~\ref{lem:sbmix}, and recalling that $\gamma_n$ is of the same order as $\Chn$ by item (b) in Proposition~\ref{pro:CheegerPoincareBounds} allows us to conclude.
\end{proof}

\section*{Acknowledgments}

We are very grateful to Perla Sousi for various helpful discussions. We also thank Mahla Amiri-Rad, Omer Angel, Persi Diaconis, Anubhab Ghosal, Kenneth Moore, Evita Nestoridi, and Jason Prodromidis for helpful comments and discussions. R.G. is supported by a joint Clarendon and Exeter College SKP scholarship. J.H. is supported by an NSERC grant. D.S. is partially funded by the Packard Foundation via Amol Aggarwal's Packard Fellowships for Science and Engineering and by the Simons Foundation via Ivan Corwin's Investigator Award.

\appendix

\section{Estimates for Markov chains}
\label{app:Markov}

We will now give the proof of Proposition~\ref{pro:CheegerPoincareBounds} on basic estimates for Markov chains.

\begin{proof}[Proof of Proposition~\ref{pro:CheegerPoincareBounds}]
    We prove each part separately below.
    \begin{enumerate}[label=(\alph*)]
        \item We will show that for all $A \subseteq [D]$, we have $Q_{U}(A, A^{\complement}) = Q_{U^T}(A, A^{\complement})$. Hence also
        \begin{equation*}
            Q_{U^+}(A, A^{\complement}) = \frac{1}{2} (Q_{U}(A, A^{\complement})+ Q_{U^T}(A, A^{\complement})) = Q_U(A, A^{\complement}).
        \end{equation*}
        Using this in \eqref{eqn:Cheeger} readily implies that $\Ch(U) = \Ch(U^T) = \Ch(U^+)$, as desired.
        Now for $A \subseteq [D]$, we have that
        \begin{align*}
            Q_U(A, A^{\complement}) &= \sum_{x \in A, y \in A^{\complement}} \pi(x) U(x,y) = \sum_{x \in A, y \in A^{\complement}} \pi(y) U(y,x)\\
            &= \sum_{x \in A, y \in A^{\complement}} \pi(x) U^T(x,y)= Q_{U^T}(A, A^{\complement}).
        \end{align*}
        The second equality above uses the detailed balance equations and the third equality uses the fact that $\pi$ is the uniform distribution on $[D]$.
        
        \item By part (a), it suffices to show that $\gamma^+ \asymp \Ch(U^+)$. It follows from \cite[Theorem~13.10]{LP} that
        \begin{equation*}
            \gamma^+ \le 2 \Ch(U^+).
        \end{equation*}
        On the other hand, setting
        \begin{equation*}
            \tau^{U^{+}}_{b} \coloneqq \inf\{t \ge 0 \colon X^{U^{+}}_t = b\}
        \end{equation*}
        for some continuous-time Markov chain $(X^{U^{+}}_t)_{t \ge 0}$ with transition matrix $U^{+}$, \cite[Theorem~1.1]{Her} yields
        \begin{equation*}
            \frac{1}{\gamma^+} \lesssim \thi \coloneqq \mathrm{max}_{a,b \in [d]} \Es{a}{\tau^{U}_{b}}
        \end{equation*}
        with respect to the maximal hitting time between two states of the discrete time chain with transition matrix $U^+$. We now show that $\thi \le \frac{2D(D-1)}{\Ch(U)}$ using \cite[Proposition~4.2]{AF} (with their notation $\tau^* \coloneqq \thi$ and the number of states $n=D$). Defining $w_{i,j} \coloneqq \pi(i) U^+(i,j) = \frac{U^+(i,j)}{D}$ the quantity  $w\coloneqq\sum_{i,j \in [D]}w_{i,j}$ satisfies $w = 1$. For the above choice of $(w_{i,j})$,  we set
        \begin{align*}
            c = \min_{\emptyset \ne A \subsetneq [D]} Q(A, A^{\complement}) &= \min_{A \colon A \subsetneq [D], 0 < \pi(A) \le 1/2} Q(A, A^{\complement})\\
            &\ge \frac{1}{D} \cdot \min_{A \colon A \subsetneq [D], 0 < \pi(A) \le 1/2} \frac{Q(A, A^{\complement})}{\pi(A)} = \frac{1}{D} \Ch(U^+).
        \end{align*}
        Finally, \cite[Proposition~4.2]{AF} asserts that
        \begin{equation*}
            \thi \le \frac{2(D-1)}{c} \le \frac{2D(D-1)}{\Ch(U)},
        \end{equation*}
        which further implies $\Ch(U^+) \lesssim \gamma^+$.

        \item Let $\nu_t \coloneqq \nu \e^{t(U-I)}$ and $u_t \coloneqq \e^{t(U^{T}-I)} \nu^T$. Denote $q(t) \coloneqq \normtp{\nu_t - \pi}^2 = \Vars{\pi}{u_t}$. While we care about $\normtp{\nu_t - \pi}^2$, it will be useful to work with $\Vars{\pi}{u_t}$ below.
        Using $\frac{\mathrm{d}}{\mathrm{d}t}\e^{-t(I-U^{T})} = -(I-U^{T})\e^{-t(I-U^{T})}$, we get that
        $\frac{\mathrm{d}}{\mathrm{d}t}q(t) = - 2\langle (I-U^T) u_t,u_t \rangle_{\pi}$.
        Since $u_t$ is a real function, we have
        \begin{equation*}
            \langle U u_t,u_t \rangle_{\pi} = \langle u_t,U^T u_t \rangle_{\pi}=\langle U^T u_t,u_t \rangle_{\pi}.
        \end{equation*}
        Hence, $\frac{\mathrm{d}}{\mathrm{d}t}q(t) = - 2\langle (I-U^+) u_t,u_t \rangle_{\pi}$. By the extremal characterisation of the Poincar\'e constant from \eqref{eq:PoincareConstant}, we have that
        \begin{equation*}
            \langle (I-U^+) u_t,u_t \rangle_{\pi} \ge \gamma^+ \Vars{\pi}{u_t} = \gamma^+ q(t).
        \end{equation*}
        Substituting this above yields that
        \begin{equation*}
            \frac{\mathrm{d}}{\mathrm{d}t}q(t) \le - 2 \gamma^+ q(t).
        \end{equation*}
        By Gr\"onwall's lemma, this yields that $q(t) \le q(0)\e^{-2 \gamma^+ t}$, as desired.

        \item Let $\Pi$ be the $D \times D$ matrix all of whose rows are equal to $\pi$. Consider the matrix $H \coloneqq \e^{U-I} - \Pi$. The multiset of eigenvalues of the matrix $H$ is given by
        \begin{equation*}
            \{0\} \cup \{\e^{\lambda - 1} \colon \lambda \ne 1 \text{ is an eigenvalue of } U\}.
        \end{equation*}
        Therefore, the spectral radius $\rho(H)$ of $H$ can be computed as
        \begin{align*}
            \rho(H) &\coloneqq \max \set{\abs{\lambda} \colon \lambda \text{ is an eigenvalue of } H}\\
            &= \max \set{\abs{\e^{\lambda-1}} \colon \lambda \ne 1 \text{ is an eigenvalue of } U}\\
            &= \max \set{\e^{\Re(\lambda)-1} \colon \lambda \ne 1 \text{ is an eigenvalue of } U}\\
            &= \e^{-\gamma}.
        \end{align*}
        Now, we have $H_t = \e^{t(U-I)} = H_{\floor{t}} H_{\{t\}}$. Furthermore, $H_s$ is doubly stochastic for all $s \ge 0$. Therefore, we obtain
        \begin{equation*}
            (H_t - \Pi) = (H_{\floor{t}} - \Pi)(H_{\{t\}} - \Pi).
        \end{equation*}
        Note that $\normtt{H_s - \Pi} \le \normtt{H_s} \le 1$ for all $s \ge 0$. Now taking spectral norm on both sides of the above equation, we obtain
        \begin{equation*}
            \normtt{H_t - \Pi} = \normtt{H_{\floor{t}} - \Pi} \normtt{H_{\{t\}} - \Pi} \le \normtt{H_{\floor{t}} - \Pi} = \normtt{H^{\floor{t}}}.
        \end{equation*}
        Now, we use Young's bound~\cite[Theorem~1]{You} on the norm of powers of a matrix to obtain
        \begin{equation}
        \label{eq:UpperSplitBound}
            \begin{split}
                \normtt{H^{\floor{t}}} &\le D (1 + \e^{-\gamma})^D \e^{-\gamma(\floor{t}-D+1)} \floor{t}^D + \mathbf{1}_{t \in [0, 1)}\\
            &\lesssim (1+t^D) \e^{-\gamma(t-D)}.
            \end{split}
        \end{equation}
        This yields the desired upper bound on the $\ell^2$-distance at time $t$.

        \item Since \eqref{eq:UpperSplitBound} yields the upper bound in (f), it remains to prove that $\frac{1}{2} \e^{-\gamma t} \le \mathrm{d}_{\mathrm{TV}}(t)$. By the same arguments as in \cite[Theorem~12.5]{LP}, we have that for any irreducible transition matrix $J$ with stationary distribution $\pi$ and any eigenvalue $\lambda \neq 1$ of $J$, we have
        \begin{equation*}
            \max_{x} \normTV{J(x,\cdot) - \pi} \ge \frac{\abs{\lambda}}{2}.
        \end{equation*}
        Applying this to $J \coloneqq \e^{t(U-I)}$ gives a lower bound of $\frac{1}{2} \e^{-\gamma t}$, as desired.
    \end{enumerate}
    This covers all cases in Proposition~\ref{pro:CheegerPoincareBounds}, and thus finishes the proof.
\end{proof}

\section{Mixing times of the shuffling models}
\label{app:shuffling}

We will prove below mixing time bounds for the shuffling schemes introduced in Section~\ref{sec:shuffle}.

\begin{proof}[Proof of Theorem~\ref{thm:shuffle}]
    We will first prove part (a). The lower bound on $t^{(n)}_{\mix}(\varepsilon)$ is immediate from the fact that the multi-stack random-to-random shuffle is a lift of the labeled generalised Bernoulli--Laplace chain.
    
    We will now prove the upper bound. For convenience, we will use the terms balls and urns instead of cards and stacks. Let $\widehat{\Omega}$ be the state space of the multi-stack random-to-random shuffle chain. One can identify $\widehat{\Omega}$ with the symmetric group $S_{dn}$. Let $x \in \widehat{\Omega}$ be the initial state. For $i \in [d]$, let $\nu_i$ be the uniform distribution on the set of all configurations $y \in \widehat{\Omega}$ satisfying
    \begin{enumerate}
        \item $y$ and $x$ have the same $n$ balls in each urn, and
        \item for all $j \in \set{i+1, \dots, d}$, the orders of balls inside urn $j$ are the same in $x$ and $y$.
    \end{enumerate}
    In other words, $\nu_i$ is obtained from $x$ by randomising the orders inside urns $1, 2, \dots, i$, uniformly at random, independently, and keeping the orders in the remaining $d-i$ urns as they are in $x$. Let
    \begin{equation*}
        T = n \log n \cdot \max \set{\frac{1}{2\gamma}, 1} + C n \log \log n.
    \end{equation*}
    For convenience, let $\nu_0 \coloneqq \delta_x$ and $\nu_{d+1} \coloneqq \mathrm{Unif}$, where $\mathrm{Unif}$ denotes the uniform distribution on $\widehat{\Omega}$. For $i \in \set{0, 1, \dots, d+1}$, we define $\rho_i \coloneqq \cP{\Xsh_T = \cdot}{\Xsh_0 \sim \nu_i}$. Note that $\rho_{d+1} = \textrm{Unif}$. By the triangle inquality, we have
    \begin{equation}
    \label{eqn:rsum}
        \normTV{\Ps{x}{\Xsh_{t_0} = \cdot} - \textrm{Unif}} \le \sum_{i=0}^d \normTV{\rho_i - \rho_{i+1}}.
    \end{equation}

    By Theorem~\ref{thm:balanced} for labeled chains, we choose $C > 0$ so that for the corresponding labeled chain,
    \begin{equation*}
        t_{\mix}(\varepsilon/2) \le \frac{n \log n}{2 \gamma} + C n \log \log n.
    \end{equation*}
    The term $i = d$ in \eqref{eqn:rsum} can be bounded above by $\varepsilon/2$, since in $\nu_d$, at each urn, independently, the $n$ balls have a random uniform order, i.e., all $n!$ orders are equally likely. For such initial distribution, the TV distance from the stationary distribution is at all times the same as it is for the projection of the chain to the version in which there is no order inside the urns. Therefore, we have
    \begin{equation}
    \label{eqn:dbound}
        \normTV{\rho_d - \rho_{d+1}} = \normTV{\rho_d - \mathrm{Unif}} \le \frac{\varepsilon}{2}.
    \end{equation}

    Now we will bound the terms in \eqref{eqn:rsum} with $i < d$. In order to control $\normTV{\rho_i - \rho_{i+1}}$, consider a coupling of the chain for the two initial distributions $\nu_i$ and $\nu_{i+1}$ in which at each step,  we pick the same $d$ balls, the same permutation $\sigma$, and we insert the $d$ selected balls to the same locations. Under this coupling, whenever a certain ball has the same location (same urn and same location inside the urn) in the two chains, it will always have the same location in the two chains at all later times. Note that we can couple the chains with the two initial distributions $\nu_i$ and $\nu_{i+1}$ so that the initial configurations have the same set of balls in each urn, and also the order of balls is the same in all urns except urn $i+1$. For convenience, label the $n$ balls which occupy urn $i+1$ at time $0$ by the set $[n]$. For $j \in [n]$, let $\tau(j)$ be the first time that the ball $j$ is picked from urn $i+1$. Define
    \begin{equation*}
        \tau \coloneqq \max \set{\tau(j) \colon j \in [n]}.
    \end{equation*}
    Note that $\tau$ is the first time that all the balls initially in urn $i+1$ have been picked at least once. Under the above coupling, at time $\tau$, the two chains will be equal to one another. It follows from the definition of $T$ that $T \ge n \log (2dn/\varepsilon)$ for $n$ large enough. Note that for each $j \in [n]$, $\tau(j)$ is an exponential random variable with mean $n$. Therefore, we have
    \begin{equation*}
        \P{\tau(j) > T} \le \Exp{-\frac{T}{n}} \le \frac{\varepsilon}{2dn}.
    \end{equation*}
    Taking a union bound over $j \in [n]$, we obtain $\P{\tau > T} \le \varepsilon/(2d)$, which implies
    \begin{equation}
    \label{eqn:lessdbound}
        \normTV{\rho_i - \rho_{i+1}} \le \frac{\varepsilon}{2d}.
    \end{equation}
    Finally, using the estimates \eqref{eqn:dbound} and \eqref{eqn:lessdbound} in \eqref{eqn:rsum} yields the desired upper bound on the mixing time $t_{\mix}^{(n)}(\varepsilon)$. This finishes the proof of part (a).
    
    The proof of part (b) is essentially identical to that of part (a). The only difference is that in order to control $\normTV{\rho_i - \rho_{i+1}}$, we need to alter the definition of $\tau(j)$, namely, for $j \in [n]$, we redefine $\tau(j)$ to be the first time that ball $j$ is moved outside urn $i+1$. Again, $\tau(j)$ is an exponential random variable, but with mean $n/(1-U(i+1,i+1))$, where $U$ is the single ball transition matrix. Note that the definition of $T$ is also changed to
    \begin{equation*}
        T = n \log n \cdot \max \set{\frac{1}{2\gamma}, \frac{1}{q}} + C n \log \log n,
    \end{equation*}
    where $q = \min \set{1-U(k,k) \colon k \in [d]}$. It follows that $T \ge (n/q) \log (2dn/\varepsilon)$ for $n$ large enough. Therefore, we have
    \begin{equation*}
        \P{\tau(j) > T} \le \Exp{-\frac{T}{n}(1-U(i+1, i+1))} \le \frac{\varepsilon}{2dn}.
    \end{equation*}
    The rest of the proof works exactly in the same way as the proof of part (a).
\end{proof}

\section{Cutoff in discrete time}
\label{app:discrete}

We prove below cutoff for the generalised Bernoulli--Laplace chain in the discrete time setup.

\begin{theorem}
\label{thm:cutoff-disc}
    Consider the setup in Theorem~\ref{thm:Main}. Under the symmetry condition on $\mu_n$, the same results hold for the non-lazy discrete time version of the chains.
\end{theorem}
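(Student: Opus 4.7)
The plan is to invoke the continuous-to-discrete transfer result of Hermon and Malmquist~\cite{HM}, which asserts that for a sequence of reversible Markov chains satisfying a quantitative near-bipartiteness condition, continuous-time cutoff at time $T_n$ with window $w_n$ implies non-lazy discrete-time cutoff around the same time with the same order of window. The two verifications needed are reversibility and the non-bipartiteness hypothesis.

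Reversibility is immediate: under the symmetry assumption on $\mu_n$, Lemma~\ref{lem:revGeneral} gives that the discrete-time chain (and hence its continuous-time lazy version) is reversible with respect to $\pi_n$. Thus the chain sits squarely in the setting of~\cite{HM}, and moreover Theorem~\ref{thm:Main} gives the continuous-time cutoff statement that we wish to transfer.

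The core of the argument is to verify that the smallest eigenvalue $\lambda_{\min}(P_n)$ of the discrete-time transition matrix is not too close to $-1$ on the scale dictated by the spectral gap. My approach is to establish a uniform lower bound on the holding probability at states in the centre. For any $x \in \CC{C}$, consider the event that the balls chosen from all $d$ urns happen to share a common colour, say colour $1$. Under this event, regardless of the permutation $\sigma \sim \mu_n$ sampled, the updated configuration is identical to $x$, because the multiset of colours entering and leaving each urn matches trivially. Since each urn contains at least $n - C\sqrt{n}$ balls of colour $1$ in a state $x \in \CC{C}$, this event has probability at least $\prod_{i=1}^d (n - C\sqrt{n})/(mn) = m^{-d}(1+o(1))$. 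Hence $P_n(x,x) \ge c$ uniformly in $x \in \CC{C}$ for some constant $c = c(d,m) > 0$, which in turn forces $1 + \lambda_{\min}$ of the restricted/induced dynamics to be bounded away from $0$ by a constant.

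The main obstacle is that the hypothesis of~\cite{HM} may require a global spectral bound on $1 + \lambda_{\min}(P_n)$ rather than a localised holding-probability bound, whereas for atypical states $x \notin \MC{\delta}$ we cannot uniformly lower-bound $P_n(x,x)$ by a constant. To handle this, I would first apply the transfer theorem to the modified chain $\Xmb$ from Section~\ref{sec:relaxation}, which is reversible with stationary distribution $\pim$ supported on $\Mac$, has relaxation time controlled by Corollary~\ref{cor:RelaxationTimeModified}, and for which the holding-probability argument above applies uniformly on its state space. Since $\X$ and $\Xmb$ agree with overwhelmingly high probability up to polynomial times by Lemma~\ref{lem:TimeInMacro}, and since $\normTV{\pi - \pim} = o(1)$ by Lemma~\ref{lem:CentreMacroOrder}, the discrete-time cutoff result for $\Xmb$ transfers back to $\X$ along the same lines as the concluding arguments in Section~\ref{sec:conclusion}. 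This yields cutoff for the non-lazy discrete-time chain at time $mn \cdot \tmsb$ with window of order $n/\gamma_n$, matching the statement in Theorem~\ref{thm:Main}.
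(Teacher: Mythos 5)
You have the right intuitive ingredients---reversibility via Lemma~\ref{lem:revGeneral}, the Hermon--Malmquist transfer machinery, and a uniform holding-probability bound coming from picking a common colour in every urn---but the route you choose introduces two genuine gaps that the paper's actual argument sidesteps.

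First, applying the transfer theorem to the modified chain $\Xmb$ requires knowing that $\Xmb$ itself exhibits continuous-time cutoff, but Theorem~\ref{thm:Main} only establishes cutoff for $\X$; the analysis of $\Xmb$ in Section~\ref{sec:relaxation} yields relaxation- and mixing-time bounds, not a cutoff profile, and the worst-case starting state for $\X$ lies outside $\Mac$, so one cannot simply read off cutoff for $\Xmb$ from cutoff for $\X$. Second, even granting discrete-time cutoff for $\Xmb$, the transfer back to $\X$ that you describe (``along the same lines as the concluding arguments in Section~\ref{sec:conclusion}'') quietly requires discrete-time analogues of the hitting-time estimates in Section~\ref{sec:concentration} and the coupling in Lemma~\ref{lem:TimeInMacro}, which are stated and proved in the continuous-time setting; that is a nontrivial amount of extra work you are leaving implicit. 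A related point: even a global bound $1 + \lambda_{\min}(\PXm) = \Omega(1)$ for the modified chain does not translate into a bound on $1 + \lambda_{\min}(P_n)$ for the original chain, because the bottom of the spectrum of $P_n$ could a priori be driven by the behaviour of $P_n$ outside $\Mac$, so the detour through $\Xmb$ does not actually feed back into the spectral hypothesis of the transfer theorem as applied to $\X$.

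The paper avoids all of this by never trying to lower-bound $1 + \lambda_{\min}(P_n)$ directly. Since continuous-time cutoff for $\X$ is known, \cite[Theorem~1.6]{HM} gives an \emph{equivalence}: non-lazy discrete cutoff holds iff $1/(1+\lambda^{(n)}) = o(t_{\mix,\textup{cts}}^{(n)})$. One then argues by contradiction: if this fails along a subsequence, the product condition forces $1 + \lambda^{(n)} = o(\gamma^{(n)})$, so \cite[equation~(1.40)]{HM} produces a cut of $\Omega_n$ with cost $1 - o(1)$. But no such near-bipartite cut can exist, precisely because $\pi(\Mac) \to 1$ and every $x \in \Mac$ has holding probability bounded below by a positive constant (your common-colour observation, applied on $\Mac$ rather than $\cC$). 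This contradiction argument stays entirely on the original chain $\X$ and requires neither cutoff for $\Xmb$ nor any discrete-time rerun of Sections~\ref{sec:concentration}--\ref{sec:conclusion}. Your holding-probability observation is exactly the right input; the missing piece is the cut characterisation that turns a local holding-probability bound into a contradiction with a small global spectral gap at $-1$.
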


\begin{proof}
    Since $\mu_n$ is symmetric for all $n \in \N$, we are in the reversible setup. For reversible chains, cutoff for the continuous time version of the chains is equivalent to cutoff for the lazy discrete time version of the chains~\cite{HPe}. Since we prove cutoff for the continuous time chains in Theorem~\ref{thm:Main}, it follows that the lazy discrete time versions of the chains also exhibit cutoff.

    Now consider a sequence $\{\X^{(n)}\}_{n \in \N}$ of non-lazy discrete time generalised Bernoulli--Laplace chains satisfying the hypothesis of the theorem. For $n \in \N$, let $\lambda^{(n)}$ and $\gamma^{(n)}$ denote the smallest eigenvalue and the spectral gap of the transition matrix of $\X^{(n)}$, respectively. Let $\{\X^{(n)}_{\textup{cts}}\}_{n \in \N}$ and $\{\X^{(n)}_{\textup{lazy}}\}_{n \in \N}$ be the sequences of corresponding continuous time and lazy discrete time chains, respectively. By the discussion in the paragraph above, both of these sequences exhibit cutoff. Then \cite[Theorem~1.6]{HM} implies that the sequence $\{\X^{(n)}\}$ exhibits cutoff if and only if $1/(1+\lambda^{(n)})$ is of a smaller order than the mixing time $t_{\mix,\,\textup{cts}}^{(n)}$ of the continuous time chain.
    
    Suppose towards a contradiction that the sequence $\{\X^{(n)}\}$ does not exhibit cutoff. Then along a subsequence, $(1+\lambda^{(n)}) t^{(n)}_{\mix,\,\textup{cts}} = O(1)$. Since the sequence of continuous time chains exhibits cutoff, the product condition must hold, namely, $\gamma^{(n)} t_{\mix,\,\textup{cts}}^{(n)} = \omega(1)$. This yields $(1+\lambda^{(n)}) = o(\gamma^{(n)})$ along the aforementioned subsequence. Now, it follows from \cite[equation (1.40)]{HM} that there exists some specific cut $(A, A^\complement)$ of the state space $\Omega_n$ of $\X^{(n)}$ whose cost is at least $1 - 5(1+\lambda^{(n)})/\gamma^{(n)} = 1 - o(1)$. But this is a contradiction since any cut $(D, D^\complement)$ must have cost bounded uniformly away from $1$ since for all $x$ in the macroscopic centre $\Mac$, we have $P(x, x) \ge ((1-\delta)/d)^d d = \Omega(1)$.
\end{proof}

\end{document}